\documentclass[11pt,a4paper,leqno]{amsart}
\usepackage{graphicx} 
\usepackage{amssymb}
\usepackage{amsmath}
\usepackage{amsthm}
\usepackage{cancel}
\usepackage{amsfonts,amssymb}
\usepackage[abbrev,non-sorted-cites, alphabetic]{amsrefs}
\usepackage{latexsym}
\usepackage{color}
\usepackage{appendix}
\usepackage{hyperref}
\hypersetup{
    colorlinks=true,
    linkcolor=black,
    citecolor=black,
}

\allowdisplaybreaks

\oddsidemargin=.in
\evensidemargin=.in

\newtheorem{theorem}{Theorem}[section]
\newtheorem{lem}[theorem]{Lemma}
\theoremstyle{plain}

\newtheorem{claim}{Claim}

\newtheorem{corollary}[theorem]{Corollary}

\newtheorem{definition}[theorem]{Definition}

\newtheorem{lemma}[theorem]{Lemma}

\newtheorem{proposition}[theorem]{Proposition}
\newtheorem{remark}{Remark}

\newenvironment{claimproof}{\par \noindent \textit{Proof of Claim.}}{\hfill $\diamond$ \par}

\newcommand{\N}{\mathbb{N}}     
\newcommand{\R}{\mathbb{R}}     
\newcommand{\C}{\mathbb{C}}     

\newcommand{\ol}[1]{\overline{#1}}  

\newcommand\dd{{\mathrm d}}

\DeclareMathOperator\im{Im}

\DeclareMathOperator\osc{osc}

\title[Spectral Analysis for Singularities of LMCF]{Spectral Analysis for Finite-Time Singularities of Lagrangian Mean Curvature Flow}

\author{Maxwell Stolarski}
\address{Maxwell Stolarski: Warwick Mathematics Institute, Zeeman Building, University of Warwick, Coventry CV4 7AL, United Kingdom}
\email{max.stolarski@warwick.ac.uk}

\author{Wei-Bo Su}
\address{Wei-Bo Su: Department of Mathematics, National Central University, No. 300, Zhongda Rd., Zhongli District, Taoyuan City 320317, Taiwan}
\email{weibosu@math.ncu.edu.tw}

\date{\today}

\begin{document}

\begin{abstract}
    Let $\mathcal C$ be a $G$-invariant special Lagrangian cone admitting a scaled family of $G$-invariant special Lagrangian desingularizations $a\overline L$ which converge to $\mathcal C$ as $a\searrow 0$. We study the linearized self-shrinker operator on $a\overline L$ in a Gaussian weighted $L^2$ space of $G$-equivariant functions. For $0<a\ll1$, we construct any prescribed finite number of eigenfunctions whose eigenvalues converge to those of the limiting conical operator, and we prove a spectral gap estimate on the orthogonal complement of these modes. We also identify the lowest eigenfunction with the scaling mode of the special Lagrangian desingularization. This spectral basis provides the analytic foundation for the construction of Type II blow-up solutions of Lagrangian mean curvature flow in the companion paper~\cite{SS26II}.
\end{abstract}

\maketitle

\tableofcontents

\section{Introduction}

A central problem in geometric flows, and more generally in parabolic PDEs, is to understand the asymptotic behavior of solutions near a limiting solution. This question is particularly important when the limiting solution arises as a parabolic blow-up limit at a singularity of the flow. In many such cases, the limit is self-similar. The first-order approximation of the flow near the limiting solution is then governed by the linearized operator at the limit. When the limit is smooth, the linearized operator admits a suitable spectral decomposition in an $L^{2}$ space, and the nonlinear error terms can be controlled, the asymptotic behavior of the flow is determined by the spectral properties of this operator. 
This strategy has played a central role in the construction of self-similar blow-up solutions and has been used to obtain sharp estimates near limiting solutions; see, for instance, Merle--Zaag~\cite{MZ98}, Angenent--Daskalopoulos--Sesum~\cite{ADS19}, Choi--Haslhofer--Hershkovits~\cite{CHH22}, the first named author~\cite{Stolarski24b}, Lee-Zhao~\cite{LeeZhao24}, and Bamler-Lai~\cite{BamlerLai26}.

The preceding scheme becomes considerably more delicate when the self-similar solution arising as a parabolic blow-up limit is singular. In this situation, the singular behavior of the original flow develops at a scale much smaller than the self-similar scale, often called the Type II scale. Consequently, the analysis of the asymptotic behavior becomes a genuinely multi-scale problem: at the self-similar scale, the flow is described by the linearization around the singular self-similar solution, whereas at Type II scales, the flow is often modeled on a desingularization of that singular self-similar solution.

There are two distinct regimes. The first is the case where the desingularization can be viewed as an $L^{2}$ perturbation of the self-similar solution. In this regime, the linearized operator on the self-similar solution remains dominant, and the analysis is still more or less dictated by the spectral theory described above. See, for example, Vel{\'a}zquez's construction of Type II blow-up in mean curvature flow~\cite{Velazquez94}, Herrero-Vel{\'a}zquez in the semi-linear heat equation~\cite{HV}, and the first named author in the Ricci flow~\cite{Stolarski24}. The second regime arises when the desingularization is not an $L^{2}$ perturbation of the self-similar solution. In this case, the deviation at the Type II scale is sufficiently large that the linear theory around the self-similar solution no longer governs the leading-order behavior. This is the regime addressed in the present paper.

The difficulty in the non-$L^{2}$ regime can be addressed by linearizing around the desingularization itself, rather than around the self-similar solution. This shifts the problem to the construction of $L^{2}$ eigenfunctions on the desingularization, which serve as the analytic foundation for the asymptotic analysis. Since the desingularization is a small, though non-$L^{2}$, perturbation of the self-similar solution, one expects its spectral theory to be related to that of the self-similar solution through a perturbative mechanism. When the desingularization is asymptotic to the self-similar solution at infinity, the relevant eigenfunctions can be constructed by a gluing procedure, matching eigenfunctions associated with the self-similar solution to eigenfunctions associated with the desingularization. This type of perturbative spectral theory was studied and used by, for example, Had{\v z}i{\'c}--Rapha{\"e}l~\cite{HadzicRaphael19}, Collot--Merle--Rapha{\"e}l~\cite{CollotMerleRaphael20}, Collot--Ghoul--Masmoudi--Nguyen~\cites{CGMN_Spectral, CGMN_Flow}, and Bensouilah--Duong--Ghoul~\cite{BDG25}.

In this paper, we develop a perturbative spectral theory for the linearized Lagrangian self-shrinker operator on the special Lagrangian desingularization of a special Lagrangian cone in a cohomogeneity-one setting. This will provide the analytic foundation for the construction of Type II singularities in Lagrangian mean curvature flow in the companion paper~\cite{SS26II}.

A family of Lagrangian submanifolds $\{L(t)\}_{t\in[0,T)}$ in $\mathbb{C}^n$ evolves by Lagrangian mean curvature flow (LMCF) if
\begin{align}\label{eq: LMCF in heat equation form}
    \left(\frac{d}{dt}\mathbf{x}\right)^{\perp}
    = \mathbf{H}_{L(t)}(\mathbf{x}),
    \quad \forall \mathbf{x}\in L(t),
\end{align}
where $\mathbf{H}_{L(t)}(\mathbf{x}) = \Delta_{L(t)}\mathbf{x}$ is the mean curvature of $L(t)$, and $\Delta_{L(t)}$ denotes the Laplace--Beltrami operator of the induced metric on $L(t)$. Suppose that, as $t\to T$, the flow develops a finite-time singularity at the origin. In the self-similar variables
\begin{align*}
    {\mathbf{s}}=e^{\tau/2}\mathbf{x},
    \qquad
    \tau=-\log(T-t),
\end{align*}
the LMCF \eqref{eq: LMCF in heat equation form} becomes the rescaled flow
\begin{align}\label{eq: RLMCF in intro}
    \left(\frac{d}{d\tau}{\mathbf{s}}\right)^{\perp}
    =
    \mathbf{H}_{{M}(\tau)}({\mathbf{s}})
    +
    \frac{{\mathbf{s}}^{\perp}}{2},
    \quad
    \forall {\mathbf{s}}\in
    {M}(\tau):=e^{\tau/2}L(T-e^{-\tau}).
\end{align}
By the compactness result of Neves~\cite{Neves07}, every subsequential limit of ${M}_\tau$ as $\tau\to\infty$ converges weakly to a union of special Lagrangian cones. Thus, given a special Lagrangian cone $\mathcal{C}\subset\mathbb{C}^n$ admitting a special Lagrangian desingularization $a\overline{L}\to\mathcal{C}$ as $a\to0$, the formation of a singularity modeled on $\mathcal{C}$ is naturally modeled by perturbations of $a(\tau)\overline{L}$, where $a(\tau)\to0$ as $\tau\to\infty$.

Under a cohomogeneity-one symmetry assumption, described in Sections~2.2 and~2.3, the LMCF equation \eqref{eq: LMCF in heat equation form} reduces to the quasilinear parabolic equation
\begin{align}\label{eq: equivariant LMCF in intro}
    \partial_t h
    =
    \frac{h_{xx}}{1+h_x^2}
    +
    (n-1)\frac{xh_x-h}{x^2+h^2}
\end{align}
for a function $h:\mathbb{R}\times[0,T)\to\mathbb{R}$. There exists a unique even stationary solution
$f:\mathbb{R}\to\mathbb{R}_+$ of \eqref{eq: equivariant LMCF in intro} satisfying $f(0) = 1$ and $f'(0) = 0$. By scale-invariance, for every $a>0$, $f_{a}(x) := af(\frac{x}{a})$ is also a stationary solution. This family of stationary solutions $\{f_{a}\}_{a>0}$ satisfies
\begin{align*}
    f_a(x)\to x\cot\left(\frac{\pi}{2n}\right)
\end{align*}
smoothly on compact subsets of $(0,\infty)$ as $a\to0$. For each $a>0$, the graph of $f_{a}$ in $\mathbb{C}$ is the profile curve of an equivariant special Lagrangian $a\overline{L}$ in $\mathbb{C}^{n}$ asymptotic to a special Lagrangian cone $\mathcal{C}$ at infinity.

In the self-similar variable $s=e^{\tau/2}x$, and writing $h=\partial_s u$, the rescaled flow \eqref{eq: RLMCF in intro} becomes the fully nonlinear parabolic equation
\begin{align}\label{eq: RLMCF at potential level}
    \partial_\tau u
    =
    \arctan(u_{ss})
    +
    (n-1)\arctan(s^{-1}u_s)
    -
    \frac{s}{2}u_s
    +
    u.
\end{align}
Linearizing the right-hand side at the desingularization determined by $f_a$ gives the operator
\begin{align}
    H_a = L_{s,a} - \frac{s}{2}\partial_s + 1,
\end{align}
where
\begin{align*}
    L_{s,a} =\frac{1}{\sqrt{\bigl(1+(\partial_s f_a)^2\bigr)\bigl(s^2+f_a^2\bigr)^{n-1}}}
    \frac{\partial}{\partial s}\left(\sqrt{\frac{\bigl(s^2+f_a^2\bigr)^{n-1}}{1+(\partial_s f_a)^2}}
    \frac{\partial}{\partial s}\right)
\end{align*}
is the Laplace--Beltrami operator for the special Lagrangian $a \overline L$ acting on equivariant functions.
$H_{a}$ is self-adjoint with respect to the weighted $L^{2}$ inner product
\begin{align*}
    \langle u, v\rangle_{L^{2}_{a}} = \int_{\mathbb{R}}uv\:e^{-F_a(s)} dV_a,
\end{align*}
with
\begin{equation*}
            dV_a := \sqrt{ ( 1 + (\partial_s f_a)^2 )( s^2 + f_a^2 )^{n-1} } \, ds \text{ and } 
            F_a(s) := \frac{s^2}4 + \frac12 \int_0^s \tilde s ( \partial_s f_a)^2 d \tilde s.
\end{equation*}

The main result of this paper gives a precise description of an arbitrary
finite portion of the spectrum of $H_a$ in $L^2_a$ near the singular limit
$a\to0$. Since the special Lagrangian desingularization
$a\overline{L}$ degenerates to the cone $\mathcal{C}$ in this limit,
the spectral problem for $H_a$ is not a regular perturbation problem. Instead,
it has two natural regions. In the inner region, after rescaling by $a^{-1}$,
the operator is modeled on the special Lagrangian desingularization $\overline{L}$.
In the outer region, it is modeled on the linearized operator associated with
the limiting cone, whose eigenfunctions can be computed explicitly. We construct
eigenfunctions by solving the corresponding interior and exterior eigenvalue problems
and matching their values and first derivatives at an intermediate point.
This produces eigenfunctions of $H_a$ with eigenvalues
asymptotic to $1-k$ as $a\to0$, and it yields a spectral gap estimate on the
orthogonal complement of these modes.

Our main spectral theorem is the following.

\begin{theorem}[Eigenfunctions and Spectral Gap]\label{main thm intro}
    Let $n\geq 3$, $K\in\mathbb{N}$, and $\epsilon>0$. 
    There exist
    $s_0^*=s_0^*(n,K)>0$ and, for each $0<s_0\leq s_0^*$,
    $a^*=a^*(n,K,s_0,\epsilon)>0$  and $C = C(n,K,s_0) > 0$ with the following property.

    For every $0<a\leq a^*$ and every integer $0\leq k\leq K$, there exists 
    $\widetilde\lambda_k=\widetilde\lambda_{k}(a;n,k,s_0)  \in \R$ 
    and a smooth eigenfunction $\phi_{k,a} = \phi_{k, a , s_0, n}$ such that 
    \begin{equation*}
         H_a\phi_{k,a}=(1-k + \widetilde \lambda_k) \phi_{k,a} \qquad \text{with} \qquad 
        |\widetilde\lambda_k(a)|\le C a
        \quad \text{and} \quad 
        |\partial_a\widetilde\lambda_k(a)|\le C.
    \end{equation*}


    Moreover, the eigenfunctions $\{ \phi_{k,a} \}_{0 \le k \le K}$ satisfy the following spectral gap estimate.
    If $v:\mathbb{R}\to\mathbb{R}$ is an odd $C^2$ function such that
    \begin{gather*}
        \|v\|_{L^2_a} + \| \partial_s v \|_{L^2_a} +\|H_av\|_{L^2_a}<\infty 
        \qquad \text{and} \qquad  
        \langle v,\phi_{k, a}\rangle_{L^2_a}=0
        \quad
        \text{for all } 0\leq k\leq K,
    \end{gather*}
    then
    \begin{equation*}\label{eq spectral gap intro}
        \langle v,H_av\rangle_{L^2_a}
        \leq
        (-K+\epsilon)\|v\|_{L^2_a}^2 .
    \end{equation*}
\end{theorem}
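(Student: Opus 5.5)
The plan is a matched-asymptotics construction of the eigenfunctions, followed by a min--max argument for the gap. We work with odd functions throughout, since $u$ is a potential and $h=\partial_s u$ is even.

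\textbf{Outer problem.} On the cone, $f_0(s)=s\cot(\pi/2n)$, and $H_0=L_{s,0}-\frac s2\partial_s+1$ acting on odd functions on $(0,\infty)$ is a confluent hypergeometric (Kummer) operator. Its Frobenius exponents at $s=0$ are $\sigma_\pm$, solving $\sigma(\sigma-1)+(n-1)\sigma=0$ after the constant metric factor is absorbed. This gives $\sigma_+=1$ (odd, regular) and $\sigma_-=2-n$, which is singular and non-$L^2$ near $0$ when $n\ge3$. The $L^2$ eigenfunctions of $H_0$ that are regular at $0$ are Laguerre-type polynomials $\psi_k(s)=s\,M(-k,\tfrac n2+\tfrac12\cdot\,\cdot,\,s^2/4c)$ with eigenvalue $1-k$. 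For $\lambda$ near $1-k$ we take the outer solution $\psi^{out}_\lambda$ that is $L^2$ at infinity, i.e.\ the Tricomi $U$-function with polynomial growth. Near $s=0$ it expands as $A(\lambda)s+B(\lambda)s^{2-n}+\dots$, where $B(\lambda)$ has a simple zero at $\lambda=1-k$ with $\partial_\lambda B\neq0$, which can be computed from Gamma-function poles. Perturbing this solution from $H_0$ to $H_a$ on $[s_0,\infty)$ costs only $O(a)$ in $C^1$, because $f_a-f_0=O(a^{n}s^{1-n})$-type decay there (asymptotics of the desingularization to the cone). This is a contraction-mapping/variation-of-parameters step.

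\textbf{Inner problem and matching.} Set $s=a\sigma$. Then $H_a=a^{-2}(L_{\bar L}+O(a^2\sigma^2)\text{ terms})$, and the odd kernel of $L_{\bar L}$ contains $\partial_a(af(\cdot/a))$-type Jacobi fields. In potential terms these are the odd solution $\Phi(\sigma)\sim\sigma+c\,\sigma^{2-n}$ at infinity, coming from scaling. Solving $H_a\psi=\lambda\psi$ on $[0,s_0]$ with the odd normalization at $0$ gives $\psi^{in}=a\Phi(s/a)+O(s_0^2)$ corrections, i.e.\ $\psi^{in}\approx s + c\,a^{n-1}s^{2-n}$. Matching the logarithmic derivatives of $\psi^{in}$ and $\psi^{out}_\lambda$ at $s_0$ gives a scalar equation
\[
\frac{B(\lambda)}{A(\lambda)}=c\,a^{n-1}+O(a)
\]
(the $O(a)$ comes from the outer perturbation). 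The implicit function theorem then yields $\widetilde\lambda_k$ with $|\widetilde\lambda_k|\le Ca$ and $|\partial_a\widetilde\lambda_k|\le C$. This requires $C^1$-in-$a$ control of all error terms, obtained by differentiating the integral equations in $a$. Here $s_0^*$ is chosen so that the Taylor/error terms from $s^2$ corrections in the inner region do not destroy the nondegeneracy $\partial_\lambda B\ne0$ for $k\le K$. For $k=0$ the eigenfunction should coincide with the scaling field $s\partial_s u_a-u_a$ exactly. This serves as a consistency check: $H_a$ applied to it gives eigenvalue $1$ up to corrections.

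\textbf{Spectral gap.} Argue by contradiction and compactness. Suppose $a_j\to0$ and $v_j$ satisfy the orthogonality conditions, have unit norm, and satisfy $\langle v_j,H_{a_j}v_j\rangle>(-K+\epsilon)$. The quadratic form bounds $\|\partial_sv_j\|_{L^2_{a_j}}$. A Gaussian-weight Hardy/Poincaré inequality $\int s^2v^2e^{-F}\lesssim\int(v^2+v_s^2)e^{-F}$ gives tightness at infinity, so no mass escapes there. Near the origin, two things can happen. Mass may concentrate at scale $a$; this is excluded because $-L_{\bar L}\ge0$ and the only $L^2$-type obstruction is the scaling mode, which is captured by $\phi_{0,a}$. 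Alternatively, a Hardy inequality on the cone, with constant $((n-2)/2)^2$, shows that mass near $0$ at scales between $a$ and $s_0$ costs positive energy. Thus $v_j\to v_\infty$ strongly in $L^2_0$ on the cone, with $v_\infty$ odd and regular, orthogonal to $\psi_0,\dots,\psi_K$ (since $\phi_{k,a}\to\psi_k$ in $L^2$), and with $\langle v_\infty,H_0v_\infty\rangle\ge -K+\epsilon$. This contradicts the cone spectrum $\{1-k\}$ on the regular (Friedrichs) domain, whose next eigenvalue is $-K$.

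\textbf{Main obstacle.} The hardest step is the lower semicontinuity/strong convergence near the tip. A uniform-in-$a$ weighted Hardy inequality must rule out concentration. This is the non-$L^2$ regime: the $s^{2-n}$ branch is borderline, so for small $n$ the control must come from $n\ge3$ precisely. I would try a cutoff decomposition $v=\chi v+(1-\chi)v$ with $\chi$ supported in $s\le\delta$, and estimate the inner piece using positivity of $-L_{s,a}$ modulo the scaling mode.
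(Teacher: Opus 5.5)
\textbf{Gap 1: the exponents are wrong, so the matched expansions are wrong.} Your outer analysis starts from an algebra slip. The indicial equation $\sigma(\sigma-1)+(n-1)\sigma=\sigma(\sigma+n-2)=0$ has roots $0$ and $2-n$, not $1$ and $2-n$.
\begin{itemize}
\item No solution of $H_0u=\lambda u$ behaves like $s$ at the vertex; for instance $H_0 s=\frac{n-1}{(1+\overline c_0^{\,2})s}+\frac s2$.
\item The regular eigenfunctions are the even Laguerre polynomials $P_k(s)=L_k^{((n-2)/2)}\bigl((1+\overline c_0^{\,2})s^2/4\bigr)$, with $P_k(0)\neq0$. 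They are not of the form $s\,M(\cdots)$.
\item Oddness imposes no vanishing at the vertex, because the two halves of the cone are disjoint.
\item The scaling potential does not grow like $\sigma$. By Lemma~\ref{lem beta properties}(v), $\beta_1(\sigma)\to\pm\overline\beta$, so $a^{-2}\beta_a(s)=\beta_1(s/a)=\overline\beta+O(a^{n-2}s^{2-n})$ for $s\gg a$.
\end{itemize}
Hence both expansions you match, $As+Bs^{2-n}$ and $s+c\,a^{n-1}s^{2-n}$, are wrong, and so is the equation $B/A=c\,a^{n-1}+O(a)$. The correct comparison is between a constant-plus-$s^{2-n}$ inner profile and $A(\lambda)\cdot(\text{regular solution})+B(\lambda)s^{2-n}$. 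For the same reason, the limits $\psi_k$ you use for orthogonality in the gap argument must be replaced by multiples $C_kP_k$.

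\textbf{Gap 2: the inner solution is not controlled to $O(a)$.} Even with the exponents fixed, treating the inner solution as the kernel element plus $O(s_0^2)$ corrections cannot give $|\widetilde\lambda_k|\le Ca$. At the matching point, the $\widetilde\lambda$-derivative of the mismatch of logarithmic derivatives (the paper's $\Phi[s_0]$) has size $s_0^{1-n}$, coming from the $s^{2-n}$ branch. An $a$-independent $O(s_0)$ mismatch at $\widetilde\lambda=0$ therefore only gives $|\widetilde\lambda_k|\lesssim s_0^{n}$. That bound does not vanish as $a\to0$ with $s_0$ fixed.

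What is needed is an inner solution whose large-$\sigma$ behaviour reproduces the whole polynomial $P_k$, all its $s^2,\dots,s^{2k}$ terms included, up to $O(a)$ at $s=s_0$. This is the paper's key device:
\begin{itemize}
\item the generalized kernel $u_0=\beta_1$, $L_{a=1}u_{i+1}=u_i$, with $u_i\sim C_i\sigma^{2i}$;
\item the combination $\sum_i\hat C_{k,i}a^{2i}u_i(s/a)$ with $\hat C_{k,i+1}=-(k-i)\hat C_{k,i}$.
\end{itemize}
The products $\hat C_{k,i}C_i$ obey the same recursion as the coefficients of $P_k$. So the sum equals $CP_k(s)+O(a)s^{2k-1}$ for $s\ge s_0$ (Lemma~\ref{lem rewriting Laguerre poly}). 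This gives $\Phi[s_0](0,a)=O(a)$ against $\partial_{\widetilde\lambda}\Phi\sim s_0^{1-n}$, and hence $|\widetilde\lambda_k|\lesssim a$. Your plan is missing this ingredient.

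\textbf{The spectral gap.} Your compactness argument is a genuinely different route from the paper's, and it works once the limits are corrected to $\phi_{k,a}\to C_kP_k$ in $L^2$.

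The paper proceeds as follows. It sets $H_a$ with a Dirichlet condition at $0$ as a self-adjoint operator on $(0,\infty)$ with compact resolvent and simple spectrum. It shows each $\phi_{k,a}$ has exactly $k$ positive zeros, using positivity of the interior piece, perturbed zeros of $P_k$, and barriers at infinity. Sturm oscillation then identifies $\phi_{k,a}$ as the $k$-th eigenfunction, so the gap is exactly $\lambda_{K+1}=-K+\widetilde\lambda_{K+1}$ with effective constants.

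Your argument avoids zero counting, but it yields a non-effective $a^*$ and needs uniform-in-$a$ Gaussian Poincaré bounds. The tip is also not the obstacle you expect. For $n\ge3$, write $v(s)=v(t)-\int_s^t v'$ and average over $t\in(1,2)$. This gives $|v(s)|^2\lesssim\|v\|_{H^1_a}^2\bigl(1+(a+s)^{2-n}\bigr)$, hence $\int_{|s|<\delta}v^2\,d\mu_a\lesssim(a+\delta)^2\|v\|_{H^1_a}^2$ uniformly in $a$. No appeal to the scaling mode is needed.
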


We also identify the geometric meaning of the lowest mode. The eigenfunction
$\phi_{0,a}$ corresponds to the infinitesimal change of scale
of the special Lagrangian desingularization. More precisely, if $\beta_a$ denotes the
potential associated with the scaling deformation, then
\begin{align*}
    \frac{d}{da}f_a
    =
    2a^{-1}\partial_s\beta_a .
\end{align*}
The following result shows that $\phi_{0,a}$ is close in $L^2_a$ to the
normalized scaling mode $a^{-2}\beta_a$. This identification will be used in
the companion paper to modulate the scale parameter $a$ while keeping the
remaining modes under control.

\begin{proposition}[Identification of the Scaling Mode]
\label{prop scaling mode intro}
    Let $n\geq 3$ and $K\in\mathbb{N}$. There exist
    $s_0^*=s_0^*(n,K)>0$ and, for each $0<s_0\leq s_0^*$,
    $a^*=a^*(n,K,s_0)>0$ with the following property.

    Suppose $0<a\leq a^*$, and let
    $\phi_{k,a}=\phi_{k,a,s_0,n}$, $0\leq k\leq K$, be the eigenfunctions
    constructed in Theorem~\ref{main thm intro}. Let $\beta_a$ be the potential for the scaling deformation as in
    Lemma~\ref{lem beta properties}. Then
    \begin{equation*}
        \|\phi_{0,a}-a^{-2}\beta_a\|_{L^2_a}
        \leq
        Ca,
    \end{equation*}
    for some constant $C=C(n,s_0)>0$. In particular,
    there exists $C' = C'(n,K, s_0) > 0$ such that 
    \begin{equation*}
        \left| \langle \phi_{0,a},a^{-2}\beta_a\rangle_{L^2_a} - \|\phi_{0,a}\|_{L^2_a}^2 \right| + \sum_{k=1}^K \left|  \langle \phi_{k,a},a^{-2}\beta_a\rangle_{L^2_a}\right| \le C' a.
    \end{equation*}
\end{proposition}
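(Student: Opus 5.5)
The strategy is to exploit that $\beta_a$ is almost an eigenfunction of $H_a$ with eigenvalue $1$. Then we show that $\phi_{0,a}$ and $a^{-2}\beta_a$ are built from the same interior and exterior solutions, up to $O(a)$ errors.

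\emph{Step 1: the scaling mode.} Because $f_a$ is a stationary special Lagrangian solution for every $a$, differentiating the stationary equation in $a$ shows that the scaling deformation is a Jacobi field. That is, the special Lagrangian linearized operator $L_{s,a}$ annihilates $\beta_a$ (Lemma~\ref{lem beta properties}). Hence
\[
H_a\beta_a=\Bigl(1-\tfrac{s}{2}\partial_s\Bigr)\beta_a .
\]
The scaling potential is homogeneous of degree two in the blow-up variable: $\beta_a(s)=a^2\bar\beta(s/a)$ for a fixed profile $\bar\beta$. This profile is asymptotic to the cone's degree-$1$ mode $cs$, with corrections decaying like $(s/a)^{-\gamma}$ for some $\gamma>0$ (the rate from Lemma~\ref{lem beta properties}). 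Therefore $(1-\frac s2\partial_s)\beta_a$ is small in the outer region, where $s\ge s_0$ and the Gaussian weight lives. Indeed, there $1-\frac s2\partial_s$ kills $s$ up to $O(s^2)$ effects coming from $F_a$ only through the weight. More precisely, I will compare $a^{-2}\beta_a$ with the explicit outer eigenfunction of the conical operator with eigenvalue $1$, which is odd and $\approx cs$ near the origin.

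\emph{Step 2: compare constructions.} By Theorem~\ref{main thm intro}, $\phi_{0,a}$ is obtained by matching an inner solution of $L_{s,a}\psi=\widetilde\lambda_0\psi+(\frac s2\partial_s)\psi$ on $[0,s_0]$ with an outer conical eigenfunction at eigenvalue $1+\widetilde\lambda_0$. In the inner region, after rescaling $s=a\sigma$, the eigenvalue term and drift are $O(a^2\sigma^2)$ perturbations of $L_{\overline L}$. The odd bounded-growth kernel of $L_{\overline L}$ is spanned by $\bar\beta$. So the inner piece of $\phi_{0,a}$ equals $a^{-2}\beta_a$ up to an error that is $O(s^2)$ relative to the main part on $[0,s_0]$, by a Grönwall/variation-of-parameters estimate. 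In the outer region, $|\widetilde\lambda_0|\le Ca$ together with the derivative bound gives smooth dependence of the outer solution on the eigenvalue. The outer profile of $\phi_{0,a}$ therefore differs from the $\lambda=1$ conical solution by $O(a)$ in the weighted norm. Meanwhile $a^{-2}\beta_a$ differs from that solution by the tail correction $a^{-2}\cdot a^2(s/a)^{1-\gamma}$ corrections. The normalizations agree because both are fixed by the matching data at $s_0$.

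\emph{Step 3: the $L^2_a$ estimate.} Split the integral into $[0,s_0]$ and $[s_0,\infty)$, using symmetry for negative $s$. Each difference is bounded by $Ca$ in the weighted norm. In the inner region the volume element $dV_a$ and the bounds above give $O(a)$ once the pointwise errors are integrated. The main obstacle is the outer estimate of $a^{-2}\beta_a$ minus the conical mode. The decay $(s/a)^{-\gamma}$ yields only $a^{\gamma}$, so I must use the precise expansion of $f_a$, the leading correction of the desingularization, to obtain a full power of $a$. If $\gamma<1$, I would instead absorb that leading correction into the matching, since both functions carry the same correction.

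\emph{Step 4: the inner products.} The final inequality follows from Cauchy–Schwarz. We have $\langle\phi_{0,a},a^{-2}\beta_a\rangle-\|\phi_{0,a}\|^2=\langle\phi_{0,a},a^{-2}\beta_a-\phi_{0,a}\rangle$. For $k\ge1$, the $\phi_{k,a}$ are orthogonal to $\phi_{0,a}$ because their eigenvalues are distinct ($1-k+O(a)\ne 1+O(a)$) and $H_a$ is self-adjoint. This gives $|\langle\phi_{k,a},a^{-2}\beta_a\rangle|\le\|\phi_{k,a}\|\,Ca$, and the norms $\|\phi_{k,a}\|_{L^2_a}$ are uniformly bounded by the construction.
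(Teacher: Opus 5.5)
Your skeleton matches the paper: compare $\phi_{0,a}$ with $a^{-2}\beta_a$ through the inner/outer construction, integrate against $d\mu_a$, then use Cauchy--Schwarz and orthogonality. Your Step~4 is exactly the paper's argument. The paper gets the first estimate directly from the pointwise bound $|\phi_{0,a}-a^{-2}\beta_a|\le C(n,s_0)\,a\,(1+|s|)$ of Lemma~\ref{lem tilde phi_k,a pointwise ests} (with $\delta=1$) together with Proposition~\ref{prop vol element bounds}. Your Steps~1--3 are meant to produce such a bound, but as written they do not give $O(a)$, for two concrete reasons.

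\textbf{The outer picture is wrong.} By Lemma~\ref{lem beta properties}(v), $\beta_1(\sigma)\to\pm\overline\beta$. So the scaling potential tends to a \emph{constant}, not to a linear mode $cs$. Correspondingly, the eigenvalue-$1$ eigenfunction of $H_{s,0}$ used in the outer construction is the constant $P_0=L_0^{((n-2)/2)}\equiv1$ (odd extension $\mathrm{sign}(s)$), not something $\approx cs$. Likewise $(1-\frac s2\partial_s)\beta_a\approx\beta_a$ is not small in the outer region; what is small is $(H_a-1)\beta_a=-\frac s2\partial_s\beta_a$. If $\beta_1\sim c\sigma$ held, then $a^{-2}\beta_a\sim cs/a$ would have $L^2_a$-norm of order $a^{-1}$ and could not be $O(a)$-close to $\phi_{0,a}$, so your outer comparison is incoherent as stated. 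With the correct picture, your ``main obstacle'' disappears. For $|s|\ge s_0$ we have $a^{-2}\beta_a(s)=\pm\overline\beta+O(a^{n-2}|s|^{2-n})$, which is $O(a)$ exactly because $n\ge3$. Meanwhile, for $s\ge s_0$, $\phi_{0,a}=\frac{\phi_{int}(s_0/a)}{\phi_{ext}(s_0)}\phi_{ext}$ with $\phi_{int}(s_0/a)=\overline\beta+O(a)$ and $\phi_{ext}=1+O(a)(1+s)^{\delta}$.

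\textbf{Your inner bound is too weak.} An error that is $O(s^2)$ relative to the main part is only $O(s_0^2)$ on $[0,s_0]$, uniformly in $a$. Since $d\mu_a\approx(a+|s|)^{n-1}ds$ there, it contributes about $s_0^{(n+4)/2}$ to the $L^2_a$-norm, which does not vanish as $a\to0$.

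Treating the drift and eigenvalue terms as generic $O(a^2\sigma^2)$ perturbations discards exactly the structure that makes the error $O(a)$. In the construction, $\phi_{int}-\beta_1=\widetilde\lambda_0a^2(u_1+v_0)+a^2w_0$.
\begin{itemize}
\item The term $\widetilde\lambda_0a^2u_1(s/a)\approx\widetilde\lambda_0C_1s^2$ is $O(a)$ only because $|\widetilde\lambda_0|\lesssim a$.
\item The correction $w_0$ is driven by the source $\Theta_0=-\frac\sigma2\beta_1'=O(\langle\sigma\rangle^{2-n})$. This is the quantitative form of ``$\beta_a$ is almost an eigenfunction with eigenvalue $1$.'' Hence $w_0=O(\langle\sigma\rangle)$ and $a^2w_0(s/a)=O(a(a+|s|))$.
\end{itemize}
You announce the near-eigenfunction property as your strategy but never use it in the one place it is needed.
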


We begin in Section \ref{section Prelims} by recalling some background material on Lagrangian mean curvature flow and establishing notation.
Section \ref{section Construction of Eigenfunctions} comprises the bulk of the paper and obtains the proofs of Theorem \ref{main thm intro} and Proposition \ref{prop scaling mode intro}.
Specifically, Theorem \ref{main thm intro} appears as Theorems \ref{thm global eigenfunctions} and \ref{thm spectral gap}, and Proposition \ref{prop scaling mode intro} appears as Lemma \ref{lem L^2_a est for phi with beta}.
In Section \ref{Section Approx Flow Solutions}, we indicate how the spectral analysis of Theorem \ref{main thm intro} can be used to obtain Lagrangian mean curvature flow solutions that form finite-time singularities with precise blow-up rates on the second fundamental form and mean curvature that remains uniformly bounded.
The arguments in Section \ref{Section Approx Flow Solutions} do not constitute rigorous proofs, but instead will be made rigorous in a forthcoming companion paper~\cite{SS26II}.

Finally, we note almost all results in this paper hold for all dimensions $n \ge 3$.
This dimension restriction is essentially due to the different asymptotic behavior of the special Lagrangian desingularization $\overline L$ in dimensions $n\ge 3$ compared to dimension $n =2$ (see Remark \ref{rmk beta asymps when n=2}).

\addtocontents{toc}{\protect\setcounter{tocdepth}{0}} 
\subsection*{Acknowledgments}
The authors thank Albert Wood, Charles Collot, and Felix Schulze for helpful conversations.
The first named author is supported by a Leverhulme Trust Early Career Fellowship (ECF-2023-182).
The second named author is supported by the Taiwan NSTC grant 114-2115-M-008-012-MY3. 
The authors also thank the University of Warwick and the National Center for Theoretical Sciences for the support and hospitality during the authors' visits.
\addtocontents{toc}{\protect\setcounter{tocdepth}{2}}

\section{Preliminaries} \label{section Prelims}
\subsection{Lagrangian Mean Curvature Flow in $\mathbb{C}^{n}$}
Throughout, we take the ambient manifold to be $\mathbb{C}^{n}$ equipped with its
standard flat Calabi--Yau structure $(\overline{g},J,\omega,\Omega)$. In terms of
the complex coordinates $z_{j}=x_{j}+iy_{j}$, $j=1,\dots,n$, these are given by
\begin{align*}
\overline{g}
=
\sum_{j=1}^{n}(dx_{j}^{2}+dy_{j}^{2}),\qquad
J=i\,\cdot,\qquad
\omega=
\sum_{j=1}^{n}dx_{j}\wedge dy_{j},\qquad
\Omega=dz_{1}\wedge\cdots\wedge dz_{n}.
\end{align*}
We denote by $\lambda$ the Liouville form
\begin{align*}
    \lambda
    =
    \frac{1}{2}\sum_{j=1}^{n}\big(y_{j}dx_{j}-x_{j}dy_{j}\big),
\end{align*}
such that $\omega = -d\lambda$.

An $n$-dimensional immersed submanifold $L\subset\mathbb{C}^{n}$ is called
\emph{Lagrangian} if $\omega\big|_{L}=0$. We say that a one-parameter family of
immersed submanifolds $\{L(t)\}_{t\in[0, T)}$ satisfies the \emph{Lagrangian mean curvature flow} (LMCF) if each time-slice
$L(t)$ is Lagrangian and
\begin{align}
    \left(\frac{d}{dt}\mathbf{x}_{L(t)}\right)^{\perp}
    =
    \mathbf{H}_{L(t)},
    \qquad t\in(0,T),
\end{align}
where $\mathbf{x}_{L(t)}\in\mathbb{C}^{n}$ is the position vector and $\mathbf{H}_{L(t)}$ denotes the mean curvature vector of $L(t)$.

If $L$ is oriented and Lagrangian, then
\begin{align*}
    \Omega|_{L}=e^{i\theta_{L}}\,dV_{L}
\end{align*}
for some function $\theta_{L}:L\to\mathbb{R}/2\pi\mathbb{Z}$, called the
\emph{Lagrangian angle}. The Lagrangian angle is a potential for the mean
curvature vector, in the sense that
\begin{align*}
    \mathbf{H}_{L}=J\nabla\theta_{L}.
\end{align*}
It follows that a connected Lagrangian $L$ is minimal if and only if $\theta_{L}$ is constant $\overline{\theta}$. In this
case, we call $L$ a \emph{special Lagrangian (SL)} submanifold with phase $\overline{\theta}$.

We say a Lagrangian submanifold $L$ is \emph{zero-Maslov} if $\theta_{L}$ can be lifted to a real-valued function $\theta_{L}:L\to\mathbb{R}$. A zero-Maslov Lagrangian submanifold $L$ is \emph{almost-calibrated} if $\osc_{L}\theta_{L}<\pi$, where $\osc_{L}\theta_{L} := \sup_L \theta_L - \inf_L \theta_L$ denotes the oscillation. 
A Lagrangian submanifold $L$ is \emph{exact} if $\lambda\big|_{L} = d\beta_{L}$ for some $\beta_{L}:L\to\mathbb{R}$. In this case, the position vector $\mathbf{x}_{L}\in\mathbb{C}^{n}$ of $L$ satisfies $\mathbf{x}_{L}^{\perp} = 2J\nabla\beta_{L}$, where $(\cdot)^{\perp}$ denotes the orthogonal projection onto the normal space of $L$. In other words, if $L$ is exact, the normal part of the position vector of $L$ has a potential $2\beta_{L}$. 
Exactness, zero-Maslov, and almost-calibrated conditions are preserved under LMCF.

Suppose that a LMCF $L(t)$ develops a finite-time singularity at
$(P,T)\in\mathbb{C}^{n}\times\mathbb{R}$. That is,
\begin{align*}
    \limsup_{\substack{x\in L(t),\, x\to P\\ t\nearrow T}}
    |\mathbf{A}_{L(t)}(x)|=\infty,
\end{align*}
where $\mathbf{A}_{L(t)}$ denotes the second fundamental form of $L(t)$ in
$\mathbb{C}^{n}$. A coarse asymptotic structure of the singularity is captured
by its \emph{tangent flows}, which arise as subsequential limits of the
parabolically rescaled flows
\begin{align*}
    L^{i}(t)
    :=
    \lambda_{i}\big(L(T+t/\lambda_{i}^{2})-P\big),
    \qquad t<0,
\end{align*}
where $\lambda_{i}\to\infty$. Neves~\cite{Neves07} showed that if the LMCF has \emph{zero-Maslov class}, then
every tangent flow at a finite-time singularity is a union of special
Lagrangian cones. Equivalently, consider the \emph{rescaled flow}
\begin{align*}
    M(\tau)
    =
    e^{\tau/2}\big(L(T-e^{-\tau})-P\big),
\end{align*}
which satisfies
\begin{align}
    \left(\frac{d}{d\tau}\mathbf{x}_{M(\tau)}\right)^{\perp} = \mathbf{H}_{M(\tau)} + \frac{1}{2}\mathbf{x}_{M(\tau)}^{\perp},
\end{align}
then every subsequential limit of $M(\tau)$ as $\tau\to\infty$ is a union of SL cones. If, in addition,
$L(t)$ is \emph{exact}, then Neves~\cite{Neves07} showed that
every tangent flow is a union of SL cones of \emph{same phase}.

The tangent flow captures the geometry of the flow near the singularity at the
parabolic scale $\sqrt{T-t}$. If the tangent flow is itself singular, then one
must look at smaller, \emph{Type II scales} in order to detect the smooth geometry
forming near the singularity. We call any smooth limiting flow obtained at such
a scale a \emph{Type II model} for the singularity.

Assuming that the tangent flow is a SL cone, one
expects the Type II model to be modeled on an asymptotically conical SL submanifold. More precisely, one expects the existence of a SL submanifold $\overline{L}$ asymptotic to the tangent cone
$\mathcal{C}$ at infinity, together with a scale function
\begin{align*}
    a(t)=o(\sqrt{T-t})\qquad\text{as }t\nearrow T,
\end{align*}
such that, in a neighborhood of the singularity, the flow is asymptotic to the
rescaled model $a(t)\overline{L}$, which collapses to $\mathcal{C}$ as
$t\nearrow T$.

\subsection{Cohomogeneity-One SLs and LMCF} \label{subsect Cohom-1 SLs and LMCF}

Now suppose that a compact connected Lie group $G\leq SU(n)$ acts linearly on
$\mathbb{C}^{n}$, preserving the standard flat Calabi--Yau structure, and that
the generic $G$-orbits are $(n-1)$-dimensional. Such cohomogeneity-one constructions were studied by, for instance, Haskins~\cite{Haskins04},
Joyce~\cite{Joyce02}, and Madnick--Wood~\cite{MadnickWood25}. We thus direct the readers to those papers for further details. Under the cohomogeneity-one setting, if $\mathcal{C}\subset\mathbb{C}^{n}$ is a $G$-invariant SL cone
with phase $0$, then, for any $a>0$, the submanifold
\begin{align}
    L
    =
    \left\{
    \lambda\boldsymbol{\sigma}
    \::\:
    \lambda\in\mathbb{C},\ 
    \im(\lambda^{n})=a^{n},\
    \arg(\lambda)\in(0,\pi/n),\
    \boldsymbol{\sigma}\in\Sigma=\mathcal{C}\cap\mathbb{S}^{2n-1}
    \right\}
\end{align}
is a smooth embedded SL submanifold in $\mathbb{C}^{n}$ with
phase $0$, asymptotic to $\mathcal{C}\cup e^{i\pi/n}\mathcal{C}$.

The set
\begin{align*}
    \left\{
    \lambda\in\mathbb{C}
    \::\:
    \im(\lambda^{n})=a^{n},\
    \arg(\lambda)\in(0,\pi/n)
    \right\}
\end{align*}
is a connected smooth embedded curve in $\mathbb{C}$, called the profile curve
of $L$. It can be parametrized by
\begin{align*}
    \gamma_{a}(\theta)
    =
    a\,\gamma(\theta)
    =
    a\,\sin(n\theta)^{-1/n}e^{i\theta},
    \qquad \theta\in(0,\pi/n),
\end{align*}
and satisfies
\begin{equation}\label{eq: equiv. SL profile curve}
    \boldsymbol{\kappa}(\gamma_{a})
    -
    (n-1)\frac{\gamma_{a}^{\perp}}{|\gamma_{a}|^{2}}
    =
    0.
\end{equation}
Rotating the profile curve by $e^{i\phi}$ changes the phase of the corresponding
SL by $n\phi$. In particular, the rotated curve
\begin{align} \label{eq: overline gamma}
    \overline{\gamma}(\theta)
    =
    \sin(n\theta)^{-1/n}
    e^{i\left(\theta+\frac{\pi}{2}\left(1-\frac1n\right)\right)},
    \qquad \theta\in(0,\pi/n),
\end{align}
is symmetric with respect to the imaginary axis and gives rise to an
asymptotically conical SL $\overline{L}$ with phase
$(n-1)\frac \pi 2$.

\begin{lemma}\label{lem: profile function of Lawlor neck}
    For any $n \ge 2$ and $a>0$, the curve $\overline{\gamma}_{a}=a\overline{\gamma}$ given in \eqref{eq: overline gamma} can be
    written as the graph of an even function
    \begin{align*}
        f_{a}:\mathbb{R}\to\mathbb{R}_{>0}.
    \end{align*}
    Moreover, $f_{a}(\cdot) = a f_1(\cdot / a) $ satisfies
    \begin{equation}\label{eq: profile function minimality}
        \frac{(f_{a})_{xx}}{1+(f_{a})_{x}^{2}}
        +
        (n-1)\frac{x(f_{a})_{x}-f_{a}}{x^{2}+(f_{a})^{2}}
        =
        0,
    \end{equation}
    \begin{align*}
        \text{with} \qquad  
        f_{a}(0)=a,\quad
        f_{a}'(0)=0, \quad \text{and } 
        f_{a}''(x)>0
        \text{ for all }x\in\mathbb{R}.
    \end{align*}
    Furthermore, as $|x|\to\infty$, there holds\footnote{See Definition \ref{defn notation} below for the precise definition of the $O(\cdot; \cdot)$ notation.}
    \begin{equation}\label{eq: asymptotic of SL profile function}
        \left|
        \left(\frac{d}{dx}\right)^{k}
        \!\left(f_{a}(x)-\overline{c}_{0}|x|\right)
        \right|
        =
        O\!\left(a^{n}|x|^{1-n-k}; k\right)
        \qquad\text{for all }k\in\mathbb{N}\cup\{0\},
    \end{equation}
    \begin{align*}
        \text{where } \quad 
        \overline{c}_{0}
        =
        \tan\!\left(\frac{\pi}{2}-\frac{\pi}{2n}\right)
        =
        \cot\!\left(\frac{\pi}{2n}\right).
    \end{align*}
\end{lemma}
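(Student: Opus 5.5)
The plan is to work directly with the explicit parametrization $\overline{\gamma}_a(\theta)=a\sin(n\theta)^{-1/n}e^{i(\theta+\frac{\pi}{2}(1-\frac1n))}$. By scaling, it suffices to treat $a=1$: if the graph of $f_1$ is $\overline\gamma$, then the graph of $f_a(x)=af_1(x/a)$ is $a\overline\gamma$. Both \eqref{eq: profile function minimality} and the normalizations transform correctly under this scaling. Indeed, $(f_a)_{xx}=a^{-1}f_1''(x/a)$, and each term of \eqref{eq: profile function minimality} is homogeneous of degree $-1$. The asymptotic bound then scales as $a\cdot(x/a)^{1-n-k}a^{-k}=a^n x^{1-n-k}$.

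Write $\phi=\theta+\frac\pi2(1-\frac1n)$ and $r(\theta)=\sin(n\theta)^{-1/n}$. Then
\begin{equation*}
x(\theta)=r\cos\phi, \qquad y(\theta)=r\sin\phi .
\end{equation*}
As $\theta$ ranges over $(0,\pi/n)$, $\phi$ ranges over $(\frac\pi2-\frac\pi{2n},\frac\pi2+\frac\pi{2n})$, which lies inside $(0,\pi)$. Hence $y>0$.

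\textbf{Symmetry.} The substitution $\theta\mapsto \pi/n-\theta$ preserves $r$ and sends $\phi\mapsto \pi-\phi$. This gives the reflection symmetry $x\mapsto -x$, and $\theta=\pi/(2n)$ corresponds to $x=0$, $y=1$.

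\textbf{Graph property.} I would show $x'(\theta)<0$. Compute
\begin{equation*}
\frac{r'}{r}=-\cot(n\theta), \qquad x'=r\bigl(-\cot(n\theta)\cos\phi-\sin\phi\bigr)=-\frac{r\,\sin(n\theta+\phi)}{\sin(n\theta)} .
\end{equation*}
Here $n\theta+\phi=(n+1)\theta+\frac\pi2-\frac\pi{2n}$, which lies in $(\frac\pi2-\frac\pi{2n},\frac\pi2+\frac\pi{2n}+\frac\pi n)\subset(0,\pi)$ for $n\ge2$. Thus $x'<0$. Moreover $x\to\mp\infty$ at the endpoints, since $r\to\infty$ while $\cos\phi\to\pm\sin(\pi/2n)\neq0$. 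So the curve is the graph of a smooth even positive function, with $f(0)=1$ and $f'(0)=0$ by the symmetry.

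\textbf{Minimality equation.} This is \eqref{eq: equiv. SL profile curve} written in graph form. For a graph $(x,f(x))$, the curvature is $\kappa=f''/(1+f'^2)^{3/2}$. The normal component of the position vector satisfies $\langle\gamma,\nu\rangle=(f-xf')/\sqrt{1+f'^2}$, with a sign fixed by the upward normal. Dividing \eqref{eq: equiv. SL profile curve} by $\sqrt{1+f'^2}$ gives \eqref{eq: profile function minimality}, and rotation does not affect \eqref{eq: equiv. SL profile curve}.

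\textbf{Convexity.} From the equation, $f''=(n-1)(1+f'^2)(f-xf')/(x^2+f^2)$, so it suffices to show $f-xf'>0$. Up to a positive factor, $f-xf'$ is the support function $\langle\gamma,\nu\rangle$. In polar form this equals $r^2/\sqrt{r^2+r'^2}$ times a positive constant, which is positive. Concretely, $f-xf'=(yx'-xy')/x'$, and $xy'-yx'=r^2\phi'=r^2>0$ while $x'<0$.

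\textbf{Asymptotics.} This is the step I expect to need the most care, mainly in obtaining the uniform derivative bounds. Near the endpoint $\theta\to0$ we have $x\to-\infty$; by evenness, treat $x\to+\infty$ via $\theta\to\pi/n$, or equivalently work with $t=\theta$ small. There $r=(n\theta)^{-1/n}(1+O(\theta^2))$ is an analytic expansion, so $x\sim -c\,\theta^{-1/n}$ and $\theta\sim c'|x|^{-n}$. Then
\begin{equation*}
f-\overline c_0|x|=y-\tan(\phi_0)\,(-x), \qquad \phi_0=\tfrac\pi2-\tfrac\pi{2n},
\end{equation*}
which equals $r\sin(\phi-\phi_0)/\cos\phi_0$ up to sign, i.e. $r\cdot O(\theta)=O(\theta^{1-1/n})=O(|x|^{1-n})$.

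For derivatives, I would write $\theta$ as a function of $w=|x|^{-1}$. The relation $w=\theta^{1/n}g(\theta)$ with $g$ analytic and nonvanishing inverts to $\theta=w^nh(w^n)$ with $h$ analytic. This follows from the inverse function theorem applied to $\theta^{1/n}g(\theta)$ in the variable $\theta^{1/n}$, which yields a series in $w^n$; I would check that $r^n\sin(n\theta)=1$ gives such a structure. Hence $f-\overline c_0|x|=|x|\,\Psi(|x|^{-n})$ with $\Psi$ analytic and $\Psi(0)=0$. Differentiating term by term then gives the $O(|x|^{1-n-k})$ bounds for each $k$.
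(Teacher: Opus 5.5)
Your overall route is the one the paper intends: its proof is just ``a direct calculation using the parametrization.'' Several of your steps are fine as written: the scaling reduction, the derivation of \eqref{eq: profile function minimality} from \eqref{eq: equiv. SL profile curve}, and the convexity argument via $f-xf'=(yx'-xy')/x'$ with $xy'-yx'=r^2$. The problem is the step that both the graph property and convexity rest on: your formula for $x'(\theta)$ is wrong. From $x'=r'\cos\phi-r\sin\phi$ and $r'=-r\cot(n\theta)$,
\begin{equation*}
x'(\theta)=-\frac{r}{\sin(n\theta)}\bigl(\cos(n\theta)\cos\phi+\sin(n\theta)\sin\phi\bigr)=-\frac{r\cos(n\theta-\phi)}{\sin(n\theta)},
\end{equation*}
not $-r\sin(n\theta+\phi)/\sin(n\theta)$. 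Your range for $n\theta+\phi=(n+1)\theta+\frac\pi2-\frac\pi{2n}$ is also off: it actually fills $\bigl(\frac\pi2-\frac\pi{2n},\frac{3\pi}2+\frac\pi{2n}\bigr)$. So $\sin(n\theta+\phi)$ vanishes at $\theta=\frac{\pi}{2n}$ and is negative beyond it. Your formula would therefore give $x'(\frac\pi{2n})=0$, whereas $r=1$, $r'=0$, $\phi=\frac\pi2$ there, so in fact $x'=-1$. It would also give $x'>0$ near $\theta=\pi/n$, so as written it refutes rather than proves the graph property. The fix is immediate: $n\theta-\phi=(n-1)\theta-\frac\pi2+\frac\pi{2n}$ ranges over $\bigl(-\frac\pi2+\frac\pi{2n},\frac\pi2-\frac\pi{2n}\bigr)$, so $\cos(n\theta-\phi)>\sin\frac{\pi}{2n}>0$ and $x'<0$.

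There are also orientation slips in the asymptotics. As $\theta\to0^+$, $\cos\phi\to\cos\phi_0=\sin\frac{\pi}{2n}>0$, so $x\to+\infty$, not $-\infty$. Your ``$x\to\mp\infty$ while $\cos\phi\to\pm$'' is internally inconsistent. With the correct orientation, $f-\overline c_0|x|=y-\tan(\phi_0)\,x=r\sin\theta/\cos\phi_0>0$ exactly, with no ``up to sign''; the expression $y-\tan(\phi_0)(-x)$ you wrote would instead be $\approx 2\overline c_0x$.

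Finally, you assert $\theta=w^nh(w^n)$ without proof. It is true, because $t\mapsto t\,g(t^n)$ with $t=\theta^{1/n}$ commutes with multiplication by $n$-th roots of unity, hence so does its analytic inverse. But you do not need it. The inverse function theorem already gives $t=w\tilde h(w)$ analytic with $\tilde h(0)\neq0$, and $r\sin\theta=t^{n-1}C(t^n)$ with $C$ analytic. Hence $f-\overline c_0x=x^{1-n}D(1/x)$ with $D$ analytic near $0$, and termwise differentiation gives every derivative bound in \eqref{eq: asymptotic of SL profile function}.
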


\begin{proof}
    This follows from a direct calculation using the parametrization of
    $\overline{\gamma}_{a}$.
\end{proof}

Neves~\cite{Neves07}, Groh--Schwarz--Smoczyk--Zehmisch ~\cite{GSSZ07}, and
Madnick--Wood~\cite{MadnickWood25} showed that, under the above
cohomogeneity-one ansatz, LMCF reduces to the following evolution equation for
the profile curves $\gamma(t)$ in $\mathbb{C}$:
\begin{equation}\label{eq: equiv. LMCF profile curve}
    \left(\frac{d}{dt}\gamma(t)\right)^{\perp}
    =
    \boldsymbol{\kappa}(\gamma(t))
    -
    (n-1)\frac{\gamma(t)^{\perp}}{|\gamma(t)|^{2}},
\end{equation}
where $\boldsymbol{\kappa}(\gamma(t))$ denotes the curvature vector of
$\gamma_{t}$.

\begin{remark}
    It is worth emphasizing that the reduced equations
    \eqref{eq: equiv. LMCF profile curve} and
    \eqref{eq: equiv. SL profile curve} are independent of the choice of the Lie
    group $G$. For example, when $G=SO(n)$ acts diagonally on $\mathbb{C}^{n}$,
    the orbits are copies of $\mathbb{S}^{n-1}$, and the resulting
    $SO(n)$-invariant SLs are diffeomorphic to
    $\mathbb{S}^{n-1}\times\mathbb{R}$. These are the
    \emph{Lagrangian catenoids}, which may be viewed as the $SO(n)$-invariant
    versions of the \emph{Lawlor necks}~\cite{Law89}.
\end{remark}

\subsection{Rescaled LMCF Near a SL Profile}\label{subsec: RLMCF near a SL profile}

Let $(\gamma(t))_{t\in I}$ be a solution of (\ref{eq: equiv. LMCF profile curve}), such that the associated equivariant LMCF $L(t)$ is almost-calibrated. Denote the image of $\gamma(t):\mathbb{R}\to\mathbb{C}$ by $\Gamma(t) = \gamma(t)(\mathbb{R})$.

Suppose that for each $t\in I$, $\Gamma(t)$ is an entire graph with profile function $f$, that is, $\Gamma(t) = \{x + if(x, t)\:|\:x\in\mathbb{R}\}$, then $f$ satisfies the quasi-linear equation
\begin{equation}\label{eq: LMCF of profile function}
    \partial_{t}f = \frac{f_{xx}}{1+f_{x}^{2}} + (n-1)\frac{xf_{x} - f}{x^{2}+f^{2}}.
\end{equation}

We will assume that $I = [0, T)$ for some $T>0$, and as $t\nearrow T$, $L(t)$ develops a finite-time singularity. By \cite{MadnickWood25}*{Theorem 6.2}, the singularity must occur at $(O, T)$, where $O\in\mathbb{C}^{n}$ is the origin. By \cite{MadnickWood25}*{Theorem 6.7}, the tangent flow is a union of $G$-invariant SL cones. We will assume the tangent flow at $(O, T)$ is $\mathcal{C}_{0}\cup\mathcal{C}_{1}$ given by the reflection-symmetric profile curve $C_{0} := e^{-i(\frac{\pi}{2}+\frac{\pi}{2n})}\mathbb{R}\cup e^{i(\frac{\pi}{2}+\frac{\pi}{2n})}\mathbb{R}$. It follows that the rescaled flow $\widetilde{\gamma}_{\tau} := e^{\tau/2}\gamma_{T - e^{-\tau}}$ converges to $C_{0}$ as $\tau\to \infty$ smoothly away from $O$. By the graphicality assumption on $\gamma(t)$, the rescaled flow $\widetilde{\gamma}_{\tau}$ is also graphical whose profile function satisfies the equation
\begin{align}\label{eq: rescaled flow of profile function}
    \partial_{\tau}h = \frac{h_{ss}}{1+h_{s}^{2}} + (n-1)\frac{sh_{s} - h}{s^{2}+h^{2}} - \frac{1}{2}(sh_{s} - h).
\end{align}



Equation~(\ref{eq: rescaled flow of profile function}) is a quasi-linear parabolic equation.
We now linearize at a SL profile and integrate (\ref{eq: rescaled flow of profile function}) into a fully non-linear equation.

Define differential operators $\mathcal{H}$ and $\mathcal{X}$ by
\begin{equation}
    \mathcal{H}[h] := \frac{h_{ss}}{1+h_{s}^{2}} + (n-1)\frac{sh_{s} - h}{s^{2}+h^{2}},\quad \mathcal{X}[h] := h-sh_{s}.
\end{equation}
Note that $\mathcal{H}[h] = 0$ if and only if $h$ is a SL profile function, and $\mathcal{X}[h] = 0$ if and only if $h$ is linear. 
Then the rescaled flow (\ref{eq: rescaled flow of profile function}) reads
\begin{equation}
    \partial_{\tau}h = \mathcal{H}[h] + \frac{1}{2}\mathcal{X}[h].
\end{equation}
\begin{lemma}[Potentials for Mean Curvature and Position]\label{lem: potentials of H and X}
    We have
    \begin{equation}
        \mathcal{H}[h] = \partial_{s}\Theta[h]\quad\mbox{and}\quad\mathcal{X}[h] = 2\partial_{s}\beta[h],
    \end{equation}
    where
    \begin{equation} \label{eqn defn Theta[h], beta[h]}
        \Theta[h] = \arctan(h_{s}) + (n-1)\arctan(s^{-1}h) \quad\mbox{and}\quad\beta[h] = \frac{1}{2}\int_{0}^{s}(h-\sigma h_{\sigma})\:\dd\sigma.
    \end{equation}
\end{lemma}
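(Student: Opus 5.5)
Both identities are direct differentiations, so the plan is to differentiate $\Theta[h]$ and $\beta[h]$ in $s$ and compare with the definitions of $\mathcal{H}[h]$ and $\mathcal{X}[h]$.

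\textbf{The identity for $\beta$.} By the fundamental theorem of calculus,
\begin{equation*}
  \partial_s\beta[h] = \tfrac12\bigl(h - s h_s\bigr) = \tfrac12\,\mathcal{X}[h],
\end{equation*}
which is exactly $\mathcal{X}[h] = 2\partial_s\beta[h]$. This requires only that $h$ be $C^1$, so that the integrand is continuous.

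\textbf{The identity for $\Theta$.} Apply the chain rule to each arctangent, assuming $h$ is $C^2$. The first term gives
\begin{equation*}
  \partial_s\arctan(h_s) = \frac{h_{ss}}{1+h_s^2}.
\end{equation*}
For the second term, note that $\partial_s(s^{-1}h) = (s h_s - h)/s^2$. Hence
\begin{equation*}
  \partial_s\arctan(s^{-1}h) = \frac{1}{1+h^2/s^2}\cdot\frac{s h_s - h}{s^2} = \frac{s h_s - h}{s^2+h^2}.
\end{equation*}
Adding the two terms, with the factor $n-1$ on the second, reproduces $\mathcal{H}[h]$. This gives $\mathcal{H}[h]=\partial_s\Theta[h]$ for $s\neq 0$.

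\textbf{The point $s=0$.} This is the only step needing care. In our setting $h>0$ (for instance $h$ near $f_a$), so $\arctan(s^{-1}h)$ jumps from $-\pi/2$ to $\pi/2$ across $s=0$. I would handle this by interpreting $\arctan(s^{-1}h)$ as $\tfrac{\pi}{2}-\arctan(h^{-1}s)$. This function agrees with $\arctan(s^{-1}h)$ for $s>0$ and differs from it by the constant $\pi$ for $s<0$. It is smooth wherever $h\neq 0$, and its derivative $(sh_s-h)/(s^2+h^2)$ is the same expression.

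With this interpretation, $\Theta[h]$ is a smooth potential on all of $\mathbb{R}$, and the identity $\mathcal{H}[h]=\partial_s\Theta[h]$ holds everywhere. The additive constant is irrelevant for the identity. Alternatively, one can simply state the identity on $s\neq 0$ and extend it by continuity, since $\mathcal{H}[h]$ itself is continuous at $s=0$ when $h(0)\neq 0$.
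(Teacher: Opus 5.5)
Your proposal is correct and matches the paper's approach: the paper's proof is just ``direct computation,'' and differentiating $\beta[h]$ by the fundamental theorem of calculus and each arctangent by the chain rule is exactly that computation. Your extra care at $s=0$, reading $\arctan(s^{-1}h)$ as $\tfrac{\pi}{2}-\arctan(h^{-1}s)$ when $h>0$, goes beyond what the paper writes. It is the reading consistent with the paper's later use of $\Theta[f_a]\equiv(n-1)\tfrac{\pi}{2}$ as a single constant on all of $\mathbb{R}$.
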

\begin{proof}
    This follows from direct computation.
\end{proof}

Assume now that $h$ is a solution of the rescaled Lagrangian mean curvature flow (\ref{eq: rescaled flow of profile function}) and $h$ is a perturbation of the SL profile $f_{a}$ by a function $U$, that is,
\begin{equation}\label{ansatz: perturb Lawlor neck by U}
    h(s, \tau) = f_{a(\tau)}(s) + U(s, \tau).
\end{equation}

\begin{lemma}[linearization of mean curvature at $f$]\label{lem: linearized mean curvature}
    Let $U:\mathbb{R}\to\mathbb{R}$ be a smooth function, and let $f_{\xi U} := f + \xi U$, $\xi\in(-\epsilon, \epsilon)$, where $f = f_{1}$ is the profile function of the SL $\overline{L}$ as in Lemma \ref{lem: profile function of Lawlor neck}. Then
    \begin{align*}
        \left. \frac{\dd}{\dd \xi}\right|_{\xi = 0}\mathcal{H}(f_{\xi U})= L_{s, 1}U + \frac{2(n-1)(sf_{s} - f)}{s^{2}+f^{2}}\left\{\frac{f_{s}U_{s}}{1+f_{s}^{2}} - \frac{fU}{s^{2}+f^{2}}\right\} - \frac{(n-1)U}{s^{2}+f^{2}},
    \end{align*}
    where $L_{s, 1}$ is the induced Laplacian on $\overline{L}$ given by
    \begin{equation}
        L_{s, 1}U = \frac{U_{ss}}{1+f_{s}^{2}} + (n-1)\frac{sU_{s}}{s^{2}+ f^{2}}.
    \end{equation}
\end{lemma}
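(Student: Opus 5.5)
The lemma is a direct first-variation computation. I would differentiate each of the two terms of $\mathcal{H}$ separately in $\xi$ at $\xi=0$, using $f_{\xi U}=f+\xi U$. The derivatives of $f_{\xi U}$ are then $f_s+\xi U_s$ and $f_{ss}+\xi U_{ss}$.

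For the first term, the quotient rule gives
\begin{align*}
\left.\frac{\dd}{\dd\xi}\right|_{\xi=0}\frac{(f_{\xi U})_{ss}}{1+(f_{\xi U})_s^2}
=\frac{U_{ss}}{1+f_s^2}-\frac{2f_{ss}f_sU_s}{(1+f_s^2)^2}.
\end{align*}
For the second term, write $N=sh_s-h$ and $D=s^2+h^2$. The variation is $(n-1)\bigl(\dot N/D-N\dot D/D^2\bigr)$ with $\dot N=sU_s-U$ and $\dot D=2fU$. This gives
\begin{align*}
(n-1)\frac{sU_s}{s^2+f^2}-\frac{(n-1)U}{s^2+f^2}-\frac{2(n-1)(sf_s-f)fU}{(s^2+f^2)^2}.
\end{align*}

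Next I would collect terms. The $U_{ss}$ term and the $(n-1)sU_s/(s^2+f^2)$ term together form $L_{s,1}U$. The $-(n-1)U/(s^2+f^2)$ term and the $fU$ term already match the stated formula.

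The remaining term, $-2f_{ss}f_sU_s/(1+f_s^2)^2$, is the only step needing input from earlier in the paper. I would use the minimal-surface equation \eqref{eq: profile function minimality} from Lemma \ref{lem: profile function of Lawlor neck} with $a=1$:
\begin{align*}
\frac{f_{ss}}{1+f_s^2}=-(n-1)\frac{sf_s-f}{s^2+f^2}.
\end{align*}
Substituting this, the remaining term becomes
\begin{align*}
\frac{2(n-1)(sf_s-f)}{s^2+f^2}\cdot\frac{f_sU_s}{1+f_s^2}.
\end{align*}
Together with the $fU$ term it yields exactly the braced expression in the statement.

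There is no real obstacle here. The one point to watch is to remember to substitute the SL equation for $f_{ss}$, since otherwise the $U_s$ coefficient is not in the stated form. I would also check the sign conventions carefully in the quotient-rule terms.
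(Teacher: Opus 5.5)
Your proposal is correct and matches the paper's proof essentially line for line. Like the paper, you differentiate both terms of $\mathcal{H}$ via the quotient rule, group the $U_{ss}$ and $(n-1)sU_s/(s^2+f^2)$ terms into $L_{s,1}U$, and use the minimal-surface equation \eqref{eq: profile function minimality} to turn $-2f_sf_{ss}U_s/(1+f_s^2)^2$ into the $f_sU_s$ term in the braces.
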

\begin{proof}
    Direct computation shows
    \begin{align*}
         & \left. \frac{\dd}{\dd \xi} \right|_{\xi = 0}\left\{  \frac{(f_{\xi U})_{ss}}{1+(f_{\xi U})_{s}^{2}} + (n-1)\frac{s(f_{\xi U})_{s} - (f_{\xi U})}{s^{2} + (f_{\xi U})^{2}} \right\}\\
         & = \frac{U_{ss}}{1+f_{s}^{2}} - \frac{2f_{s}f_{ss}U_{s}}{(1+f_{s}^{2})^{2}} + (n-1)\left\{\frac{sU_{s} - U}{s^{2}+f^{2}} - \frac{2(sf_{s} - f)fU}{(s^{2}+f^{2})^{2}}\right\}\\
        &= L_{s, 1}U - \frac{2f_{s}f_{ss}U_{s}}{(1+f_{s}^{2})^{2}} - \frac{(n-1)U}{s^{2}+f^{2}}\left\{1 + \frac{2f(sf_{s} - f)}{s^{2}+f^{2}}\right\}\\
        & = L_{s, 1}U + \frac{2(n-1)f_{s}(sf_{s} - f)U_{s}}{(1+f_{s}^{2})(s^{2}+f^{2})} - \frac{(n-1)U}{s^{2}+f^{2}}\left\{1 + \frac{2f(sf_{s} - f)}{s^{2}+f^{2}}\right\}\\
        & = L_{s, 1}U + \frac{2(n-1)(sf_{s} - f)}{s^{2}+f^{2}}\left\{\frac{f_{s}U_{s}}{1+f_{s}^{2}} - \frac{f U}{s^{2}+f^{2}}\right\} - \frac{(n-1)U}{s^{2}+f^{2}},
    \end{align*}
    where in the third equality we use the equation of $f = f_{1}$.
\end{proof}

We now assume $U = u_{s}$ for some smooth potential $u:\mathbb{R}\to\mathbb{R}$ so that
\begin{equation}\label{ansatz: perturb Lawlor neck by der of u}
    h(s, \tau) = f_{a(\tau)}(s) + u_{s}(s, \tau).
\end{equation}

\begin{lemma}[Commutation Relations]\label{lem: commutator}
    Let $f = f_{1}$ be the SL profile function as in Lemma \ref{lem: profile function of Lawlor neck}. For any smooth function $u:\mathbb{R}\to\mathbb{R}$, we have
    \begin{align}
        (L_{s, 1}u)_{s} &= L_{s, 1}u_{s} + \frac{2(n-1)(sf_{s} - f)}{s^{2}+f^{2}}\left\{\frac{f_{s}u_{ss}}{1+f_{s}^{2}} - \frac{fu_{s}}{s^{2}+f^{2}}\right\} - \frac{(n-1)u_{s}}{s^{2}+f^{2}}, \text{ and}\\
        (\mathcal{X} u)_{s} &= \mathcal{X} u_{s} - u_{s}.
    \end{align}
\end{lemma}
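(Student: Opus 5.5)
Both identities are verified by differentiating directly and then using the profile equation \eqref{eq: profile function minimality} for $f=f_1$ (with $a=1$) to remove $f_{ss}$. No structural input beyond Lemma~\ref{lem: profile function of Lawlor neck} is needed. The argument mirrors the proof of Lemma~\ref{lem: linearized mean curvature}; indeed, the first identity says that $(L_{s,1}u)_s$ equals the linearized mean curvature applied to $U=u_s$.

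The second identity is immediate. Since $\mathcal X u = u - s u_s$, we have $(\mathcal X u)_s = u_s - u_s - s u_{ss} = -s u_{ss}$. On the other hand, $\mathcal X u_s = u_s - s u_{ss}$. Subtracting gives $(\mathcal X u)_s = \mathcal X u_s - u_s$.

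For the first identity, I differentiate
\[
L_{s,1}u = \frac{u_{ss}}{1+f_s^2} + (n-1)\frac{s u_s}{s^2+f^2}
\]
in $s$ and subtract
\[
L_{s,1}u_s = \frac{u_{sss}}{1+f_s^2} + (n-1)\frac{s u_{ss}}{s^2+f^2}.
\]
The third-order terms and the $(n-1)s u_{ss}/(s^2+f^2)$ terms cancel, leaving
\begin{align*}
(L_{s,1}u)_s - L_{s,1}u_s
= -\frac{2f_sf_{ss}u_{ss}}{(1+f_s^2)^2}
+ \frac{(n-1)u_s}{s^2+f^2}
- \frac{2(n-1)s(s+ff_s)u_s}{(s^2+f^2)^2}.
\end{align*}
In the first term I substitute $f_{ss}/(1+f_s^2) = -(n-1)(sf_s-f)/(s^2+f^2)$, which is \eqref{eq: profile function minimality} with $a=1$. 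This turns it into exactly
\[
\frac{2(n-1)(sf_s-f)}{s^2+f^2}\cdot\frac{f_su_{ss}}{1+f_s^2},
\]
which is the $u_{ss}$-term in the claim.

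For the $u_s$-terms, I multiply through by $(s^2+f^2)^2/((n-1)u_s)$ and compare the two sides. The left side gives
\[
(s^2+f^2) - 2s^2 - 2sff_s = f^2 - s^2 - 2sff_s .
\]
The claimed right side gives
\[
-2f(sf_s - f) - (s^2+f^2) = f^2 - s^2 - 2sff_s .
\]
These coincide, which proves the identity.

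The only step needing care is the use of the SL equation to eliminate $f_{ss}$; the rest is bookkeeping of rational expressions in $s$, $f$ and $f_s$. I do not expect any genuine obstacle.
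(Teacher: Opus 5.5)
Your proposal is correct and is essentially the paper's proof: you differentiate $L_{s,1}u$ directly, cancel the terms against $L_{s,1}u_s$, and use the profile equation \eqref{eq: profile function minimality} to replace $f_{ss}/(1+f_s^2)$ by $-(n-1)(sf_s-f)/(s^2+f^2)$, after which the remaining $u_s$-terms agree as rational expressions in $s,f,f_s$. Your step of multiplying by $1/u_s$ is not valid where $u_s=0$, so phrase it instead as comparing the coefficients of $u_s$ on both sides, which is what your computation actually checks.
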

\begin{proof}
    For the first equation, a direct computation shows
    \begin{align*}
        (L_{s, 1}u)_{s} &= \left(\frac{u_{ss}}{1+f_{s}^{2}} + (n-1)\frac{su_{s}}{s^{2}+f^{2}}\right)_{s}\\
        &=\frac{u_{sss}}{1+f_{s}^{2}} + (n-1)\frac{su_{ss}}{s^{2}+f^{2}} - \frac{2f_{s}f_{ss}}{(1+f_{s}^{2})^{2}}u_{ss}\\
        &\quad + \frac{(n-1)u_{s}}{s^{2}+f^{2}} - \frac{2(n-1)(s + ff_{s})}{(s^{2}+f^{2})^{2}}su_{s}\\
        & = L_{s, 1}u_{s} + \frac{2(n-1)(sf_{s} - f)}{s^{2}+f^{2}}\left\{\frac{f_{s}u_{ss}}{1+f_{s}^{2}} - \frac{fu_{s}}{s^{2}+f^{2}}\right\}\\
        &\quad + \frac{2(n-1)(sf_{s} - f)fu_{s}}{(s^{2}+f^{2})}- \frac{2(n-1)(s + ff_{s})}{(s^{2}+f^{2})^{2}}su_{s}+ \frac{(n-1)u_{s}}{s^{2}+f^{2}}\\
        & = L_{s, 1}u_{s} + \frac{2(n-1)(sf_{s} - f)}{s^{2}+f^{2}}\left\{\frac{f_{s}u_{ss}}{1+f_{s}^{2}} - \frac{fu_{s}}{s^{2}+f^{2}}\right\} - \frac{(n-1)u_{s}}{s^{2}+f^{2}},
    \end{align*}
    as desired. Note that in the third equality we use the equation of $f$ \eqref{eq: profile function minimality}.
    
    For the second equation, 
    \begin{equation*}
        (\mathcal{X} u)_{s} = (u - su_{s})_{s} = - su_{ss}
        = u_{s} - su_{ss} - u_{s}
        = \mathcal{X} u_{s} - u_{s}.
    \end{equation*}
\end{proof}

\begin{lemma}[Linearized Mean Curvature at the Level of Potential]\label{lem: linearization at the level of potential}
    Let $f_{\xi u_{s}}:= f + \xi u_{s}$, where $f = f_{1}$ is the SL profile function from Lemma \ref{lem: profile function of Lawlor neck}. Then
    \begin{align*}
        \left. \frac{d}{d \xi} \right|_{\xi = 0}\mathcal{H}[f_{\xi u_{s}}] = (L_{s, 1}u)_{s}.
    \end{align*}
\end{lemma}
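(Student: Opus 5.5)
The plan is to reduce the statement to the two lemmas already established. Since the perturbation is $f_{\xi u_s} = f + \xi U$ with $U := u_s$, Lemma \ref{lem: linearized mean curvature} applies directly with this choice of $U$. It gives
\begin{align*}
    \left.\frac{d}{d\xi}\right|_{\xi=0}\mathcal{H}[f_{\xi u_s}]
    = L_{s,1}u_s + \frac{2(n-1)(sf_s - f)}{s^2+f^2}\left\{\frac{f_s u_{ss}}{1+f_s^2} - \frac{f u_s}{s^2+f^2}\right\} - \frac{(n-1)u_s}{s^2+f^2},
\end{align*}
where we have used $U_s = u_{ss}$.

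Next I will invoke the first identity of Lemma \ref{lem: commutator}, the commutation relation for $L_{s,1}$ and $\partial_s$. Its right-hand side is exactly the expression displayed above, term by term: the same $L_{s,1}u_s$ term, the same correction term $\frac{2(n-1)(sf_s-f)}{s^2+f^2}\{\cdots\}$, and the same zeroth-order term $-\frac{(n-1)u_s}{s^2+f^2}$. Comparing the two formulas then yields $\frac{d}{d\xi}\big|_{\xi=0}\mathcal{H}[f_{\xi u_s}] = (L_{s,1}u)_s$, which is the claim.

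Both lemmas were derived using the minimal surface equation \eqref{eq: profile function minimality} for $f = f_1$ to replace $f_{ss}$, so they hold under the same hypotheses and nothing further is needed. The only point needing care is bookkeeping: one must check that the substitution $U = u_s$, $U_s = u_{ss}$ matches the arguments inside the braces in both formulas, which it does.

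As an independent sanity check, one could instead use Lemma \ref{lem: potentials of H and X}, which gives $\mathcal{H} = \partial_s\Theta$. Linearizing $\Theta[f + \xi u_s] = \arctan(f_s + \xi u_{ss}) + (n-1)\arctan\big(s^{-1}(f + \xi u_s)\big)$ gives
\begin{align*}
    \frac{u_{ss}}{1+f_s^2} + (n-1)\frac{s u_s}{s^2+f^2} = L_{s,1}u,
\end{align*}
and differentiating in $s$ (which commutes with $\partial_\xi$) gives the same identity immediately. I expect no real obstacle in either route.
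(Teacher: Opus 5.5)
Your main argument is exactly the paper's proof: substitute $U = u_s$ into Lemma~\ref{lem: linearized mean curvature} and recognize the right-hand side of the first identity in Lemma~\ref{lem: commutator}. Your alternative check via $\mathcal{H} = \partial_s\Theta$ is also valid and more direct. Linearizing $\Theta$ and commuting $\partial_\xi$ with $\partial_s$ never uses \eqref{eq: profile function minimality} when $L_{s,1}$ is taken in the non-divergence form of Lemma~\ref{lem: linearized mean curvature}; the two uses of that equation in the paper's lemmas simply cancel.
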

\begin{proof}
    The result follows from Lemma~\ref{lem: linearized mean curvature} and Lemma~\ref{lem: commutator}.
\end{proof}

We now arrive at the equation of the potential $u$ of the perturbation given in (\ref{ansatz: perturb Lawlor neck by der of u}).
\begin{proposition} \label{prop: PDE for the potential}
    Let $h(s, \tau) = f_{a(\tau)}(s) + u_{s}(s, \tau)$. Then $h$ is a solution of (\ref{eq: rescaled flow of profile function}) if and only if
    \begin{equation}\label{eq: linearization at Lawlor at potential level with constant c}
        \partial_{\tau}u = (n-1)\frac{\pi}{2} + \left(1-2\tfrac{\partial_\tau a(\tau)}{a(\tau)}\right)\beta[f_{a(\tau)}] + H_{s, a}u + Q_{s, a}(u) + c
    \end{equation}
    for some $\tau$-dependent constant $c = c(\tau) \in\mathbb{R}$, where
    \begin{equation}
        H_{s, a}u = L_{s, a}u - \frac{s}{2}u_{s} + u,
        \qquad L_{s,a} u = \frac{u_{ss}}{1 + (f_a)_s^2} +(n-1) \frac{s u_s}{s^2 + f_a^2} 
        ,
    \end{equation}
    and
    \begin{multline}
        Q_{s, a}(u) = \Theta[f_{a}+u_{s}] - (n-1)\frac{\pi}{2} - L_{s, a}\\
        = \arctan\left((f_{a} + u_{s})_{s}\right) - \frac{u_{ss}}{1+(f_{a})_{s}^{2}} + (n-1)\left[\arctan\left(\frac{f_{a}+u_{s}}{s}\right) - \frac{su_{s}}{s^{2}+f_{a}^{2}}\right]-(n-1)\frac{\pi}{2}.
    \end{multline}
\end{proposition}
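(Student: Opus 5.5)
The plan is to differentiate the integrated equation in $s$ and compare it with \eqref{eq: rescaled flow of profile function}. Lemma~\ref{lem: potentials of H and X} writes both sides of that equation as $s$-derivatives of potentials, so the two equations should agree up to an $s$-independent constant $c(\tau)$.

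First, substitute $h=f_{a(\tau)}+u_s$. The left-hand side becomes
\[
\partial_\tau h=\partial_\tau a\,\tfrac{d}{da}f_a+\partial_s\partial_\tau u .
\]
By scaling, $f_a(s)=af_1(s/a)$, so $\tfrac{d}{da}f_a=a^{-1}(f_a-s\partial_sf_a)=a^{-1}\mathcal X[f_a]$. Lemma~\ref{lem: potentials of H and X} then gives $\tfrac{d}{da}f_a=2a^{-1}\partial_s\beta[f_a]$. For the right-hand side, Lemma~\ref{lem: potentials of H and X} gives $\mathcal H[h]=\partial_s\Theta[f_a+u_s]$. Since $\beta$ is linear in $h$, we also get
\[
\tfrac12\mathcal X[h]=\partial_s\beta[f_a]+\partial_s\beta[u_s].
\]
To handle $\beta[u_s]$, use the computation in Lemma~\ref{lem: commutator}: $(u-su_s)_s=-su_{ss}=\mathcal X[u_s]-u_s$. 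Hence
\[
\tfrac12\mathcal X[u_s]=\partial_s\bigl(-\tfrac s2u_s+u\bigr).
\]
So \eqref{eq: rescaled flow of profile function} is equivalent to
\[
\partial_s\Bigl[\partial_\tau u-\Theta[f_a+u_s]-\bigl(1-2\tfrac{\partial_\tau a}{a}\bigr)\beta[f_a]+\tfrac s2u_s-u\Bigr]=0 .
\]

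Next, insert $\Theta[f_a+u_s]=(n-1)\tfrac\pi2+L_{s,a}u+Q_{s,a}(u)$, which is just the definition of $Q_{s,a}$. The bracket then becomes
\[
\partial_\tau u-(n-1)\tfrac\pi2-\bigl(1-2\tfrac{\partial_\tau a}{a}\bigr)\beta[f_a]-H_{s,a}u-Q_{s,a}(u).
\]
This vanishes after one $s$-derivative exactly when it equals an $s$-independent function $c(\tau)$. That gives both directions of the equivalence at once.

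The only point needing care is the constant $(n-1)\tfrac\pi2$, which must make $Q_{s,a}$ quadratically small rather than merely defined. I would check that $\Theta[f_a]\equiv(n-1)\tfrac\pi2$, the phase of $\overline L$. Since $\mathcal H[f_a]=0$, $\Theta[f_a]$ is constant on each component of $\{s\neq0\}$. Evaluating at $s\to0^+$, where $f_a'(0)=0$ and $\arctan(f_a/s)\to\pi/2$, gives $(n-1)\tfrac\pi2$. The branch of $\arctan(s^{-1}h)$ across $s=0$ has to be chosen continuously, for example via $\tfrac\pi2-\arctan(s/h)$ since $h>0$, and this is the mild subtlety I expect to be the main obstacle. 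Everything else is routine bookkeeping, and the linearization identity Lemma~\ref{lem: linearization at the level of potential} confirms that $L_{s,a}$ is the correct linear part.
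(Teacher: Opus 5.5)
Your proposal is correct and is essentially the paper's proof. Both arguments rewrite the rescaled flow as a single $s$-derivative using Lemma~\ref{lem: potentials of H and X}, the identity $\mathcal{X}[u_s]=(2u-su_s)_s$ from Lemma~\ref{lem: commutator}, and $\partial_\tau f_{a(\tau)}=\bigl(2\tfrac{\partial_\tau a}{a}\beta[f_a]\bigr)_s$, and then integrate in $s$; your remark that $\arctan(s^{-1}h)$ must be read as the continuous branch $\tfrac{\pi}{2}-\arctan(s/h)$ (so that $\Theta[f_a]\equiv(n-1)\tfrac{\pi}{2}$ on all of $\mathbb{R}$ and $c(\tau)$ is one constant rather than one on each side of $s=0$) makes explicit a point the paper uses without stating.
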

\begin{proof}
    By Lemma~\ref{lem: linearized mean curvature}, Lemma~\ref{lem: linearization at the level of potential} and the Taylor expansion we have
    \begin{equation}
        \mathcal{H}[f_{a}+u_{s}] = \mathcal{H}[f_{a}] + (L_{s, a}u)_{s} + \mathcal{Q}_{s, a}(u) = (L_{s, a}u)_{s} + \mathcal{Q}_{s, a}[u],
    \end{equation}
    where
    \begin{equation*}
        \mathcal{Q}_{s, a}[u] = \mathcal{H}[f_{a}+u_{s}]  - (L_{s, a}u)_{s}
        =\left(\Theta[f_{a}+u_{s}] - L_{s, a}u\right)_{s} = (\Theta[f_{a}] + Q_{s, a}(u))_{s},
    \end{equation*}
    and $Q_{s,a}(u) := \Theta[f_{a}+ u_{s}] - \Theta[f_{a}] - L_{s,a}u$.
    On the other hand, by Lemma~\ref{lem: commutator} we have
    \begin{equation*}
        \mathcal{X}[f_{a} + u_{s}] = \mathcal{X}[f_{a}] + \mathcal{X}[u_{s}]
        = (2\beta[f_{a}] + \mathcal{X}u + u)_{s}
        = (2\beta[f_{a}] + 2u - su_{s})_{s}.
    \end{equation*}
    Thus, $\partial_{\tau}h = \mathcal{H}[h] + \frac{1}{2}\mathcal{X}[h]$ implies that
    \begin{align*}
         \partial_{\tau}f_{a(\tau)} + (\partial_{\tau}u)_{s} &= \left(\Theta[f_{a}] + L_{s, a}u + Q_{s, a}(u) + \beta[f_{a}] + u - \frac{s}{2}u_{s} \right)_{s}.
    \end{align*}
    Since $f_{a(\tau)}(s) = a(\tau)f_{1}(\tfrac{s}{a(\tau)})$, taking a time-derivative yields
    \begin{align*}
        \partial_{\tau}f_{a(\tau)}(s) &= \partial_{\tau}\left(a(\tau)f_{1}(\tfrac{s}{a(\tau)})\right)\\
        &= a'(\tau)f_{1}(\tfrac{s}{a(\tau)}) - s(f_1)_{s}(\tfrac{s}{a(\tau)})a'(\tau)a(\tau)^{-1}\\
        &= \frac{a'(\tau)}{a(\tau)}\left(f_{a(\tau)}(s) - s(f_1)_{s}(\tfrac{s}{a(\tau)})\right)\\
        &= \frac{a'(\tau)}{a(\tau)}\mathcal{X}[f_{a(\tau)}](s)\\
        &= \left(2\frac{a'(\tau)}{a(\tau)}\beta[f_{a(\tau)}]\right)_{s}.
    \end{align*}
    Therefore,
    \begin{align*}
        (\partial_{\tau}u)_{s} = \left(\Theta[f_{a}] + \left(1 - 2\tfrac{a'(\tau)}{a(\tau)}\right)\beta[f_{a(\tau)}] + L_{s, a}u - \frac{s}{2}u_{s} + u + Q_{s, a}(u)\right)_{s}.
    \end{align*}
    Since $\Theta[f_{a}] = (n-1)\frac{\pi}{2}$ is a constant, this finishes the proof.
\end{proof}

It follows that to construct a solution to (\ref{eq: rescaled flow of profile function}) with $h = f_{a} + u_{s}$, it suffices to solve for $u$ which satisfies (\ref{eq: linearization at Lawlor at potential level with constant c}) for some $c\in\mathbb{R}$.

\begin{corollary} \label{cor: PDE for the potential}
    Suppose $u$ solves the equation
    \begin{equation}\label{eq: main equation}
        \partial_{\tau}u = \left(1 - 2\frac{\partial_\tau a(\tau)}{a(\tau)}\right)\beta[f_{a(\tau)}] + H_{s, a}u + Q_{s, a}(u).
    \end{equation}
    Then $h(s, \tau) := f_{a(\tau)}(s) + u_{s}(s, \tau)$ is a solution of (\ref{eq: rescaled flow of profile function}).
\end{corollary}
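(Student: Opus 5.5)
The corollary is immediate from Proposition~\ref{prop: PDE for the potential}, so the plan is simply to check that a solution of \eqref{eq: main equation} satisfies \eqref{eq: linearization at Lawlor at potential level with constant c} for a suitable $\tau$-dependent constant $c(\tau)$. I would choose $c(\tau) = -(n-1)\frac{\pi}{2}$, which is independent of $s$. With this choice, the right-hand side of \eqref{eq: main equation} equals the right-hand side of \eqref{eq: linearization at Lawlor at potential level with constant c}.

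The "if" direction of the proposition then gives that $h = f_{a(\tau)} + u_s$ solves \eqref{eq: rescaled flow of profile function}. To be safe, I would recheck that direction directly instead of quoting it. Differentiate \eqref{eq: main equation} in $s$; the constant $c$ disappears. Next use the identities from the proof of the proposition:
\[
\partial_\tau f_{a(\tau)} = \Bigl(2\tfrac{a'}{a}\beta[f_a]\Bigr)_s, \qquad
\mathcal{H}[f_a+u_s] = \bigl(\Theta[f_a]+L_{s,a}u+Q_{s,a}(u)\bigr)_s,
\]
\[
\tfrac12\mathcal{X}[f_a+u_s] = \Bigl(\beta[f_a]+u-\tfrac s2 u_s\Bigr)_s .
\]
The first comes from scaling, the second from the definition of $Q_{s,a}$ together with the fact that $\Theta[f_a]$ is constant, and the third from Lemmas~\ref{lem: potentials of H and X} and~\ref{lem: commutator}.

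Summing these identities gives $\partial_\tau h = \mathcal{H}[h] + \tfrac12\mathcal{X}[h]$. The only point needing care is that $\Theta[f_a] \equiv (n-1)\frac{\pi}{2}$ is genuinely constant, so its $s$-derivative vanishes. This holds because $f_a$ is a special Lagrangian profile with $\mathcal{H}[f_a]=0$, and the phase value is fixed by evaluating at $s=0$: there $\arctan(f_a'(0)) = 0$ and $\arctan(f_a/s)\to\pi/2$. No step poses a real obstacle.
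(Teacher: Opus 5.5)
Your proposal is correct and is exactly the paper's argument: the paper proves the corollary by taking $c=-(n-1)\frac{\pi}{2}$ in Proposition~\ref{prop: PDE for the potential}, and your direct recheck reproduces that proposition's proof. One small nit in the recheck: as $s\to 0^-$ the literal $\arctan(f_a/s)$ tends to $-\frac{\pi}{2}$, not $\frac{\pi}{2}$. So $\Theta[f_a]\equiv(n-1)\frac{\pi}{2}$ on all of $\mathbb{R}$ requires reading $\arctan(s^{-1}h)$ as the continuous branch $\frac{\pi}{2}-\arctan(s/h)$, as the paper implicitly does. This is harmless, since only $\partial_s\Theta[f_a]=\mathcal{H}[f_a]=0$ is actually used.
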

\begin{proof}
    Take $c = -(n-1)\frac{\pi}{2}$ in (\ref{eq: linearization at Lawlor at potential level with constant c}).
\end{proof}

We collect the properties of the zeroth order term in (\ref{eq: main equation}).
\begin{lemma} \label{lem beta properties}
    Let $n \ge 3$ and $a>0$. The function $\beta_{a} := \beta[f_{a}]$ has the following properties:
    \begin{itemize}
        \item[(i)] $\beta_{a}(s) = a^{2}\beta_{1}(\frac{s}{a})$.
        \item[(ii)] $2a^{-1}\beta_{a}$ is the potential of the infinitesimal scaling of $f_{a}$.
        \item[(iii)] $L_{s, a}\beta_{a} = 0$.
        \item[(iv)] $\beta_a(s)$ is a smooth, odd function of $s \in \R$. 
        \item[(v)] There is a positive constant  $\overline{\beta}\in (0, \infty)$  such that 
        \begin{equation}
            \lim_{s\to\pm\infty}\beta_{1}(s) = \pm\overline{\beta}, 
        \end{equation}
        and $\beta_{a}$ satisfies the estimate\footnote{See Definition \ref{defn notation} below for the precise definition of the $O(\cdot; \cdot)$ notation.}
        \begin{align}
            \left|\partial_{s}^{k}(\beta_{a}(s) \pm a^{2}\overline{\beta})\right| = O(a^{n}|s|^{2-n-k}; k)\quad\mbox{as}\quad s\to\pm\infty.
        \end{align}
        \item[(vi)] For $s > 0$, $\beta_a(s)$ is a positive, increasing, concave function of $s$.
    \end{itemize}
\end{lemma}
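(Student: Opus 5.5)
The plan is to verify each item directly from the definition $\beta_a(s)=\frac12\int_0^s\bigl(f_a(\sigma)-\sigma (f_a)_\sigma(\sigma)\bigr)\,d\sigma$ in \eqref{eqn defn Theta[h], beta[h]}. I will use the scaling $f_a(x)=af_1(x/a)$, the profile equation \eqref{eq: profile function minimality}, evenness and convexity of $f_a$, and the asymptotics \eqref{eq: asymptotic of SL profile function}, all from Lemma~\ref{lem: profile function of Lawlor neck}. Throughout I use the two identities
\begin{equation*}
\partial_s\beta_a=\tfrac12\bigl(f_a-s(f_a)_s\bigr)=\tfrac12\mathcal X[f_a],
\qquad
\partial_s^2\beta_a=-\tfrac12 s(f_a)_{ss}.
\end{equation*}

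\emph{Items (i)--(iv).} For (i), substitute $\sigma=at$. Since $f_a(at)=af_1(t)$ and $(f_a)_\sigma(at)=(f_1)_t(t)$, the integrand becomes $a\bigl(f_1(t)-t(f_1)_t(t)\bigr)$ and $d\sigma=a\,dt$, which gives $\beta_a(s)=a^2\beta_1(s/a)$. For (ii), the computation in the proof of Proposition~\ref{prop: PDE for the potential} gives $\frac{d}{da}f_a=a^{-1}\mathcal X[f_a]=2a^{-1}\partial_s\beta_a$. So $2a^{-1}\beta_a$ is the potential of the scaling deformation in the sense of \eqref{ansatz: perturb Lawlor neck by der of u}. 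For (iii), insert the two identities into $L_{s,a}$:
\begin{equation*}
L_{s,a}\beta_a=-\frac{s}{2}\left[\frac{(f_a)_{ss}}{1+(f_a)_s^2}+(n-1)\frac{s(f_a)_s-f_a}{s^2+f_a^2}\right]=0
\end{equation*}
by \eqref{eq: profile function minimality}. For (iv), $f_a$ is smooth and even, so $f_a-s(f_a)_s$ is smooth and even. Its integral from $0$ is therefore smooth and odd.

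\emph{Item (vi).} For $s>0$ we have $\partial_s^2\beta_a=-\frac12 s(f_a)_{ss}<0$ because $f_a''>0$, so $\beta_a$ is concave. Hence $\partial_s\beta_a$ is strictly decreasing on $(0,\infty)$. Writing
\begin{equation*}
f_a-s(f_a)_s=(f_a-\overline c_0 s)-s\,\partial_s(f_a-\overline c_0 s),
\end{equation*}
the asymptotics \eqref{eq: asymptotic of SL profile function} with $k=0,1$ give $\partial_s\beta_a=O(a^n s^{1-n})\to0$ as $s\to\infty$. A strictly decreasing function tending to $0$ is positive, so $\partial_s\beta_a>0$ and $\beta_a$ is increasing. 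Since $\beta_a(0)=0$, it follows that $\beta_a>0$ for $s>0$.

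\emph{Item (v)} is the only step needing care, and it is where $n\ge3$ enters. Since $\partial_s\beta_1=O(s^{1-n})$ and $1-n\le-2$, the derivative is integrable at infinity. Thus $\overline\beta:=\lim_{s\to\infty}\beta_1(s)=\int_0^\infty\partial_s\beta_1$ exists, and it is positive by (vi). The limit at $-\infty$ follows by oddness. By (i), $a^2\overline\beta=\lim_{s\to\infty}\beta_a(s)$. For $s>0$ write $\beta_a(s)-a^2\overline\beta=-\int_s^\infty\partial_\sigma\beta_a\,d\sigma=O(a^ns^{2-n})$, and the case $s\to-\infty$ follows by oddness. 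For $k\ge1$, the derivative $\partial_s^k(\beta_a\mp a^2\overline\beta)=\partial_s^{k-1}\bigl(\tfrac12(f_a-s(f_a)_s)\bigr)$ is a combination of $s^j\partial_s^{j+m}(f_a-\overline c_0|s|)$ with $j\in\{0,1\}$. Each such term is $O(a^n|s|^{1-n-(k-1)})=O(a^n|s|^{2-n-k})$ by \eqref{eq: asymptotic of SL profile function}, which is the claimed estimate.
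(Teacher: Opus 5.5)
Your proof is correct and follows essentially the same route as the paper. Items (i)--(iv) come from direct computation with the definition of $\beta[f_a]$ and the profile equation, (vi) from $\partial_s^2\beta_a=-\tfrac12 s(f_a)_{ss}<0$ together with $\partial_s\beta_a\to 0$, and (v) from integrating the asymptotics of $f_a$. You supply details the paper leaves implicit (integrability of $\partial_s\beta_1=O(s^{1-n})$ for $n\ge 3$, the tail integral, the derivative bookkeeping), and you correctly write $\beta_a\mp a^2\overline{\beta}$ as $s\to\pm\infty$, which is the sign the estimate in (v) should carry.
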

\begin{proof}
    The scaling property (i) follows from definition.
    
    Since $f_{a}(s) = af_{1}(\tfrac{s}{a})$ for any $a>0$, we have
    \begin{align*}
        \left. \frac{d}{d\xi}\right|_{\xi = 0}f_{a+\xi}(s) &= \left.\frac{\dd}{\dd\xi}\right|_{\xi = 0}(a+\xi)f_{1}(\tfrac{s}{a+\xi})\\
        & = f_{1}(\tfrac{s}{a}) - a^{-1}s(f_{1})_{s}(\tfrac{s}{a})\\
        & = a^{-1}\left(f_{a}(s) - s(f_{a})_{s}(s)\right)\\
        & = \left(2a^{-1}\beta_{a}(s)\right)_{s}.
    \end{align*}
    This proves (ii).

    To prove (iii), first we compute (for simplicity, $(\cdot)' := \frac{d}{d s}$ here)
    \begin{equation*}
        \beta'_{a} = f_{a} - sf'_{a}, \qquad 
        \beta''_{a} = -sf''_{a}.
    \end{equation*}
    Thus,
    \begin{equation*}
        L_{s, a}\beta_{a} = \frac{\beta''_{a}}{1+(f'_{a})^{2}} + (n-1)\frac{s\beta'_{a}}{s^{2}+f_{a}^{2}}
        =s\left[\frac{-f''_{a}}{1+(f'_{a})^{2}} + (n-1)\frac{f_{a} - sf'_{a}}{s^{2}+f_{a}^{2}}\right] = 0
    \end{equation*}
    by (\ref{eq: profile function minimality}). Hence, (iii) is proved.

    (iv) follows from definition \eqref{eqn defn Theta[h], beta[h]} of $\beta_a = \beta[f_a]$ and the fact that $f_a(s)$ is a smooth, even function of $s \in \R$ by Lemma \ref{lem: profile function of Lawlor neck}.

    The estimate (v) with some $\overline \beta \in \R$ follows from the estimates of the profile function in Lemma~\ref{lem: profile function of Lawlor neck}.

    Since $f_a$ is convex (Lemma \ref{lem: profile function of Lawlor neck}), $\beta_a'' = -s f_a'' < 0$ for $s > 0$.
    (v) (with some $\overline \beta \in \R$ possibly non-positive) shows $\lim_{s \to \infty} \beta_a' = 0$, and it follows that $\beta_a' > 0$ for $s >0$.
    (iv) implies $\beta_a(0) = 0$.
    Thus, $\beta_a(s) > 0$ for all $s > 0$ and $\lim_{s \to \infty} \beta_1 = \overline \beta >0$.
    This shows $\overline \beta >0$ in (v) and proves (vi).
\end{proof}

\begin{remark} \label{rmk beta asymps when n=2}
    In dimension $n =2$, $\beta[f_a]$ has different asymptotics.
    Indeed, for $n =2$, $f_1(s) - \overline c_0 s$ is asymptotic to $s^{-1}$ (Lemma \ref{lem: profile function of Lawlor neck}), so the integral \eqref{eqn defn Theta[h], beta[h]} defining $\beta[f_1]$ is asymptotic to $\ln s$.
\end{remark}

\subsection{Notation}
    For the remainder of the paper, we use the following notation unless otherwise noted.

    \begin{definition} \label{defn notation}
        For constants $a,b,c$ and functions $f = f(\vec x), g = g( \vec x),$ and $h = h(\vec x) \ge 0$ defined for all $\vec x \in \Omega$, 
        we write ``$f = g + O(h; a,b,c )$'' to mean there exists a constant $C$ (depending on $a,b,c$ and dimension $n$) such that 
            $$|f(\vec x) - g (\vec x )| \le C h(\vec x) \qquad \text{for all }  \, \vec x \in \Omega.$$
        We write ``$f = g + O(h; a,b,c)$ as $\vec x \to \infty$'' to mean there exists constants $C, R_0$ (depending on $a,b,c$ and dimension $n$) such that 
            $$|f(\vec x) - g (\vec x )| \le C h(\vec x) \qquad \text{for all } \, |\vec x| \ge R_0.$$
        Similarly, ``$f = g + O(h; a,b,c)$ as $\vec x \to \vec x_0$'' means there exists $C, \epsilon$ (depending on $a,b,c$ and dimension $n$) such that 
            $$|f(\vec x) - g(\vec x ) | \le C h(\vec x) \qquad \text{for all } \, |\vec x - \vec x_0| \le \epsilon.$$
        Analogous definitions apply for additional (or fewer) parameters $a,b,c,d,e, \dots$ or for $\vec x = (\vec y, \vec z) \to (\infty, \vec z_0)$.

        For nonnegative functions $f = f(\vec x) \ge 0 $ and $g = g(\vec x) \ge 0$, we use the notation ``$f \lesssim g$'' to mean there exists $C > 0$ such that $f(\vec x) \le C g (\vec x)$ for all $\vec x$.
        We write ``$f \lesssim_{a,b,c} g $'' to indicate that $f \lesssim g$ and the implicit constant $C = C(a,b,c)$ depends on $a,b,c$.
        We write ``$f \sim g$'' to indicate that $f \lesssim g$ and $g \lesssim f$.
        Analogous definitions apply for additional (or fewer) parameters.
    \end{definition}

    For any $a > 0$, $\overline \gamma_a : \R \to \C$ denotes the profile curve for the cohomogeneity-one special Lagrangian $\overline L_a$ asymptotic to $C = \R_{\ge 0} e^{i \left( \frac \pi 2 - \frac \pi {2n} \right)} \cup \R_{\ge 0} e^{i \left( \frac \pi 2 + \frac \pi {2n} \right)}$ described in Subsection \ref{subsect Cohom-1 SLs and LMCF}.
    It is scaled so that the image 
        $$\overline \Gamma_a := \overline \gamma_a (\R) \text{ contains } 0+ia \in \C.$$
    $f_a : \R \to \R$ denotes the profile function for $\overline \Gamma_a$ as in Lemma \ref{lem: profile function of Lawlor neck}, that is,
        $$\overline \Gamma_a = \{ s + i f_a(s) \, \colon \, s \in \R \}.$$
    $\overline c_0 := \tan\left( \frac \pi 2 - \frac \pi{2n} \right)$ denotes the asymptotic slope of $f_a(s)$ (Lemma \ref{lem: profile function of Lawlor neck}).

    We denote operators
    \begin{align}
        \label{eqn L_a defn}
        L_a u :={}& L_{s,a} u = \frac1{\sqrt{ ( 1 + (\partial_s f_a)^2 ) (s^2 + f_a^2)^{n-1} }} \frac \partial {\partial s} \left( \sqrt{ \frac{(s^2 + f_a^2)^{n-1} }{1 + (\partial_s f_a)^2 } } \frac{\partial u}{\partial s} \right)\\
        \nonumber
        ={}& \frac{u_{ss}}{1 + (f_a)_s^2} +(n-1) \frac{s u_s}{s^2 + f_a^2} \\
        \label{eqn H_a defn}
        H_a u :={}& H_{s,a} u = L_{s,a} u - \frac s2 \partial_s u + u , \\ 
        \label{eqn Q_a defn}
        Q_a(u) :={}& Q_{s,a}(u) = \arctan ( \partial_s f_a + u_{ss} ) + (n-1) \arctan \left( \frac {f_a+u_s}s \right) - L_{s, a} u .
    \end{align}

\subsubsection{Weighted Sobolev Spaces}

\begin{definition}  \label{defn Weigthed Sobolev spaces}
        For any $a > 0$, define measures $dV_a, d\mu_a$ on $\R$ by
        \begin{align}
            \label{eqn defn dV_a}
            dV_a &:= \sqrt{ ( 1 + (\partial_s f_a)^2 )( s^2 + f_a^2 )^{n-1} } \, ds \text{ and} \\
            \label{defn eqn mu_a}
            d \mu_a &:= e^{-F_a(s)} dV_a \\
            \label{eqn defn F_a}
            \text{where } F_a(s) &:= \frac{s^2}4 + \frac12 \int_0^s \tilde s ( \partial_s f_a)^2 d \tilde s.
        \end{align}

        For any $k \in \N$ and $p \in [1, \infty]$, we furthermore denote weighted Sobolev spaces with respect to $d \mu_a$ by
        \begin{equation}
            W^{k,p}_a := W^{k,p}( \R , d\mu_a ), \qquad
            H^k_a := W^{k, 2}(\R, d\mu_a ), \qquad
            L^2_a := L^2(\R, d\mu_a).
        \end{equation}
        where for example
        \begin{equation*}
            \| u \|_{H^k_a} = \left( \int_\R u^2 d \mu_a + \int_\R ( \partial_s u)^2 d \mu_a  \right)^{1/2}.
        \end{equation*}
\end{definition}
\begin{remark}
    Note that the derivatives used in the definition of the $W^{k,p}_a, H^k_a$ norms are simply $\partial_s$ and \emph{not} covariant derivatives associated to the special Lagrangian.
\end{remark}

\section{Construction of Eigenfunctions} \label{section Construction of Eigenfunctions}

In this section, which comprises the bulk of the paper, we obtain the proofs of the main results Theorem \ref{main thm intro} and Proposition \ref{prop scaling mode intro}.
Specifically, Theorem \ref{main thm intro} will appear below as Theorems \ref{thm global eigenfunctions} and \ref{thm spectral gap}, and Proposition \ref{prop scaling mode intro} will appear as Lemma \ref{lem L^2_a est for phi with beta}.

The first main result, Theorem \ref{thm global eigenfunctions}, constructs an arbitrary finite number of eigenfunctions $\{ \phi_{k,a} : \R \to \R \}_{0 \le k \le K}$ for the linear operators $H_a$ \eqref{eqn H_a defn} when $0 < a \ll 1$ is sufficiently small.
To obtain this result, we first construct interior eigenfunctions $\phi_{int}$ for the operator on the cohomogeneity-one special Lagrangian $\overline L_a$ (Subsection \ref{subsection Cohom-One SLs}) and exterior eigenfunctions $\phi_{ext}$ for the operator on the asymptotic SL cone $\mathcal C$ (Subsection \ref{Subsection Planes}).
Subsection \ref{Subsection Matching} then constructs the eigenfunctions $\phi_{k,a}$ by gluing the $\phi_{int}$ and $\phi_{ext}$.

The gluing construction yields precise pointwise and integral estimates for the eigenfunctions $\phi_{k,a}$.
Subsection \ref{Subsection Eigenfunction Estimates} records these estimates.
These estimates later yield the spectral gap result Theorem \ref{thm spectral gap} obtained in Subsection \ref{Subsection Spectral Gap}.

\subsection{Cohomogeneity-One Special Lagrangians} \label{subsection Cohom-One SLs}

This subsection's main result, Lemma \ref{lem interior eigenfunctions}, constructs the interior eigenfunctions $\phi_{int}$ for the operator $H_a = L_a - \frac s2 \partial_s + 1 $ on a compact domain of the SL $\overline L_a$.
The interior eigenfunctions $\phi_{int}$ comprise a key piece of the gluing construction for the global $H_a$-eigenfunctions $\phi_{k,a}$ described above.
Lemma \ref{lem interior eigenfunctions} constructs these interior eigenfunctions by perturbing a weighted sum of generalized kernel elements for the operator $L_a$.

\subsubsection{Generalized Kernel Elements}

Consider the Poisson equation
    $$L_{a=1} u = \psi$$
where $u =u(s)$ and $\psi = \psi(s)$.
By \eqref{eqn L_a defn}, this reduces to
    $$\frac{1}{K_1(s)} \frac{d}{ds} \left( \frac{1}{K_2(s)} \frac{ d u }{ds} \right) :=
    \frac1{\sqrt{ ( 1 + (\partial_s f_1)^2 ) (s^2 + f_1^2)^{n-1} }} \frac \partial {\partial s} \left( \sqrt{ \frac{(s^2 + f_1^2)^{n-1} }{1 + (\partial_s f_1)^2 } } \frac{\partial u}{\partial s} \right)
    = \psi(s).$$
It is then straightforward to verify that, for any smooth function $\psi : \R \to \R$,
\begin{equation} \label{eqn Lapl Inversion Formula}
    u(s) =
    \int_0^s \psi (  \sigma)  K_1 (  \sigma) 
    \int_{ \sigma}^s  K_2( \tilde \sigma) d \tilde \sigma  d  \sigma
    := \int_0^s \psi (  \sigma) K(  \sigma , s ) d  \sigma 
\end{equation}
\begin{gather} \label{eqn defn K(sigma, s)}
\begin{aligned}
    \text{with } K( \sigma , s) :={}& K_1 (  \sigma) 
    \int_{ \sigma}^s  K_2( \tilde \sigma) d \tilde \sigma \\
    ={}& \sqrt{(1 + \partial_s f_1(\sigma)^2 )(\sigma^2 + f_1(\sigma)^2 )^{n-1} } 
    \int_{\sigma}^s \sqrt{ \frac{1 + \partial_s f_1(\tilde \sigma)^2 } {(\tilde \sigma^2 + f_1(\tilde \sigma)^2 )^{n-1}}} d \tilde \sigma
\end{aligned}
\end{gather}
defines a smooth solution to $L_1 u = \psi$ on $\R$ such that additionally $u(0) = 0$ and $\left. \frac{du}{ds}\right|_{s=0} = 0$.

The following proposition uses the above inversion of $L_{a=1}$ to construct generalized kernel elements of the operator $L_{a=1}$.
\begin{proposition}[The Generalized Kernel of $L_{a=1}$] \label{prop generalized kernel}
    Let $n \ge  3$ and $a=1$.
    There exists a collection of smooth functions $\{ u_i : \R \to \R , \, u_i =u_i(s) \}_{i \in \N}$ such that
		$$u_0 = \beta_{a=1}, \qquad L_{a=1} u_0 = 0, \quad  \text{and} \quad 
		L_{a=1} u_{i+1} = u_{i} \quad \forall i \in \mathbb N.$$
    Moreover, the $u_i$ are odd functions of $s \in \R$, have $u_i(s) > 0$ for all $s > 0$, and have asymptotic expansions
	\begin{equation} \label{generalized kernel asymptotic expansion eqn}
		u_i(s) =  \pm C_i s^{2i} + O( s^{2i-1} ; n,i) 	\qquad (\text{as } s \to \pm \infty)
	\end{equation}
	where the $C_i = C_i(n,i) \ne 0$ are given by the inductive formula
    \begin{equation} \label{gend kernel asymp constant inductive formula}
		C_i = \frac{C_{i-1} ( 1 + \ol{c}_0^2 ) }{2i ( 2i +n - 2 ) } 		\qquad ( \forall i \ge 1)
    \end{equation}
    and $\ol{c}_0 = \ol{c}_0(n) = \cot \left( \frac \pi {2n} \right) $ is as in Lemma \ref{lem: profile function of Lawlor neck}.
    These asymptotic expansions extend to first- and second-order derivatives of $u_i$, namely, for all $i \ge 1$
    \begin{equation} \label{generalized kernel derivatives asymptotic expansion eqn}
        \partial_s u_i(s) = \pm 2i C_i s^{2i-1} + O ( s^{2i-2};n,i ), \,
        \partial_{ss} u_i(s) = \pm 2i(2i-1) C_i s^{2i-2} + O ( s^{2i-3};n,i ),
    \end{equation}
    and, for $i =0$,
    \begin{equation} \label{beta derivatives asymptotic expansion eqn}
        \partial_s u_0(s) = \partial_s \beta_1 =  O ( s^{1-n} ;n) \le  O ( s^{-2} ;n), \quad
        \partial_{ss} u_0(s) = \partial_{ss} \beta_1 = O ( s^{-n} ;n) \le O ( s^{-3};n ) 
    \end{equation}
    as $s \to \pm \infty$.
    The $u_i$ have the additional property that
    \begin{equation} \label{eqn defn Theta_i}
        - \frac s2 \partial_s u_i + u_i = (1 - i ) u_i + \Theta_i ,
		\quad \text{where} \quad  \Theta_i =  O( s^{2i - 1};n,i ) 
		\quad \text{as } s \to \pm \infty
    \end{equation}
  and $\Theta_i$ is a smooth, odd function of $s$.
\end{proposition}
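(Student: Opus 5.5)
We construct the $u_i$ inductively with the inversion formula \eqref{eqn Lapl Inversion Formula}: set $u_0=\beta_1$ and
\[
u_{i+1}(s)=\int_0^s u_i(\sigma)K(\sigma,s)\,d\sigma .
\]
This is smooth and satisfies $L_1u_{i+1}=u_i$, $u_{i+1}(0)=u_{i+1}'(0)=0$. The case $i=0$ is supplied by Lemma~\ref{lem beta properties}: (iii) gives $L_1u_0=0$, (iv) gives oddness, (vi) gives positivity, and (v) gives the asymptotics, including \eqref{beta derivatives asymptotic expansion eqn} since $n\ge3$.

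\emph{Oddness and positivity.} Since $f_1$ is even, $K_1,K_2$ are even. The substitution $\sigma\mapsto-\sigma$ then shows that $s\mapsto u_{i+1}(s)$ is odd whenever $u_i$ is odd. For $s>0$ and $0<\sigma<s$ we have $K(\sigma,s)>0$, so $u_i>0$ on $(0,\infty)$ implies $u_{i+1}>0$ there. Alternatively, $u_{i+1}'(s)=K_2(s)^{-1}\cdot\int_0^s u_iK_1\,d\sigma>0$ with the correct normalization, using $L_1u=\frac1{K_1}(\frac1{K_2}u')'$. I would use this first-order form for the asymptotics because it is cleaner.

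\emph{Asymptotics.} Lemma~\ref{lem: profile function of Lawlor neck} gives, as $s\to\infty$,
\[
K_1(s)=\sqrt{1+\ol c_0^2}\,(1+\ol c_0^2)^{(n-1)/2}s^{n-1}\bigl(1+O(s^{-n})\bigr),\qquad
\frac1{K_2(s)}=(1+\ol c_0^2)^{(n-1)/2}(1+\ol c_0^2)^{-1/2}s^{n-1}\bigl(1+O(s^{-n})\bigr).
\]
Assume inductively $u_i=C_is^{2i}+O(s^{2i-1})$; for $i=0$ this reads $u_0=\ol\beta+O(s^{2-n})$ with $C_0=\ol\beta$. Then
\[
\int_0^s u_iK_1\,d\sigma=\frac{C_i\,c_1}{2i+n}\,s^{2i+n}+O(s^{2i+n-1}),
\]
where the bounded region near $0$ only contributes a constant, which is absorbed because $2i+n-1\ge 0$. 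Multiplying by $K_2$ gives
\[
u_{i+1}'=\frac{C_i(1+\ol c_0^2)}{2i+n}\,s^{2i+1}+O(s^{2i}).
\]
Integrating yields $u_{i+1}=C_{i+1}s^{2i+2}+O(s^{2i+1})$ with
\[
C_{i+1}=\frac{C_i(1+\ol c_0^2)}{(2i+2)(2i+n)}.
\]
This is exactly \eqref{gend kernel asymp constant inductive formula} with $i+1$ in place of $i$, since $2(i+1)+n-2=2i+n$. Nonvanishing of $C_i$ follows by induction from $\ol\beta>0$. The behavior as $s\to-\infty$ follows from oddness.

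\emph{Derivatives.} The derivative expansion has just been obtained directly. The second derivative comes from $u_{i+1}''=K_1K_2u_i-(\log K_2)'u_{i+1}'$, where $(\log K_2)'=-(n-1)s^{-1}+O(s^{-n-1})$, which uses the derivative bounds for $f_1$ in Lemma~\ref{lem: profile function of Lawlor neck}. The result is $(2i+2)(2i+1)C_{i+1}s^{2i}+O(s^{2i-1})$.

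\emph{The $\Theta_i$ identity.} Define $\Theta_i:=-\frac s2u_i'+iu_i$; it is smooth and odd. Using the expansions of $u_i$ and $u_i'$,
\[
-\tfrac s2\bigl(2iC_is^{2i-1}\bigr)+iC_is^{2i}=0,
\]
and the remaining error terms are $O(s^{2i-1})$, so $\Theta_i=O(s^{2i-1})$. For $i=0$, $\Theta_0=-\frac s2\beta_1'=O(s^{2-n})$, which is acceptable.

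\emph{Main obstacle.} The main difficulty is the error bookkeeping in the induction. One must verify that the $O(s^{-n})$ corrections in $K_1$ and $K_2$, together with the contribution from the compact region $[0,R_0]$, never exceed the stated $O(s^{2i+1})$ order. For $i=0$, where $u_0\to\ol\beta$ with only an $O(s^{2-n})$ error, the $n\ge3$ hypothesis is what makes the remainder integrable in the right sense.
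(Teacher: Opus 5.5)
Your proposal follows the paper's proof essentially step for step: the inductive definition via the inversion formula, oddness from the evenness of $K_1,K_2$, positivity from $K(\sigma,s)>0$, the asymptotics and the recursion for $C_i$ from $\partial_s u_{i+1}=K_2(s)\int_0^s K_1u_i\,d\sigma$, the second derivative from the ODE, and $\Theta_i$ from the cancellation of the leading terms. Two slips need fixing. In the positivity paragraph the prefactor should be $K_2(s)$ rather than $K_2(s)^{-1}$, as you correctly use later. The ODE gives $\partial_{ss}u_{i+1}=K_1K_2u_i+(\log K_2)'\,\partial_s u_{i+1}$ with a plus sign; with your minus sign and $(\log K_2)'=-(n-1)s^{-1}$, the leading coefficient would come out as $\frac{2i+2n-1}{2i+n}(1+\ol{c}_0^2)C_i$ instead of the correct $\frac{2i+1}{2i+n}(1+\ol{c}_0^2)C_i=(2i+2)(2i+1)C_{i+1}$.
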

\begin{proof}
    To simplify the notation, we omit the dependencies in the ``$O$'' and ``$\lesssim$'' notation and write, for example, simply $O (\, \cdot \, ;  i) = O (\, \cdot \, )$ throughout the proof.
    We also omit the subscripts ``$a=1$'' throughout and write simply $L = L_{a=1}$ for example.

    First, note that by Lemma \ref{lem beta properties} $u_0(s) := \beta(s)$ is a smooth, odd function of $s \in \R$ that satisfies $L u_0 = 0$.
    Following \eqref{eqn Lapl Inversion Formula}, define the $u_i = u_i (s)$ for $i\ge1$ inductively by
    \begin{equation} \label{proof generalized kernel, eqn 1}
        u_i(s) := \int_0^s u_{i-1}(\sigma) K(\sigma, s) d \sigma.
    \end{equation}
    By the discussion above and a straightforward induction argument, the functions $u_i : \R \to \R$ are well-defined, smooth, and satisfy $L u_i = u_{i-1}$ for all $i \ge 1$.

    (The $u_i$ are odd functions) \\
    We claim the $u_i$ are odd functions of $s$ for all $i \in \mathbb N$.
    This has already been shown for $u_0 = \beta$.
    Note that $f(s)$ is even (Lemma \ref{lem: profile function of Lawlor neck}), and so
        $$K(-\sigma, -s) = -K(\sigma , s)$$
    by \eqref{eqn defn K(sigma, s)}.
    Thus, a $\sigma$-substitution reveals
    $$u_{i+1}(-s) = \int_0^{-s} u_i(\sigma) K(\sigma, -s) d \sigma = - \int_0^s u_i (-\sigma) K (-\sigma, -s) d \sigma = \int_0^s u_i (-\sigma) K(\sigma, s) d \sigma.$$
    An induction argument then implies that the $u_i$ are odd functions of $s \in \R$ for all $i \in \mathbb N$.

    ($u_i(s) > 0$ for all $s > 0$) \\
    Note that $u_0 = \beta(s) >0 $ for all $s > 0$ by Lemma \ref{lem beta properties}.
    \eqref{eqn defn K(sigma, s)} implies $K(\sigma, s) >0$ for all $s, \sigma \in \R$ with $s > \sigma$.
    Using \eqref{proof generalized kernel, eqn 1}, a straightforward induction argument then implies that $u_i(s) > 0$ for all $s> 0$ and $i \in \N$.

    (Asymptotics at $s = \pm \infty$) \\
    Since the $u_i$ are odd functions of $s$, it suffices to determine the asymptotics at $s=+\infty$.
	We first obtain estimates for $K_1, K_2$.
	Observe that the asymptotics of $f=f_1$ and $\partial_s f_1 = f_s$ (Lemma \ref{lem: profile function of Lawlor neck}) imply
    \begin{multline}
        \label{K_2 asymps eqn 1}
        K_2(s) = \sqrt{ \frac{1 + f_s^2}{ ( s^2 + f^2 )^{n-1} } } 
	   = s^{1-n} \sqrt{ \frac{ 1 + \overline c_0^2 + O( s^{-n} ) }{ [ 1 + \overline c_0^2 + O ( s^{-n} ) ]^{n-1} } } \\
	   = s^{1-n} ( 1 + \overline c_0^2 )^{\frac{2-n}{2} } \left( 1 + O (s^{-n} ) \right)
	   \lesssim s^{1-n} .
    \end{multline}
    for large $s \gg 1$.
	Additionally, $0 <K_2(s) < C(n) < \infty$ for $s$ near $0$.
	It follows that
    \begin{equation} \label{K_2 asymps eqn 2}
        \frac{1}{ ( 1 + s)^{n-1} } \lesssim K_2(s) \lesssim \frac{ 1}{ ( 1 + s)^{n-1} }.
    \end{equation}
	An analogous argument for $K_1$ reveals
    \begin{gather}
        \label{K_1 asymps eqn 1}
        K_1(s) = s^{n-1} ( 1 + \overline c_0^2 )^{ \frac{n}{2} } \left( 1 + O (s^{-n} ) \right) 
		\quad  \text{as } s \to \infty, \\
        \label{K_1 asymps eqn 2}
		\text{and } ( 1 + s)^{n-1} \lesssim K_1 (s) \lesssim ( 1 + s)^{n-1}.
    \end{gather}

    We now use induction on $i$ to prove the claimed asymptotics of $u_i$.
    For $n \ge 3$, the asymptotics for $u_0 = \beta$ follow from Lemma \ref{lem beta properties}.
    Assume now the inductive hypothesis that the claimed asymptotics \eqref{generalized kernel asymptotic expansion eqn}--\eqref{generalized kernel derivatives asymptotic expansion eqn} hold for $u_{i-1}$ ($i \ge 1$).
    Since $u_i(s) = \int_0^s u_{i-1}(\sigma) K(\sigma, s) d \sigma$, differentiating and using \eqref{eqn defn K(sigma, s)} implies
    $$\frac{du_i}{ds}(s) = K_2(s) \int_0^s K_1(\sigma) u_{i-1}(\sigma) d \sigma.$$
    By \eqref{K_2 asymps eqn 1}, \eqref{K_1 asymps eqn 1}, and the inductive hypothesis,
    \begin{align*}
        \frac{du_i}{ds} 
        ={}& K_2(s) \int_0^s K_1(\sigma) u_{i-1}(\sigma) d \sigma \\
        ={}& s^{1-n} ( 1 + \overline c_0^2 ) ( 1 + O( s^{-n} ) ) \int_0^s \sigma^{n-1} ( 1 + O ( \sigma^{-n} ) ) [ C_{i-1} \sigma^{2i-2} + O ( \sigma^{2i-3} ) ] d \sigma \\
        ={}& s^{1-n} ( 1 + c_0^2 ) ( 1 + O( s^{-n} ) ) 
        \int_0^s C_{i-1} \sigma^{2i-2+n-1} + O ( \sigma^{2i-3 + n-1} ) d \sigma  \\
        ={}& \frac{C_{i-1}(1 + \overline c_0^2)}{2i - 2 +n} s^{2i-1} + O ( s^{2i-2} ) 
    \end{align*}
    as $s \to +\infty$.
    The claimed asymptotics for $u_i$, as well as \eqref{gend kernel asymp constant inductive formula}, now follow from integration.
    The asymptotics for $\partial_{ss} u_i$ can then be deduced from the relation $L u_i = u_{i-1}$, the asymptotics of $u_{i-1}$, $u_i$, and $\partial_s u_i$, and the fact that the asymptotics of $K_2(s)$ \eqref{K_2 asymps eqn 1} propagate to its derivative $\partial_s K_2$.

    (Asymptotics of $- \frac s2 \partial_s  u_i + u_i$)	\\
    The asymptotic expansion of $-\frac s2 \partial_s u_i + u_i$ \eqref{eqn defn Theta_i} can be deduced from the asymptotics of $u_i$ and $\partial_s u_i$, \eqref{generalized kernel asymptotic expansion eqn} and \eqref{generalized kernel derivatives asymptotic expansion eqn} respectively.    
    Since $u_i, s \partial_s u_i$ are odd, smooth functions of $s$, it follows that $\Theta_i$ is odd and smooth.
\end{proof}

\subsubsection{Interior Eigenfunctions} \label{subsubsection interior eigenfunctions}

In this subsection, we construct eigenfunctions for $H_{s,a} = L_{s,a} - \frac s2 \partial_s +1$ in a small region near $s = 0$.
Later, these will be matched with exterior eigenfunctions to form globally defined eigenfunctions for small enough $a > 0$.
For these later applications, it will be convenient to use the variable $\sigma$ instead of $s$ in this section.

\begin{definition}
    For $\kappa \in \R$ and $\Gamma > 0$, define weighted $C^2$-norms
    $$\| u \|_{X^\kappa_{\Gamma}} := \sup_{-\Gamma < \sigma < \Gamma } \sum_{i=0}^2 \frac{ | ( \langle \sigma \rangle \partial_\sigma )^i u | }{ \langle \sigma \rangle^\kappa }, 
    \qquad \text{where } \langle \sigma \rangle = \sqrt{ 1 + \sigma^2}.$$
    These define Banach spaces
        $$X^\kappa_\Gamma := \left\{ u \in C^2 ( (-\Gamma, \Gamma)) \, \colon \, \| u \|_{X^\kappa_\Gamma} < \infty \right\} \qquad \text{with norm } \| \cdot \|_{X^\kappa_\Gamma}.$$
\end{definition}

\begin{remark}
    The spaces $X^\kappa_\Gamma$ are equivalent to the usual spaces $C^2((-\Gamma, \Gamma))$.
    However, it will be more convenient to work with the spaces $X^\kappa_\Gamma$ to capture the asymptotic growth or decay at infinity.
\end{remark}

\begin{lem}[Interior Eigenfunctions] \label{lem interior eigenfunctions}
	Let $n \ge 3$ and $K \in \N$. There exists $0 < \overline s_0(n,K) \ll 1$ sufficiently small depending only on $n,K$ such that the following holds:
    
    For all $\tilde \lambda$ with $|\tilde \lambda | \le 1$ and all 
    $0 < a \le s_0 \le \overline s_0(n, K  ) \ll 1$,
    there exists a smooth, odd function $\phi_{int} \in C^\infty ((-s_0/a, s_0/a) \to \R )$, $\phi_{int} = \phi_{int; \tilde \lambda, a, s_0,K,n}(\sigma)$ such that 
    \begin{equation} \label{eqn interior eigval problem}
        \left(L_{\sigma, a=1} - a^2 \frac \sigma 2 \partial_\sigma + a^2 \right) \phi_{int} = a^2 ( 1 - K + \tilde \lambda ) \phi_{int}
    \end{equation}
    and $\phi_{int}$ has the form
    \begin{equation} \label{eqn interior eigenfunctions ansatz}
		\phi_{int} = \sum_{i=0}^K \hat C_{K, i} a^{2i} u_i 
		+ \tilde \lambda \sum_{i=0}^K a^{2(i+1)} \left( \hat C_{K, i} u_{i+1} + v_i \right) + a^2 w_K
	\end{equation}
    where $u_i = u_i(\sigma)$ are the generalized kernel elements from Proposition \ref{prop generalized kernel}, 
    the constants $\hat C_{K, i}$ satisfy the inductive formula
    \begin{equation} \label{eqn interior eigenfunctions, constants relation}
        \hat C_{K, i+1} = - \hat C_{K, i} ( K - i) , \qquad \hat C_{K, 0} = 1,
    \end{equation}
    and the correction terms $v_i , w_K$ are odd functions of $\sigma$ that satisfy the estimates
    \begin{gather} \label{eqn interior eigenfunctions, correction term ests}
    \begin{aligned}
        \| v_i \|_{X_{s_0/a}^{2i+2}} &\le C s_0^2, &
        \| \partial_a v_i \|_{X_{s_0/a}^{2i+3}} &\le C s_0, &
        \| \partial_{\tilde \lambda} v_i \|_{X^{2i+2}_{s_0/a}} &\le C s_0^2, \\
        \| w_K \|_{X_{s_0/a}^1} &\le C , &
        \| \partial_a w_K \|_{ X_{s_0/a}^{2} } &\le C s_0, &
        \| \partial_{\tilde \lambda} w_K \|_{X_{s_0/a}^{3}} &\le C a^2, 
    \end{aligned}
    \end{gather}
    for some $C = C (n, K) > 0$.
    Moreover, $\phi_{int}$ is a smooth function of $(\tilde \lambda, a, s_0, \sigma)$.
\end{lem}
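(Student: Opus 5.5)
The plan is to substitute the ansatz \eqref{eqn interior eigenfunctions ansatz} into \eqref{eqn interior eigval problem}, show that the leading terms cancel exactly because of the recursion \eqref{eqn interior eigenfunctions, constants relation}, and solve for the corrections $v_i, w_K$ by a contraction argument built on the inversion formula \eqref{eqn Lapl Inversion Formula}. Write
\[
P_a := L_{\sigma,1} - a^2\bigl(\tfrac\sigma2\partial_\sigma - 1\bigr) - a^2(1-K+\tilde\lambda),
\]
so the goal is $P_a\phi_{int}=0$. By \eqref{eqn defn Theta_i},
\[
a^2\bigl(-\tfrac\sigma2\partial_\sigma u_i+u_i-(1-K)u_i\bigr) = a^2\bigl((K-i)u_i+\Theta_i\bigr).
\]
Apply $P_a$ to $\sum_i\hat C_{K,i}a^{2i}u_i$ and use $L_1u_{i+1}=u_i$. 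The term $L_1(\hat C_{K,i+1}a^{2i+2}u_{i+1})=\hat C_{K,i+1}a^{2i+2}u_i$ cancels $\hat C_{K,i}(K-i)a^{2i+2}u_i$ by \eqref{eqn interior eigenfunctions, constants relation}. The top index $i=K$ has coefficient $K-K=0$, and $L_1u_0=0$. What remains is
\[
a^2\sum_i\hat C_{K,i}a^{2i}\Theta_i \;-\; \tilde\lambda a^2\sum_i\hat C_{K,i}a^{2i}u_i .
\]
The $\tilde\lambda$ part is cancelled to leading order by $L_1$ applied to $\tilde\lambda\hat C_{K,i}a^{2i+2}u_{i+1}$. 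This leaves a second residual, namely $\tilde\lambda a^2$ times $a^{2i+2}$ times $[(\tfrac\sigma2\partial_\sigma-1)+(1-K+\tilde\lambda)]u_{i+1}$, which has size $a^{2i+4}\langle\sigma\rangle^{2i+2}$.

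Given this, I will define the corrections as fixed points. Set $v_i$ to solve
\[
L_1v_i = a^2\bigl[(\tfrac\sigma2\partial_\sigma-1)+(1-K+\tilde\lambda)\bigr]\bigl(\hat C_{K,i}u_{i+1}+v_i\bigr),
\]
with the $\Theta_{i+1}$-type remainder also placed there. Set $w_K$ to solve
\[
L_1w_K = -\sum_i\hat C_{K,i}a^{2i}\Theta_i + a^2\bigl[(\tfrac\sigma2\partial_\sigma-1)+(1-K+\tilde\lambda)\bigr]w_K .
\]
Each equation is inverted with the operator $T\psi(\sigma):=\int_0^\sigma\psi K(\cdot,\sigma)$ from \eqref{eqn Lapl Inversion Formula}. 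The linear estimate to establish first is that, for $\kappa>-n$ (certainly for $\kappa\ge -1$), $T$ maps $X^\kappa_\Gamma$ into $X^{\kappa+2}_\Gamma$ with bound $C(n,\kappa)$ independent of $\Gamma$. This should follow from the two-sided bounds \eqref{K_2 asymps eqn 2} and \eqref{K_1 asymps eqn 2}, via $\partial_\sigma T\psi=K_2\int_0^\sigma K_1\psi$ and the equation itself for the second derivative. The operator $a^2(\tfrac\sigma2\partial_\sigma + c)$ maps $X^{\kappa+2}_\Gamma$ to $X^{\kappa+2}_\Gamma$ with norm $\lesssim a^2$. On $|\sigma|<s_0/a$ we have $a^2\langle\sigma\rangle^2\lesssim s_0^2$, so $X^{\kappa+2}_\Gamma\hookrightarrow X^\kappa_\Gamma$ with a factor $a^2$ costs at most $s_0^2$. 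Hence $T\circ a^2(\cdots)$ is a contraction on $X^{\kappa+2}_{s_0/a}$ once $s_0\le\overline s_0(n,K)$ is small.

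For the sources: $a^{2i}\Theta_i=O(a^{2i}\langle\sigma\rangle^{2i-1})\lesssim s_0^{2i}\langle\sigma\rangle^{-1}$ for $i\ge1$, and $\Theta_0=\beta-\tfrac\sigma2\beta'$ is bounded. So the $w_K$ source lies in $X^{-1}$, giving $\|w_K\|_{X^1}\le C$. For $v_i$, the source $a^2\langle\sigma\rangle^{2i+2}\cdot\langle\sigma\rangle^{-0}$ is estimated in $X^{2i}$ by $s_0^2$ after the weight trade, giving $\|v_i\|_{X^{2i+2}}\le Cs_0^2$. Oddness is preserved because $T$, $\sigma\partial_\sigma$ and the sources all preserve parity. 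Smooth dependence on $(\tilde\lambda,a,s_0,\sigma)$ follows from the implicit function theorem applied to the fixed-point map.

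Differentiating the fixed-point equations in $\tilde\lambda$ and $a$ gives the remaining estimates in \eqref{eqn interior eigenfunctions, correction term ests}. The $\tilde\lambda$-derivative only brings down an extra $a^2$, which yields $\partial_{\tilde\lambda}w_K\in X^3$ with bound $Ca^2$. The $a$-derivative hits $a^2$ and produces $2a\langle\sigma\rangle^2\lesssim s_0\langle\sigma\rangle$, so one gains only $s_0$ and loses one power of weight; this accounts for the $X^{2i+3}$ and $X^2$ norms. The domain $(-s_0/a,s_0/a)$ also depends on $a$, but since $T$ integrates from $0$ the construction is local and the endpoint dependence is harmless.

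The main obstacle I expect is the uniform-in-$\Gamma$ mapping property of $T$ in the weighted $C^2$ spaces, including the second-derivative control through $L_1$, together with the bookkeeping showing that each residual indeed lands in the weight class claimed. I would verify that first, checking separately the regimes $|\sigma|\lesssim1$ and $|\sigma|\gg1$ using \eqref{K_2 asymps eqn 1} and \eqref{K_1 asymps eqn 1}.
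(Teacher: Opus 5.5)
Your strategy is the paper's. You substitute the ansatz and let the recursion for $\hat C_{K,i}$ cancel the $a^{2i+2}u_i$ terms, which gives exactly the paper's equations for $v_i$ and $w_K$. You then invert $L_{\sigma,1}$ via \eqref{eqn Lapl Inversion Formula} with weighted bounds, close by contraction using $a^2\langle\sigma\rangle^2\lesssim s_0^2$ on $|\sigma|<s_0/a$, and differentiate the fixed-point equations in $\tilde\lambda$ and $a$. Your weight bookkeeping for the $v_i$ and for the derivative estimates is right. Two details, however, are wrong as written and would break the estimates if taken literally.

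First, the function spaces do not close.
\begin{itemize}
\item $X^{\kappa+2}_\Gamma$ is a $C^2$-type norm, so $a^2(\tfrac\sigma2\partial_\sigma+c)$ does not map $X^{\kappa+2}_\Gamma$ into itself; it costs a derivative.
\item A mapping property $T\colon X^\kappa_\Gamma\to X^{\kappa+2}_\Gamma$ would demand $C^2$ control of sources you only control pointwise. An example is the $\Theta_i$, for which Proposition~\ref{prop generalized kernel} gives no derivative bounds.
\item What you need is $\|T\psi\|_{X^{\kappa+2}_\Gamma}\le C(n,\kappa)\sup_{|\sigma|<\Gamma}|\psi|\langle\sigma\rangle^{-\kappa}$. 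Your own sketch, via $\partial_\sigma T\psi=K_2\int_0^\sigma K_1\psi$ plus the equation for $\partial_\sigma^2$, actually proves this.
\item This bound holds for $\kappa>-2$, not $\kappa>-n$: for $\kappa\le-2$, $T\psi$ need not be $O(\langle\sigma\rangle^{\kappa+2})$. The range $\kappa>-2$ still covers every weight you use.
\item With that input norm, $a^2(\tfrac\sigma2\partial_\sigma+c)$ maps $X^{\kappa+2}_{s_0/a}$ into the weighted $C^0$ space with norm $\lesssim s_0^2$, which is what the contraction needs.
\end{itemize}

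Second, $\Theta_0=-\tfrac\sigma2\beta'$, not $\beta-\tfrac\sigma2\beta'$: in \eqref{eqn defn Theta_i} with $i=0$ the $\beta$ terms cancel. This is not cosmetic. Your expression tends to $\pm\overline\beta\neq0$, which would put the $w_K$ source only in weight $\langle\sigma\rangle^{0}$. Then $w_K$ would grow like $\sigma^2$, giving $\|w_K\|_{X^1_{s_0/a}}\sim s_0/a$ instead of $O(1)$. The correct decay $\Theta_0=O(\langle\sigma\rangle^{2-n})=O(\langle\sigma\rangle^{-1})$, which is where $n\ge3$ enters, is what yields $\|w_K\|_{X^1_{s_0/a}}\le C$. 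With these two corrections your argument coincides with the paper's proof.
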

\begin{proof}
    Write $\Lambda = - \frac \sigma 2 \partial_\sigma + 1$ and $L = L_{\sigma, a = 1}$ throughout.
    Using \eqref{eqn interior eigenfunctions ansatz} and Proposition \ref{prop generalized kernel}, a computation reveals 
    \begin{align*}
		&( L + a^2 \Lambda  ) \phi_{int}	
	   - a^2 ( 1 -K + \tilde \lambda ) \phi_{int} \\
		={}&  \sum_{i=0}^{K-1} a^{2i+2} u_i \left[ \hat C_{K,i+1} + ( K - i ) \hat C_{K, i}   \right] 
		+  \sum_{i=0}^K \hat C_{K, i} a^{2i+2} \Theta_i  \\
        &+ \tilde \lambda \sum_{i=0}^K \hat C_{K, i} a^{2i+2} a^2 ( \Lambda - 1 + K - \tilde \lambda ) u_{i+1} 
        + \tilde \lambda \sum_{i=0}^K a^{2i+2} ( L v_i + a^2 ( \Lambda - 1 + K - \tilde \lambda ) v_i ) \\
        &+ a^2 L w_K + a^4 ( \Lambda - 1 + K - \tilde \lambda ) w_K.
    \end{align*}
    We therefore obtain a solution of \eqref{eqn interior eigval problem} when
    \begin{align}
        \label{proof interior eigenfunctions, eqn 1}
		\hat C_{K, i+1} + ( K-i) \hat C_{K,i} &= 0 && \forall i \in \{ 0, \dots, K - 1\}, \\
        \label{proof interior eigenfunctions, eqn 2}
		L v_i + a^2 ( \Lambda-1 + K  - \tilde \lambda ) \left( \hat C_{K, i} u_{i+1} + v_i \right) &= 0
		&& \forall i \in \{ 0, \dots, K \}, \text{ and} \\
        \label{proof interior eigenfunctions, eqn 3}
		L w_K + a^2 ( \Lambda -1+ K  - \tilde \lambda ) w_K + \left( \sum_{i=0}^K \hat C_{K, i} a^{2i} \Theta_i \right) &= 0.
	\end{align}    
	The relation \eqref{eqn interior eigenfunctions, constants relation} ensures \eqref{proof interior eigenfunctions, eqn 1} is satisfied.
    The existence of $v_i, w_K$ solving \eqref{proof interior eigenfunctions, eqn 2}, \eqref{proof interior eigenfunctions, eqn 3} will be obtained by a fixed point argument, for suitable choices of the parameters $\tilde \lambda , a,s_0$. 

    First, we prove the following general claim that will be useful for the fixed point arguments.
    \begin{claim} \label{proof interior eigenfunctions, claim 1}
        Let $\Gamma > 0$ and $\kappa > -2$.
        For any $\psi \in C^0 ( (-\Gamma, \Gamma) )$, there exists a unique $u \in C^2((-\Gamma, \Gamma))$ such that $Lu = \psi$ on $(-\Gamma, \Gamma)$ and $u(0) = \partial_\sigma u(0) = 0$.
        Moreover, $u$ is given by
        \begin{equation} \tag{\ref{eqn Lapl Inversion Formula}}
            u(\sigma) = \int_0^\sigma \psi(\xi) K(\xi, \sigma) d \xi
        \end{equation}
        and satisfies the estimate
        \begin{equation} \label{proof interior eigenfunctions, eqn 4}
            \| u \|_{X_\Gamma^{\kappa + 2}} \le C \sup_{|\sigma | < \Gamma} \frac{ |\psi(\sigma)|}{\langle \sigma \rangle^\kappa}
            \qquad \text{for some } C = C(n, \kappa ) > 0.
        \end{equation}
    \end{claim}
    \begin{claimproof}
        The existence and uniqueness of $u$ follows from standard linear ODE theory.
        It remains to prove the estimate \eqref{proof interior eigenfunctions, eqn 4}.
        Throughout the proof of this claim, $C = C(n, \kappa) > 1$ denotes a constant depending only on $n$ and $\kappa$, but not $\Gamma$, which may change from line to line.
        By uniqueness, $u$ is equivalently given by 
        \begin{equation} \tag{\ref{eqn Lapl Inversion Formula}}
            u(\sigma) = \int_0^\sigma \psi(\xi) K( \xi , \sigma) d \xi
            = \int_0^\sigma \psi(\xi) K_1(\xi) \int_\xi^\sigma K_2(\rho) d \rho d \xi.
        \end{equation}
        and so
        \begin{equation} \label{proof interior eigenfunctions, eqn 5}
            \partial_\sigma u (\sigma) = K_2(\sigma) \int_0^\sigma \psi(\xi) K_1(\xi) d \xi.
        \end{equation}
        Recall \eqref{K_2 asymps eqn 2} and \eqref{K_1 asymps eqn 2} which give
        \begin{equation} \label{proof interior eigenfunctions, eqn 6}
            C^{-1} \langle \sigma \rangle^{n-1} \le K_1(\sigma) \le C \langle \sigma \rangle^{n-1} \quad \text{and} \quad
            C^{-1} \langle \sigma \rangle^{1-n} \le K_2 (\sigma) \le C \langle \sigma \rangle^{1-n}.
        \end{equation}
        Combining \eqref{proof interior eigenfunctions, eqn 5} and \eqref{proof interior eigenfunctions, eqn 6} then implies
        \begin{multline}\label{proof interior eigenfunctions, eqn 7} 
            \sup_{|\sigma | \le \Gamma } \frac{ | \langle \sigma \rangle  \partial_\sigma u | }{\langle \sigma \rangle^{\kappa + 2} }
            = \sup_{|\sigma | \le \Gamma} \frac{ |   \partial_\sigma u | }{\langle \sigma \rangle^{\kappa + 1} } 
            \le C \sup_{| \sigma | \le \Gamma } \langle \sigma \rangle^{-\kappa - n} \int_0^\sigma \langle \xi \rangle^{\kappa + n - 1} d \xi \cdot \sup_{|\xi | \le \Gamma } \frac{ |\psi(\xi)|}{\langle \xi \rangle^{\kappa} } \\
            \le C \sup_{|\sigma | \le \Gamma } \frac{ |\psi(\sigma)|}{\langle \sigma \rangle^{\kappa} }
        \end{multline}
        where we used the fact that $\kappa +n> -2+n > 0$ to estimate the integral.
        Integrating this derivative estimate and using that $\kappa > -2$ then implies 
        \begin{equation} \label{proof interior eigenfunctions, eqn 8} 
            \sup_{|\sigma | \le \Gamma } \frac{ | u |}{\langle \sigma \rangle^{\kappa + 2} } \le C \sup_{|\sigma | \le \Gamma } \frac{ | \langle \sigma \rangle  \partial_\sigma u | }{\langle \sigma \rangle^{\kappa + 2} }
            \le C \sup_{|\sigma | \le \Gamma } \frac{ |\psi(\sigma)|}{\langle \sigma \rangle^{\kappa} }.
        \end{equation}
        Finally, the second derivative estimate for $u$ follows from the fact that $Lu = \psi$, or equivalently $\partial_{\sigma \sigma} u = \psi K_1 K_2 + \frac{ \partial_\sigma K_2}{K_2} \partial_\sigma u$, and the estimates for $K_1, K_2$ and $\partial_\sigma u$ above.
        This completes the proof of \eqref{proof interior eigenfunctions, eqn 4} and the claim.
    \end{claimproof}

    Next, we prove the existence of a $w_K$ solving \eqref{proof interior eigenfunctions, eqn 3} using a fixed point argument.
    Throughout the remainder of the proof, $C = C(n,K) > 1$ denotes a constant depending only on $n,K$ which may change from line to line.
    Consider the operator
    \begin{gather}
        \mathcal F : X^1_{s_0/a} \to X^1_{s_0/a} \text{ given by}\\
        \mathcal F [u](\sigma) := -\int_0^\sigma K(\xi,\sigma) \left( \sum_{i=0}^K \hat C_{K, i} a^{2i} \Theta_i(\xi) + a^2 ( \Lambda -1+ K  - \tilde \lambda ) u(\xi) \right) d \xi  .
    \end{gather}
    If $0 < a \le s_0 \le 1$, then for any $\sigma \in (-s_0/a, s_0/a)$
    \begin{gather} \label{proof interior eigenfunctions, eqn 8.5} 
    \begin{aligned}
        \frac{\left| \sum_{i=0}^K \hat C_{K,i} a^{2i} \Theta_i(\sigma) \right| }{\langle \sigma \rangle^{-1}}
        &\le C \langle \sigma \rangle  \sum_{i=0}^K a^{2i} |\Theta_i(\sigma)| \\
        &\le C \langle \sigma \rangle  \sum_{i=0}^K a^{2i} \langle \sigma \rangle^{2i-1} && (\text{by } \eqref{eqn defn Theta_i}) \\
        &\le C \cancel{\langle \sigma \rangle}  \sum_{i=0}^K \cancel{a^{2i}}\left( \frac{s_0}{\cancel{a}} \right)^{2i} \cancel{\langle \sigma \rangle^{-1}} && \left( \text{since } |\sigma | \le \frac{s_0}a \text{ and } \frac{s_0}a \ge 1 \right) \\
        &\le C  && ( \text{since } s_0 \le 1 ).
    \end{aligned}    
    \end{gather}
    
    Additionally, for any $u \in X^1_{s_0/a}$ and any $\sigma \in (-s_0/a, s_0/a)$,
    \begin{gather} \label{proof interior eigenfunctions, eqn 9} 
    \begin{aligned}
        \frac{ |a^2 ( \Lambda -1 + K - \tilde \lambda ) u | }{\langle \sigma \rangle^{-1} }
        &\le C a^2 \langle \sigma \rangle \left( | \langle \sigma \rangle \partial_\sigma u | + | u| \right) && (\text{since $|\tilde \lambda | \le 1$} ) \\
        &\le C a^2 \langle \sigma \rangle^2 \| u \|_{X^1_{s_0/a}} \\
        &\le C (a^2 + s_0^2) \| u \|_{X^1_{s_0/a}}
        && (\text{since $|\sigma | \le s_0/a$} ) \\
        &\le C s_0^2 \| u \|_{X^1_{s_0/a}} && ( \text{since $a \le s_0$} ).
    \end{aligned}    
    \end{gather}

    These estimates and Claim \ref{proof interior eigenfunctions, claim 1} imply $\mathcal F : X^1_{s_0/a} \to X^1_{s_0/a}$ is a well-defined map when $0 <a \le  s_0 \le 1$.
    Additionally, the estimate \eqref{proof interior eigenfunctions, eqn 9} and Claim \ref{proof interior eigenfunctions, claim 1} show that $\mathcal F : X^1_{s_0/a} \to X^1_{s_0/a}$ is a contraction mapping when additionally $0 < s_0 \le  \overline s_0 ( n ,K)  \ll 1$ is sufficiently small depending only on $n,K$.
    Therefore, the contraction mapping theorem applies to give the existence of a fixed point $w_K \in X^1_{s_0/a}$ of $\mathcal F$, or equivalently a solution of \eqref{proof interior eigenfunctions, eqn 3}.
    By Claim \ref{proof interior eigenfunctions, claim 1} and estimates \eqref{proof interior eigenfunctions, eqn 8.5} and \eqref{proof interior eigenfunctions, eqn 9}, $\| w_K \|_{X^1_{s_0/a}} \le C$.
    Since $\Theta_i$ is a smooth, odd function, it follows from \eqref{proof interior eigenfunctions, eqn 3} that $w_K$ is also a smooth, odd function.
    
    By similar logic, for every $0 \le i \le K$, the existence of a smooth, odd function $v_i$ solving \eqref{proof interior eigenfunctions, eqn 2} with $\| v_i \|_{X^{2i+2}_{s_0/a}} \le C s_0^2$ can similarly be obtained via a contraction mapping argument for a suitably defined integral operator involving $K(\xi, \sigma)$ when $|\tilde \lambda | \le 1$ and $ 0 < a \le s_0 \le \overline s_0 ( n, K ) \ll 1$ is sufficiently small.

    The coefficients of the linear ODE system \eqref{proof interior eigenfunctions, eqn 2} and \eqref{proof interior eigenfunctions, eqn 3} for $(v_1, \dots, v_K, w_K)$ depend smoothly on the parameters $\tilde \lambda, a, s_0$,
    and the solutions $v_1, \dots, v_K, w_K$ as constructed above all vanish to first order at $\sigma = 0$.
    Therefore, $v_1, \dots , v_K, w_K$, and $\phi_{int}$ are smooth functions of $(\tilde \lambda, a, s_0, \sigma)$.
    Differentiating equations \eqref{proof interior eigenfunctions, eqn 2} and \eqref{proof interior eigenfunctions, eqn 3} with respect to $a$ and $\tilde \lambda$ gives similar differential equations for $\partial_a v_i, \partial_a w_K, \partial_{\tilde \lambda} v_i,$ and $\partial_{\tilde \lambda } w_K$.
    Repeatedly applying the estimate \eqref{proof interior eigenfunctions, eqn 4} from Claim \ref{proof interior eigenfunctions, claim 1} to these differential equations gives the remaining estimates in \eqref{eqn interior eigenfunctions, correction term ests}.
    The details are left as an exercise for the reader.
\end{proof}

\subsection{$G$-Invariant Special Lagrangian Cones} \label{Subsection Planes}

In this subsection, we consider the limiting case as $a \searrow 0$ and the SL limits to the asymptotic pair of cones $\mathcal C = \mathcal C_0 \cup \mathcal C_1$ given by the profile function $f_{a=0}(s) := \overline c_0 |s| $ away from $s = 0$.
Informally, as $a \searrow 0$, the operator $L_{s, a} u$ limits to
\begin{equation}
    L_{s, 0} u := \frac1{1 + \overline c_0^2} \left( \partial_{ss} u + \frac{n-1} s \partial_s u \right) \qquad \forall s > 0.
\end{equation}
We shall investigate the operator
    \begin{equation} \label{eqn H_0 defn}
        H_{s, 0} u := L_{s, 0} u - \frac s2 \partial_s u + u = \frac1{1 + \overline c_0^2} \left( \partial_{ss} u + \frac{n-1} s \partial_s u \right) - \frac s2 \partial_s u + u \qquad \forall s > 0.  
    \end{equation}
The next proposition provides eigenfunctions of $H_{s,0}$ that will be used in the construction of exterior $H_{s,a}$-eigenfunctions $\phi_{ext}$ in Lemma \ref{lem exterior eigenfunctions}.
Together with the interior eigenfunctions $\phi_{int}$ from Lemma \ref{lem interior eigenfunctions}, these exterior eigenfunctions $\phi_{ext}$ form the key ingredients for the gluing construction of the global $H_{s,a}$-eigenfunctions $\phi_{k,a}$ in Theorem \ref{thm global eigenfunctions}.

\begin{proposition} \label{prop plane eigenfunction}
    For any $k \in \mathbb N_{\ge 0}$, the functions $u_{\lambda_k} : (0, \infty) \to \R$ defined in terms of generalized Laguerre polynomials $L$ by
    \begin{equation} \label{eqn defn plane eigenfunction}
        u_{\lambda_k}(s) := L_{- \lambda_k + 1 }^{\left( \frac{n-2}2 \right)} \left( \frac{(1 + \overline c_0^2)s^2} 4 \right) = L_k^{\left( \frac{n-2}2 \right)} \left( \frac{(1 + \overline c_0^2)s^2} 4 \right), \qquad \lambda_k := 1 - k 
    \end{equation}
    are eigenfunctions of $H_{s,0}$ with eigenvalue $\lambda_k = 1-k$, that is,
        $$H_{s, 0} u_{\lambda_k} 
        = \frac{\partial_{ss} u_{\lambda_k}}{1 + \overline c_0^2} + \left( \frac{n-1}{(1+ \overline c_0^2) s} - \frac s2 \right) \partial_s u_{\lambda_k} + u_{\lambda_k} 
        = \lambda_k u_{\lambda_k} = (1-k) u_{\lambda_k}\qquad (\forall s >0).$$
    $u_{\lambda_k}$ has asymptotics
    \begin{gather} \label{eqn plane eigenfunction asymptotics}
        u_{\lambda_k}(s)  = 
        \left\{
        \begin{aligned}
            & \frac{(-1)^k \cdot ( 1 + \overline c_0^2)^k}{k!\cdot 4^k} s^{2k} + O(s^{2k-2}; n,k) \qquad & \text{as } s &\nearrow +\infty, \\
            & \frac{\Gamma\left( k + \frac n 2\right)}{k! \cdot \Gamma \left( \frac n2 \right)} + O(s^2 ; n,k) & \text{as } s &\searrow 0
        \end{aligned} \right. \\
        \label{eqn plane eigenfunction derivative asymptotics}
        \partial_s u_{\lambda_k}(s)  =
        \left\{
        \begin{aligned}
            & \frac{(-1)^k \cdot ( 1 + \overline c_0^2)^k}{k!\cdot 4^k} 2k s^{2k-1} + O(s^{2k-3}; n,k) \qquad & \text{as } s &\nearrow +\infty, \\
            & -\frac{\Gamma\left( k + \frac n 2\right)}{(k-1)! \cdot \Gamma \left( \frac n2 + 1 \right)} \frac{ (1 + \overline c_0^2)}2 s + O(s^3 ; n,k) & \text{as } s &\searrow 0
        \end{aligned} \right.
    \end{gather}
    where $\Gamma(\cdot)$ denotes the Gamma function.


    There exists another solution $v_{\lambda_k}(s)$ of 
    \begin{equation} \label{eqn defn v_lambda_k}
        H_{s,0} v_{\lambda_k} = (1-k) v_{\lambda_k}   \qquad (s >0)
    \end{equation}
    given by
    \begin{equation}
        v_{\lambda_k}(s) = u_{\lambda_k}(s) \int_1^s \frac1{u_{\lambda_k}(\sigma)} \sigma^{1-n} e^{ \frac{(1+ \overline c_0^2)\sigma^2}4} d \sigma .
    \end{equation}
    It has asymptotics
    \begin{gather} \label{eqn v_lambda_k asymptotics}
		v_{\lambda_k} (s) =
		\left\{ 
		\begin{aligned}
			&(-1)^k \frac{2 \cdot 4^k \cdot k!}{( 1+ \overline c_0^2)^{k+1}} s^{-2k - n} e^{\frac{(1+\overline c_0^2)s^2}4} + O \left(s^{-2k-n-2} e^{ \frac{(1+\overline c_0^2)s^2}4 }; n,k \right) ,	 &  \text{as } s &\nearrow + \infty\\
			&\frac1{(2-n) u_{\lambda_k}(0)}s^{2-n}  + O(s^{3-n}; n,k) , & \text{as } s&\searrow 0	
		\end{aligned} \right.
        \\
        \label{eqn v_lambda_k derivative asymptotics}
        \partial_s v_{\lambda_k}(s)  =
        \left\{ 
        \begin{aligned}
            & (-1)^k \frac{ 4^k \cdot k!}{( 1+ \overline c_0^2)^{k}} s^{-2k - n+1} e^{\frac{(1+\overline c_0^2)s^2}4} + O \left(s^{-2k-n-1} e^{ \frac{(1+\overline c_0^2)s^2}4 }; n,k \right) ,  & \text{as } s &\nearrow +\infty \\
            & \frac1{u_{\lambda_k}(0)} s^{1-n} + O(s^{3-n} ; n,k) , & \text{as } s &\searrow 0.
        \end{aligned} \right.
	\end{gather}
    $u_{\lambda_k}, v_{\lambda_k}$ are linearly independent and have Wronskian
    \begin{equation} \label{eqn defn plane Wronskian}
        W[u_{\lambda_k}, v_{\lambda_k} ] = s^{1-n} e^{ \frac{(1+\overline c_0^2) s^2}4}.
    \end{equation}
    
        
    Moreover, there exists a generalized eigenmode $\tilde u_{\lambda_k}: (0, \infty) \to \R$ satisfying
    \begin{gather} \label{eqn defn plane generalized eigenfunction 1}
        [H_{s, 0} - (1-k) ] \tilde u_{\lambda_k} = u_{\lambda_k} \qquad (\forall s > 0) , \\
        \label{eqn defn plane generalized eigenfunction 2}
        \text{or equivalently} \quad  
        \frac{\partial_{ss} \tilde u_{\lambda_k}}{1 + \overline c_0^2}  + \left( \frac{n-1}{(1+\overline c_0^2) s} - \frac s{2} \right) \partial_s \tilde u_{\lambda_k} + k \tilde u_{\lambda_k} = u_{\lambda_k} \qquad (\forall s > 0),
    \end{gather}    
    which has asymptotics
    \begin{gather} \label{eqn plane generalized eigenfunction asymptotics}
		\tilde u_{\lambda_k} (s) = 
		\left\{ 
		\begin{aligned}
			&(-1)^{k+1} \frac{2 \cdot ( 1+ \overline c_0^2)^{k-1}}{4^k \cdot k! } s^{2k} \ln (s) + O(s^{2k}; n,k),	\qquad &  \text{as } s &\nearrow + \infty\\
			&- \frac{\left( \int_0^\infty u^2 \sigma^{n-1} e^{- \frac{(1+\overline c_0^2)\sigma^2}4 } d\sigma \right)}{(2-n) u_{\lambda_k}(0)}  s^{2-n} 
            + O(s^{3-n}; n,k), & \text{as } s&\searrow 0	
		\end{aligned} \right.
        \\
        \label{eqn plane generalized eigenfunction derivative asymptotics}
        \partial_s \tilde u_{\lambda_k} (s) =
		\left\{ 
		\begin{aligned}
			&(-1)^{k+1} \frac{2 \cdot ( 1+ \overline c_0^2)^{k-1}}{4^k \cdot k! } 2k s^{2k-1} \ln (s) + O(s^{2k-1}; n,k),,	 &  \text{as } s &\nearrow + \infty\\
			&- \frac{\left( \int_0^\infty u^2 \sigma^{n-1} e^{- \frac{(1+\overline c_0^2)\sigma^2}4 } d\sigma \right)}{ u_{\lambda_k}(0)}  s^{1-n} 
            + O(s^{2-n}; n,k)  , & \text{as } s&\searrow 0	.
		\end{aligned} \right.
	\end{gather}

\end{proposition}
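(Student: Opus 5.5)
The plan is to reduce $H_{s,0}$ to the Laguerre equation by a change of variables and then read off all three families of solutions from classical ODE facts. Set $c = \frac{1+\overline c_0^2}{4}$ and $x = c s^2$. Then $\partial_s = 2cs\,\partial_x$ and $\partial_{ss} = 2c\,\partial_x + 4c^2 s^2\partial_{xx}$.

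Substituting into $(1+\overline c_0^2)^{-1}(\partial_{ss}u + \frac{n-1}{s}\partial_s u) - \frac s2\partial_s u + u = (1-k)u$, i.e. $\frac1{4c}(\ldots) - \frac s2 \partial_s u + ku = 0$, gives
\begin{equation*}
x\,u_{xx} + \left(\tfrac n2 - x\right) u_x + k u = 0 .
\end{equation*}
This is exactly the generalized Laguerre equation with $\alpha = \frac{n-2}{2}$ and degree $k$, so $u_{\lambda_k} = L_k^{(\alpha)}(cs^2)$ solves it. The asymptotics \eqref{eqn plane eigenfunction asymptotics}--\eqref{eqn plane eigenfunction derivative asymptotics} then follow from the explicit polynomial
\begin{equation*}
L_k^{(\alpha)}(x) = \sum_{j=0}^k (-1)^j \binom{k+\alpha}{k-j}\frac{x^j}{j!} .
\end{equation*}
The leading coefficient is $(-1)^k c^k/k!$, and the constant term is $\binom{k+\alpha}{k} = \frac{\Gamma(k+n/2)}{k!\,\Gamma(n/2)}$. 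The linear coefficient gives the $s\searrow 0$ derivative formula via $\partial_s = 2cs\,\partial_x$.

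For the second solution, write the ODE in Sturm--Liouville form:
\begin{equation*}
\partial_s\!\left(s^{n-1}e^{-cs^2}\partial_s u\right) + 4c\,k\,s^{n-1}e^{-cs^2}u = 0 .
\end{equation*}
Abel's formula then gives Wronskian $W = s^{1-n}e^{cs^2}$, up to normalization, and reduction of order gives $v_{\lambda_k} = u_{\lambda_k}\int_1^s u_{\lambda_k}^{-2}\sigma^{1-n}e^{c\sigma^2}\,d\sigma$. The integrand must be read as $u^{-2}$; the statement's $1/u$ appears to be a typo, since with $u^{-2}$ one gets $W[u,v] = u^2\cdot u^{-2}s^{1-n}e^{cs^2}$. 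This formula is valid on intervals avoiding zeros of $u_{\lambda_k}$ and is extended across those zeros by ODE uniqueness, i.e. by defining $v$ as the solution with the prescribed Wronskian.

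The asymptotics of $v_{\lambda_k}$ come from Laplace-type expansions. As $s\to\infty$, $\int^s \sigma^{-4k+1-n}e^{c\sigma^2}\,d\sigma \sim \frac{1}{2c}s^{-4k-n}e^{cs^2}$; multiplying by $u\sim (-1)^k c^k s^{2k}/k!$ and dividing out the leading coefficient of $u^2$ gives the stated constant. As $s\to0$, the integral is dominated by $\int_1^s \sigma^{1-n}/u(0)^2 = \frac{s^{2-n}}{(2-n)u(0)^2} + O(1)$, and since $n\ge3$ the $O(1)$ and $O(\sigma^{3-n})$ corrections are lower order. The derivative asymptotics follow from $v' = u'\,v/u + u^{-1}s^{1-n}e^{cs^2}$, i.e. from the Wronskian identity.

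For the generalized eigenmode, use variation of parameters with $W$. A particular solution of $(\mathcal L + k)\tilde u = u$ is
\begin{equation*}
\tilde u(s) = (1+\overline c_0^2)\left[ v(s)\int_0^s u^2\sigma^{n-1}e^{-c\sigma^2}\,d\sigma - u(s)\int_{s_*}^s u\,v\,\sigma^{n-1}e^{-c\sigma^2}\,d\sigma \right],
\end{equation*}
where $s_*$ is a fixed base point chosen so the second integral is finite. The factor $1+\overline c_0^2$ arises because the leading coefficient is $(1+\overline c_0^2)^{-1}$.

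Near $s=0$ the first term is $O(s^2)\cdot s^{2-n}$, which is negligible. The leading behavior $s^{2-n}$ with coefficient involving $\int_0^\infty u^2\sigma^{n-1}e^{-c\sigma^2}$ comes from rewriting $\int_0^s = \int_0^\infty - \int_s^\infty$ in the first term and adding a multiple of $v$. I need to match the stated normalization, and there may be a constant-factor convention to fix.

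As $s\to\infty$, the product $uv\,\sigma^{n-1}e^{-c\sigma^2}$ behaves like $\frac{2}{1+\overline c_0^2}\,\sigma^{-1}(1+O(\sigma^{-2}))$. Hence $u\int^s uv\cdots$ produces the $s^{2k}\ln s$ growth with the stated coefficient. Meanwhile $v\int_0^s u^2\cdots$ is $v$ times a constant plus a tail of size $O(s^{2k-2})$, and we subtract the multiple of $v$ so that $\tilde u$ has no exponential growth.

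The main obstacle is this last bookkeeping step: choosing the homogeneous corrections (multiples of $v$ and $u$) so that $\tilde u$ simultaneously has only polynomial-logarithmic growth at infinity and the exact stated coefficient at $0$. I would first choose the $v$-coefficient to kill the growth at infinity, then verify the resulting constant at $s\searrow0$ by expanding $\int_s^\infty u^2\sigma^{n-1}e^{-c\sigma^2}$. The derivative asymptotics follow by differentiating the variation-of-parameters formula, where the terms containing the derivatives of the integrals cancel.
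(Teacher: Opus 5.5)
This is correct and is essentially the paper's proof: the Laguerre substitution, the divergence form and Abel's identity for the Wronskian, reduction of order with $u_{\lambda_k}^{-2}$ (which is what the paper's proof actually uses, so the $1/u$ in the statement is indeed a typo), and variation of parameters with the $v_{\lambda_k}$-multiple fixed by killing exponential growth, which gives a multiple of $-u_{\lambda_k}\int_1^s u_{\lambda_k}v_{\lambda_k}\sigma^{n-1}e^{-\frac{(1+\overline c_0^2)\sigma^2}4}\,d\sigma - v_{\lambda_k}\int_s^\infty u_{\lambda_k}^2\sigma^{n-1}e^{-\frac{(1+\overline c_0^2)\sigma^2}4}\,d\sigma$. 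The constant-factor issue you anticipate is real: the paper takes this expression itself as $\tilde u_{\lambda_k}$ and drops the factor $1+\overline c_0^2$ that you correctly include, so the stated $\tilde u_{\lambda_k}$ constants (at both $s\searrow0$ and $s\nearrow\infty$) actually correspond to the right-hand side $u_{\lambda_k}/(1+\overline c_0^2)$, and your computation will return them multiplied by $1+\overline c_0^2$ rather than ``the stated coefficient'' at infinity --- a harmless discrepancy, since the later arguments only use the signs and orders of these terms.
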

\begin{proof}
    The change of variables $r = \frac{(1+ \overline c_0^2) s^2}4$ transforms the operator
    \begin{gather}
        H_{s, 0} = \frac{\partial_{ss}}{1 + \overline c_0^2} + \left( \frac{n-1}{(1+\overline c_0^2)s} - \frac s2 \right) \partial_s + 1 \\
        \text{to } \quad \tilde H_{r, 0} :=  r \partial_{rr} + \left( \frac n2 - r \right) \partial_r + 1 .
    \end{gather}
    For any $k \in \mathbb N_{\ge 0}$, 
    \begin{equation} \label{proof plane eigenfunction, eqn 1}
        \tilde H_{r, 0} u = (1- k) u \quad \text{or equivalently} \quad 
        r \partial_{rr} u + \left( \frac n2 - r \right) \partial_r u +k u = 0
    \end{equation}
    is the Laguerre differential equation. It has a polynomial solution given by the generalized Laguerre polynomial
        $$u_{\lambda_k} = L_k^{\left( \frac{n-2}2 \right) } (r) = L_k^{\left( \frac{n-2}2 \right) } \left( \frac{(1+ \overline c_0^2) s^2}4 \right), $$
    which is known to have the asymptotics \eqref{eqn plane eigenfunction asymptotics}, \eqref{eqn plane eigenfunction derivative asymptotics} (see e.g. \cite{AS72}*{Ch. 22}).

    $u_{\lambda_k}$ solves $H_{s,0} u = (1-k) u$, which in divergence form becomes
        $$\frac d{ds} \left( s^{n-1} e^{- \frac{(1+\overline c_0^2)s^2}4 } \frac {du}{ds} \right) = - k (1+\overline c_0^2) u .$$
    The associated Wronskian $W$ satisfies
        $$\frac d{ds} \left( s^{n-1} e^{- \frac{(1+\overline c_0^2)s^2}4 } W \right) = 0 \qquad (\forall s > 0).$$
    It follows that
    \begin{equation} \tag{\ref{eqn defn v_lambda_k}}
        v_{\lambda_k}(s) = u_{\lambda_k}(s) \int_1^s \frac1{u_{\lambda_k}^2(\sigma)} \sigma^{1-n} e^{\frac{(1+\overline c_0^2) \sigma^2}4 } d \sigma
    \end{equation}
    gives a solution of $H_{s,0} v_{\lambda_k} = (1-k ) v_{\lambda_k}$ such that $u_{\lambda_k}, v_{\lambda_k}$ are linearly independent and have Wronskian
        $$W[u_{\lambda_k} , v_{\lambda_k} ] = u_{\lambda_k} v'_{\lambda_k} - u_{\lambda_k}' v_{\lambda_k} = s^{1-n} e^{ \frac{(1+\overline c_0^2)s^2}4}.$$
    The asymptotics of $v_{\lambda_k}$ can then be computed from \eqref{eqn defn v_lambda_k} and the asymptotics \eqref{eqn plane eigenfunction asymptotics}, \eqref{eqn plane eigenfunction derivative asymptotics}.

    Finally, for any $k \in \mathbb N_{\ge 0}$, the generalized eigenmode $\tilde u_{\lambda_k} : (0, \infty) \to \R$ solving \eqref{eqn defn plane generalized eigenfunction 1} can be obtained from the variation of parameters formula
    \begin{multline} \label{proof plane eigenfunction, eqn 2}
        \tilde u_{\lambda_k} (s)
        = - u_{\lambda_k}(s) \int_1^s \frac{ v_{\lambda_k} u_{\lambda_k}}{W} d \sigma + v_{\lambda_k} (s)\int_1^s \frac{u_{\lambda_k}^2}{W} d \sigma  
        - v_{\lambda_k} (s)\int_1^\infty \frac{u_{\lambda_k}^2}{W} d\sigma \\
        = - u_{\lambda_k}(s) \int_1^s u_{\lambda_k} v_{\lambda_k} \sigma^{n-1} e^{-\frac{(1+ \overline c_0^2)\sigma^2}4} d \sigma 
        - v_{\lambda_k}(s) \int_s^\infty u_{\lambda_k}^2 \sigma^{n-1} e^{-\frac{(1+ \overline c_0^2)\sigma^2}4} d \sigma .
    \end{multline}
    The asymptotics \eqref{eqn plane generalized eigenfunction asymptotics}, \eqref{eqn plane generalized eigenfunction derivative asymptotics} of $\tilde u_{\lambda_k}, \partial_s \tilde u_{\lambda_k}$ can then be deduced from \eqref{proof plane eigenfunction, eqn 2} and the asymptotics \eqref{eqn plane eigenfunction asymptotics}, \eqref{eqn plane eigenfunction derivative asymptotics}, \eqref{eqn v_lambda_k asymptotics}, \eqref{eqn v_lambda_k derivative asymptotics}.
\end{proof}

\subsubsection{Exterior Eigenfunctions} \label{subsubsection exterior eigenfunctions}

In this subsection, we construct eigenfunctions for $H_{s,a} = L_{s,a} - \frac s2 \partial_s + 1$ outside of a region near $s=0$.
These will be matched with the interior eigenfunctions from Subsection \ref{subsubsection interior eigenfunctions} to form globally defined eigenfunctions for small enough $a>0$.
Here, it will suffice to work with $s>0$ and perform an odd reflection for the matching argument later.

As usual, we consider the operators
\begin{gather}
    \tag{\ref{eqn H_a defn}}
    H_a = H_{s,a} = L_{s, a} - \frac s2 \partial_s + 1 \qquad ( a > 0)\\
    \tag{\ref{eqn H_0 defn}}
    \text{and } H_0 = \frac {\partial_{ss} }{1 + \overline c_0^2 } + \left( \frac{n-1}{(1 + \overline c_0^2)s} - \frac s2 \right) \partial_s + 1 .
\end{gather}
As in Subsection \ref{subsubsection interior eigenfunctions}, we begin by defining relevant weighted $C^2$-norms for the construction of the exterior eigenfunctions.

\begin{definition} \label{defn Y norms}
Given $b, b' \in \R$ and $0 < s_0 < 1$, define weighted $C^2$-norms
\begin{multline} \label{eqn Y^2 norm defn}
    \| u \|_{Y_{s_0}^{b,b'}} = \| u \|_{Y_{s_0}^{2; b,b'}}
    := \sup_{s_0 < s \le 1} \left(s^{-b} |u| + s^{-b+1} |\partial_s u| + s^{-b+2} |\partial_{ss} u| \right) \\
    + \sup_{ s \ge 1} \left(s^{-b'} (|u| + |\partial_{ss} u|) + s^{-b'+1} |\partial_s u|  \right)
\end{multline}
and its ``order 0'' analog
\begin{equation} \label{eqn Y^0 norm defn}
     \| u \|_{Y_{s_0}^{0; b,b'}}
    := \sup_{s_0 < s \le 1} \left(s^{-b} |u|  \right) 
    + \sup_{ s \ge 1} \left(s^{-b'} |u|  \right).
\end{equation}   

These norms naturally define Banach spaces
\begin{gather}
    Y^{2; b,b'}_{s_0} := \left \{ u \in C^2((s_0, \infty)) \, \colon \, \| u \|_{Y^{2; b, b'}_{s_0}} < \infty \right\}  \text{ and} \\
    Y^{0; b, b'}_{s_0} := \left \{ u \in C^0((s_0, \infty)) \, \colon \, \| u \|_{Y^{0; b, b'}_{s_0}} < \infty \right\} .
\end{gather}
\end{definition}

\begin{remark}
    Informally, $Y^{0; b, b'}_{s_0}$ captures functions $u$ that grow at most like $s^b$ near $0$ and at most like $s^{b'}$ near $\infty$.

    The $Y^{2; b,b'}_{s_0}$-norm involves (for $s \ge 1$) $s^{-b'}|\partial_{ss} u|$ but not $s^{-b'+2}|\partial_{ss} u|$ because
    solutions of say
        $$H_0 u = \psi$$
    have
        $$| \partial_{ss} u | \lesssim |s \partial_s u| + |u| + |\psi| \qquad (\text{for } s \gg 1),$$
    but 
    $$| s^2 \partial_{ss} u | \not \lesssim |s \partial_s u| + |u| + |\psi| \qquad (\text{for } s \gg 1)$$
    because of the presence of the drift term $- \frac s2 \partial_s$ in $H_0$.
\end{remark}

\begin{lem}[Exterior Eigenfunctions] \label{lem exterior eigenfunctions}
    Let $n \ge 3$, $K \in \mathbb N$, and $0 < s_0 < 1$.
    There exists $0 < \tilde \lambda^* (n, K, s_0) \ll 1$ and $0 < a^* (n, K, s_0) \ll 1$ sufficiently small depending only on $n,K, s_0$ such that the following holds:
    
    For any $0 \le | \tilde \lambda| < \tilde \lambda^*(n, K, s_0) \ll 1$,
    and $ 0 < a <  a^* (n, K,s_0) \ll 1$, 
    there exists a smooth function $\phi_{ext} : [s_0, \infty) \to \R , \phi_{ext} = \phi_{ext; \tilde \lambda , a, s_0, K,n} (s)$ such that 
    \begin{equation} \label{eqn exterior eigenfunction}
        (L_{s,a} - \frac s2 \partial_s + 1 )  \phi_{ext} = ( 1 - K + \tilde \lambda) \phi_{ext}    \qquad \text{for all } s \in (s_0, \infty)
    \end{equation}
    and such that $\phi_{ext}$ has the form 
    \begin{equation} \label{eqn exterior eigenfunctions ansatz}
        \phi_{ext} = u_{\lambda_K} + \tilde \lambda ( \tilde u_{\lambda_K} + \tilde v ) + \tilde w   
    \end{equation}
    where $u_{\lambda_K}$ ($\tilde u_{\lambda_K}$) is the (generalized) eigenfunction of $H_0$ with eigenvalue $1-K$ as in Proposition \ref{prop plane eigenfunction},
    and $\tilde v, \tilde w$ satisfy the following estimates: 
    for any $2K < b' \le  2K +1$ there exists $C = C(n, K, s_0, b') > 0$ such that 
    \begin{gather} \label{eqn exterior eigenfunctions, correction term ests}
        \begin{aligned}
        \| \tilde v \|_{Y^{2; 2-n, b'}_{s_0}} &\le C | \tilde \lambda |, &
        \partial_a \tilde v &= 0, &
        \| \partial_{\tilde \lambda} \tilde v \|_{Y^{2; 2-n, b'}_{s_0}} &\le C, 
        \\
        \| \tilde w \|_{Y^{2; 2-n, b'}_{s_0}} &\le C a^{n}, &
        \| \partial_a \tilde w \|_{Y^{2; 2-n, b'}_{s_0}} &\le C a^{n-1}, &
        \| \partial_{\tilde \lambda} \tilde w \|_{Y^{2; 2-n, b'}_{s_0}} &\le C a^{n} 
        \end{aligned}
    \end{gather}
    when additionally $0 < |\tilde \lambda | , a \ll  1$ are sufficiently small depending on $n,K,s_0$, and also $b'$.
    Moreover, $\phi_{ext}$ depends smoothly on $(\tilde \lambda, a, s)$.
\end{lem}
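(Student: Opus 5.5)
We plug the ansatz \eqref{eqn exterior eigenfunctions ansatz} into \eqref{eqn exterior eigenfunction} and solve for the correction terms by a contraction mapping on $(s_0,\infty)$. The key is a right inverse for $H_0-(1-K)$ that kills the exponentially growing solution $v_{\lambda_K}$ at $+\infty$. Write $H_a = H_0 + (L_{s,a}-L_{s,0})$. For $s\ge s_0$ and $a\ll s_0$, Lemma \ref{lem: profile function of Lawlor neck} gives $f_a-\overline c_0 s = O(a^n s^{1-n})$ together with the corresponding derivative bounds. Hence the coefficients of $E_a := L_{s,a}-L_{s,0}$ are $O(a^n s^{-n})$ on $[s_0,\infty)$, with constants depending on $s_0$, and their $a$-derivatives are $O(a^{n-1}s^{-n})$. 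Using $[H_0-(1-K)]u_{\lambda_K}=0$ and $[H_0-(1-K)]\tilde u_{\lambda_K}=u_{\lambda_K}$, the equation becomes two problems. The first is independent of $a$:
\begin{equation*}
[H_0-(1-K)-\tilde\lambda]\tilde v = \tilde\lambda\,\tilde u_{\lambda_K}.
\end{equation*}
The second carries all the $a$-dependence:
\begin{equation*}
[H_0-(1-K)-\tilde\lambda]\tilde w = -E_a\bigl(u_{\lambda_K}+\tilde\lambda(\tilde u_{\lambda_K}+\tilde v)+\tilde w\bigr).
\end{equation*}
Putting all the $a$-dependence into $\tilde w$ is exactly what gives $\partial_a\tilde v=0$.

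For the right inverse I will use variation of parameters with the basis $u_{\lambda_K},v_{\lambda_K}$ and the Wronskian \eqref{eqn defn plane Wronskian}. Define
\begin{equation*}
\mathcal R\psi(s) = -u_{\lambda_K}(s)\int_{1}^{s}\frac{v_{\lambda_K}\psi}{W}\,d\sigma\,(1+\overline c_0^2) - v_{\lambda_K}(s)\int_s^\infty\frac{u_{\lambda_K}\psi}{W}\,d\sigma\,(1+\overline c_0^2).
\end{equation*}
The normalization is chosen as in \eqref{proof plane eigenfunction, eqn 2}, and the lower limit of the first integral may be adjusted near $s_0$. The main estimate is
\begin{equation*}
\|\mathcal R\psi\|_{Y^{2;2-n,b'}_{s_0}}\le C(n,K,s_0,b')\,\|\psi\|_{Y^{0;-n,b'}_{s_0}}, \qquad 2K<b'\le 2K+1.
\end{equation*}
This follows from the asymptotics \eqref{eqn plane eigenfunction asymptotics}--\eqref{eqn v_lambda_k derivative asymptotics}. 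On $[s_0,1]$ everything is bounded by constants depending on $s_0$, so the weight $s^{2-n}$ there is harmless. For $s\ge1$ the second term is bounded by $s^{-2K-n}e^{cs^2}\int_s^\infty \sigma^{2K+b'+n-1}e^{-c\sigma^2}\,d\sigma\lesssim s^{b'-2}$. The first term is bounded by $s^{2K}\int_1^s \sigma^{-2K-n}\sigma^{b'}\sigma^{n-1}\,d\sigma\lesssim s^{b'}$, which uses $b'>2K$. Derivative bounds follow the same way, and the $\partial_{ss}$ bound follows from the equation, as explained in the remark after Definition \ref{defn Y norms}.

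Next I treat the $\tilde\lambda$-term as a perturbation: $\|\tilde\lambda\,\mathcal R u\|\le C|\tilde\lambda|\,\|u\|$ in $Y^{2;2-n,b'}$, because $Y^{2;2-n,b'}\subset Y^{0;-n,b'}$ with a constant depending on $s_0$. So for $|\tilde\lambda|<\tilde\lambda^*$ the map $u\mapsto\mathcal R(\tilde\lambda u+\psi)$ is a contraction. The source $\tilde\lambda\tilde u_{\lambda_K}\sim \tilde\lambda s^{2K}\ln s$ lies in $Y^{0;-n,b'}$ precisely because $b'>2K$, and this gives $\|\tilde v\|\le C|\tilde\lambda|$.

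For $\tilde w$, the source satisfies $|E_a g|\lesssim a^n s^{-n}(|g|+s|g_s|+|g_{ss}|)$. This maps $Y^{2;2-n,b'}$ and the functions $u_{\lambda_K},\tilde u_{\lambda_K}$ into $Y^{0;-n,b'}$ with norm $\lesssim a^n$. The $s^{-n}$ decay is more than enough at infinity, and near $s_0$ the constants depend on $s_0$. A second contraction for $a<a^*(n,K,s_0)$ then gives $\|\tilde w\|\le Ca^n$.

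Smooth dependence on $(\tilde\lambda,a,s)$ comes from the smooth dependence of the fixed point on parameters. To estimate $\partial_a\tilde w$ and $\partial_{\tilde\lambda}\tilde w,\partial_{\tilde\lambda}\tilde v$, I differentiate the fixed-point equations and reapply the same inverse estimate. This gives $a^{n-1}$ for $\partial_a\tilde w$, coming from $\partial_aE_a$, and $O(1)$ and $a^n$ for the $\tilde\lambda$-derivatives. I expect the main obstacle to be the weighted bound for $\mathcal R$ at infinity, in particular checking that the $\ln s$ growth of $\tilde u_{\lambda_K}$ and the range $2K<b'\le2K+1$ are compatible, and that $\partial_{ss}$ is controlled only with the weaker weight at infinity.
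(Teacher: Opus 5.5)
Your proposal is correct and follows essentially the same route as the paper: the same splitting into an $a$-independent equation for $\tilde v$ and an $a$-dependent equation for $\tilde w$, the same variation-of-parameters inverse of $H_0-(1-K)$ that suppresses $v_{\lambda_K}$ at infinity, and the same contraction and differentiated fixed-point arguments driven by the pointwise bounds on $H_a-H_0$ and $\partial_a H_a$. The differences are inessential: the paper builds $\tilde v$ on $(0,\infty)$ using source weights $b\le 2-n$, $b\neq -n$, with $s_0$-independent constants. It also makes explicit one step you leave implicit, namely that the fixed points for different $b'$ coincide, via $Y^{2;2-n,b'}_{s_0}\subset Y^{2;2-n,2K+1}_{s_0}$ and uniqueness; this is what lets a single $\phi_{ext}$, constructed with $\tilde\lambda^*(n,K,s_0)$, satisfy the estimates for every $b'\in(2K,2K+1]$.
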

\begin{proof}
    Write $H_a = L_a - \frac s2 \partial_s + 1 $ as usual \eqref{eqn H_a defn}.
    Throughout the proof, we write $u_K = u_{\lambda_K}$, $v_K = v_{\lambda_K}$, and $\tilde u_K = \tilde u_{\lambda_K}$ to simplify the notation.
    Using \eqref{eqn exterior eigenfunctions ansatz} and Proposition \ref{prop plane eigenfunction}, a computation reveals
    \begin{multline} \label{proof exterior eigenfunctions, eqn 1}
        H_a \phi_{ext} - ( 1- K + \tilde \lambda ) \phi_{ext}
        = (H_a - H_0 ) \phi_{ext} 
        - \tilde \lambda^2 \tilde u_K
        + \tilde \lambda H_0 \tilde v - \tilde \lambda ( 1 - K + \tilde \lambda ) \tilde v \\
        + H_0 \tilde w - ( 1 - K + \tilde \lambda ) \tilde w.
    \end{multline}
    Thus, $\phi_{ext} = u_K + \tilde \lambda ( \tilde u_K + \tilde v) + \tilde w$ solves \eqref{eqn exterior eigenfunction} if
    \begin{align}
        \label{proof exterior eigenfunctions, eqn 2}
        H_0 \tilde v - ( 1 - K + \tilde \lambda ) \tilde v -  \tilde \lambda  \tilde u_K &=0 \text{ and} \\
        \label{proof exterior eigenfunctions, eqn 3}
        H_0 \tilde w - ( 1 - K + \tilde \lambda ) \tilde w + ( H_a - H_0) \phi_{ext} &= 0 .
    \end{align}
    The existence of $\tilde v, \tilde w$ solving \eqref{proof exterior eigenfunctions, eqn 2}, \eqref{proof exterior eigenfunctions, eqn 3} will be obtained by a fixed point argument, for suitable choices of the parameters $\tilde \lambda,a, s_0$.
    We first prove the following claim, which will be useful for the fixed point arguments that follow.
    \begin{claim} \label{proof exterior eigenfunctions, claim 1}
        Let $n \ge 3$ and $K \in \mathbb N$.
        For any $0 \le s_0 < 1$ and any $\psi \in C^0((s_0, \infty))$, there exists a solution $u \in C^2((s_0, \infty))$ of
        \begin{equation} \label{proof exterior eigenfunctions, eqn 5}
            H_0 u - ( 1 -K) u = \psi \qquad (\forall s > s_0 ) 
        \end{equation}
        given by
        \begin{multline} \label{proof exterior eigenfunctions, eqn 6}
            u (s) 
            = [(H_0 - (1-K))^{-1} \psi] (s)
            := -u_K(s) \int_1^s v_K (\sigma) \psi(\sigma) \sigma^{n-1} e^{- \frac{(1+ \overline c_0^2)\sigma}4 } d \sigma \\
            - v_K (s) \int_s^\infty u_K (\sigma ) \psi(\sigma) \sigma^{n-1} e^{- \frac{(1+ \overline c_0^2)\sigma}4 } d \sigma 
        \end{multline}
        where $u_K, v_K$ are as in Proposition \ref{prop plane eigenfunction}.

        Moreover, if $b, b' \in \R$ are such that
        \begin{equation} \label{proof exterior eigenfunctions, eqn 4}
            b \le 2-n, \qquad b \ne -n, \qquad \text{and} \qquad b' > 2K,
        \end{equation}
        then 
        \begin{equation} \label{proof exterior eigenfunctions, eqn 7}
           \psi \in Y^{0; b, b'}_{s_0} \qquad \implies \qquad u \in Y^{2; b, b'}_{s_0} \quad \text{and} \quad  \| u \|_{Y^{2; b, b'}_{s_0}} \le C \| \psi \|_{Y^{0; b, b'}_{s_0}}
        \end{equation}
        where $ C = C (n, K, b, b') > 0$ depends only on $n, K, b, b'$ (but is independent of $s_0$).
    \end{claim}
    \begin{claimproof}
        Recall from Proposition \ref{prop plane eigenfunction} that $u_K, v_K$ solve the homogeneous equation $(H_0 - (1-K) ) u = 0$.
        A straightforward computation then shows that, for any $\psi \in C^0((s_0, \infty))$, $u$ defined by \eqref{proof exterior eigenfunctions, eqn 6} is in $C^2((s_0, \infty))$ and satisfies $(H_0 - ( 1- K) ) u= 0$.

        The estimate \eqref{proof exterior eigenfunctions, eqn 7} follows from tedious but straightforward estimates using \eqref{proof exterior eigenfunctions, eqn 6} and the asymptotics of $u_K, v_K$ from Proposition \ref{prop plane eigenfunction}.
        For example, letting $C = (n, K, b, b') > 0$ be a constant depending only on $n, K, b,b'$ which may change from line to line, it follows that 
    \begin{align*}
        &\sup_{s \in (s_0, 1]} \left| s^{-b} v_K(s)  \int_s^\infty u_K(\sigma) \psi(\sigma) \sigma^{n-1} e^{- \frac{(1+ \overline c_0^2)\sigma^2}4} d \sigma \right|  
        \\
        \le{}& \sup_{s \in (s_0, 1]} C s^{2-n-b} \left( \int_s^1 |u_K| |\psi| \sigma^{-b} \sigma^{n-1+b} e^{- \frac{(1+ \overline c_0^2)\sigma^2}4} \right.
        \\ & \qquad \qquad \left. + \int_1^\infty |u_K| |\psi| \sigma^{-b'} \sigma^{n-1+b'} e^{- \frac{(1+ \overline c_0^2)\sigma^2}4} \right) 
            && (\text{by } \eqref{eqn v_lambda_k asymptotics}) 
        \\
        \le{}& \sup_{s \in (s_0, 1]} C s^{2-n-b}  \left( \int_s^1|u_K|  \sigma^{n-1+b} e^{- \frac{(1+ \overline c_0^2)\sigma^2}4} \right. \\
        & \qquad \qquad \left. + \int_1^\infty |u_K|  \sigma^{n-1+b'} e^{- \frac{(1+ \overline c_0^2)\sigma^2}4} \right) \| \psi \|_{Y^{0;b,b'}_{s_0}} 
        \\
        \le{}& \sup_{s \in (s_0, 1]} C s^{2-n-b}  \left( \int_s^1  \sigma^{n-1+b}  +    \int_1^\infty \sigma^{2K} \sigma^{n-1+b'} e^{- \frac{(1+ \overline c_0^2)\sigma^2}4} \right) \| \psi \|_{Y^{0;b,b'}_{s_0}} 
            && (\text{by } \eqref{eqn plane eigenfunction asymptotics}) 
        \\
        \le{}& \sup_{s \in (s_0, 1]} C s^{2-n-b}  \left( s^{n+ b} + 1  \right) \| \psi \|_{Y^{0;b,b'}_{s_0}} && (\text{since } b \ne -n)
        \\
        ={}&  \sup_{s \in (s_0, 1]}C  \left( s^2 + s^{2-n-b} \right) \| \psi \|_{Y^{0;b,b'}_{s_0}} 
        \\
        \le{}& C \| \psi \|_{Y^{0; b, b'}_{s_0}}.
    \end{align*}
    where the last inequality uses the fact that $b \le 2-n$ and $0 \le s_0 < 1$.
    Analogous estimates apply to estimate the other integral term on the right-hand side of \eqref{proof exterior eigenfunctions, eqn 6} and to estimate the supremum over $s \in[1,\infty)$.
    The details are left as an exercise for the reader.
    (Note that the term $v_K(s)  \int_s^\infty u_K(\sigma) \psi(\sigma) \sigma^{n-1} e^{- \frac{(1+ \overline c_0^2)\sigma^2}4} d \sigma$ in \eqref{proof exterior eigenfunctions, eqn 6} is the dominating term for $s_0 < s \le 1$,
    and the term $u_K(s) \int_1^s v_K (\sigma) \psi(\sigma) \sigma^{n-1} e^{- \frac{(1+ \overline c_0^2)\sigma}4 } d \sigma$ in \eqref{proof exterior eigenfunctions, eqn 6} is the dominating term as $s \nearrow +\infty$.)

    Differentiating \eqref{proof exterior eigenfunctions, eqn 6} gives
    \begin{multline}
        \partial_s u = - (\partial_s u_K) \int_1^s v_K(\sigma ) \psi(\sigma) \sigma^{n-1} e^{- \frac{(1+ \overline c_0^2)\sigma^2}4} d\sigma \\
        - (\partial_s v_K) \int_s^\infty u_K(\sigma) \psi(\sigma) \sigma^{n-1} e^{- \frac{(1+ \overline c_0^2)\sigma^2}4} d \sigma .
    \end{multline}
    This quantity can be estimated pointwise using the asymptotic estimates from Proposition \ref{prop plane eigenfunction}.
    Finally, $H_0 u - ( 1 - K) u = \psi$ implies
    $$\partial_{ss} u = ( 1 + \overline c_0^2) \left\{ \psi + ( 1- K ) u - \frac{ n-1}{(1 + \overline c_0^2) s } \partial_s u + \frac s2 \partial_s u - u\right\} ,$$
    and the pointwise estimates for $u, \partial_s u, \psi$ thus give pointwise estimates for $\partial_{ss} u$.    
    Estimate \eqref{proof exterior eigenfunctions, eqn 7} follows.
    The details are left as an exercise for the reader.
    \end{claimproof}

    \begin{claim} \label{proof exterior eigenfunctions, claim 2}
        For $s_0 = 0$, $b = 2-n$, any $b' > 2K$, and any $0 \le | \tilde \lambda | \le \tilde \lambda^* (n, K, b') \ll 1 $ sufficiently small (depending only on $n, K, b'$),
        the function
        \begin{equation} \label{proof exterior eigenfunctions, eqn 7.5}
        F = F_{\tilde \lambda}^{ b'}: Y^{0; 2-n, b'}_{0} \to Y^{0; 2-n, b'}_{0} \qquad \text{given by } F[u] = [H_0 - ( 1 - K)]^{-1} ( \tilde \lambda u + \tilde \lambda \tilde u_K) 
        \end{equation}
        is well-defined and has a unique fixed point $\tilde v = \tilde v_{\tilde \lambda, b'} \in Y^{0; 2-n ,b'}_{0}$.
        Moreover, the unique fixed point $\tilde v:(0, \infty) \to \R$ is smooth, solves \eqref{proof exterior eigenfunctions, eqn 2}, depends smoothly on $(\tilde \lambda, a, s)$, and satisfies estimates
        \begin{equation} \label{proof exterior eigenfunctions, eqn 7.7}
            \| \tilde v \|_{Y^{2; 2-n, b'}_{0}} \le C | \tilde \lambda |, \qquad 
            \partial_a \tilde v = 0, \qquad \text{and} \qquad 
            \| \partial_{\tilde \lambda} \tilde v \|_{Y^{2; 2-n, b'}_{0}} \le C
        \end{equation}
        for some $C = C(n, K, b') > 0$.
    \end{claim}
    \begin{claimproof}
    For $s_0 = 0$, $b = 2-n$, any $b' > 2K$, and any $0 \le | \tilde \lambda | \le 1$, consider the map
    $F = F^{b'}_{\tilde \lambda}$ given by \eqref{proof exterior eigenfunctions, eqn 7.5}.
    By the asymptotics of $\tilde u_K$ \eqref{eqn plane generalized eigenfunction asymptotics}, $\| \tilde u_K \|_{Y^{0; 2-n, b'}_{0}} \le C(n, K, b') <  \infty$ for all $b' > 2K$.
    Thus, Claim \ref{proof exterior eigenfunctions, claim 1} implies $F : Y^{0; 2-n, b'}_{0} \to Y^{0; 2-n, b'}_{0}$ is well-defined for any $b' > 2K$ and $|\tilde \lambda | \le 1$.
    Moreover, it follows from \eqref{proof exterior eigenfunctions, eqn 7} that $F : Y^{0; 2-n, b'}_{0} \to Y^{0; 2-n, b'}_{0}$ is a contraction mapping if also $|\tilde \lambda | \ll 1$ is sufficiently small depending only on $n, K, b'$.
    In this case, the contraction mapping theorem applies to give a unique fixed point $\tilde v  \in Y^{0; 2-n, b'}_{0}$ with $F[\tilde v ] = \tilde v$.
    It follows from Claim \ref{proof exterior eigenfunctions, claim 1} that $\tilde v \in Y^{2; 2-n, b'}_{0}$, $\tilde v$ solves \eqref{proof exterior eigenfunctions, eqn 2}, and 
    \begin{equation} \label{proof exterior eigenfunctions, eqn 8}
        \| \tilde v \|_{Y^{2; 2-n, b'}_{0}} \le C(n, K, b') | \tilde \lambda | 
    \end{equation}
    if $b' > 2K$ and $|\tilde \lambda | \ll 1$ is sufficiently small depending only on $n, K,b'$.
    Since $\tilde u_K$ is smooth and $\tilde v$ solves \eqref{proof exterior eigenfunctions, eqn 2}, $\tilde v$ is also smooth as a function of $s$.
    
    The coefficients of the linear ODE \eqref{proof exterior eigenfunctions, eqn 2} for $\tilde v$ depend smoothly on $\tilde \lambda $, and the construction of $\tilde v$ implies $\tilde v (1), \partial_s \tilde v(1)$ depend smoothly on $\tilde \lambda$.
    Hence, $\tilde v$ is a smooth function of $(\tilde \lambda, s)$.
    Since $F$ doesn't depend on $a$, $\tilde v$ is independent of $a$ and $\partial_a \tilde v = 0$.
    The estimate for $\partial_{\tilde \lambda} \tilde v$ \eqref{proof exterior eigenfunctions, eqn 7.7} follows from differentiating both sides of the equality
        $$\tilde v =  [ H_0 - ( 1-K)]^{-1} ( \tilde \lambda \tilde v + \tilde \lambda \tilde u_K)$$
    with respect to $\tilde \lambda$ and using \eqref{proof exterior eigenfunctions, eqn 7} and \eqref{proof exterior eigenfunctions, eqn 8}.
    \end{claimproof}

    \begin{claim} \label{proof exterior eigenfunctions, claim 3}
        For $s_0= 0$, $b = 2-n$, any $b' > 2K$, and any $0 \le | \tilde \lambda | \le \tilde \lambda^* (n, K, b') \ll 1$ sufficiently small (depending only on $n, K, b'$),
        let $\tilde v = \tilde v_{\tilde \lambda , b'}$ denote the unique fixed point of $F = F^{ b'}_{\tilde \lambda }$ from Claim \ref{proof exterior eigenfunctions, claim 2}.

        If $2K < b' \le b'_+$, then $\tilde v _{\tilde \lambda, b'_+} = \tilde v_{\tilde \lambda , b'} $ for all $0 \le |\tilde \lambda | \le \tilde \lambda^* (n, K, b') \ll 1$.
    \end{claim}
    \begin{claimproof}
        By definition of the norms, there's an inclusion $Y^{0; 2-n, b'}_{0} \subset Y^{0; 2-n, b'_+}_{0}$.
        It then follows from the definition of $F = F^{ b'}_{\tilde \lambda }$ \eqref{proof exterior eigenfunctions, eqn 7.5} that
        $F^{b'}_{\tilde \lambda} : Y^{0; 2-n, b'}_{0} \to Y^{0; 2-n, b'}_{0}$ is the restriction of $F^{b'_+}_{\tilde \lambda}: Y^{0; 2-n, b'_+}_{0} \to Y^{0; 2-n, b'_+}_{0}$, that is $F^{ b'}_{\tilde \lambda} = F^{b'_+}_{\tilde \lambda } |_{Y^{0; 2-n, b'}_{0}}$.
        Thus, $\tilde v_{\tilde \lambda , b'} $ is a fixed point of $F^{b'_+}_{\tilde \lambda }$.
        By uniqueness of the fixed point (Claim \ref{proof exterior eigenfunctions, claim 2}), $\tilde v _{\tilde \lambda, b'_+} = \tilde v_{\tilde \lambda , b'} .$
    \end{claimproof}

    By Claims \ref{proof exterior eigenfunctions, claim 2} and \ref{proof exterior eigenfunctions, claim 3}, the function $\tilde v = \tilde v_{\tilde \lambda, b'=2K+1}$, which is defined for any $0 \le |\tilde \lambda | \le \tilde \lambda^* (n, K, b'=2K+1) \ll 1$ sufficiently small depending only on $n$ and $K$, 
    is a smooth function $\tilde v : (0, \infty) \to \R$ solving \eqref{proof exterior eigenfunctions, eqn 2}.
    Moreover, for any $2K < b' \le 2K+1$, if also $0 \le |\tilde \lambda | < \tilde \lambda^* (n, K, b') \ll 1$ is sufficiently small depending only on $n, K,$ and $b'$,
    then $\tilde v = \tilde v_{\tilde \lambda , 2K+1} = \tilde v_{\tilde \lambda, b'}$ and Claim \ref{proof exterior eigenfunctions, claim 2} implies estimates \eqref{eqn exterior eigenfunctions, correction term ests} hold.

    \begin{claim} \label{proof exterior eigenfunctions, claim 4}
        For any $0 < s_0 < 1$, $b = 2-n$, and  $b' > 2K$,
        there exists $ 0 < \tilde \lambda^* (n, K, s_0, b') \ll 1$ and $ 0 < a^* (n, K, s_0,  b') \ll 1$ sufficiently small depending only on $n, K, s_0, b'$ such that the following holds:
        
        If $0 \le |\tilde \lambda | \le \tilde \lambda^* \ll 1$ and $ 0 < a \le a^* \ll 1$ and
        $\tilde v = \tilde v_{\tilde \lambda, 2K+1} = \tilde v_{\tilde \lambda , b'}$ as in Claims \ref{proof exterior eigenfunctions, claim 2} and \ref{proof exterior eigenfunctions, claim 3},
        then the function
        \begin{multline} \label{proof exterior eigenfunctions, eqn 9}
            G = G^{b'}_{\tilde \lambda,a, s_0}: Y^{2; 2-n, b'}_{s_0} \to Y^{2; 2-n, b'}_{s_0} , \\ \text{given by }
            G w = (H_0 - 1 + K )^{-1} \left[ \tilde \lambda w - ( H_a - H_0) w - (H_a - H_0) (u_K + \tilde \lambda (\tilde u_K + \tilde v) ) \right]
        \end{multline}
        is well-defined and has a unique fixed point $\tilde w = \tilde w_{\tilde \lambda, a, s_0, b'} \in Y^{2; 2-n, b'}_{s_0}$.
        Moreover, $\tilde w : (s_0, \infty) \to \R$ is smooth, solves \eqref{proof exterior eigenfunctions, eqn 3}, is a smooth function of $(\tilde \lambda, a, s)$, and satisfies the estimates
        \begin{equation} \label{proof exterior eigenfunctions, eqn 9.5}
            \| \tilde w \|_{Y^{2; 2-n, b'}_{s_0}} \le C a^n, \qquad \| \partial_a \tilde w \|_{Y^{2;2-n,b'}_{s_0}} \le C a^{n-1} , \qquad \text{and} \qquad \| \partial_{\tilde \lambda} \tilde w \|_{Y^{2; 2-n, b'}_{s_0}}\le C a^n
        \end{equation}
        for some $C = C(n, K,s_0,  b' ) > 0$.
    \end{claim}
    \begin{claimproof}
        First note that by definition of the norms $Y$ (Definition \ref{defn Y norms}),
        \begin{equation} \label{proof exterior eigenfunctions, eqn 10}
            \| \partial_{ss}u \|_{Y_{s_0}^{0; b, b'}} +\| \partial_s u \|_{Y_{s_0}^{0; b, b'}} \le \| u \|_{Y^{2; b+2, b'}_{s_0}}\le \frac 1 {s_0^2} \| u \|_{Y^{2; b, b'}_{s_0}} \lesssim_{s_0} \| u \|_{Y^{2; b, b'}_{s_0}}   .
        \end{equation}
        When $0 < s_0 < 1$, $b' > 2K$, and $|\tilde \lambda | \ll1 $ is sufficiently small depending only on $n, K, b'$, it follows that
        \begin{gather} \label{proof exterior eigenfunctions, eqn 11}
        \begin{aligned}
            &\| G w \|_{Y^{2; 2-n, b'}_{s_0}} \\
            \lesssim&{}_{n, K, b'} | \tilde \lambda | \|  w \|_{Y^{0; 2-n, b'}_{s_0}}
            + \| (H_a - H_0)  w \|_{Y^{0; 2-n, b'}_{s_0}} \\
            &+ \| (H_a - H_0 )(u_K + \tilde \lambda ( \tilde u_K + \tilde v) ) \|_{Y^{0; 2-n, b'}_{s_0}} 
            && (\text{by \eqref{proof exterior eigenfunctions, eqn 7}})\\
            \lesssim&{}_{n, s_0} | \tilde \lambda | \|  w \|_{Y^{2; 2-n, b'}_{s_0}}
            + \| a^n s^{-n} | \partial_{ss} w | + a^n s^{-n-1} | \partial_s w|  \|_{Y^{0; 2-n, b'}_{s_0}} \\
            &+ \| a^n s^{-n} | \partial_{ss} u_K | + a^n s^{-n-1} | \partial_s u_K| \|_{Y^{0; 2-n, b'}_{s_0}} \\
            &+ |\tilde \lambda | \| a^n s^{-n} | \partial_{ss} ( \tilde u_K + \tilde v) | + a^n s^{-n-1} | \partial_s ( \tilde u_K + \tilde v ) | \|_{Y^{0; 2-n, b'}_{s_0}}
            && (\text{by Lemma \ref{lem H_a - H_0 pointwise est}}) \\
            \lesssim&{}_{s_0} |\tilde \lambda | \| w \|_{Y^{2; 2-n, b'}_{s_0}}
            + a^n \| w\|_{Y^{2; 2-n, b'}_{s_0}} \\
            &+ a^n \| u_K \|_{Y^{2; 2-n, b'}_{s_0}}
            + |\tilde \lambda|   a^n \| \tilde u_K \|_{Y^{2; 2-n, b'}_{s_0}}  
            +|\tilde \lambda | a^n \| \tilde v \|_{Y^{2; 2-n, b'}_{s_0}} 
            && (\text{by \eqref{proof exterior eigenfunctions, eqn 10}}) \\
            \le{}& ( | \tilde \lambda | + a^n ) \| w \|_{Y^{2; 2-n, b'}_{s_0}} 
            +  C(n, K, s_0, b') \cdot a^n
        \end{aligned}
        \end{gather}
        where the last line uses the fact that $\| \tilde v \|_{Y^{2; 2-n, b'}_{s_0}} \le C(n, K, s_0, b') $ for $b' > 2K$ and $|\tilde \lambda | \ll 1$ sufficiently small depending only on $n, K, b'$ (see \eqref{eqn exterior eigenfunctions, correction term ests})
        and uses the fact that the asymptotics of $u_K, \tilde u_K$ from Proposition \ref{prop plane eigenfunction} implies $\| u_K \|_{Y^{2; 2-n, b'}_{s_0}} + \| \tilde u_K \|_{Y^{2; 2-n, b'}_{s_0}} \le C(n,K, b')$.
        In particular, $G : Y^{2; 2-n, b'}_{s_0} \to Y^{2 ; 2-n, b'}_{s_0}$ is well-defined for $0 < s_0 < 1$, $b' > 2K$, and $|\tilde \lambda | \ll 1$ sufficiently small depending only on $n, K, b'$.
        
        A similar estimate as above applies to show that for all $w_1, w_2 \in Y^{2; 2-n, b'}_{s_0}$
        \begin{gather}\label{proof exterior eigenfunctions, eqn 11.5}
        \begin{aligned} 
            \| G w_1 - G w_2 \|_{Y^{2; 2-n, b'}_{s_0}} &\le C(n, K, b',s_0) ( |\tilde \lambda | + a^n ) \| w_1 - w_2 \|_{Y^{2; 2-n, b'}_{s_0}} \\
            &< \frac12 \| w_1 - w_2 \|_{Y^{2; 2-n, b'}_{s_0}}
        \end{aligned} \end{gather}        
        when $0 < s_0 < 1$, $b' > 2K$, and $a, |\tilde \lambda | \ll 1$ are sufficiently small depending only on $n, K, s_0, b'$.
        In this case, the contraction mapping theorem applies to give the existence of a unique fixed point $\tilde w = \tilde w_{\tilde \lambda,a,  s_0,b'} \in Y^{2; 2-n, b'}_{s_0}$, $G \tilde w = \tilde w$.
        By the definition of $[H_0 - (1-K)]^{-1}$ \eqref{proof exterior eigenfunctions, eqn 6}, this fixed point $\tilde w :(s_0, \infty) \to \R$ solves \eqref{proof exterior eigenfunctions, eqn 3} and is smooth since $u_K, \tilde u_K, \tilde v$ are smooth.
        By estimate \eqref{proof exterior eigenfunctions, eqn 11},
        \begin{gather} \label{proof exterior eigenfunctions, eqn 12}
        \begin{aligned}
            \| \tilde w \|_{Y^{2; 2-n,b'}_{s_0}} 
            = \| G \tilde w \|_{Y^{2; 2-n,b'}_{s_0}}  
            \le C(n, K, s_0, b') ( | \tilde \lambda | + a^n ) \| \tilde w \|_{Y^{2; 2-n,b'}_{s_0}} + C(n, K, s_0,b') a^n \\
            \le \frac12 \| \tilde w \|_{Y^{2; 2-n,b'}_{s_0}} + C(n, K, s_0, b') a^n.
        \end{aligned}
        \end{gather}
        Subtracting $\frac12 \| \tilde w \|_{Y^{2; 2-n,b'}_{s_0}}$ from both sides proves
        $\| \tilde w \|_{Y^{2;2-n, b'}_{s_0}} \le C(n, K, s_0, b') \cdot a^n$.

        Because the coefficients of \eqref{proof exterior eigenfunctions, eqn 3} are smooth functions of $(\tilde \lambda, a)$, $\tilde v$ depends smoothly on $(\tilde \lambda , a,s)$, and $\tilde w(1), \partial_s\tilde w (1)$ depend smoothly on $(\tilde \lambda, a)$, it follows that $\tilde w$ depends smoothly on $(\tilde \lambda, a,s)$.
        Differentiating both sides of 
        \begin{equation}
            \tilde w = G \tilde w = (H_0 - 1 + K)^{-1} \left[ \tilde \lambda w - ( H_a - H_0) w - (H_a - H_0) (u_K + \tilde \lambda (\tilde u_K + \tilde v) ) \right]
        \end{equation}
        with respect to $\tilde \lambda$ and $a$ shows 
        \begin{equation} \label{proof exterior eigenfunctions, eqn 13}
            \partial_{\tilde \lambda} \tilde w = (H_0 - 1 +K)^{-1} \left[ \tilde w+ \tilde \lambda \partial_{\tilde \lambda} \tilde w - (H_a -H_0) \partial_{\tilde \lambda } \tilde w - (H_a - H_0 ) (\tilde u_K + \tilde v + \tilde \lambda \partial_{\tilde \lambda } \tilde v )  \right] 
        \end{equation} 
        \begin{multline} \label{proof exterior eigenfunctions, eqn 14}
            \text{and } \partial_a \tilde w = (H_0 - 1 + K)^{-1} \left[ \tilde \lambda \partial_a \tilde w - (\partial_a H_a) \tilde w - (H_a - H_0) \partial_a \tilde w \right.\\
             \left.- (\partial_a H_a) ( u_K + \tilde \lambda ( \tilde u_K + \tilde v)) \right] 
        \end{multline}
        where $\partial_a H_a$ is the operator defined by \eqref{proof partial_a H_a pointwise est, eqn 2}.
        Lemma \ref{lem partial_a H_a pointwise est} implies
        \begin{equation}
            \| (\partial_a H_a ) ( u_K + \tilde \lambda ( \tilde u_k + \tilde v) + \tilde w) \|_{Y^{0; 2-n, b'}_{s_0}} 
            \le C(n, K, s_0, b') a^{n-1} 
        \end{equation}
        and \eqref{proof exterior eigenfunctions, eqn 7.7} gives $\| \partial_{\tilde \lambda } \tilde v \|_{Y^{0; 2-n, b'}_{s_0}} \le C(n, K, s_0, b')$.
        The claimed estimates \eqref{proof exterior eigenfunctions, eqn 9.5} for $\partial_{\tilde \lambda} \tilde w$ and $\partial_a \tilde w$ then follow by applying similar logic as in the estimates \eqref{proof exterior eigenfunctions, eqn 11} and \eqref{proof exterior eigenfunctions, eqn 11.5} to equations \eqref{proof exterior eigenfunctions, eqn 13} and \eqref{proof exterior eigenfunctions, eqn 14}.
    \end{claimproof}

    A priori, the fixed point $\tilde w = \tilde w_{\tilde \lambda, a, s_0, b'}$ from Claim \ref{proof exterior eigenfunctions, claim 4} depends on $s_0 \in (0,1)$ and $b' \in (2K, \infty)$.
    However, similar logic as in Claim \ref{proof exterior eigenfunctions, claim 3} gives that, for any $0 < s_0 \le s_0^+ < 1$ and $2K < b' \le b'_+$, 
    \begin{equation}
        \tilde w_{\tilde \lambda, a, s_0, b'} |_{(s_0^+, \infty)} = \tilde w_{\tilde \lambda , a, s_0^+, b'_+}
    \end{equation}
    for all $0 \le |\tilde \lambda | \le \tilde \lambda^* (n, K, s_0, b') \ll 1$ and $0 < a \le a^* (n, K, s_0, b') \ll 1$.
    Thus, taking 
    \begin{equation*}
        \phi_{ext} = u_K + \tilde \lambda ( \tilde u_K + \tilde v) + \tilde w 
    \end{equation*}
    with $\tilde v = \tilde v_{\tilde \lambda , 2K+1} |_{[s_0, \infty)}$ and $\tilde w = \tilde w_{\tilde \lambda, a, \frac{s_0}2, 2K+1} |_{[s_0, \infty)}$ as in Claims \ref{proof exterior eigenfunctions, claim 2} and \ref{proof exterior eigenfunctions, claim 4} completes the proof.
\end{proof}

\subsection{Matching Interior and Exterior Eigenfunctions} \label{Subsection Matching}

In this subsection, we use a matching argument to construct global eigenfunctions of $H_{s,a}$ defined for $s \in \R$.
This is done in Theorem \ref{thm global eigenfunctions} by gluing the interior and exterior eigenfunctions constructed in Lemmas \ref{lem interior eigenfunctions} and \ref{lem exterior eigenfunctions} above.
Theorem \ref{thm global eigenfunctions} will complete a key portion of the paper's main result, Theorem \ref{main thm intro}, stated in the introduction.

\begin{remark}
    For the statements that follow, it will be convenient to use the simplified notation ``for all $0 < a < a^* (n,k,s_0) \ll 1$'' to mean ``for all positive $a$ sufficiently small depending only on $n,k,s_0$'' or, more specifically,
    ``there exists $0< a^* = a^* (n,k,s_0)<1$ depending only on $n,k,s_0$ such that for all $0 < a < a^*$.''
    See for example Lemma \ref{lem matching prelude} below.
\end{remark}

We first require the following two lemmas (Lemmas \ref{lem rewriting Laguerre poly} and \ref{lem matching prelude}), which ultimately show the interior and exterior eigenfunctions agree to leading order on their overlapping domains.

\begin{lem} \label{lem rewriting Laguerre poly}
    Let $n \ge 3$, $k \in \mathbb N$, and $P_k(s)$ be the polynomial in $s$ given in terms of the generalized Laguerre polynomial as $P_{k}(s) = L_k^{\left( \frac{n-2}2 \right) } \left( \frac {(1 + \overline c_0^2) s^2}{4} \right)$.
    Let $\hat C_{k,i}$ be the constants as in Lemma \ref{lem interior eigenfunctions} and let $u_i$ be the generalized kernel elements with asymptotics
    \begin{equation} \tag{\ref{generalized kernel asymptotic expansion eqn}}
        u_i(\sigma) = \pm C_i\sigma^{2i} + O ( \sigma^{2i-1}; n, i) \qquad \text{as } \sigma \to \pm \infty
    \end{equation}
    as in Proposition \ref{prop generalized kernel}.

    Then, there exists a constant $C = C(n,k) \ne 0$ such that for all $s_0 > 0$ and $0 < a < a^* (n, k , s_0) \ll 1$,
    \begin{align} 
        \label{lem rewriting Laguerre poly, eqn 1}
        \sum_{i=0}^k \hat C_{k,i} a^{2i} u_i (s/a) &= C P_k (s) + O(a; n, k,s_0) \cdot s^{2k-1}  && \forall s \ge s_0 , \text{ and}\\
        \label{lem rewriting Laguerre poly, eqn 2}
        \frac \partial{ \partial s} \left( \sum_{i=0}^k \hat C_{k,i} a^{2i} u_i (s/a) \right) &= C P_k' (s) + O(a; n, k,s_0) \cdot s^{2k-2}  && \forall s \ge s_0 .
    \end{align}
\end{lem}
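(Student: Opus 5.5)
The plan is to compare both sides coefficient by coefficient in powers of $s^2$, using the leading asymptotics of the generalized kernel elements. By Proposition \ref{prop generalized kernel}, for $s \ge s_0$ and $a$ small we have $s/a \ge s_0/a \gg 1$. Hence
$$a^{2i} u_i(s/a) = a^{2i}\bigl( C_i (s/a)^{2i} + O((s/a)^{2i-1}) \bigr) = C_i s^{2i} + O(a\, s^{2i-1}).$$
Summing over $0\le i\le k$ gives
$$\sum_{i=0}^k \hat C_{k,i} a^{2i} u_i(s/a) = \sum_{i=0}^k \hat C_{k,i} C_i s^{2i} + \sum_{i=0}^k O(a s^{2i-1}).$$
For $s\ge s_0$, each $s^{2i-1}$ with $i\le k$ is bounded by $C(s_0) s^{2k-1}$, since $s^{2i-1}\le s_0^{2i-2k}s^{2k-1}$. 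The $i=0$ term needs a little care: there $u_0=\beta_1$ and $a^0 u_0(s/a) = \overline\beta + O(a^{n-2}s^{2-n})$, using the rescaled estimate in Lemma \ref{lem beta properties}(v) with $\beta_1(s/a)$. This error is $O(a)$ times $s^{2k-1}$ on $s\ge s_0$ when $n\ge3$ and $k\ge1$. So the error is $O(a;n,k,s_0)\, s^{2k-1}$ as claimed.

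It remains to show that the polynomial $Q_k(s):=\sum_{i=0}^k \hat C_{k,i} C_i s^{2i}$ equals $C P_k(s)$ for some constant $C\neq0$. I would argue via the ODE rather than by comparing explicit Laguerre coefficients. Write $c=1+\overline c_0^2$. The recursion \eqref{gend kernel asymp constant inductive formula} says precisely that $L_{s,0} (C_i s^{2i}) = C_{i-1}s^{2i-2}$, because $\frac{1}{c}(\partial_{ss}+\frac{n-1}{s}\partial_s) s^{2i} = \frac{2i(2i+n-2)}{c}s^{2i-2}$, and $L_{s,0}(C_0)=0$. Also $(-\frac s2\partial_s+1)s^{2i}=(1-i)s^{2i}$. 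Therefore
$$\bigl(H_{s,0}-(1-k)\bigr)Q_k = \sum_i \hat C_{k,i}C_{i-1}s^{2i-2} + \sum_i (k-i)\hat C_{k,i}C_i s^{2i},$$
and reindexing with $\hat C_{k,i+1}+(k-i)\hat C_{k,i}=0$ from \eqref{eqn interior eigenfunctions, constants relation} makes this vanish. Thus $Q_k$ is a polynomial solution, regular at $0$, of $H_{s,0}u=(1-k)u$ on $(0,\infty)$.

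By Proposition \ref{prop plane eigenfunction}, the solution space is spanned by $u_{\lambda_k}=P_k$ and $v_{\lambda_k}$. The function $v_{\lambda_k}$ is singular like $s^{2-n}$ at $0$ (and grows exponentially), so $Q_k = C P_k$. The constant is nonzero because the top coefficient $\hat C_{k,k}C_k = (-1)^k k!\,C_k\neq 0$, by \eqref{eqn interior eigenfunctions, constants relation} and $C_k\neq0$. The constant depends only on $n,k$.

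For the derivative estimate \eqref{lem rewriting Laguerre poly, eqn 2}, I repeat the argument using \eqref{generalized kernel derivatives asymptotic expansion eqn}. For $i\ge1$,
$$\partial_s\bigl(a^{2i}u_i(s/a)\bigr) = a^{2i-1}(\partial u_i)(s/a) = 2iC_i s^{2i-1} + O(a\,s^{2i-2}).$$
For $i=0$, the term $a^{-1}\partial\beta_1(s/a) = O(a^{n-2}s^{1-n})$ from \eqref{beta derivatives asymptotic expansion eqn} is $O(a)s^{2k-2}$ on $s\ge s_0$. The leading part is then $Q_k'=CP_k'$.

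The main subtlety is that the $O(\cdot)$ in Proposition \ref{prop generalized kernel} holds only for $|\sigma|\ge R_0$. I would handle this by requiring $a^*\le s_0/R_0(n,k)$, so that $s/a\ge R_0$ whenever $s\ge s_0$.
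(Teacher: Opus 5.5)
Your argument is correct. The asymptotic bookkeeping matches the paper's, including the requirement $a^*\le s_0/R_0$, which the paper leaves implicit. The difference lies in how you prove the polynomial identity $\sum_{i=0}^k\hat C_{k,i}C_is^{2i}=CP_k(s)$.

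The paper derives from the Laguerre equation the coefficient recursion $B_{i+1}=-\frac{(k-i)(1+\overline c_0^2)}{(2i+2)(2i+n)}B_i$ for $P_k$. It notes that $b_i=\hat C_{k,i}C_i$ obeys the same recursion, and gets $C\ne0$ from the constant terms ($b_0=\overline\beta>0$, $B_0\ne0$). You instead observe that the recursions defining $C_i$ and $\hat C_{k,i}$ say exactly that $(H_{s,0}-(1-k))Q_k=0$. You then conclude by uniqueness of the solution bounded at $s=0$, reading off $C\ne0$ from the top coefficient $(-1)^k k!\,C_k$.

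The paper's route is purely algebraic and uses nothing about the second solution. Yours is more conceptual: it shows directly that the $a\to0$ leading part of the interior ansatz is an eigenfunction of the cone operator, which is what makes the matching work. The price is that you rely on the singular behaviour of $v_{\lambda_k}$ at $0$. You could also justify this via Abel's formula: two polynomial solutions would have a Wronskian bounded near $0$, whereas every Wronskian of this ODE is a constant multiple of $s^{1-n}e^{(1+\overline c_0^2)s^2/4}$.

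One small point: the restriction $k\ge1$ in your $i=0$ error estimate is unnecessary. Since $a^{n-2}s^{2-n}\le s_0^{3-n}\,a\,s^{-1}$ for $s\ge s_0$ and $a<1$, the case $k=0$ is covered too, and the paper later uses it for $\phi_{0,a}$.
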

\begin{proof}  
    The asymptotics \eqref{generalized kernel asymptotic expansion eqn} imply that for all $s \ge s_0$
    \begin{multline} \label{proof rewriting Laguerre poly, eqn 1}
        \sum_{i=0}^k \hat C_{k,i} a^{2i} u_i(s/a)
        = \sum_{i=0}^k \left[ \hat C_{k,i}  C_i s^{2i} + O(a s^{2i-1} ; n, i) \right]  \\
        = \sum_{i=0}^k \hat C_{k,i} C_i s^{2i} + O(a; n, k,s_0) \cdot s^{2k-1}.
    \end{multline}
    Similarly, by Proposition \ref{prop generalized kernel}, for all $s \ge s_0$
    \begin{equation} \label{proof rewriting Laguerre poly, eqn 2}
        \frac \partial {\partial s} \left( \sum_{i=0}^k \hat C_{k,i} a^{2i} u_i(s/a) \right) 
        = \sum_{i=0}^k \hat C_{k,i} 2i C_i  s^{2i-1}  + O ( a ; n, k, s_0) s^{2k- 2} .
    \end{equation}
    The proof will then follow from the following claim.
    \begin{claim} \label{claim rewriting Laguerre poly}
        There exists a constant $C = C(n,k) \ne  0$ such that 
        \begin{equation} \label{eqn rewriting Laguerre poly}
            \sum_{i=0}^k \hat C_{k,i} C_i s^{2i} = C P_k(s) \qquad (\forall s > 0 ).
        \end{equation}
    \end{claim}
    \begin{claimproof}
        $P_k (s) = L_k^{\left( \frac{n-2}2 \right)} \left( \frac{(1 + \ol{c}_0^2) s^2}4 \right) = \sum_{i=0}^k B_i s^{2i}$ is a polynomial of order $2k$ in $s$ whose coefficients satisfy the inductive formula
        $$B_{i+1} = - \frac{(k-i) ( 1 + \ol{c}_0^2)}{(2i+2)(2i+n)} B_i \qquad \forall 0 \le i < k.$$
        Indeed, the generalized Laguerre polynomial $u_k(r) = L_k^{\left( \frac{n-2}2 \right)} \left( \frac{r^2}4 \right) = \sum_{i = 0}^k \beta_i r^{2i}$ satisfies the differential equation
            $$u_{rr} + \left( \frac{n-1}r - \frac r2 \right) u_r + u = ( 1-k) u.$$
        After substituting in the polynomial expression for $u(r) = \sum_{i = 0}^k \beta_i r^{2i}$ into this differential equation, it follows that
            $$\beta_{i+1} = - \frac{(k-i)}{(2i+2)(2i+n)} \beta_i \qquad \forall 0 \le i < k.$$
        The inductive formula for $B_i$ then follows from the fact that $r^2 = ( 1 + \ol{c}_0^2 ) s^2$.

        By Proposition \ref{prop generalized kernel} and Lemma \ref{lem interior eigenfunctions}, $\sum_{i=0}^k \hat C_{k,i} C_i s^{2i} = \sum_{i=0}^k b_i s^{2i}$ is a polynomial of order $2k$ in $s$ whose coefficients $b_i = \hat C_{k,i} C_i$ satisfy the inductive formula 
        $$b_{i+1} = \hat C_{k, i+1} C_{i+1}
        = - (k-i) \hat C_{k, i} \frac{( 1 + \ol{c}_0^2) C_i }{(2i+2)(2i+n)}
        = - \frac{(k-i) ( 1 + \ol{c}_0^2)}{(2i+2)(2i+n)} b_i \qquad \forall 0 \le i < k.$$
        Since the coefficients $b_i, B_i$ satisfy the same inductive formula,
        it follows that, for some $C = C(n,k)$,
        \begin{equation}  
            \sum_{i=0}^k \hat C_{k,i} C_i s^{2i} = C P_k(s) \qquad (\forall s > 0).
        \end{equation} 
        Additionally, $C \ne 0$ because
            $$B_0 \ne 0 \quad \text{and} \quad b_0 = \hat C_{k,0} C_0 = 1 \cdot \lim_{s \to +\infty} \beta_1(s) > 0$$
        by Lemma \ref{lem beta properties}.
    \end{claimproof}
    The lemma now follows from combining Claim \ref{claim rewriting Laguerre poly} with \eqref{proof rewriting Laguerre poly, eqn 1} and \eqref{proof rewriting Laguerre poly, eqn 2}.
\end{proof}

\begin{lem} \label{lem matching prelude}
    Let $n \ge 3$, $k \in \mathbb N$, and
    $P_{k}(s)$ be the polynomial in $s$ given in terms of the generalized Laguerre polynomial as $P_{k}(s) = L_k^{\left( \frac{n-2}2 \right) } \left( \frac {(1 + \overline c_0^2) s^2}{4} \right)$.
    
    There exists $C = C(n,k) \ne 0$ such that 
    for all $0 < s_0 \le s_1 \le s_1^* ( n, k) \ll 1$, 
    $0 < a \le a^* (n, k, s_0, s_1) \ll 1$, and
    $|\tilde \lambda | \le \tilde \lambda^* ( n, k, s_0, s_1) \ll 1$,
    the following estimates hold for the interior eigenfunctions $\phi_{int,k} = \phi_{int; \tilde \lambda, a,s_1,k}(\sigma = s/a)$ from Lemma \ref{lem interior eigenfunctions} and the exterior eigenfunctions $\phi_{ext,k} = \phi_{ext; \tilde \lambda, a, s_0, k} (s)$ from Lemma \ref{lem exterior eigenfunctions}:
    \begin{align}
        \label{lem matching prelude, eqn 1}
        \phi_{ext,k}(s) &= P_k(s) + O ( |\tilde \lambda|; n,k,s_0) + O(a^n; n,k,s_0) ,\\
        \label{lem matching prelude, eqn 2}
        \partial_s \phi_{ext,k} (s) &= P'_k(s) + O ( |\tilde \lambda|; n,k,s_0) + O(a^n; n,k,s_0), \\
        \label{lem matching prelude, eqn 3}
        \phi_{int,k}(s/a) &= C P_k(s) + O ( |\tilde \lambda|; n,k,s_0) + O(a; n,k,s_0) , \text{ and}  \\
        \label{lem matching prelude, eqn 4}
        a^{-1} \partial_\sigma \phi_{int,k}(s/a) &= C P_k'(s) + O ( |\tilde \lambda|; n,k,s_0) + O(a; n,k,s_0),
    \end{align}
    for all $s \in [s_0, s_1]$.
\end{lem}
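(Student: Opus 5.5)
The plan is to treat the exterior and interior estimates separately. In each case we substitute the explicit ansatz from Lemma~\ref{lem exterior eigenfunctions} or Lemma~\ref{lem interior eigenfunctions}, identify the leading term, and bound every correction term on the compact interval $[s_0,s_1]$. Since $s_0\le s\le s_1<1$, all powers of $s$ are bounded above and below by constants depending on $n,k,s_0$. This is why constants may depend on $s_0$ but not on $s_1$, apart from the requirement $s_1\le s_1^*$ needed to invoke the interior lemma with the parameter $s_1$.

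\emph{Exterior estimates.} By \eqref{eqn exterior eigenfunctions ansatz} with $K=k$,
\[
\phi_{ext,k}=u_{\lambda_k}+\tilde\lambda(\tilde u_{\lambda_k}+\tilde v)+\tilde w ,
\]
and by \eqref{eqn defn plane eigenfunction} we have $u_{\lambda_k}=P_k$ exactly. We take $b'=2k+1$.
\begin{itemize}
\item For $\tilde u_{\lambda_k}$ we use Proposition~\ref{prop plane eigenfunction}: it is smooth on $(0,\infty)$, so it and its derivative are bounded on $[s_0,1]$ by $C(n,k,s_0)$.
\item For the corrections we use \eqref{eqn exterior eigenfunctions, correction term ests}: $\|\tilde v\|_{Y^{2;2-n,b'}_{s_0}}\le C|\tilde\lambda|$ and $\|\tilde w\|_{Y^{2;2-n,b'}_{s_0}}\le Ca^n$.
\item On $[s_0,1]$ the $Y$-norm controls $|u|\le s^{2-n}\|u\|$ and $|\partial_s u|\le s^{1-n}\|u\|$, both at most $s_0^{1-n}\|u\|$.
\end{itemize}
Together these give \eqref{lem matching prelude, eqn 1} and \eqref{lem matching prelude, eqn 2} for $|\tilde\lambda|\le\tilde\lambda^*$ and $a\le a^*$.

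\emph{Interior estimates.} We apply Lemma~\ref{lem interior eigenfunctions} with $K=k$ and with $s_1$ in place of $s_0$. This needs $a\le s_1$, which follows from $a\le s_0\le s_1$ after shrinking $a^*$. Evaluating at $\sigma=s/a$, which lies in the domain since $s\le s_1$, we treat three terms.
\begin{itemize}
\item \textbf{Leading term.} Lemma~\ref{lem rewriting Laguerre poly} gives $\sum_i\hat C_{k,i}a^{2i}u_i(s/a)=CP_k(s)+O(a;n,k,s_0)$ on $[s_0,s_1]$, and likewise for its $s$-derivative, which is exactly $a^{-1}\partial_\sigma$ of that sum.
\item \textbf{$\tilde\lambda$-terms.} Proposition~\ref{prop generalized kernel} gives $|u_{i+1}(\sigma)|\lesssim\langle\sigma\rangle^{2i+2}$, and \eqref{eqn interior eigenfunctions, correction term ests} gives $|v_i(\sigma)|\le Cs_1^2\langle\sigma\rangle^{2i+2}$. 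Hence
\[
a^{2i+2}|u_{i+1}(s/a)|+a^{2i+2}|v_i(s/a)|\lesssim (a^2+s^2)^{i+1}\lesssim 1 .
\]
So the whole $\tilde\lambda$ sum is $O(|\tilde\lambda|)$.
\item \textbf{$w_k$-term.} The $X^1$ bound $|w_k(\sigma)|\le C\langle\sigma\rangle$ gives $a^2|w_k(s/a)|\lesssim a(a+s)=O(a)$.
\end{itemize}

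\emph{Derivatives in the interior.} The $X^\kappa$-norms control $\langle\sigma\rangle|\partial_\sigma u|\le\langle\sigma\rangle^\kappa\|u\|$. For the $\tilde\lambda$-terms this gives
\[
a^{-1}a^{2i+2}|\partial_\sigma v_i|\lesssim a^{2i+1}\langle s/a\rangle^{2i+1}\lesssim 1 ,
\]
and the same bound holds for $u_{i+1}$ by \eqref{generalized kernel derivatives asymptotic expansion eqn}. For the last term, $a^{-1}a^2|\partial_\sigma w_k|\le aC$. This proves \eqref{lem matching prelude, eqn 3} and \eqref{lem matching prelude, eqn 4}.

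\emph{Main obstacle.} The delicate point is the dependence of constants. The interior bounds must be uniform in $\sigma$ up to $s_1/a$, which is ensured by the weighted $X^\kappa$ norms. The constant $C$ must be the same in \eqref{lem matching prelude, eqn 3} and \eqref{lem matching prelude, eqn 4}, and it is, because both come from Claim~\ref{claim rewriting Laguerre poly}. The smallness thresholds $a^*$ and $\tilde\lambda^*$ must be chosen as the minimum of those required by the two lemmas.
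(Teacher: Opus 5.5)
Your proposal is correct and follows the paper's own route. The exterior estimates are read off from the ansatz and correction bounds of Lemma~\ref{lem exterior eigenfunctions}, using $u_{\lambda_k}=P_k$. The interior estimates come from bounding the $\tilde\lambda$- and $w_k$-terms of the ansatz in Lemma~\ref{lem interior eigenfunctions} and then converting the leading sum via Lemma~\ref{lem rewriting Laguerre poly}. Your version just makes explicit the weighted-norm bookkeeping that the paper compresses, such as $a^{2i+2}\langle s/a\rangle^{2i+2}=(a^2+s^2)^{i+1}\lesssim 1$ and $a^2|w_k(s/a)|\lesssim a$.
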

\begin{proof}
    For all 
        $$0 < s_0 < s_1 \le s_1^* (n,k) \ll 1, \, 0 < a \le a^* (n,k,s_0,s_1) \ll1 ,  \text{ and } |\tilde \lambda |\le \tilde \lambda^* ( n, k, s_0, s_1) \ll1,$$
    let $\phi_{int, k} = \phi_{int; \tilde \lambda, a, s_1, k}$ and $\phi_{ext, k} = \phi_{ext; \tilde \lambda , a , s_0, k}$ denote the interior and exterior eigenfunctions given by Lemmas \ref{lem interior eigenfunctions} and \ref{lem exterior eigenfunctions}.

    The estimates \eqref{lem matching prelude, eqn 1} and \eqref{lem matching prelude, eqn 2} follow immediately from Lemma \ref{lem exterior eigenfunctions}.
    
    For any $s \in [s_0, s_1]$,
    \begin{align*}
        \phi_{int, k} (s/a) 
        ={}& \sum_{i= 0}^k \hat C_{k,i} a^{2i} u_i(s/a) \\
            &+ \tilde \lambda \sum_{i = 0}^k a^{2(i+1)} \left( \hat C_{k, i} u_{i+1}(s/a) + v_i(s/a) \right) + a^2 w_k(s/a) \\
        ={}& \sum_{i= 0}^k \hat C_{k,i} a^{2i} u_i(s/a) 
            + O ( |\tilde \lambda| ; n, k ,s_0) + O ( a; n, k,s_0)  
        && ( \text{by Lemma \ref{lem interior eigenfunctions}}) \\ 
        ={}& C(n,k) P_k(s) + O (|\tilde \lambda | ; n, k,s_0) + O (a; n,k,s_0) 
        && ( \text{by Lemma \ref{lem rewriting Laguerre poly}} )
    \end{align*}
    This proves \eqref{lem matching prelude, eqn 3}.
    A similar argument applies to the derivative of $\phi_{int,k}$ to prove \eqref{lem matching prelude, eqn 4}.
\end{proof}

\begin{theorem} \label{thm global eigenfunctions}
    Let $n \ge 3$ and $K \in \mathbb N$.
    For all $0 < s_0 \le s_0^* (n,K) \ll 1$,
    $0 < a \le a^* (n,K, s_0 ) \ll 1 $,
    and $k \in \mathbb N $ with $0 \le k \le K$,
    there exists a unique $\tilde \lambda_k = \tilde \lambda_k ( a; n,k, s_0)$ with 
    \begin{equation} \label{thm global eigenfunctions, eqn 1}
        |\tilde \lambda_k (a) | \lesssim_{n,k,s_0} a  \quad \text{and} \quad 
        |\partial_a \tilde \lambda_k (a) | \lesssim_{n,k,s_0} 1
    \end{equation}
    such that $\phi_k = \phi_{k,a,s_0}$ defined by
    \begin{gather} \label{eqn global eigenfunction defn}
        \phi_k(s) = 
        \left\{ \begin{aligned}
            &\phi_{int; \tilde \lambda_k , a, s_0, k}\left( \frac s a \right), & \text{if } |s| < s_0,  \\
            &\frac{\phi_{int; \tilde \lambda_k, a, s_0, k} \left( \frac {s_0} a \right) }{\phi_{ext; \tilde \lambda_k ,a, s_0, k}(s_0)}\phi_{ext; \tilde \lambda_k, a , s_0, k}(s), & \text{if } |s| \ge s_0.
        \end{aligned}
        \right.
    \end{gather}
    (where $\phi_{int; \tilde \lambda_k, a, s_0, k}$ and $\phi_{ext; \tilde \lambda_k , a, s_0, k}$ are given by Lemmas \ref{lem interior eigenfunctions}, \ref{lem exterior eigenfunctions} respectively)
    is a smooth eigenfunction
        $$H_{s,a} \phi_k = \lambda_k \phi_k \qquad \text{with eigenvalue } \lambda_k = 1 - k + \tilde \lambda_k.$$
\end{theorem}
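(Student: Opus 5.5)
The plan is a shooting/matching argument at the single point $s=s_0$. For fixed $0<s_0\le s_0^*(n,K)$ and $0\le k\le K$, apply Lemma \ref{lem interior eigenfunctions} with $K$ replaced by $k$ and with $s_0$ playing the role of the interior cutoff. This produces the odd function $\phi_{int}(\sigma)$ on $(-s_0/a,s_0/a)$, smooth in $(\tilde\lambda,a)$. Apply Lemma \ref{lem exterior eigenfunctions} with cutoff $s_0/2$ and restrict to $[s_0,\infty)$, so that the matching point $s_0$ is interior to its domain; extend oddly to $s\le -s_0$. Both pieces solve $H_{s,a}u=(1-k+\tilde\lambda)u$ on their domains. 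The interior equation \eqref{eqn interior eigval problem} is exactly $H_{s,a}$ rewritten in $\sigma=s/a$, using $f_a(s)=af_1(s/a)$ and $\partial_s f_a(s)=f_1'(\sigma)$. The glued function $\phi_k$ in \eqref{eqn global eigenfunction defn} is continuous at $s_0$ by the choice of normalizing constant. By ODE uniqueness for this second order linear equation, which is regular at $s_0\neq0$, $\phi_k$ is a smooth global eigenfunction as soon as the first derivatives also match. Oddness then handles $-s_0$. So I reduce everything to the scalar equation
\begin{equation*}
\mathcal M(\tilde\lambda,a):=a^{-1}\partial_\sigma\phi_{int}(s_0/a)\,\phi_{ext}(s_0)-\phi_{int}(s_0/a)\,\partial_s\phi_{ext}(s_0)=0,
\end{equation*}
a Wronskian-type mismatch at $s_0$. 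I also need $\phi_{ext}(s_0)\neq0$. This holds because $\phi_{ext}(s_0)=P_k(s_0)+o(1)$ and $P_k(0)=\Gamma(k+\tfrac n2)/(k!\Gamma(\tfrac n2))\neq0$, so $P_k(s_0)\neq0$ for $s_0\le s_0^*$ small.

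Next I compute $\mathcal M(\tilde\lambda,0)$ to first order in $\tilde\lambda$. Lemma \ref{lem matching prelude} gives $\mathcal M(\tilde\lambda,a)=C(P_k'P_k-P_kP_k')(s_0)+O(|\tilde\lambda|)+O(a)=O(|\tilde\lambda|)+O(a)$, so the leading terms cancel. That is not enough for the implicit function theorem; I need the exact $\partial_{\tilde\lambda}\mathcal M$ at $(0,0)$ and to show it is nonzero. In $\phi_{ext}$ the $\tilde\lambda$-coefficient is $\tilde u_{\lambda_k}+\tilde v$, and $\partial_{\tilde\lambda}\tilde v=O(|\tilde\lambda|)$ near $\tilde\lambda=0$ by its fixed point structure. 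Near $s=0$, $\tilde u_{\lambda_k}$ behaves like $c_k s^{2-n}$ with $c_k$ proportional to $\int_0^\infty u_{\lambda_k}^2\sigma^{n-1}e^{-(1+\overline c_0^2)\sigma^2/4}d\sigma>0$, by \eqref{eqn plane generalized eigenfunction asymptotics}. In $\phi_{int}$ the $\tilde\lambda$-coefficient is $\sum a^{2i+2}(\hat C u_{i+1}+v_i)$, of size $O(s_0^2)$ relative to $P_k$ in the region $s\sim s_0$. Hence the $\tilde\lambda$-derivative of the mismatch is dominated by the Wronskian $W[P_k,\tilde u_{\lambda_k}](s_0)\approx c_k(2-n)P_k(0)s_0^{1-n}$, plus errors $O(s_0^{2-n})$. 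This is nonzero for $s_0$ small; that requirement is why $s_0^*$ depends only on $(n,K)$. I expect this to be the main obstacle. The difficulty is making the error bounds from Lemmas \ref{lem interior eigenfunctions} and \ref{lem exterior eigenfunctions} on $\partial_{\tilde\lambda}v_i$, $\partial_{\tilde\lambda}w_k$, $\partial_{\tilde\lambda}\tilde w$ and $\partial_{\tilde\lambda}\tilde v$ precise enough at $s=s_0$, including derivatives, to beat the singular leading term $s_0^{1-n}$. If needed, I would normalize by multiplying $\mathcal M$ by $s_0^{n-1}$.

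Finally I treat $\mathcal M$ as a $C^1$ function of $(\tilde\lambda,a)$ on $[-\tilde\lambda^*,\tilde\lambda^*]\times[0,a^*]$. For the endpoint $a=0$, I use that the $a$-dependence enters through terms of size $O(a)$ (interior) and $O(a^n)$ (exterior), with $a$-derivatives of size $O(1)$ by the stated bounds $\|\partial_a v_i\|,\|\partial_a w_k\|\lesssim s_0$ and $\|\partial_a\tilde w\|\lesssim a^{n-1}$. Alternatively, I avoid the endpoint and argue directly by the intermediate value theorem plus monotonicity. Using $|\partial_{\tilde\lambda}\mathcal M|\ge c(n,k,s_0)>0$ uniformly on a small box and $|\mathcal M(0,a)|\le C a$, I get a unique zero $\tilde\lambda_k(a)$ with $|\tilde\lambda_k|\le Ca/c$. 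Differentiating $\mathcal M(\tilde\lambda_k(a),a)=0$ gives $\partial_a\tilde\lambda_k=-\partial_a\mathcal M/\partial_{\tilde\lambda}\mathcal M=O(1)$, which yields \eqref{thm global eigenfunctions, eqn 1}. Smoothness of $\phi_k$ follows from the ODE uniqueness noted above, since $\phi_k$ is $C^1$ at $\pm s_0$ and solves the same linear equation on both sides.
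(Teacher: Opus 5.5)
Your proposal is correct and is essentially the paper's proof. The paper works with the difference of logarithmic derivatives $\Phi[s_0]=\frac{a^{-1}\phi_{int}'(s_0/a)}{\phi_{int}(s_0/a)}-\frac{\phi_{ext}'(s_0)}{\phi_{ext}(s_0)}$, which is your $\mathcal M$ divided by $\phi_{int}\phi_{ext}$. It shows $\partial_{\tilde\lambda}\Phi=C(n,k)s_0^{1-n}+O(s_0^{2-n})+O(|\tilde\lambda|+a)$ from the $s^{2-n}$ singularity of $\tilde u_{\lambda_k}$, then concludes by monotonicity, the intermediate value theorem and implicit differentiation. Your Wronskian form has the small advantage of never dividing by $\phi'$, which the paper does and which degenerates for $k=0$ since $P_0\equiv 1$.

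Two small bookkeeping points:
\begin{itemize}
\item What is $O(|\tilde\lambda|)$ is $\partial_{\tilde\lambda}(\tilde\lambda\tilde v)=\tilde v+\tilde\lambda\,\partial_{\tilde\lambda}\tilde v$, not $\partial_{\tilde\lambda}\tilde v$ itself; the lemma only gives $\partial_{\tilde\lambda}\tilde v=O(1)$.
\item The stated interior bounds, for example on $\tilde\lambda\,a^{2i+2}v_i(s_0/a)$ differentiated through $\sigma=s_0/a$, only give $\partial_a\mathcal M=O(1)+O(|\tilde\lambda|a^{-1})$. So $\mathcal M$ is not shown to be $C^1$ up to $a=0$, and your bound $\partial_a\tilde\lambda_k=O(1)$ must use $|\tilde\lambda_k(a)|\lesssim a$, exactly as the paper's second claim does.
\end{itemize}
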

\begin{proof}
    Let $k \in \mathbb N$ with $0 \le k \le K$.
    Since $\phi_{int} =  \phi_{int; \tilde \lambda, a, s_0, k}(\sigma = s/a)$ and $\phi_{ext} =  \phi_{ext; \tilde \lambda , a, s_0, k}(s)$ solve the same ODE $H_{s,a} u = (1- k + \tilde \lambda) u$, $\phi_k$ defined by \eqref{eqn global eigenfunction defn} will be smooth if the expressions in the piecewise definition \eqref{eqn global eigenfunction defn} match to first order at $s = s_0$.
    They clearly match to zeroth order.
    To measure the first-order difference, define
    \begin{equation} \label{proof global eigenfunctions, eqn 1}
        \Phi[s_0](\tilde \lambda ,a) := \frac{a^{-1} \phi_{int}'(s_0/a)}{\phi_{int} (s_0/a)} - \frac{ \phi_{ext}'(s_0)}{\phi_{ext}(s_0)}.
    \end{equation}

    \begin{claim} \label{proof global eigenfunctions, claim 1}
        There exists a positive constant $C= C(n,k) > 0$ depending only on $n,k$ such that,
        for all $0 < s_0 \le s_0^* (n, k) \ll1$,
        $0 < a \le a^* (n, k, s_0) \ll1$,
        and $ |\tilde \lambda| \le \tilde \lambda^* ( n, k, s_0) \ll 1$,
        \begin{gather}
            \label{proof global eigenfunctions, claim 1, eqn 1}
            \Phi[s_0](\tilde \lambda, a) = O ( |\tilde \lambda|; n, k, s_0) + O ( a ; n, k, s_0 )   \text{ and}\\
            \label{proof global eigenfunctions, claim 1, eqn 2}
            \partial_{\tilde \lambda} \Phi = C(n,k) s_0^{1-n} + O ( s_0^{2-n} ;n,k) + O ( | \tilde \lambda |; n,k,s_0) + O( a ; n,k,s_0) .
        \end{gather}
    \end{claim}
    \begin{claimproof}
        Denote
        $$P(s_0) =  L_k^{\left( \frac{n-2}2 \right) } \left( \frac{( 1 + \ol{c}_0^2) s_0^2}4 \right) $$
        and assume $0 < s_0^* (n,k) \ll 1$ is taken sufficiently small so that $P(s_0) \ne 0$ for all $0 < s_0 \le s_0^* (n,k) \ll 1$.
        Then Lemma \ref{lem matching prelude} implies that for some $C_{n,k}\ne 0 $ depending only on $n,k$
        \begin{gather} \label{proof global eigenfunctions, proof claim 1, eqn 1} \begin{aligned}
            \Phi[s_0]( \tilde \lambda , a)  
            ={}& \frac{ C_{n,k} P'(s_0) + O ( | \tilde \lambda  | +a; n, k,s_0 )  }{C_{n,k} P(s_0) + O( |\tilde \lambda|+ a ; n,k,s_0)  }
            - \frac{  P'(s_0) + O ( | \tilde \lambda  |+ a^n; n,k,s_0 )  }{ P(s_0) + O( |\tilde \lambda|+ a^n; n,k,s_0)  } \\
            ={}& \frac{ O ( | \tilde \lambda|+a; n,k,s_0 )  }{P (s_0)^2 + O ( |\tilde \lambda |+ a; n,k,s_0 )  } \\
            ={}& O ( | \tilde \lambda|+ a ; n,k,s_0 ).
        \end{aligned} \end{gather}
        This proves \eqref{proof global eigenfunctions, claim 1, eqn 1}.

        Next, a straightforward computation using \eqref{proof global eigenfunctions, proof claim 1, eqn 1} reveals
        \begin{gather} \label{proof global eigenfunctions, proof claim 1, eqn 2}
        \begin{aligned}
            \partial_{\tilde \lambda} \Phi 
            ={}& \frac{a^{-1} \phi_{int}'(s_0/a)}{\phi_{int}(s_0/a)} 
            \left[ \frac{ \partial_{\tilde \lambda} \phi_{int}'(s_0/a)}{ \phi_{int}'(s_0/a) } - 
            \frac{ \partial_{\tilde \lambda} \phi_{int}(s_0/a)}{ \phi_{int}(s_0/a) } \right]\\
            & - \frac{ \phi_{ext}'(s_0)}{\phi_{ext}(s_0)} 
            \left[ \frac{ \partial_{\tilde \lambda} \phi_{ext}'(s_0)}{ \phi_{ext}'(s_0) } - 
            \frac{ \partial_{\tilde \lambda} \phi_{ext}(s_0)}{ \phi_{ext}(s_0) } \right] \\
            ={}& \left( \frac{ \phi_{ext}'(s_0)}{\phi_{ext}(s_0)} + O( |\tilde \lambda| + a  ; n, k, s_0)   \right)  \\
            &\cdot \left[ \frac{ \partial_{\tilde \lambda} \phi_{int}'(s_0/a)}{ \phi_{int}'(s_0/a) } - 
            \frac{ \partial_{\tilde \lambda} \phi_{int}(s_0/a)}{ \phi_{int}(s_0/a) } 
            -\frac{ \partial_{\tilde \lambda} \phi_{ext}'(s_0)}{ \phi_{ext}'(s_0) } + 
            \frac{ \partial_{\tilde \lambda} \phi_{ext}(s_0)}{ \phi_{ext}(s_0) } \right] .
        \end{aligned} \end{gather}
    We compute the asymptotics of terms in this expansion.
    For the remainder of the proof of this claim, all uses of the $O(\cdot)$ notation are in fact $O(\cdot; n,k,s_0)$ unless indicated otherwise.
    
    By Lemma \ref{lem matching prelude},
    \begin{gather} \label{proof global eigenfunctions, proof claim 1, eqn 3}
        \frac{  \phi_{ext}'(s_0) }{\phi_{ext}(s_0)} 
        = \frac{P'(s_0) + O( |\tilde \lambda |) + O ( a^{n} ) }{P(s_0) + O ( |\tilde \lambda | ) + O ( a^{n} ) } 
        =  \frac{P'(s_0)}{P(s_0)} + O ( |\tilde \lambda | ) + O ( a^{n} ) ,\\
        \label{proof global eigenfunctions, proof claim 1, eqn 3.5}
        \text{and so } \left| \frac{  \phi_{ext}'(s_0) }{\phi_{ext}(s_0)}  \right| \sim_{n,k} s_0 .
    \end{gather}
    Lemma \ref{lem exterior eigenfunctions} implies that at $s=s_0$
    \begin{gather}
        \label{proof global eigenfunctions, proof claim 1, eqn 3.6}
        \partial_{ \tilde \lambda} \phi_{ext} 
        = \tilde u_{\lambda_k} + \tilde v + \tilde \lambda \partial_{\tilde \lambda} \tilde v + \partial_{\tilde \lambda} w 
        = \tilde u_{\lambda_k}(s_0) + O ( | \tilde \lambda | ) + O ( a^{n} ) \text{ and} \\
        \label{proof global eigenfunctions, proof claim 1, eqn 3.7}
        \partial_{\tilde \lambda} \phi_{ext}' 
        = \tilde u_{\lambda_k}' + \tilde v' + \tilde \lambda \partial_{\tilde \lambda} \tilde v' + \partial_{\tilde \lambda} w' 
        = \tilde u_{\lambda_k}'(s_0) + O ( | \tilde \lambda | ) + O ( a^{n} ) .
    \end{gather}
    Hence, 
    \begin{gather} \label{proof global eigenfunctions, proof claim 1, eqn 4}
    \begin{aligned}
        -\frac{ \partial_{\tilde \lambda} \phi_{ext}'(s_0)}{ \phi_{ext}'(s_0) } + 
        \frac{ \partial_{\tilde \lambda} \phi_{ext}(s_0)}{ \phi_{ext}(s_0) } 
        &= - \frac{  \tilde u_{\lambda_k}'(s_0) + O ( | \tilde \lambda |) + O ( a^{n} ) }{ P'(s_0) + O ( | \tilde \lambda | ) + O ( a^{n} ) } 
        + \frac{ \tilde u_{\lambda_k}(s_0) + O ( | \tilde \lambda |) + O ( a^{n} ) }{ P(s_0) + O ( | \tilde \lambda | ) + O ( a^{n} ) }  \\
        &= - \frac{  \tilde u_{\lambda_k}'(s_0) }{ P'(s_0) } + \frac{ \tilde u_{\lambda_k} (s_0) }{ P(s_0) } + O ( | \tilde \lambda |) + O ( a^{n} ) \\
        &= - \frac{  \tilde u_{\lambda_k}'(s_0) }{ P'(s_0) } + O( s_0^{2-n} ; n,k) + O ( |\tilde \lambda | ) + O(a^n),
    \end{aligned} \end{gather}
    where the last line uses the asymptotics of $\tilde u_{\lambda_k}$ from Proposition \ref{prop plane eigenfunction}.
    In particular,
    \begin{equation} \label{proof global eigenfunctions, proof claim 1, eqn 5}
        \left| -\frac{ \partial_\lambda \phi_{ext}'(s_0)}{ \phi_{ext}'(s_0) } + 
        \frac{ \partial_\lambda \phi_{ext}(s_0)}{ \phi_{ext}(s_0) } \right| \sim_{n,k} s_0^{-n}
    \end{equation}
    by the asymptotics of $\tilde u_{\lambda_k}, \tilde u_{\lambda_k}'$ from Proposition \ref{prop plane eigenfunction}.

    Lemma \ref{lem interior eigenfunctions} implies that, at $\sigma = s_0/a$, we have
    \begin{align}
        \partial_{\tilde \lambda} \phi_{int,k}(s_0 /a) 
        ={}& \sum_{i=0}^k a^{2(i+1)} \left( \hat C_{k, i} u_{i+1} + v_i \right) 
        + \tilde \lambda \sum_{i=0}^k a^{2(i+1)} \partial_{\tilde \lambda} v_i 
        + a^2 \partial_{\tilde \lambda} w_k \\
        a^{-1} \partial_{\tilde \lambda} \phi_{int,k}'(s_0 /a) 
        ={}& \sum_{i=0}^k a^{2i+1} \left( \hat C_{k, i} u_{i+1}' + v_i' \right) 
        + \tilde \lambda \sum_{i=0}^k a^{2i+1} \partial_{\tilde \lambda} v_i' 
        + a \partial_{\tilde \lambda} w_k'. 
    \end{align}
    It follows from Lemma \ref{lem interior eigenfunctions} and Proposition \ref{prop generalized kernel} that, at $\sigma = s_0/a$,
    \begin{align} \label{proof global eigenfunctions, proof claim 1, eqn 6}
        \partial_{\tilde \lambda} \phi_{int,k}(s_0 /a) 
        ={}& \sum_{i=0}^k a^{2(i+1)} \left( \hat C_{k, i} u_{i+1} + v_i \right) 
        + \tilde \lambda \sum_{i=0}^k a^{2(i+1)} \partial_{\tilde \lambda} v_i 
        + a^2 \partial_{\tilde \lambda} w_k \\
        ={}& \sum_{i =0}^k a^{2(i+1) } \left( \hat C_{k,i} u_{i+1} (s_0/a) + O ( s_0^2 ; n,k) \left( \frac sa \right)^{2i+2}  \right) 
         + O ( |\tilde \lambda | ) + O ( a) \notag\\
        ={}& \sum_{i =0}^k a^{2(i+1) } \left( \hat C_{k,i} u_{i+1} (s_0/a)   \right) 
        + O(s_0^4; n, k) + O ( |\tilde \lambda | ) + O ( a) \notag\\
        ={}& \sum_{i =0}^k a^{2(i+1) } \left( \hat C_{k,i} \left[ C_i \left(\frac{s_0}a \right)^{2i+2} + O\left( \left(\frac {s_0} a \right)^{2i+1}; n,k \right) \right]   \right) \notag\\
        &+ O(s_0^4; n, k) + O ( |\tilde \lambda | ) + O ( a) \notag\\
        ={}& \sum_{i=0}^k \hat C_{k,i} C_i s_0^{2i+2} + O(s_0^4; n, k) + O ( |\tilde \lambda | ) + O ( a) \notag\\
        ={}& s_0^2 \tilde P ( s_0) + O ( s_0^4; n, k) + O ( | \tilde \lambda| ) + O ( a) \notag
    \end{align}
    where $\tilde P ( s_0) $ is the even polynomial of degree $2k$ with nonzero constant term given by
        $$\tilde P (s_0) = \sum_{i=0}^k \hat C_{k,i} C_i s_0^{2i}$$
    with constants $\hat C_{k,i}$ and $C_i$ as in Lemma \ref{lem interior eigenfunctions} and Proposition \ref{prop generalized kernel}, respectively.

    Similar computations apply to show that
    \begin{multline} \label{proof global eigenfunctions, proof claim 1, eqn 7}
        a^{-1} \partial_{\tilde \lambda} \phi_{int,k}'(s_0/a) 
        = 2 s_0 \tilde P(s_0) + s_0^2 \tilde P_0'(s_0) + O ( s_0^3; n,k) + O ( | \tilde \lambda | ) + O ( a ) \\
        = 2 s_0 \tilde P(s_0) + O ( s_0^3; n,k) + O ( | \tilde \lambda | ) + O ( a).   
    \end{multline}
    Using Lemma \ref{lem matching prelude}, \eqref{proof global eigenfunctions, proof claim 1, eqn 6}, and \eqref{proof global eigenfunctions, proof claim 1, eqn 7},
    it follows that
    \begin{align} \label{proof global eigenfunctions, proof claim 1, eqn 8}
        &\frac{a^{-1} \partial_{\tilde \lambda} \phi_{int}'(s_0/a)}{a^{-1} \phi_{int}'(s_0/a )}
        - \frac{\partial_{\tilde \lambda} \phi_{int}(s_0/a) }{ \phi_{int}(s_0/a)} \\
        ={}& \frac{2 s_0 \tilde P(s_0) + O ( s_0^3; n,k) + O ( | \tilde \lambda |+ a)}{C_{n,k} P'(s_0) + O ( |\tilde \lambda | + a ) }
        - \frac{s_0^2 \tilde P(s_0) + O ( s_0^4;n,k) + O ( |\tilde \lambda |+a )  }{ C_{n,k} P(s_0) + O ( |\tilde \lambda |+a  )  } \notag\\
        ={}&\left( 2C' s_0 + O( s_0^3;n,k) + O ( | \tilde \lambda |+a)  \right) \frac1{C P'(s_0) } \left( 1 + O ( |\tilde \lambda |+a )  \right) \notag\\
        &- \left( C' s_0^2 + O ( s_0^4;n,k) + O ( |\tilde \lambda| + a ) \right) \frac1{C P(s_0)} \left( 1 + O ( |\tilde \lambda|+a )  \right) \notag\\
        ={}&\left( C'' + O ( s_0^2;n,k ) + O ( |\tilde \lambda|+a )  \right) - \left( C''' s_0^2 + O ( s_0^4;n,k) + O ( |\tilde \lambda |+a )  \right) \notag\\
        ={}& C'' + O ( s_0^2;n,k ) + O ( |\tilde \lambda | +a) \notag
    \end{align}
    where $C = C_{n,k}, C', C'', C'''$ are all non-zero constants depending only on $n, k$.

    Inserting \eqref{proof global eigenfunctions, proof claim 1, eqn 3}, \eqref{proof global eigenfunctions, proof claim 1, eqn 4}, and \eqref{proof global eigenfunctions, proof claim 1, eqn 8} into \eqref{proof global eigenfunctions, proof claim 1, eqn 2} then gives that
    \begin{multline}
        \partial_{\tilde \lambda} \Phi 
        = \left( \frac{P'(s_0)}{P(s_0)} + O( |\tilde \lambda |+ a )  \right) 
         \cdot \left[ - \frac{ \tilde u_{\lambda_k}'(s_0)}{P'(s_0)} + O ( s_0^{2-n}; n,k) + O( |\tilde \lambda| + a) \right] \\
        = - \frac{ \tilde u_{\lambda_k}'(s_0) }{P(s_0)} + O ( s_0^{3-n} ; n,k) + O ( |\tilde \lambda | + a ) .
    \end{multline}
    Thus, by the asymptotics of $\tilde u_{\lambda_k}'$ and $P = u_{\lambda_k}$ near $0$ (Proposition \ref{prop plane eigenfunction}), it follows that 
    \begin{equation}
        \partial_{\tilde \lambda} \Phi = C(n,k) s_0^{1-n} + O ( s_0^{2-n} ;n,k) + O ( | \tilde \lambda| + a ) 
    \end{equation}
    for some positive constant $ C(n,k) > 0$ depending only on $n,k$.
    This completes the proof of the claim.
    \end{claimproof}

    Now, consider the domains
    \begin{equation}
        \Omega_{\tilde \lambda^* , a^* } := \{ (\tilde \lambda , a) \in \R^2  \colon - \tilde \lambda^* < \tilde \lambda < \tilde \lambda^* \text{ and } 0< a < a^* \} .
    \end{equation}
    By Claim \ref{proof global eigenfunctions, claim 1},
    for all $0 < s_0 \le s_0^*(n,k) \ll 1$, there exist $\tilde \lambda^* = \tilde \lambda^* (n,k, s_0) \ll 1$ and $a^* = a^* (n,k,s_0) \ll1 $ small such that
    \begin{enumerate}
        \item $\Phi[s_0] (\cdot, \cdot) $ is a well-defined smooth function on $\Omega = \Omega_{\tilde \lambda^*, a^*}$,
        \item there exists $C_{n,k,s_0}>0$ (depending only on $n,k,s_0$) such that 
            $$|\Phi[s_0] (0, a) |\le C_{n,k,s_0} a < C_{n,k,s_0} a^* \quad \text{for all } 0 < a < a^*,$$
            and
        \item there exists $C_{n,k,s_0}'>4$ (depending only on $n,k,s_0$) such that
            $$2 C_{n,k,s_0}' \ge \partial_{\tilde \lambda} \Phi \ge \frac12 C_{n,k,s_0}' \ge 2 \qquad \text{for all } (\tilde \lambda , a) \in \Omega.$$
    \end{enumerate}
    Therefore, the $\partial_{\tilde \lambda} \Phi$ bounds imply for all $0< a < a^*$,
    \begin{gather} \label{proof global eigenfunctions, eqn 2}
        \Phi[s_0]( \tilde \lambda^* /2, a) \ge \tilde \lambda^* + \Phi[s_0] (0, a) \ge \tilde \lambda^* - C_{n,k,s_0} a   \\
        \label{proof global eigenfunctions, eqn 3}
        \text{and similarly } \Phi[s_0] ( - \tilde \lambda^* /2 , a) \le -\tilde \lambda^* + C_{n,k,s_0} a.
    \end{gather}
    Setting $a^{**} = a^{**} (n,k,s_0) := \min \left\{ \frac{ \tilde \lambda^*}{C_{n,k,s_0}}, a^* \right\}$, it follows that
    \begin{equation}
        \Phi[s_0] (-\tilde \lambda^* /2, a)  < 0 <  \Phi[s_0] (\tilde \lambda^* /2, a) 
    \end{equation}
    for all $0 < a < a^{**}$.
    Therefore, the intermediate value theorem and the fact that $\partial_{\tilde \lambda} \Phi \ge 2 > 0$ implies that for all $0 < a < a^{**}$, there exists a unique $ \tilde \lambda_k(a) \in (-\tilde \lambda^*, \tilde \lambda^* )$ such that $\Phi[s_0] ( \tilde \lambda_k (a), a) = 0$.
    As described at the beginning of the proof, the associated $\phi_k = \phi_{k,a,s_0}$ is then a smooth eigenfunction of $H_{s,a}$ with eigenvalue $\lambda_k = 1 - k + \tilde \lambda_k(a)$.

    It remains to prove the estimates \eqref{thm global eigenfunctions, eqn 1}.
    Since $\frac12 C_{n,k,s_0}' \le \partial_{\tilde \lambda } \Phi \le 2 C_{n,k,s_0}$,
    similar logic used to obtain \eqref{proof global eigenfunctions, eqn 2}, \eqref{proof global eigenfunctions, eqn 3} implies 
    \begin{equation} \label{proof global eigenfunctions, eqn 4}
        \frac12 C_{n,k,s_0}' \tilde \lambda - C_{n,k,s_0} a \le \Phi[s_0] (\tilde \lambda , a) \le 2 C_{n,k,s_0}' \tilde \lambda + C_{n,k,s_0} a.
    \end{equation}
    Evaluating these inequalities \eqref{proof global eigenfunctions, eqn 4} at $(\tilde \lambda_k(a), a)$ then shows that
    \begin{equation} \label{proof global eigenfunctions, eqn 4.5}
        | \tilde \lambda_k(a) | \lesssim_{n,k,s_0} a.
    \end{equation}

    \begin{claim} \label{proof global eigenfunctions, claim 2}
        For all $0 < s_0 \le s_0^* (n, k) \ll1$,
        $0 < a \le a^* (n, k, s_0) \ll1$,
        and $ |\tilde \lambda| \le \tilde \lambda^* ( n, k, s_0) \ll 1$,
        \begin{equation} \label{proof global eigenfunctions, claim 2, eqn 1}
            |\partial_{a} \Phi[s_0](\tilde \lambda, a)| = O(1; n,k,s_0) + O( |\tilde \lambda | a^{-1} ; n,k,s_0) .
        \end{equation}
    \end{claim}
    \begin{claimproof}
        By similar logic used to deduce \eqref{proof global eigenfunctions, proof claim 1, eqn 2},
        \begin{gather} \label{proof global eigenfunctions, proof claim 2, eqn 1}
        \begin{aligned}
            \partial_{\tilde \lambda} \Phi 
            ={}& \left( \frac{ \phi_{ext}'(s_0)}{\phi_{ext}(s_0)} + O( |\tilde \lambda| + a  ; n, k, s_0)   \right)  \\
            &\cdot \left[ \frac{ \partial_a \partial_s|_{s=s_0} [ \phi_{int}(s/a) ]}{ \partial_s|_{s=s_0} [ \phi_{int}(s/a) ] } - 
            \frac{ \partial_a [\phi_{int}(s_0/a)]}{ \phi_{int}(s_0/a) } 
            -\frac{ \partial_a \phi_{ext}'(s_0)}{ \phi_{ext}'(s_0) } + 
            \frac{ \partial_a \phi_{ext}(s_0)}{ \phi_{ext}(s_0) } \right]  .
        \end{aligned} \end{gather}
        We compute the asymptotics of terms in this expansion.

        By Lemma \ref{lem matching prelude},
        \begin{equation} \label{proof global eigenfunctions, proof claim 2, eqn 1.5}
            \frac{ \phi_{ext}'(s_0)}{\phi_{ext}(s_0)} + O( |\tilde \lambda| + a  ; n, k, s_0) 
            = O(1;n,k,s_0) 
        \end{equation}

        By Lemma \ref{lem exterior eigenfunctions}, 
        \begin{align}
            \label{proof global eigenfunctions, proof claim 2, eqn 2}
            \partial_a \phi_{ext}(s_0) &= \partial_a \tilde w (s_0) = O(a^{n-1}; n,k,s_0)  \text{ and}\\
            \label{proof global eigenfunctions, proof claim 2, eqn 3}
            \partial_a \phi_{ext}'(s_0) &= \partial_a \tilde w' (s_0) = O(a^{n-1}; n,k,s_0).
        \end{align}
        Thus, using also Lemma \ref{lem matching prelude},
        \begin{equation} \label{proof global eigenfunctions, proof claim 2, eqn 4}
            -\frac{ \partial_a \phi_{ext}'(s_0)}{ \phi_{ext}'(s_0) } + 
            \frac{ \partial_a \phi_{ext}(s_0)}{ \phi_{ext}(s_0) }
            = O(a^{n-1}; n,k,s_0) .
        \end{equation}

        As in Proposition \ref{prop generalized kernel}, write
        \begin{gather}
            \label{proof global eigenfunctions, proof claim 2, eqn 5}
            u_i (s) = \pm C_i s^{2i} + R_i (s) \\
            \label{proof global eigenfunctions, proof claim 2, eqn 6}
            \text{where } R_i (s) = O(s^{2i-1} ; n,i) \text{ and } R_i'(s) = O(s^{2i-1}; n,i) \text{ as } s \to +\infty.
        \end{gather}
        Then, by Lemma \ref{lem interior eigenfunctions},
        \begin{gather} \label{proof global eigenfunctions, proof claim 2, eqn 7}
        \begin{aligned}
            \phi_{int}(s/a) 
            ={}& \sum_{i = 0}^k \hat C_{k,i} a^{2i} u_i (s/a) 
            + \tilde \lambda \sum_{i =0}^k a^{2(i+1)} [ \hat C_{k,i} u_{i+1} (s/a) + v_i(s/a)] + a^2 w_k \\
            ={}& \sum_{i=0}^k \hat C_{k,i}  C_i s^{2i} + \sum_{i=0}^k \hat C_{k,i} a^{2i} R_i (s/a) \\
            & + \tilde \lambda \sum_{i=0}^k  \hat C_{k,i} C_{i+1}  s^{2i + 2} 
            + \tilde \lambda \sum_{i=0}^k a^{2(i+1)} [ \hat C_{k,i} R_{i+1} (s/a) + v_i (s/a) ] \\
            & + a^2 w_k (s/a).
        \end{aligned} \end{gather}
        Thus, 
        \begin{gather}  \label{proof global eigenfunctions, proof claim 2, eqn 8}
        \begin{aligned}
            \partial_a [ \phi_{int}(s/a)] 
            ={}& \sum_{i=0}^k \hat C_{k,i} ( 2i a^{2i-1} R_i (s/a) - s a^{2i-2} R_i'(s/a) ) \\
            &+ \tilde \lambda \sum_{i =0}^k (2i+2) a^{2i+1} [ \hat C_{k,i} R_{i+1} (s/a) + v_i (s/a)] \\
            &+ \tilde \lambda \sum_{i = 0}^k a^{2i+2} \left( \hat C_{k,i} R'_{i+1} (s/a) (-s/a^2) + \partial_a v_i (s/a) + v_i'(s/a)(-s/a^2) \right) \\
            &+ 2a w_k (s/a) + a^2 \partial_a w_k (s/a) + a^2 w_k'(s/a) (-s/a^2).
        \end{aligned} \end{gather}
        Inserting the estimates \eqref{proof global eigenfunctions, proof claim 2, eqn 6} and the estimates for $v_i, w_k$ from Lemma \ref{lem interior eigenfunctions} into \eqref{proof global eigenfunctions, proof claim 2, eqn 8},
        it follows that at $s=s_0$
        \begin{gather} \label{proof global eigenfunctions, proof claim 2, eqn 9}
        \begin{aligned}
            &\partial_a [\phi_{int} (s_0/a) ] \\
            ={}& 
            \sum_{i=0}^k  \left[ a^{2i-1} O \left( \frac{s_0^{2i-1}}{a^{2i-1}} ; n,k \right) + a^{2i} \cdot \frac{s_0}{a^2} \cdot O \left( \frac{s_0^{2i-2}}{a^{2i-2}} ; n,k \right)    \right] \\
            & + \tilde \lambda \sum_{i =0}^k  a^{2i+1} \left[ O \left( \frac{s_0^{2i+1}}{a^{2i+1}} ; n,k \right)  + s_0^2\cdot  O \left( \frac{s_0^{2i+2}}{a^{2i+2}}; n,k \right) \right] \\
            &+\tilde \lambda \sum_{i = 0}^k a^{2i+2} \left[   \frac{s_0}{a^2} \cdot  O \left(  \frac{s_0^{2i}}{a^{2i}} ; n,i \right)  + O \left( s_0 \frac{s_0^{2i+3}}{a^{2i+3}} ; n,k \right)  +  \frac{s_0}{a^2} \cdot O \left( s_0 \frac{s_0^{2i+1}}{a^{2i+1}}  ; n,k \right)  \right]
            \\
            &+ a \cdot O \left( \frac{s_0}{a} ; n,k \right) + a^2 \cdot O \left( s_0 \frac{s_0^2}{a^2} ; n,k \right) + a^2 \cdot \frac{s_0}{a^2} \cdot  O(1;n,k)
            \\
            ={}& O(1; n,k,s_0) + O(|\tilde \lambda| a^{-1} ; n,k,s_0) .
        \end{aligned} \end{gather}
        An analogous computation applies to show that
        \begin{equation} \label{proof global eigenfunctions, proof claim 2, eqn 10}
            \partial_a \partial_s |_{s=s_0} [ \phi_{int} (s/a)]
            = \partial_a \left[ \frac1a \phi_{int}' (s_0/a) \right]
            = O(1; n,k,s_0) + O(|\tilde \lambda| a^{-1} ; n,k,s_0) .
        \end{equation}
        Thus, by \eqref{proof global eigenfunctions, proof claim 2, eqn 9}, \eqref{proof global eigenfunctions, proof claim 2, eqn 10}, and Lemma \ref{lem matching prelude},
        \begin{equation} \label{proof global eigenfunctions, proof claim 2, eqn 11}
            \frac{ \partial_a \partial_s|_{s=s_0} [ \phi_{int}(s/a) ]}{ \partial_s|_{s=s_0} [ \phi_{int}(s/a) ] } - 
            \frac{ \partial_a [\phi_{int}(s_0/a)]}{ \phi_{int}(s_0/a) } 
            = O(1; n,k,s_0) + O(|\tilde \lambda| a^{-1} ; n,k,s_0) .
        \end{equation}

        Inserting estimates \eqref{proof global eigenfunctions, proof claim 2, eqn 1.5}, 
        \eqref{proof global eigenfunctions, proof claim 2, eqn 4}, and \eqref{proof global eigenfunctions, proof claim 2, eqn 11} into \eqref{proof global eigenfunctions, proof claim 2, eqn 1} completes the proof of the claim.
    \end{claimproof}

    Now, differentiating the identity $\Phi[s_0] (\tilde \lambda_k(a), a) \equiv 0$ implies
    \begin{align*}
        &\partial_a \tilde \lambda_k (a) \\
        ={}& - \frac{ \partial_a \Phi[s_0](\tilde \lambda_k(a), a)}{ \partial_{\tilde \lambda} \Phi[s_0] (\tilde \lambda_k (a), a) } \\
        ={}& \frac{ O(1; n, k, s_0) + O ( |\tilde \lambda_k(a) | a^{-1} ; n,k,s_0) }{C(n,k)s_0^{1-n} + O (s_0^{2-n}; n,k) + O ( |\tilde \lambda_k(a)| + a ; n,k, s_0) }
        && \text{(by Claims \ref{proof global eigenfunctions, claim 1}, \ref{proof global eigenfunctions, claim 2})} \\
        ={}& O(1; n,k,s_0) 
        && \text{(by \eqref{proof global eigenfunctions, eqn 4.5})}.
    \end{align*}
    This proves the remaining estimate in \eqref{thm global eigenfunctions, eqn 1} and thereby completes the proof of the theorem for any $k \in \mathbb N$ with $0 \le k \le K$.
    The theorem then follows by taking minima or maxima over $0 \le k \le K$ as needed.
\end{proof}

\subsection{Estimates for the Matched Eigenfunctions} \label{Subsection Eigenfunction Estimates}

The advantage of Theorem \ref{thm global eigenfunctions}'s gluing construction is that it implies precise estimates for the global eigenfunctions.
In this subsection, we collect and record these pointwise and integral estimates.

\subsubsection{Pointwise Estimates}
\begin{lem} \label{lem tilde phi_k,a pointwise ests}
    Let $n\ge 3$ and $K \in \mathbb N$.
    For all $0 < s_0 \le s_0^* (n,K) \ll 1$,
    $\delta \in (0,1)$,
    $0 < a \le a^* (n,K, s_0 , \delta) \ll 1 $,
    and $k \in \mathbb N $ with $0 \le k \le K$,
    the following holds:
    
    Let $\phi_{k,a} = \phi_{k,a,s_0}(s)$ denote the $H_a$-eigenfunction constructed in Theorem \ref{thm global eigenfunctions}.
    Define $\tilde \phi_{k,a}$ such that
    \begin{equation} \label{eqn defn tilde phi}
        \phi_{k,a}(s) = \sum_{i=0}^k \hat C_{k,i} a^{2i} u_i(s/a) + \tilde \phi_{k,a}(s)
    \end{equation}
    where $\hat C_{k,i}$ and $u_i$ are as in Lemma \ref{lem interior eigenfunctions}.

    Then
    \begin{align}
        \label{lem tilde phi_k,a pointwise ests, eqn 1}
        \tilde \phi_{k,a} (s) &= O ( a; n, k,s_0, \delta) \left( 1 + |s| \right)^{2k+ \delta} && \forall s \in \R, \text{ and} \\
        \label{lem tilde phi_k,a pointwise ests, eqn 1.5}
        \partial_s \tilde \phi_{k,a} (s) &= O (a; n, k, s_0, \delta) \left( 1 + |s| \right)^{2k-1+\delta} && \forall s \in \R.
    \end{align}

    In particular,
    \begin{align}
        \label{lem tilde phi_k,a pointwise ests, eqn 2}
        a^{-2} \beta_a(s) - \phi_{0,a} (s) &=   O ( a; n,s_0, \delta) \left( 1 + |s| \right)^{ \delta} && \forall s \in \R, \text{ and} \\
        \label{lem tilde phi_k,a pointwise ests, eqn 2.5}
        \partial_s \left( a^{-2} \beta_a -  \phi_{0,a}  \right) &= O (a; n, s_0, \delta) \left( 1 + |s| \right)^{-1+\delta} && \forall s \in \R.
    \end{align}
\end{lem}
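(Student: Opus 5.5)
We split $\R$ into the interior region $|s|<s_0$ and the exterior region $|s|\ge s_0$. Since $\phi_{k,a}$, $u_i$ and hence $\tilde\phi_{k,a}$ are odd, it suffices to treat $s\ge 0$. Throughout, $\tilde\lambda_k=\tilde\lambda_k(a)$ with $|\tilde\lambda_k|\lesssim a$ by Theorem \ref{thm global eigenfunctions}.

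\emph{Interior region.} For $0\le s<s_0$, Lemma \ref{lem interior eigenfunctions} gives
\begin{equation*}
\tilde\phi_{k,a}(s)=\tilde\lambda_k\sum_{i=0}^k a^{2(i+1)}\bigl(\hat C_{k,i}u_{i+1}+v_i\bigr)(s/a)+a^2w_k(s/a).
\end{equation*}
We bound each piece using $\langle s/a\rangle\lesssim (a+s)/a$.
\begin{itemize}
\item By Proposition \ref{prop generalized kernel}, $a^{2i+2}|u_{i+1}(s/a)|\lesssim (a+s)^{2i+2}\lesssim 1$.
\item The bound $\|v_i\|_{X^{2i+2}}\le C s_0^2$ gives the same estimate for $a^{2i+2}|v_i(s/a)|$.
\end{itemize}
Together these terms contribute $O(|\tilde\lambda_k|)=O(a)$. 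The last term satisfies $a^2|w_k(s/a)|\lesssim a^2\langle s/a\rangle\lesssim a(a+s)\lesssim a$. Derivatives work the same way: $\partial_s$ acting on $g(s/a)$ produces $a^{-1}g'(s/a)$, and the weighted norms control $\langle\sigma\rangle|g'|$ by $\langle\sigma\rangle^{\kappa}$. This gives, for example, $a|w_k'(s/a)|\lesssim a$. So on $|s|<s_0$ both estimates hold with $(1+|s|)^{\cdot}\sim 1$.

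\emph{Exterior region.} For $s\ge s_0$ write $\phi_{k,a}=\mu\,\phi_{ext}$ with $\mu=\phi_{int}(s_0/a)/\phi_{ext}(s_0)$. By Lemma \ref{lem matching prelude} at $s=s_0$ (with $P_k(s_0)\neq0$), $\mu=C_{n,k}+O(a)$, where $C_{n,k}$ is the constant of Lemma \ref{lem rewriting Laguerre poly}. Lemma \ref{lem exterior eigenfunctions} gives
\begin{equation*}
\phi_{ext}=P_k+\tilde\lambda_k(\tilde u_{\lambda_k}+\tilde v)+\tilde w .
\end{equation*}
Take $b'=2k+\delta$, which is admissible since $2k<b'\le 2k+1$. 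Then $\|\tilde v\|_{Y^{2;2-n,b'}_{s_0}}\lesssim|\tilde\lambda_k|$ and $\|\tilde w\|_{Y^{2;2-n,b'}_{s_0}}\lesssim a^n$. Also $|\tilde u_{\lambda_k}|\lesssim s^{2k}\ln s\lesssim s^{2k+\delta}$ for $s\ge1$, with the analogous bound on its derivative, and everything is bounded on $[s_0,1]$ by constants depending on $s_0$. Hence
\begin{equation*}
\phi_{k,a}=C_{n,k}P_k+O(a)(1+s)^{2k+\delta},\qquad
\partial_s\phi_{k,a}=C_{n,k}P_k'+O(a)(1+s)^{2k-1+\delta}.
\end{equation*}
The derivative bound uses the $s^{-b'+1}|\partial_s u|$ part of the $Y$-norm. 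Lemma \ref{lem rewriting Laguerre poly} gives $\sum_i\hat C_{k,i}a^{2i}u_i(s/a)=C_{n,k}P_k+O(a)s^{2k-1}$ for $s\ge s_0$, and the same for derivatives with $s^{2k-2}$. Subtracting yields both estimates on $s\ge s_0$.

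The main obstacle is that the two constants must be the same $C_{n,k}$: the normalization factor $\mu$ has to converge to exactly the constant in Lemma \ref{lem rewriting Laguerre poly}, with an $O(a)$ error. This follows because Lemma \ref{lem matching prelude} derives \eqref{lem matching prelude, eqn 3} from that same lemma. The resulting error $O(a)$, rather than $O(|\tilde\lambda|+a)$, relies on $|\tilde\lambda_k|\lesssim a$. The loss of $\delta$ comes entirely from the $\ln s$ growth of $\tilde u_{\lambda_k}$ and from the requirement $b'>2k$. This is why $a^*$ depends on $\delta$: $\tilde\lambda^*$ in Lemma \ref{lem exterior eigenfunctions} depends on $b'$, and $|\tilde\lambda_k|\lesssim a$ must fall below it.

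For the particular case $k=0$: $\hat C_{0,0}=1$ and $u_0=\beta_1$, so $\sum_i\hat C_{0,i}a^{2i}u_i(s/a)=\beta_1(s/a)=a^{-2}\beta_a(s)$ by Lemma \ref{lem beta properties}(i). Therefore $\tilde\phi_{0,a}=\phi_{0,a}-a^{-2}\beta_a$, and \eqref{lem tilde phi_k,a pointwise ests, eqn 2}–\eqref{lem tilde phi_k,a pointwise ests, eqn 2.5} are the $k=0$ instances of \eqref{lem tilde phi_k,a pointwise ests, eqn 1}–\eqref{lem tilde phi_k,a pointwise ests, eqn 1.5}.
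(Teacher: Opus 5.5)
Your proposal is correct and follows essentially the same route as the paper. On $|s|<s_0$ you bound the $\tilde\lambda_k$- and $w_k$-corrections of $\phi_{int}$ via Lemma \ref{lem interior eigenfunctions} and $|\tilde\lambda_k|\lesssim a$. On $|s|\ge s_0$ you combine the normalization constant from Lemma \ref{lem matching prelude}, the decomposition of $\phi_{ext}$ from Lemma \ref{lem exterior eigenfunctions} with $b'=2k+\delta$, and Lemma \ref{lem rewriting Laguerre poly}, exactly as the paper does, while making explicit the matching of the constants $C_{n,k}$ that the paper leaves implicit.
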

\begin{proof}
    For $|s| \le s_0$,
    \begin{multline}
        \tilde \phi_{k,a}(s)
        = \phi_{int, k}(s/a) - \sum_{i=0}^k \hat C_{k,i} a^{2i} u_i(s/a)  
        = \tilde \lambda_k \sum_{i=0}^k a^{2(i+1)} ( \hat C_{k,i} u_{i+1} + v_i ) + a^2 w_k \\
        = O (a; n, k, s_0)
    \end{multline}
    by the estimates of Lemma \ref{lem interior eigenfunctions} and Theorem \ref{thm global eigenfunctions}.
    Similar logic applies to show 
    \begin{equation}
        \partial_s \tilde \phi_{k,a} = O ( a; n, k, s_0) \qquad \forall |s| \le s_0.
    \end{equation}

    Recall from Theorem \ref{thm global eigenfunctions} that $|\tilde \lambda_k(a) | \lesssim_{n,k,s_0} a .$
    It follows that, for $s \ge s_0$,
    \begin{align*}
        &\tilde \phi_{k,a} (s) \\
        ={}& \phi_{k,a}(s) - \sum_{i=0}^k \hat C_{k,i} a^{2i} u_i (s/a) \\
        ={}& \frac{\phi_{int} (s_0/a) }{\phi_{ext}(s_0)} \phi_{ext}(s) 
        - \check C(n,k) P_k(s) + O ( a; n, k, s_0) s^{2k-1} 
        && ( \text{Lemma \ref{lem rewriting Laguerre poly}} ) \\
        ={}& \frac{ \check C P_k(s_0) + O ( a; n, k, s_0) }{P_k(s_0) + O (a; n, k, s_0) } \phi_{ext}(s) 
        - \check C P_k(s) + O ( a; n, k, s_0) s^{2k-1} 
        && (\text{Lemma \ref{lem matching prelude}} )\\
        ={}& \check C \left( \phi_{ext}(s) - P_k(s) \right)
        + O(a; n,k,s_0) \phi_{ext} + O(a;n,k,s_0) s^{2k-1} \\
        ={}& O(a; n, k, s_0, \delta) s^{2k+\delta} 
        && ( \text{Lemma \ref{lem exterior eigenfunctions}} ).
    \end{align*}
    Similar logic applies to $s$-derivatives to show that 
    \begin{equation*}
        \partial_s \tilde \phi_{k,a} (s) = O(a;n, k, s_0, \delta) s^{2k-1+\delta} \qquad \forall s \ge s_0.
    \end{equation*}
    This proves \eqref{lem tilde phi_k,a pointwise ests, eqn 1} and \eqref{lem tilde phi_k,a pointwise ests, eqn 1.5}.

    \eqref{lem tilde phi_k,a pointwise ests, eqn 2} and \eqref{lem tilde phi_k,a pointwise ests, eqn 2.5} then follow from the fact that 
    \begin{equation}
        \hat C_{0, 0} u_0(s/a) = \beta_1(s/a) = a^{-2} \beta_a(s)
    \end{equation}
    (see Proposition \ref{prop generalized kernel} and Lemma \ref{lem beta properties}).
\end{proof}

Before proceeding, we note the following consequence of Lemma \ref{lem tilde phi_k,a pointwise ests}.
\begin{lem} \label{lem phi_k,a - phi_0,a pointwise ests}
    Let $n\ge 3$ and $K \in \mathbb N$.
    For all $0 < s_0 \le s_0^* (n,K) \ll 1$,
    $\delta \in (0,1)$,
    $0 < a \le a^* (n,K, s_0 , \delta) \ll 1 $,
    and $k \in \mathbb N $ with $0 \le k \le K$,
    the following holds:
    
    Let $\phi_{k,a} = \phi_{k,a,s_0}(s)$ denote the $H_a$-eigenfunction constructed in Theorem \ref{thm global eigenfunctions} and $\tilde \phi_{k,a}$ be as in \eqref{eqn defn tilde phi}.

    Then there exists $C = C(n,k)>0$ and $C' = C'(n,k,s_0, \delta) > 0$ such that 
    \begin{align}
        \label{lem phi_k,a - phi_0,a pointwise ests, eqn 1}
        | \phi_{k,a} (s) - \phi_{0,a}(s) | 
        &\le C \left( a^2 + |s|^2 + |s|^{2k} \right) + C' a \left( 1 + |s| \right)^{2k+\delta} && \forall s \in \R, \text{ and} \\
        \label{lem phi_k,a - phi_0,a pointwise ests, eqn 2}
        \left| \partial_s\left( \phi_{k,a} (s) - \phi_{0,a}(s) \right) \right| 
        &\le C \left( a + |s| + |s|^{2k-1} \right) + C' a \left( 1 + |s| \right)^{2k-1+\delta} && \forall s \in \R.
    \end{align}
\end{lem}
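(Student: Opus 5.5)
We will write both eigenfunctions through the decomposition \eqref{eqn defn tilde phi} of Lemma \ref{lem tilde phi_k,a pointwise ests}:
\begin{equation*}
    \phi_{k,a}(s) - \phi_{0,a}(s) = \sum_{i=0}^k \hat C_{k,i} a^{2i} u_i(s/a) - \hat C_{0,0} u_0(s/a) + \tilde\phi_{k,a}(s) - \tilde\phi_{0,a}(s).
\end{equation*}
Since $\hat C_{k,0} = \hat C_{0,0} = 1$, the $i=0$ terms cancel. What remains is the explicit sum $\sum_{i=1}^k \hat C_{k,i} a^{2i} u_i(s/a)$ plus the two error terms $\tilde\phi_{k,a}$ and $\tilde\phi_{0,a}$. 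Lemma \ref{lem tilde phi_k,a pointwise ests} bounds these by $O(a;n,k,s_0,\delta)(1+|s|)^{2k+\delta}$ and $O(a;n,s_0,\delta)(1+|s|)^{\delta}$ respectively. Both are absorbed into the term $C' a (1+|s|)^{2k+\delta}$. The derivative estimate works the same way, using \eqref{lem tilde phi_k,a pointwise ests, eqn 1.5}.

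The main step is a uniform bound $|u_i(\sigma)| \lesssim_{n,i} |\sigma|^2 + |\sigma|^{2i}$ for all $\sigma\in\R$ and $i\ge 1$. The constant must depend only on $n$ and $i$, not on $s_0$. For large $|\sigma|$ this is immediate from the asymptotics \eqref{generalized kernel asymptotic expansion eqn}. For bounded $\sigma$, recall from \eqref{proof generalized kernel, eqn 1} that $u_i$ solves $L u_i = u_{i-1}$ with $u_i(0)=\partial_\sigma u_i(0)=0$. Since $u_i$ is smooth, Taylor's theorem gives $|u_i(\sigma)| \lesssim \sigma^2$ on $|\sigma|\le 1$. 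In fact $u_i$ is odd, so $u_i = O(|\sigma|^3)$ there, which is even better.

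Combining the two regimes gives $|u_i(\sigma)| \lesssim \sigma^2(1+\sigma^{2i-2})$. Plugging in $\sigma = s/a$ then yields
\begin{equation*}
    a^{2i}|u_i(s/a)| \lesssim a^{2i}\left(\frac{s^2}{a^2} + \frac{s^{2i}}{a^{2i}}\right) = a^{2i-2}s^2 + s^{2i} \le s^2 + s^{2i}
\end{equation*}
for $a\le 1$. Finally, $|s|^{2i} \le |s|^2 + |s|^{2k}$ for every $1\le i\le k$, by splitting into $|s|\le 1$ and $|s|\ge1$. Summing over $i$ with the constants $\hat C_{k,i}$ gives the first estimate, with $C = C(n,k)$. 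The $a^2$ term in the statement is simply harmless slack.

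For the derivative, we use $\partial_s[a^{2i}u_i(s/a)] = a^{2i-1}u_i'(s/a)$. The same two-regime argument gives $|u_i'(\sigma)| \lesssim |\sigma| + |\sigma|^{2i-1}$. For large $\sigma$ this comes from \eqref{generalized kernel derivatives asymptotic expansion eqn}, and for small $\sigma$ from $u_i'(0)=0$. Hence $a^{2i-1}|u_i'(s/a)| \lesssim a^{2i-2}|s| + |s|^{2i-1} \le |s| + |s|^{2i-1}$. The only delicate point is keeping the constant $C$ independent of $s_0$ and $\delta$. This holds because the $u_i$ and the $\hat C_{k,i}$ depend only on $n$ and $i$; all $s_0$- and $\delta$-dependence sits in $C'$, which comes from Lemma \ref{lem tilde phi_k,a pointwise ests}. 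The requirement $a\le a^*(n,K,s_0,\delta)$ is inherited from that lemma.
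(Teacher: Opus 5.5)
Your proof is correct and follows the paper's argument. It uses the same cancellation of the $i=0$ terms via $\hat C_{k,0}=\hat C_{0,0}=1$, the same appeal to Lemma \ref{lem tilde phi_k,a pointwise ests} for $\tilde\phi_{k,a}-\tilde\phi_{0,a}$, and a pointwise bound on $a^{2i}u_i(s/a)$ from Proposition \ref{prop generalized kernel}. The only difference is cosmetic: the paper uses the cruder global bound $|u_i(\sigma)|\lesssim\langle\sigma\rangle^{2i}$, giving $(a^2+s^2)^i\lesssim a^2+s^2+s^{2k}$, whereas your use of $u_i(0)=\partial_\sigma u_i(0)=0$ removes the $a^2$ and $a$ slack terms. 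For that you need the intermediate range $1\le|\sigma|\le R_0$, which is handled trivially by continuity or by Taylor's bound on $[-R_0,R_0]$.
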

\begin{proof}
    Observe the cancellation
    \begin{gather} \label{proof phi_k,a - phi_0,a pointwise ests, eqn 1} \begin{aligned}
        \phi_{k,a}(s) - \phi_{0,a}(s)
        ={}& \sum_{i=0}^k \hat C_{k,i} a^{2i} u_i(s/a) + \tilde \phi_{k,a}(s) 
        - \hat C_{0,0} u_0 (s/a) - \tilde \phi_{0,a}(s) \\
        ={}& \sum_{i=1}^k \hat C_{k,i} a^{2i} u_i(s/a) + \tilde \phi_{k,a}(s) - \tilde \phi_{0,a}(s)
    \end{aligned} \end{gather}
    since $\hat C_{j,0} =1$ for all $j$ by Lemma \ref{lem interior eigenfunctions}.
    By Lemma \ref{lem tilde phi_k,a pointwise ests},
    \begin{equation} \label{proof phi_k,a - phi_0,a pointwise ests, eqn 2}
        | \tilde \phi_{k,a} (s) - \tilde \phi_{0,a}(s) |
        \le O(a; n, k, s_0, \delta) \left( 1 + |s| \right)^{2k+\delta} 
        \qquad \forall s \in \R.
    \end{equation}
    By Proposition \ref{prop generalized kernel},
    \begin{equation} \label{proof phi_k,a - phi_0,a pointwise ests, eqn 3}
        \left| \sum_{i=1}^k \hat C_{k,a} a^{2i} u_i(s/a) \right| 
        \le C \sum_{i=1}^k a^{2i} \langle s/a \rangle^{2i}  
        \le C \left( a^2 + s^2 + s^{2k} \right) \qquad \forall s \in \R
    \end{equation}
    where $C = C(n,k) > 0$ denotes a constant that depends only on $n,k$ and may change from line to line.
    Using estimates \eqref{proof phi_k,a - phi_0,a pointwise ests, eqn 2} and \eqref{proof phi_k,a - phi_0,a pointwise ests, eqn 3} in \eqref{proof phi_k,a - phi_0,a pointwise ests, eqn 1} thereby proves \eqref{lem phi_k,a - phi_0,a pointwise ests, eqn 1}.
    Taking derivatives of \eqref{proof phi_k,a - phi_0,a pointwise ests, eqn 1} and applying analogous estimates proves \eqref{lem phi_k,a - phi_0,a pointwise ests, eqn 2}.
\end{proof}

\begin{lem}[$\partial_a \phi_{k,a}$ Estimates] \label{lem partial_a phi_a ests}
    Let $n\ge 3$ and $K \in \mathbb N$.
    For all $0 < s_0 \le s_0^* (n,K) \ll 1$,
    $\delta \in (0,1)$,
    $0 < a \le a^* (n,K, s_0 , \delta) \ll 1 $,
    and $k \in \mathbb N $ with $0 \le k \le K$,
    the following holds:

    If $\phi_{k,a} = \phi_{k,a,s_0}(s)$ denotes the $H_a$-eigenfunction constructed in Theorem \ref{thm global eigenfunctions},
    then there exists $C = C(n,k,s_0)$ and $C_\delta = C(n,k,s_0, \delta)$ such that
    \begin{align}
        \label{lem partial_a phi_a ests, eqn 1}
        | \partial_a \phi_{k,a} (s) - \partial_a \phi_{0,a} (s) | &\le C + C_\delta (1+|s|)^{2k+\delta} && ( \forall s \in \R),  \\
        \label{lem partial_a phi_a ests, eqn 1.5}
        | \partial_s \partial_a \phi_{k,a} (s) - \partial_s \partial_a \phi_{0,a} (s) | &\le C + C_\delta (1+|s|)^{2k-1+\delta} && ( \forall s \in \R),   \\
        \label{lem partial_a phi_a ests, eqn 2} 
        |\partial_a \phi_{k,a}(s) | &\le \frac{Cs a^{-2}}{(1+(s/a))^{n-1}}+ C + C_\delta (1+|s|)^{2k+\delta}
        && ( \forall s \in \R),   \text{ and}\\
        \label{lem partial_a phi_a ests, eqn 2.5}
        |\partial_s \partial_a \phi_{k,a}(s) | &\le \frac{C a^{-2}}{(1+(s/a))^{n-1}}+ C + C_\delta(1 +  |s|)^{2k-1+\delta}
        && ( \forall s \in \R) .
    \end{align}

    In particular,
    \begin{equation} \label{lem partial_a phi_a ests, eqn 3}
        \| \partial_a \phi_{k,a} - \partial_a \phi_{0,a} \|_{H^1_a} \le C(n,k, s_0) .
    \end{equation}
\end{lem}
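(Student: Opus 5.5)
The natural route is to differentiate the piecewise formula \eqref{eqn global eigenfunction defn} in $a$ and treat the interior region $|s|<s_0$ and the exterior region $|s|\ge s_0$ separately. Throughout, $\tilde\lambda=\tilde\lambda_k(a)$, so every $a$-derivative picks up chain-rule terms $\partial_a\tilde\lambda_k\,\partial_{\tilde\lambda}(\cdot)$. By Theorem \ref{thm global eigenfunctions} these are harmless, since $|\partial_a\tilde\lambda_k|\lesssim 1$ and $|\tilde\lambda_k|\lesssim a$. It suffices to treat $s\ge0$ by oddness.

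\emph{Interior region.} For $|s|<s_0$ I would write $\phi_{k,a}(s)=\phi_{int}(s/a)$ using the ansatz \eqref{eqn interior eigenfunctions ansatz}.
\begin{itemize}
\item The leading piece is $\sum_i\hat C_{k,i}a^{2i}u_i(s/a)$. For $i\ge1$, the identity $\partial_a[a^{2i}u_i(s/a)]=2ia^{2i-1}u_i-sa^{2i-2}u_i'$ together with $|u_i(\sigma)|\lesssim\langle\sigma\rangle^{2i}$ and $|u_i'(\sigma)|\lesssim\langle\sigma\rangle^{2i-1}$ gives a bound $\lesssim a^{2i-1}\langle s/a\rangle^{2i}\lesssim C(s_0)$. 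This is $O(1)$, not merely $O(a^{-1})$: for $s\ge a$ the bound is $\lesssim s^{2i}/a$, so I must use the finer expansion.
\item \textbf{This is the main obstacle.} With $u_i=C_i\sigma^{2i}+R_i$, the leading monomial $C_is^{2i}$ has zero $a$-derivative. The remainder then contributes $a^{2i-1}O(\langle s/a\rangle^{2i-1})=O(1)$, exactly as in \eqref{proof global eigenfunctions, proof claim 2, eqn 9}. The derivative estimates on $R_i$ are those of Proposition \ref{prop generalized kernel}.
\item The $i=0$ term is $\partial_a\beta_1(s/a)=-sa^{-2}\beta_1'(s/a)$. Since $|\beta_1'(\sigma)|\lesssim(1+\sigma)^{1-n}$ by \eqref{beta derivatives asymptotic expansion eqn}, this is exactly the singular term $Csa^{-2}(1+s/a)^{1-n}$ in \eqref{lem partial_a phi_a ests, eqn 2}. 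It is common to all $k$, since $\hat C_{k,0}=1$, so it cancels in \eqref{lem partial_a phi_a ests, eqn 1}.
\item The $\tilde\lambda$-terms and $a^2w_k$ are bounded by $O(1)+O(|\tilde\lambda|a^{-1})=O(1)$ using \eqref{eqn interior eigenfunctions, correction term ests}, again mirroring \eqref{proof global eigenfunctions, proof claim 2, eqn 9}, now including the $\partial_a\tilde\lambda_k\,\partial_{\tilde\lambda}$ terms.
\item The $s$-derivative versions are handled identically, with one fewer power of $s$. The $i=0$ term then gives $\partial_s\partial_a\beta_1(s/a)$, which is $\lesssim a^{-2}(1+s/a)^{1-n}$ using $|\beta_1''|\lesssim(1+\sigma)^{-n}$.
\end{itemize}

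\emph{Exterior region.} For $s\ge s_0$ I would write $\phi_{k,a}=m(a)\,\phi_{ext}(s)$ with $m(a)=\phi_{int}(s_0/a)/\phi_{ext}(s_0)$. Then
\[\partial_a\phi_{k,a}=m'(a)\,\phi_{ext}+m\,\partial_a\phi_{ext}.\]
\begin{itemize}
\item By the interior step at $s=s_0$, we have $\partial_a[\phi_{int}(s_0/a)]=O(1)$. The $\beta$ term there is $O(s_0a^{-2}(s_0/a)^{1-n})=O(a^{n-3})=O(1)$ because $n\ge3$. Moreover $\partial_a\phi_{ext}(s_0)=O(1)$, and Lemma \ref{lem matching prelude} gives $|\phi_{ext}(s_0)|\gtrsim1$. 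Together these give $|m'|\lesssim C(s_0)$.
\item Lemma \ref{lem exterior eigenfunctions} gives $|\phi_{ext}|\lesssim(1+s)^{2k+\delta}$, taking $b'=2k+\delta$.
\item For the second term, $\partial_a\phi_{ext}=\partial_a\tilde\lambda_k(\tilde u_K+\tilde v+\tilde\lambda\partial_{\tilde\lambda}\tilde v+\partial_{\tilde\lambda}\tilde w)+\partial_a\tilde w$. Each piece lies in $Y^{2;2-n,b'}$ with bounded norm, so it is $\lesssim C_\delta(1+s)^{2k+\delta}$. The $\ln s$ growth of $\tilde u_K$ is absorbed by the $s^\delta$ factor.
\item Derivatives in $s$ follow from the $Y$-norm control of $\partial_s$.
\end{itemize}
This yields \eqref{lem partial_a phi_a ests, eqn 2} and \eqref{lem partial_a phi_a ests, eqn 2.5} on $s\ge s_0$, and the difference bounds there follow trivially.

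\emph{Weighted norm bound.} Finally, \eqref{lem partial_a phi_a ests, eqn 3} follows by integrating the pointwise bounds \eqref{lem partial_a phi_a ests, eqn 1} and \eqref{lem partial_a phi_a ests, eqn 1.5} against $d\mu_a$. The density satisfies $d\mu_a\lesssim(1+|s|)^{n-1}e^{-s^2/4}ds$, since $F_a\ge s^2/4$ and $dV_a\lesssim(a+|s|)^{n-1}ds$. Polynomial growth is therefore integrable, with a constant depending only on $n,k,s_0$ once $\delta$ is fixed, say $\delta=1/2$.
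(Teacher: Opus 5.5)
Your proposal is correct and follows the paper's proof essentially step for step. In the interior you split $u_i=C_i\sigma^{2i}+R_i$ so that the leading monomial is $a$-independent. You isolate the common $\beta_1(s/a)$ term, which produces the singular piece $s a^{-2}(1+s/a)^{1-n}$ and cancels in the difference since $\hat C_{k,0}=1$. You use $|\tilde\lambda_k|\lesssim a$ and $|\partial_a\tilde\lambda_k|\lesssim 1$ for the remaining terms. In the exterior you apply the product rule to $\frac{\phi_{int}(s_0/a)}{\phi_{ext}(s_0)}\phi_{ext}(s)$ with the $Y$-norm bounds of Lemma \ref{lem exterior eigenfunctions}, exactly matching the paper's terms (I)--(IV).

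The only blemish is in your first bullet: it says the crude bound $a^{2i-1}\langle s/a\rangle^{2i}$ is $\lesssim C(s_0)$, but it is actually $\sim s^{2i}/a$ for $s\gg a$. Your second bullet supplies the correct fix, so nothing is missing.
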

\begin{proof}
    Fix $0 \le k \le K$.
    Throughout the proof, $C = C(n,k, s_0)$ denotes a constant that depends on $n, k, s_0$ only and which may change from line to line.
    Similarly, $C_\delta = C_\delta(n,k,s_0, \delta)$ denotes a constant that depends on $n,k,s_0, \delta$ only and which may change from line to line.

    (Estimates for $|s| \ge s_0$)
    
    For $s \ge s_0$, 
    \begin{align} \label{proof partial_a phi_a ests, eqn 1}
        \partial_a \phi_{k,a}
        ={}& \partial_a \left( \frac {\phi_{int} (s_0/a) }{\phi_{ext}(s_0)} \phi_{ext}(s) \right) \\
        ={}& \frac{\phi_{int}(s_0/a)}{\phi_{ext}(s_0)} \partial_a \phi_{ext}(s)
        + \frac{\phi_{int}(s_0/a)}{\phi_{ext}(s_0)} \partial_{\tilde \lambda} \phi_{ext}(s) \frac{d \tilde \lambda}{da} \notag\\
        &+ \phi_{ext}(s) \frac{\partial_a [\phi_{int}(s_0/a)] \phi_{ext}(s_0) - \phi_{int}(s_0/a) \partial_a \phi_{ext}(s_0) }{\phi_{ext}^2(s_0)} \notag\\
        &+ \phi_{ext}(s) \frac{\partial_{\tilde \lambda} \phi_{int}(s_0/a) \phi_{ext}(s_0) - \phi_{int}(s_0/a) \partial_{\tilde \lambda} \phi_{ext}(s_0) }{\phi_{ext}^2(s_0)}  \frac {d \tilde \lambda}{da} \notag\\
        :={}& (I) + (II) + (III) + (IV).\notag
    \end{align}

    \begin{align} \label{proof partial_a phi_a ests, eqn 2}
        |(I)|
        ={}& \left| \frac{\phi_{int}(s_0/a)}{\phi_{ext}(s_0)} \partial_a \phi_{ext}(s) \right| \\
        ={}& \left| \frac{ C(n,k) P_k(s_0) + O ( a; n, k, s_0) }{P_k(s_0) + O ( a; n, k, s_0) }  \partial_a \tilde w \right| 
        && (\text{Lemmas \ref{lem exterior eigenfunctions}, \ref{lem matching prelude}} ) \notag\\
        \le{}& C_\delta a^{n-1} s^{2k+\delta} 
        && (\text{Lemma \ref{lem exterior eigenfunctions}}). \notag
    \end{align}

    \begin{align} \label{proof partial_a phi_a ests, eqn 3}
        |(II)| 
        ={}& \left| \frac{\phi_{int}(s_0/a)}{\phi_{ext}(s_0)} \partial_{\tilde \lambda} \phi_{ext}(s) \frac{d \tilde \lambda}{da} \right| \\
        \le{}& C \left| \frac{C(n,k) P_k(s_0) + O(a; n,k,s_0) }{P_k(s_0) + O(a; n,k,s_0) }  \partial_{\tilde \lambda} \phi_{ext} \right| 
        && ( \text{Lemma \ref{lem matching prelude}, Theorem \ref{thm global eigenfunctions}}) \notag\\
        \le{}& C | \tilde u_{\lambda_k} + \tilde v + \tilde \lambda  \partial_{\tilde \lambda} \tilde v  + \partial_{\tilde \lambda} \tilde w | && ( \text{Lemma \ref{lem exterior eigenfunctions}}) \notag\\
        \le{}& C_\delta s^{2k+\delta} && ( \text{Lemma \ref{lem exterior eigenfunctions}} ).\notag
    \end{align}

    \begin{align} \label{proof partial_a phi_a ests, eqn 4} 
        |(III)|  
        ={}& \left| \phi_{ext}(s) \frac{\partial_a [\phi_{int}(s_0/a)] \phi_{ext}(s_0) - \phi_{int}(s_0/a) \partial_a \phi_{ext}(s_0) }{\phi_{ext}^2(s_0)} \right|  \\
        ={}& \left| \phi_{ext}(s) \frac{ O(1)(P_k(s_0) + O(a) ) - ( C(n,k) P_k(s_0) + O(a) ) O(a^{n-1} )  }{(P_k (s_0) + O(a) )^2}\right| \notag\\
        &   ( \text{by Lemma \ref{lem matching prelude}, \eqref{proof global eigenfunctions, proof claim 2, eqn 2}, \eqref{proof global eigenfunctions, proof claim 2, eqn 9}} ) \notag\\
        \le{}& C|\phi_{ext}(s)| \notag\\
        \le{}& C s^{2k} + C_\delta a s^{2k+\delta} .\notag
    \end{align}
    where the last inequality follows from the estimates in Lemma \ref{lem exterior eigenfunctions} and we have written $O(\cdot) = O(\cdot ; n,k,s_0)$ above for brevity.

    Similarly, writing $O(\cdot) = O(\cdot; n,k,s_0)$ for brevity,
    \begin{align} \label{proof partial_a phi_a ests, eqn 5}
        |(IV)|
        ={}& \left| \phi_{ext}(s) \frac{\partial_{\tilde \lambda} \phi_{int}(s_0/a) \phi_{ext}(s_0) - \phi_{int}(s_0/a) \partial_{\tilde \lambda} \phi_{ext}(s_0) }{\phi_{ext}^2(s_0)}  \frac {d \tilde \lambda}{da} \right| \\
        \le{}& C| \phi_{ext}(s) |\left|\frac{ O(1) ( P_k(s_0) + O(a) )- ( C(n,k) P_k(s_0) + O(a)) O(1) }{(P_k(s_0) +O(a))^2} \right| \notag\\
        & (\text{by Lemma \ref{lem matching prelude}, \eqref{proof global eigenfunctions, proof claim 1, eqn 3.6}, \eqref{proof global eigenfunctions, proof claim 1, eqn 6}}) \notag\\
        \le{}& C s^{2k} + C_\delta s^{2k+\delta}\notag
    \end{align}
    where the last inequality follows from the estimates in Lemma \ref{lem exterior eigenfunctions}.

    Combining \eqref{proof partial_a phi_a ests, eqn 1}, \eqref{proof partial_a phi_a ests, eqn 2}, \eqref{proof partial_a phi_a ests, eqn 3}, \eqref{proof partial_a phi_a ests, eqn 4}, and \eqref{proof partial_a phi_a ests, eqn 5} gives that 
    \begin{equation} \label{proof partial_a phi_a ests, eqn 6}
        |\partial_a \phi_{k,a} (s) | 
        \le C_\delta s^{2k + \delta} = C(n,k,s_0, \delta) s^{2k+\delta} 
        \qquad \forall s \ge s_0.
    \end{equation}

    For $s \ge s_0$, $\partial_s \partial_a \phi_{k,a}$ is the same expression as \eqref{proof partial_a phi_a ests, eqn 1} but with $\phi_{ext}(s)$ replaced by $\partial_s \phi_{ext}(s)$ throughout.
    Similar estimates then give that
    \begin{equation} \label{proof partial_a phi_a ests, eqn 6.1}
        |\partial_s \partial_a \phi_{k,a} (s) | 
        \le C_\delta s^{2k-1 + \delta} = C(n,k,s_0, \delta) s^{2k-1+\delta} 
        \qquad \forall s \ge s_0.
    \end{equation}

    In particular,
    \begin{align} 
        \label{proof partial_a phi_a ests, eqn 6.5}
        |\partial_a \phi_{k,a}(s) - \partial_a \phi_{0,a}(s)| 
        \le | \partial_a \phi_{k,a} (s) | + | \partial_a \phi_{0,a} (s)| 
        \le C_\delta (s^{2k+\delta} + s^\delta ) 
        \le C_\delta s^{2k+ \delta} \\
        \label{proof partial_a phi_a ests, eqn 6.6}
        \text{and } |\partial_s \partial_a \phi_{k,a} (s) - \partial_s \partial_a \phi_{0,a} (s) | \le C_\delta s^{2k-1+\delta}
    \end{align}
    for all $s \ge s_0$.
    
    (Estimates for $|s| \le s_0$)

    We first note the following cancellation:
    For $0 \le s \le s_0$,
    \begin{align} \label{proof partial_a phi_a ests, eqn 7}
        &\phi_{k,a}(s) - \phi_{0,a}(s) \\
        ={}& \phi_{int,k}(s/a) - \phi_{int,0}(s/a) \notag\\
        ={}& \left( \sum_{i=0}^k \hat C_{k,i} a^{2i} u_i \right) - \hat C_{0,0} a^0 u_0 
         + \tilde \lambda \left( \sum_{i=0}^k a^{2i+2} (\hat C_{k,i} u_{i+1} + v_{k,i} )\right) - \tilde \lambda a^2 ( \hat C_{0,0} u_1 + v_{0,0} ) \notag\\
        &+a^2 w_k - a^2 w_0 \notag\\
        ={}& \sum_{i=1}^k \hat C_{k,i} a^{2i} u_i(s/a) + \tilde \lambda \sum_{i=1}^k \hat C_{k,i}a^{2i+2}   u_{i+1}(s/a) 
        + \tilde \lambda \left( \sum_{i=0}^k a^{2i+2} v_{k,i}(s/a) \right) \notag\\
        &- \tilde \lambda a^2 v_{0,0} (s/a)
        + a^2 w_k(s/a) - a^2 w_0(s/a) \notag
    \end{align}
    since $\hat C_{k,0} = \hat C_{0,0} = 1$ for all $k$ (Lemma \ref{lem interior eigenfunctions}).

    By Proposition \ref{prop generalized kernel}, we can write
    \begin{align}
        \label{proof partial_a phi_a ests, eqn 8}
        u_i(s) = \pm C_i s^{2i} + R_i(s) \\ 
        \label{proof partial_a phi_a ests, eqn 9}
        \text{where } | \partial_s^j R_i(s) | \le C(n,i,j) \langle s \rangle^{2i-1-j} \qquad \forall s \in \R, \, j \in \{0,1,2\}.
    \end{align}
    Using \eqref{proof partial_a phi_a ests, eqn 7}, it follows that for $0 \le s \le s_0$ (cf. \eqref{proof global eigenfunctions, proof claim 2, eqn 7}, \eqref{proof global eigenfunctions, proof claim 2, eqn 8})
    \begin{align} \label{proof partial_a phi_a ests, eqn 10}
        &\partial_a [ \phi_{k,a}(s) - \phi_{ 0,a}(s) ]  
        \\
        ={}& \partial_a \phi_{int,k} (s/a) - \frac s{a^2}  \phi_{int,k}'(s/a) + \frac{ d \tilde \lambda}{da} \partial_{\tilde \lambda} \phi_{int,k} (s/a ) \notag\\
        &-  \partial_a \phi_{int,0} (s/a) + \frac s{a^2}  \phi_{int,0}'(s/a) - \frac{ d \tilde \lambda}{da} \partial_{\tilde \lambda} \phi_{int,0} (s/a )
        \notag\\
        ={}& \sum_{i=1}^k \left[ \hat C_{k,i}  2i a^{2i-1} R_i (s/a) - \hat C_{k,i} s a^{2i-2} R_i'(s/a)  \right] \notag\\
        &+ \tilde \lambda \sum_{i=1}^k \left[ \hat C_{k,i} (2i+2) a^{2i+1} R_{i+1}(s/a) - \hat C_{k,i}s a^{2i} R_{i+1}'(s/a)\right] \notag\\
        &+ \frac{d \tilde \lambda}{da} \sum_{i=1}^k \hat C_{k,i} a^{2i+2} u_{i+1}(s/a)\notag\\
        &+  \tilde \lambda \sum_{i=0}^k \left[ (2i+2) a^{2i+1} v_{k,i} (s/a) + a^{2i+2} (\partial_a v_{k,i}) (s/a) - s a^{2i} v_{k,i}'(s/a)\right] \notag\\
        &+  \sum_{i=0}^k \left[ \frac{d \tilde \lambda}{da} a^{2i+2} v_{k,i}(s/a) + \tilde \lambda a^{2i+2} \frac{d \tilde \lambda}{da} (\partial_{\tilde \lambda} v_{k,i} )(s/a) \right] \notag\\
        &- \tilde \lambda 2 a v_{0,0} (s/a) - \tilde \lambda a^2 (\partial_a v_{0,0} )(s/a) + \tilde \lambda s v_{0,0}'(s/a) \notag\\
        &- \frac{d \tilde \lambda}{da} a^2 v_{0,0} (s/a) - \frac{d \tilde \lambda }{da } \tilde \lambda a^2 (\partial_{\tilde \lambda} v_{0,0}) (s/a)\notag\\
        &+ 2a w_k (s/a) + a^2 (\partial_a w_k) (s/a) - s w_k' (s/a) + a^2 \frac{d \tilde \lambda}{da} (\partial_{\tilde \lambda } w_k )(s/a) \notag\\
        &- 2a w_0 (s/a) - a^2 (\partial_a w_0) (s/a) + s w_0' (s/a) - a^2 \frac{d \tilde \lambda}{da} (\partial_{\tilde \lambda } w_0 )(s/a) \notag\\
        :={}& (I) + (II) + (III) + (IV) +(V) + (VI) +(VII) + (VIII) + (IX).\notag
    \end{align}

    \begin{align} \label{proof partial_a phi_a ests, eqn 11}
        |(I)| 
        ={}& \left| \sum_{i=1}^k \left[ \hat C_{k,i}  2i a^{2i-1} R_i (s/a) - \hat C_{k,i} s a^{2i-2} R_i'(s/a)  \right] \right| \\
        \le{}& \sum_{i=1}^k C(n,k,i)  \left( a^{2i-1} \langle s/a \rangle^{2i-1} + s a^{2i-2} \langle s/a \rangle^{2i - 2} \right)
        && ( \text{by \eqref{proof partial_a phi_a ests, eqn 9}} )\notag\\
        \le{}& C = C(n,k,s_0) 
        && ( |s| \le s_0) \notag
    \end{align}

    \begin{align} \label{proof partial_a phi_a ests, eqn 12}
        &|(II)|\\
        ={}& \left| \tilde \lambda \sum_{i=1}^k \left[ \hat C_{k,i} (2i+2) a^{2i+1} R_{i+1}(s/a) - \hat C_{k,i}s a^{2i} R_{i+1}'(s/a)\right] \right| \notag\\
        \le{}& C a \sum_{i=1}^k \left( a^{2i+1} \langle s/a \rangle^{2i+1} + sa^{2i} \langle s/a\rangle^{2i} \right) 
        && ( \text{Theorem \ref{thm global eigenfunctions}, \eqref{proof partial_a phi_a ests, eqn 9}} ) \notag\\
        \le{}& C a  && ( |s| \le s_0) .\notag
    \end{align}

    The remaining terms can be estimated similarly using also the estimates for $v_{k,i}$ and $w_k$ from Lemma \ref{lem interior eigenfunctions}. 
    We briefly sketch the remaining estimates.
    \begin{equation} \label{proof partial_a phi_a ests, eqn 13}
        |(III)|
        = \left| \frac{d \tilde \lambda}{da} \sum_{i=1}^k \hat C_{k,i} a^{2i+2} u_{i+1}(s/a) \right|  
        \le C \sum_{i=1}^k a^{2i+2} \langle s/a \rangle^{2i+2} 
        \le C ,
    \end{equation}
    \begin{multline} \label{proof partial_a phi_a ests, eqn 14}
        |(IV)| 
        =  \left| \tilde \lambda \sum_{i=0}^k \left[ (2i+2) a^{2i+1} v_{k,i} (s/a) + a^{2i+2} (\partial_a v_{k,i}) (s/a) - s a^{2i} v_{k,i}'(s/a)\right] \right| \\ 
        \le C a \sum_{i=0}^k \left[ a^{2i+1} \langle s/a \rangle^{2i+2} + a^{2i+2} \langle s/a \rangle^{2i+3} + sa^{2i} \langle s/a \rangle^{2i+1} \right] 
        \le C,
    \end{multline}
    \begin{multline} \label{proof partial_a phi_a ests, eqn 15}
        |(V)| 
        = \left| \sum_{i=0}^k \left[ \frac{d \tilde \lambda}{da} a^{2i+2} v_{k,i}(s/a) + \tilde \lambda a^{2i+2} \frac{d \tilde \lambda}{da} (\partial_{\tilde \lambda} v_{k,i} )(s/a) \right] \right| \\
        \le C \sum_{i=0}^k \left[a^{2i+2} \langle s/a \rangle^{2i+2} + a^{2i+3} \langle s/a \rangle^{2i+2} \right]
        \le C ,
    \end{multline}
    \begin{equation} \label{proof partial_a phi_a ests, eqn 16}
        |(VI)|
        = \left |- \tilde \lambda 2 a v_{0,0} (s/a) - \tilde \lambda a^2 (\partial_a v_{0,0} )(s/a) + \tilde \lambda s v_{0,0}'(s/a) \right| 
        \le C ,
    \end{equation}
    \begin{equation} \label{proof partial_a phi_a ests, eqn 17}
        |(VII)|
        = \left| - \frac{d \tilde \lambda}{da} a^2 v_{0,0} (s/a) - \frac{d \tilde \lambda }{da } \tilde \lambda a^2 (\partial_{\tilde \lambda} v_{0,0}) (s/a)\right| 
        \le C,
    \end{equation}
    \begin{multline} \label{proof partial_a phi_a ests, eqn 18}
        |(VIII)|
        = \left| 2a w_k (s/a) + a^2 (\partial_a w_k) (s/a) - s w_k' (s/a) + a^2 \frac{d \tilde \lambda}{da} (\partial_{\tilde \lambda } w_k )(s/a) \right|  \\
        \le C \left( a \langle s/a \rangle + a^2 \langle s/a \rangle^2 + s + a^2 \cdot a^2 \langle s/a \rangle^3 \right) 
        \le C ,
    \end{multline}
    and
    \begin{equation} \label{proof partial_a phi_a ests, eqn 19}
        |(IX)|
        = \left| - 2a w_0 (s/a) - a^2 (\partial_a w_0) (s/a) + s w_0' (s/a) - a^2 \frac{d \tilde \lambda}{da} (\partial_{\tilde \lambda } w_0 )(s/a) \right| 
        \le C.
    \end{equation}

    Inserting estimates \eqref{proof partial_a phi_a ests, eqn 11}--\eqref{proof partial_a phi_a ests, eqn 19} into \eqref{proof partial_a phi_a ests, eqn 10} gives that 
    \begin{equation} \label{proof partial_a phi_a ests, eqn 20}
        |\partial_a ( \phi_{k,a} (s) - \phi_{0,a}(s)) | \le C = C(n,k,s_0) \qquad \forall |s| \le s_0.
    \end{equation}

    Additionally, for $0 \le s \le s_0$,
    \begin{multline} \label{proof partial_a phi_a ests, eqn 21}
        \phi_{0,a} (s)  = \phi_{int, 0} (s/a)
        =  u_0(s/a) + \tilde \lambda a^2 u_1(s/a) + \tilde \lambda a^2 v_{0,0}(s/a) + a^2 w_0(s/a) \\
        = C_0 + R_0(s/a) + \tilde \lambda C_1 s^2 + \tilde \lambda a^2 R_1(s/a)
        + \tilde \lambda a^2 v_{0,0}(s/a) + a^2 w_0(s/a) 
    \end{multline}
    since $\hat C_{0,0} =1$ (Lemma \ref{lem interior eigenfunctions}).
    Thus, for $0 \le s \le s_0$,
    \begin{align} \label{proof partial_a phi_a ests, eqn 22}
        \partial_a [\phi_{0,a} (s)]
        ={}& - sa^{-2} R_0'(s/a) + \frac {d \tilde \lambda }{da} C_1 s^2  \\
        &+ \frac{d\tilde \lambda}{da} a^2 R_1(s/a) + 2a \tilde \lambda  R_1(s/a) - \tilde \lambda s R_1'(s/a) \notag\\
        &+  \frac{d\tilde \lambda}{da} a^2 v_{0,0}(s/a) + 2a \tilde \lambda  v_{0,0}(s/a) - \tilde \lambda s v_{0,0}'(s/a) \notag\\
        &+ 2 a w_0(s/a) - s w_0'(s/a).\notag
    \end{align}
    Similar estimates as in \eqref{proof partial_a phi_a ests, eqn 11}--\eqref{proof partial_a phi_a ests, eqn 19} above then show that for $0 \le s \le s_0$,
    \begin{equation} \label{proof partial_a phi_a ests, eqn 23}
        |\partial_a [ \phi_{0,a}(s)] | 
        \le | sa^{-2} R_0'(s/a) | + C
        \le C sa^{-2} \langle s/a \rangle^{1-n} + C 
        \le \frac{Cs a^{-2}}{(1+(s/a))^{n-1}} + C
    \end{equation}
    where the penultimate inequality uses \eqref{beta derivatives asymptotic expansion eqn}.

    Taking $\partial_s$ of \eqref{proof partial_a phi_a ests, eqn 10} yields, for $0 \le s \le s_0$,
    \begin{align} \label{proof partial_a phi_a ests, eqn 24}
        &\partial_s \partial_a [ \phi_{k,a}(s) - \phi_{ 0,a}(s) ]  
        \\
        ={}& \sum_{i=1}^k \left[ \hat C_{k,i}  2i a^{2i-2} R_i' (s/a) - \hat C_{k,i}  a^{2i-2} R_i'(s/a)  - \hat C_{k,i} s a^{2i-3} R_i''(s/a)\right] \notag\\
        &+ \tilde \lambda \sum_{i=1}^k \left[ \hat C_{k,i} (2i+2) a^{2i} R_{i+1}'(s/a) - \hat C_{k,i} a^{2i} R_{i+1}'(s/a) - \hat C_{k,i}s a^{2i-1} R_{i+1}''(s/a) \right] \notag\\
        &+ \frac{d \tilde \lambda}{da} \sum_{i=1}^k \hat C_{k,i} a^{2i+1} u_{i+1}'(s/a)\notag\\
        &+  \tilde \lambda \sum_{i=0}^k \left[ (2i+2) a^{2i} v_{k,i}' (s/a) + a^{2i+1} (\partial_a v_{k,i}') (s/a) -  a^{2i} v_{k,i}'(s/a) - s a^{2i-1} v_{k,i}''(s/a) \right] \notag\\
        &+  \frac{d \tilde \lambda}{da} \sum_{i=0}^k \left[  a^{2i+1} v_{k,i}'(s/a) + \tilde \lambda a^{2i+1}  (\partial_{\tilde \lambda} v_{k,i}' )(s/a) \right] \notag\\
        &- \tilde \lambda 2  v_{0,0}' (s/a) - \tilde \lambda a (\partial_a v_{0,0}' )(s/a) + \tilde \lambda  v_{0,0}'(s/a) + \tilde \lambda \frac sa v_{0,0}''(s/a) \notag\\
        &- \frac{d \tilde \lambda}{da} a v_{0,0}' (s/a) - \frac{d \tilde \lambda }{da } \tilde \lambda a (\partial_{\tilde \lambda} v_{0,0}') (s/a)\notag\\
        &+ 2 w_k' (s/a) + a (\partial_a w_k') (s/a) -  w_k' (s/a) - \frac sa w_k'' (s/a) + a \frac{d \tilde \lambda}{da} (\partial_{\tilde \lambda } w_k' )(s/a) \notag\\
        &- 2 w_0' (s/a) - a (\partial_a w_0') (s/a) +  w_0' (s/a) + \frac sa w_0'' (s/a) - a \frac{d \tilde \lambda}{da} (\partial_{\tilde \lambda } w_0' )(s/a) \notag\\
        :={}& (I') + (II') + (III') + (IV') + (V') + (VI') + (VII') + (VIII') + (IX').\notag
    \end{align}
    All of these quantities can be estimated by similar logic as in \eqref{proof partial_a phi_a ests, eqn 11}--\eqref{proof partial_a phi_a ests, eqn 19} above, except for the terms
    \begin{equation} \label{proof partial_a phi_a ests, eqn 25}
        \hat C_{k,1} s a^{-1} R_1''(s/a) , \quad
        \tilde \lambda s a^{-1} v_{k,0}''(s/a) , \quad
        \tilde \lambda \frac sa v_{0,0}''(s/a) , \quad
        \frac sa w_k'' (s/a), \quad \text{and} \quad
        \frac sa w_0''(s/a)
    \end{equation}
    contributed from lines $(I'), (IV'), (VI'), (VIII')$, and $(IX')$, respectively.
    These terms are instead estimated as follows:
    \begin{gather} \label{proof partial_a phi_a ests, eqn 26} \begin{aligned}
        &\left| \hat C_{k,1} s a^{-1} R_1''(s/a) \right|  + \left| \frac sa w_k'' (s/a) \right| + \left|  \frac sa w_0''(s/a)  \right| \\
        \le{}& C \frac sa \langle s/a \rangle^{-1} 
        && ( \text{by \eqref{proof partial_a phi_a ests, eqn 9} and Lemma \ref{lem interior eigenfunctions})} \\
        \le{}& C \frac{ (s/a) }{1 + (s/a) } 
        \le C.
    \end{aligned} \end{gather} 
    \begin{equation} \label{proof partial_a phi_a ests, eqn 27}
        \left| \tilde \lambda s a^{-1} v_{k,0}''(s/a) \right| + \left| \tilde \lambda \frac sa v_{0,0}''(s/a) \right|
        \le C s \langle s/a \rangle^0 \le C  \qquad (\forall 0 \le s \le s_0)
    \end{equation}
    by the fact that $|\tilde \lambda | \le C a$ (Theorem \ref{thm global eigenfunctions}) and the estimates from Lemma \ref{lem interior eigenfunctions}.
    In summary,
    \begin{equation} \label{proof partial_a phi_a ests, eqn 28}
        |\partial_s\partial_a ( \phi_{k,a} (s) - \phi_{0,a}(s)) | \le C = C(n,k,s_0) \qquad \forall |s| \le s_0.
    \end{equation}

    Additionally, taking $\partial_s$ of \eqref{proof partial_a phi_a ests, eqn 22} gives, for $0 \le s \le s_0$, 
    \begin{gather} \label{proof partial_a phi_a ests, eqn 29} \begin{aligned}
        \partial_s \partial_a [\phi_{0,a} (s)]
        ={}& - a^{-2} R_0'(s/a) - sa^{-3} R_0''(s/a) + \frac {d \tilde \lambda }{da} 2 C_1 s  \\
        &+ \frac{d\tilde \lambda}{da} a R_1'(s/a) + 2 \tilde \lambda  R_1'(s/a) - \tilde \lambda  R_1'(s/a) - \tilde \lambda \frac sa R_1''(s/a) \\
        &+  \frac{d\tilde \lambda}{da} a v_{0,0}'(s/a) + 2 \tilde \lambda  v_{0,0}'(s/a) - \tilde \lambda  v_{0,0}'(s/a) - \tilde \lambda \frac sa v_{0,0}''(s/a) \\
        &+ 2  w_0'(s/a) -  w_0'(s/a) - \frac sa w_0''(s/a).
    \end{aligned} \end{gather}
    By similar logic as in \eqref{proof partial_a phi_a ests, eqn 23},
    it follows that for all $0 \le s \le s_0$
    \begin{multline} \label{proof partial_a phi_a ests, eqn 30}
        |\partial_s \partial_a [\phi_{0,a} (s)] | 
        \le C \left( a^{-2} \langle s/a \rangle^{1-n} + a^{-2} \langle s/a \rangle^{1-n} \frac{ (s/a)}{\langle s/a \rangle} + 1 + \frac{(s/a)}{\langle s/a \rangle} \right)\\
        \le C a^{-2} \langle s/a \rangle^{1-n} + C 
    \end{multline}
    since $\frac{(s/a)}{\langle s/a \rangle} \le C \frac{(s/a)}{1 + (s/a)} \le C$.
    
    Combining \eqref{proof partial_a phi_a ests, eqn 6.5} and \eqref{proof partial_a phi_a ests, eqn 20} proves \eqref{lem partial_a phi_a ests, eqn 1}.
    Combining \eqref{proof partial_a phi_a ests, eqn 6.6} and \eqref{proof partial_a phi_a ests, eqn 28} proves \eqref{lem partial_a phi_a ests, eqn 1.5}.
    Combining \eqref{proof partial_a phi_a ests, eqn 6}, \eqref{proof partial_a phi_a ests, eqn 20}, and \eqref{proof partial_a phi_a ests, eqn 23} proves \eqref{lem partial_a phi_a ests, eqn 2}.    
    Combining \eqref{proof partial_a phi_a ests, eqn 6.1}, \eqref{proof partial_a phi_a ests, eqn 28}, and \eqref{proof partial_a phi_a ests, eqn 30} proves \eqref{lem partial_a phi_a ests, eqn 2.5}.
    \eqref{lem partial_a phi_a ests, eqn 3} then follows from \eqref{lem partial_a phi_a ests, eqn 1} and \eqref{lem partial_a phi_a ests, eqn 1.5} with $\delta =1$ and Proposition \ref{prop vol element bounds}.
\end{proof}

\subsubsection{Integral Estimates}

\begin{lem} \label{lem L^2_a est for global eigenfunctions}
    Let $n\ge 3$ and $K \in \mathbb N$.
    For all $0 < s_0 \le s_0^* (n,K) \ll 1$,
    $0 < a \le a^* (n,K, s_0 ) \ll 1 $,
    and $k \in \mathbb N $ with $0 \le k \le K$,
    the following holds:

    If $\phi_{k,a} = \phi_{k,a,s_0}(s)$ denotes the $H_a$-eigenfunction constructed in Theorem \ref{thm global eigenfunctions},
    then there exists $C(n,k) > 0$ such that 
    \begin{equation}
        0 < C(n,k)^{-1} \le \| \phi_{k,a} \|_{L^2_a(\R)} \le C(n,k) < \infty.
    \end{equation}
\end{lem}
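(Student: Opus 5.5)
The plan is to use the decomposition \eqref{eqn defn tilde phi}, $\phi_{k,a} = \sum_{i=0}^k \hat C_{k,i} a^{2i} u_i(s/a) + \tilde\phi_{k,a}$, together with the pointwise bounds of Lemma \ref{lem tilde phi_k,a pointwise ests} and Lemma \ref{lem rewriting Laguerre poly}. We will also need uniform-in-$a$ control of the measure $d\mu_a$, namely Proposition \ref{prop vol element bounds}, which is referenced later in the paper and which we take as given. Write $P_k(s) = L_k^{(\frac{n-2}2)}\big(\frac{(1+\overline c_0^2)s^2}4\big)$. The expectations about this measure are as follows. By Lemma \ref{lem: profile function of Lawlor neck}, $f_a(s) = \overline c_0|s| + O(a^n|s|^{1-n})$ for $|s|\gtrsim a$, and $|\partial_s f_a|\le \overline c_0$. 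Hence $dV_a \lesssim (a+|s|)^{n-1}ds$, with $dV_a \sim |s|^{n-1}ds$ for $|s|\ge a$. Also $F_a(s) \ge \frac{s^2}4$, and $F_a(s) = \frac{(1+\overline c_0^2)s^2}{4} + O(a^2)$ uniformly on $\R$, since $\int_0^s \tilde s\,\big(\overline c_0^2 - (\partial_s f_a)^2\big)\,d\tilde s = O(a^2)$ by the decay estimate in Lemma \ref{lem: profile function of Lawlor neck}. So $d\mu_a$ is comparable, uniformly in $a$, to $\mu_0 := (a+|s|)^{n-1}e^{-(1+\overline c_0^2)s^2/4}ds$ from above, and to $|s|^{n-1}e^{-(1+\overline c_0^2)s^2/4}ds$ from below on $|s|\ge a$.

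\emph{Upper bound.} Using $|u_i(\sigma)|\lesssim_{n,i} \langle\sigma\rangle^{2i}$ from Proposition \ref{prop generalized kernel}, we get the pointwise bound $|\sum_i \hat C_{k,i}a^{2i}u_i(s/a)| \lesssim_{n,k} \sum_i (a+|s|)^{2i} \lesssim (1+|s|)^{2k}$. Combined with \eqref{lem tilde phi_k,a pointwise ests, eqn 1} (taking $\delta = 1/2$, say, and $a$ small depending on $s_0$), this gives $|\phi_{k,a}(s)| \le C(n,k)(1+|s|)^{2k} + 1$. Against the Gaussian-type measure this yields $\|\phi_{k,a}\|_{L^2_a}^2 \le C(n,k)$. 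The constant is independent of $s_0$ because the $O(a;n,k,s_0,\delta)$ error is absorbed by taking $a^*$ small.

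\emph{Lower bound.} Restrict to a fixed window, say $s\in[1,2]$, which lies in $s \ge s_0$. Lemma \ref{lem rewriting Laguerre poly} gives $\sum_i \hat C_{k,i}a^{2i}u_i(s/a) = C P_k(s) + O(a;n,k)$ there, with $C = C(n,k)\neq 0$. Taking $s_0=1$ in that lemma is fine, since its statement holds for all $s_0>0$. Together with \eqref{lem tilde phi_k,a pointwise ests, eqn 1}, this gives $\phi_{k,a} = C P_k + O(a;n,k,s_0)$ on $[1,2]$. Since $P_k$ is a nonzero polynomial, $\int_1^2 P_k^2\, s^{n-1}e^{-(1+\overline c_0^2)s^2/4}ds =: c(n,k)>0$. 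For $a\le a^*(n,K,s_0)$ small, $\|\phi_{k,a}\|_{L^2_a}^2 \ge \tfrac12 C^2 c(n,k)\, \inf_{[1,2]} \frac{d\mu_a}{|s|^{n-1}e^{-(1+\overline c_0^2)s^2/4}ds}$, and the infimum is bounded below uniformly by the comparison above.

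The main obstacle is the uniform comparison of $d\mu_a$ with the model measure, in particular controlling $F_a$ uniformly in $a$ on all of $\R$ and not just on compact sets. This is exactly where Proposition \ref{prop vol element bounds} is used. If it were unavailable, we would prove it directly by splitting $\int_0^s\tilde s(\partial_s f_a)^2$ into $|\tilde s|\le a$, which contributes $O(a^2)$, and $|\tilde s|\ge a$, where we use $(\partial_sf_a)^2 = \overline c_0^2 + O(a^n\tilde s^{-n})$ and $n\ge3$ to get an integrable, $O(a^2)$ correction. A secondary point is that the constants must depend only on $n,k$. The lower bound uses only the fixed window $[1,2]$ and the $s_0$-independent constant $C(n,k)$, and the upper bound absorbs all $s_0$-dependent errors into the choice of $a^*$.
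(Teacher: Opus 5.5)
Your proposal is correct and follows essentially the same route as the paper: both combine the decomposition \eqref{eqn defn tilde phi} with Lemmas \ref{lem rewriting Laguerre poly} and \ref{lem tilde phi_k,a pointwise ests} and the weight bounds of Propositions \ref{prop exp -F_a bounds} and \ref{prop vol element bounds}. The paper compares $\phi_{k,a}$ in $L^2_a$ with $\chi_{|s|\ge s_0}C_kP_k$ and pays an $O(s_0)$ interior error, whereas your fixed window $[1,2]$ for the lower bound avoids needing $s_0$ small there; one small slip is that, after choosing $a$ small, the $\tilde\phi_{k,a}$ contribution is bounded by $(1+|s|)^{2k+\delta}$ rather than by $1$, which is harmless by \eqref{eqn L^2_a est for polynomials}.
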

\begin{proof}
    Fix $k \in \mathbb N$ with $0 \le k \le K$.
    Let $\chi_{ |s| < s_0 }$ denote the characteristic function for $s \in (-s_0, s_0)$ and similarly define $\chi_{|s| \ge s_0}$.
    
    By Lemmas \ref{lem rewriting Laguerre poly} and \ref{lem tilde phi_k,a pointwise ests}, there holds the pointwise estimate for all $s \in \R$
    \begin{gather} \label{proof L^2_a bound for global eigfuncs, eqn 1} \begin{aligned}
        \phi_{k,a}(s) ={}& \sum_{i=0}^k \hat C_{k,i} a^{2i} u_i (s/a) + \tilde \phi_{k,a}(s)  \\
        ={}& \chi_{|s| < s_0} \cdot \left(\sum_{i=0}^k \hat C_{k,i} a^{2i} u_i (s/a)\right)
        + \chi_{|s| \ge s_0} \cdot \left( C_k P_{k}(s) + O (a; n,k,s_0) |s|^{2k-1} \right)  \\
        &+ O(a; n,k,s_0) ( 1 + |s|^{2k+1} ) 
    \end{aligned} \end{gather}
    where $C_k = C_k(n,k) > 0$ is a fixed constant depending on $n,k$ that does \emph{not} vary from line to line.
    By the asymptotics in Proposition \ref{prop generalized kernel},
    \begin{equation} \label{proof L^2_a bound for global eigfuncs, eqn 2}
        \left| \chi_{|s| < s_0} \left(\sum_{i=0}^k \hat C_{k,i} a^{2i} u_i (s/a)\right) \right| \le C = C(n,k) .
    \end{equation}
    
    It follows that 
    \begin{gather} \label{proof L^2_a bound for global eigfuncs, eqn 3} \begin{aligned}
        &\| \phi_{k,a} - \chi_{|s| \ge s_0} C_k P_k \|_{L^2_a} \\
        \le{}& \left\| \chi_{|s| < s_0} \left(\sum_{i=0}^k \hat C_{k,i} a^{2i} u_i (s/a)\right)\right\|_{L^2_a}  
        + O(a; n,k, s_0) \| 1 + |s|^{2k+1} \|_{L^2_a}
        && (\text{by \eqref{proof L^2_a bound for global eigfuncs, eqn 1}}) \\
        \le{}& C \| \chi_{|s| < s_0 } \|_{L^2_a} + O(a;n,k,s_0) \| 1 + |s|^{2k+1} \|_{L^2_a}
        && ( \text{by \eqref{proof L^2_a bound for global eigfuncs, eqn 2}} ) \\
        \le{}& O(s_0; n,k) + O(a;n,k,s_0) 
    \end{aligned} \end{gather}
    where the last line follows from the estimates in Propositions \ref{prop exp -F_a bounds} and \ref{prop vol element bounds}.
    Using Propositions \ref{prop exp -F_a bounds} and \ref{prop vol element bounds},
    it can moreover be shown that there exists $C_k'= C_k'(n,k) > 0$ such that 
    \begin{equation} \label{proof L^2_a bound for global eigfuncs, eqn 4}
        0 < C_k'^{-1} C_k \le \| \chi_{|s| \ge s_0 } C_k P_k \|_{L^2_a} \le C_k' C_k< \infty
    \end{equation}
    for all $0 < s_0 \le s_0^*(n,K) \ll 1$ and all $0 < a \le a^* (n,K, s_0) \ll 1$.
    Combining \eqref{proof L^2_a bound for global eigfuncs, eqn 3} and \eqref{proof L^2_a bound for global eigfuncs, eqn 4} then yields the statement of the lemma.
\end{proof}

\begin{lem} \label{lem L^2_a est for global eigenfunctions + weight}
    Let $n\ge 3$ and $K \in \mathbb N$.
    For all $0 < s_0 \le s_0^* (n,K) \ll 1$,
    $0 < a \le a^* (n,K, s_0 ) \ll 1 $,
    and $k \in \mathbb N $ with $0 \le k \le K$,
    the following holds:

    If $\phi_{k,a} = \phi_{k,a,s_0}(s)$ denotes the $H_a$-eigenfunction constructed in Theorem \ref{thm global eigenfunctions},
    then there exists $C = C(n,k,s_0) > 0$ such that 
    \begin{equation}
        \left\| \frac{\phi_{k,a}(s)}{1 + (|s|/a)^{n+1}} \right\|_{L^2_a} \le C a^{\frac n2}.
    \end{equation}
\end{lem}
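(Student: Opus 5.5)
The plan is to bound $\phi_{k,a}$ pointwise, split the integral into the inner region $|s|\le a$ and the outer region $|s|\ge a$, and use the known behaviour of the measure $d\mu_a$ near $s=0$.

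\textbf{Pointwise bound.} We start from the decomposition \eqref{eqn defn tilde phi}:
$\phi_{k,a}(s) = \sum_{i=0}^k \hat C_{k,i} a^{2i} u_i(s/a) + \tilde\phi_{k,a}(s)$.

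For the first piece, Proposition \ref{prop generalized kernel} gives $|u_i(\sigma)|\lesssim_{n,i} \langle\sigma\rangle^{2i}$. Hence $|a^{2i}u_i(s/a)|\lesssim (a+|s|)^{2i}$.

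For the remainder, Lemma \ref{lem tilde phi_k,a pointwise ests} with, say, $\delta=1/2$ gives $|\tilde\phi_{k,a}(s)|\lesssim_{n,k,s_0} a(1+|s|)^{2k+1/2}$.

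Together these yield
\[
|\phi_{k,a}(s)|\le C(n,k,s_0)\,(1+|s|)^{2k+1} \qquad \text{for all } s\in\R,
\]
uniformly for $0<a\le a^*$. In fact $|\phi_{k,a}|\le C$ on $|s|\le s_0$, which is the region that matters.

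\textbf{Measure bounds.} Next we need bounds for $d\mu_a$, which should be the content of Propositions \ref{prop exp -F_a bounds} and \ref{prop vol element bounds}.
\begin{itemize}
\item[(a)] $e^{-F_a(s)}\le C e^{-s^2/4}$. This holds because $F_a\ge s^2/4$ directly from \eqref{eqn defn F_a}.
\item[(b)] $dV_a\lesssim (a+|s|)^{n-1}\,ds$. This follows from $s^2+f_a^2\sim (a+|s|)^2$ and $|\partial_s f_a|\le \overline c_0$; the latter holds by convexity and evenness of $f_a$ (Lemma \ref{lem: profile function of Lawlor neck}) together with its asymptotic slope.
\end{itemize}

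\textbf{The integral.} Using the pointwise bound and (a), (b),
\[
\Big\|\tfrac{\phi_{k,a}}{1+(|s|/a)^{n+1}}\Big\|_{L^2_a}^2 \lesssim \int_\R \frac{(1+|s|)^{4k+2}e^{-s^2/4}(a+|s|)^{n-1}}{(1+(|s|/a)^{n+1})^2}\,ds .
\]
We split into $|s|\le a$ and $|s|\ge a$.

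On $|s|\le a$ the integrand is $\lesssim a^{n-1}$. Integrating over an interval of length $2a$ gives $\lesssim a^n$.

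On $|s|\ge a$ the denominator is $\gtrsim (|s|/a)^{2n+2}$, and $a+|s|\le 2|s|$. So the integrand is
\[
\lesssim a^{2n+2}\,|s|^{\,n-1-2n-2}(1+|s|)^{4k+2}e^{-s^2/4} = a^{2n+2}\,|s|^{-n-3}(1+|s|)^{4k+2}e^{-s^2/4}.
\]
Integrating, $\int_a^\infty |s|^{-n-3}\,ds\sim a^{-n-2}$, while the Gaussian controls large $s$. This part is therefore $\lesssim a^{2n+2}a^{-n-2}=a^n$.

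Adding the two regions and taking square roots gives $Ca^{n/2}$.

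\textbf{Main obstacle.} The delicate point is ensuring the constants in the pointwise bound on $\phi_{k,a}$ are uniform in $a$ near $s=0$, in the inner region $|s|\lesssim a$. There we must use the precise form of $\phi_{int}$ from Lemma \ref{lem interior eigenfunctions}: its correction terms carry factors $a^{2i+2}\langle\sigma\rangle^{2i+2}$ and $a^2\langle\sigma\rangle$, and these are bounded for $|\sigma|\le s_0/a$. This is exactly what the proof of Lemma \ref{lem tilde phi_k,a pointwise ests} provides.

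Verifying the volume-element bound $dV_a\lesssim (a+|s|)^{n-1}ds$ is the other ingredient. I expect it to be routine via the scaling $f_a(s)=af_1(s/a)$ and $f_1\ge1$.
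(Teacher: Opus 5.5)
Your proof is correct. It uses the same ingredients as the paper: the decomposition $\phi_{k,a}=\sum_i\hat C_{k,i}a^{2i}u_i(s/a)+\tilde\phi_{k,a}$, Lemma \ref{lem tilde phi_k,a pointwise ests}, and the Gaussian and volume-element bounds of Propositions \ref{prop exp -F_a bounds} and \ref{prop vol element bounds}. Where you differ is in how you organize the integral.

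The paper splits at $|s|=s_0$.
\begin{itemize}
\item On the inner region it keeps the finer bound $\phi_{k,a}^2\lesssim\sum_i a^{4i}\langle s/a\rangle^{4i}+a^2$ and rescales to $\sigma=s/a$. It then has to treat the integrals $a^{n+4i}\int_0^{s_0/a}\langle\sigma\rangle^{4i-n-3}\,d\sigma$ by cases, according to whether $4i-n-3<-1$ or not.
\item On $|s|>s_0$ it uses the expansion $\phi_{k,a}=C_kP_k+O(a)(1+s^{2k+1})$, which gives the much smaller contribution $O(a^{2n+2})$.
\end{itemize}

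You instead split at $|s|=a$, the natural scale of the weight, and use only the crude uniform bound $|\phi_{k,a}|\lesssim_{n,k,s_0}(1+|s|)^{2k+1}$. All the decay then comes from $(|s|/a)^{-2n-2}$ against $dV_a\lesssim|s|^{n-1}\,ds$, through $\int_a^1 s^{-n-3}\,ds\sim a^{-n-2}$.

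Your route removes the casework. It also makes clear that the estimate needs only two facts: $\phi_{k,a}$ is bounded on $|s|\lesssim a$, and it grows at most polynomially. The paper's route gives slightly more information, namely that $|s|\ge s_0$ contributes only $O(a^{2n+2})$, but this is not needed for the statement.

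A minor point in your favour: taking $\delta=1/2$ keeps you inside the stated range $\delta\in(0,1)$ of Lemma \ref{lem tilde phi_k,a pointwise ests}, whereas the paper invokes it with $\delta=1$.
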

\begin{proof}
    Fix $k$ with $0 \le k \le K$.
    Throughout the proof, $C = C(n,k,s_0) > 0$ denotes a positive constant depending only on $n,k,s_0$ and which may change from line to line.

    We estimate the $L^2_a$-norm in regions where $|s| \le s_0$ and $|s| > s_0$.
    First,
    \begin{gather} \label{proof L^2_a est for global eigenfunctions + weight, eqn 1} \begin{aligned}
        &\int_0^{s_0} \frac{ \phi_{k,a}^2 (s)}{( 1 + (s/a)^{n+1} )^2} d \mu_a \\
        \le{}& C \int_0^{s_0} \phi_{int; k, a}^2(s/a) \cdot \langle s/a \rangle^{-2n-2} dV_a(s) 
        && ( \text{Proposition \ref{prop exp -F_a bounds}} ) \\
        ={}& C \int_0^{s_0} \left( \sum_{i=0}^k \hat C_{k,i} a^{2i} u_i(s/a) + \tilde \phi_{k,a} (s)\right)^2 \langle s/a \rangle^{-2n-2} dV_a(s) \\
        \le{}& C \int_0^{s_0} \left(\sum_{i=0}^k a^{4i} \langle s/a\rangle^{4i} + a^2  \right) \langle s/a \rangle^{-2n-2} dV_a(s) \\
        & (\text{by Proposition \ref{prop generalized kernel} and Lemma \ref{lem tilde phi_k,a pointwise ests}}) \\
        ={}& C a^n \int_0^{s_0/a}  \left(\sum_{i=0}^k a^{4i} \langle \sigma \rangle^{4i} + a^2  \right) \langle \sigma \rangle^{-2n-2} dV_{a=1}(\sigma)
        &&  (\text{substituting $\sigma = s/a$} )\\
        \le{}& C a^n \int_0^{s_0/a}  \left(\sum_{i=0}^k a^{4i} \langle \sigma \rangle^{4i} + a^2  \right) \langle \sigma \rangle^{-2n-2} \langle \sigma \rangle^{n-1} d \sigma 
        && (\text{by Lemma \ref{lem: profile function of Lawlor neck}}) \\
        \le{}& C a^n \sum_{i=0}^k a^{4i} \int_0^{s_0/a} \langle \sigma \rangle^{4i-n-3} d \sigma 
        + Ca^{n+2} \int_0^{\infty} \langle \sigma \rangle^{-n -3} d\sigma \\
        \le{}& C a^n \sum_{i=0}^k a^{4i} \int_0^{s_0/a} \langle \sigma \rangle^{4i-n-3} d \sigma 
        + C a^{n+2} && ( n \ge 3) .
    \end{aligned} \end{gather}
    When $0 \le i \le k$ and $4i - n- 3 < -1$, 
    \begin{equation} \label{proof L^2_a est for global eigenfunctions + weight, eqn 2}
        Ca^n a^{4i} \int_0^{s_0/a} \langle \sigma \rangle^{4i-n-3} d \sigma 
        \le C a^{n+4i}  \le C a^n.
    \end{equation}
    When $0 \le i \le k$ and $4i - n - 3 \ge -1$,
    \begin{equation} \label{proof L^2_a est for global eigenfunctions + weight, eqn 3}
        C a^n a^{4i} \int_0^{s_0/a} \langle \sigma \rangle^{4i-n-3} d \sigma 
        \le C a^{n+4i} \langle s_0/a \rangle^{4i-n-3+2} 
        \le C a^{n+4i} + C a^{n+4i-4i+n+1 } 
        \le C a^n.
    \end{equation}
    Using \eqref{proof L^2_a est for global eigenfunctions + weight, eqn 2} and \eqref{proof L^2_a est for global eigenfunctions + weight, eqn 3} in \eqref{proof L^2_a est for global eigenfunctions + weight, eqn 1} then gives
    \begin{equation} \label{proof L^2_a est for global eigenfunctions + weight, eqn 4}
        \int_0^{s_0} \frac{ \phi_{k,a}^2 (s)}{( 1 + (s/a)^{n+1} )^2} d \mu_a \le C a^n.
    \end{equation}

    Next, note that there exists a fixed constant $C_k = C_k(n,k) > 0$ such that, for all $|s| \ge s_0$ 
    \begin{equation}\label{proof L^2_a est for global eigenfunctions + weight, eqn 5}
        \phi_{k,a}(s) = 
        C_k P_k(s) + O ( a; n,k,s_0) ( 1+ s^{2k+1} )
    \end{equation}
    by Lemma \ref{lem tilde phi_k,a pointwise ests} with $\delta = 1$ and Lemma \ref{lem rewriting Laguerre poly}.
    It follows that 
    \begin{gather} \label{proof L^2_a est for global eigenfunctions + weight, eqn 6} \begin{aligned}
        &\int_{s_0}^\infty \frac{ \phi_{k,a}^2 (s)}{( 1 + (s/a)^{n+1} )^2} d \mu_a \\
        ={}& \int_{s_0}^\infty \frac{\left( C_k P_k(s) + O(a; n,k,s_0) ( 1 + s^{2k+1} )\right)^2}{(1 + (s/a)^{n+1})^2} d \mu_a 
        && ( \text{by \eqref{proof L^2_a est for global eigenfunctions + weight, eqn 5}} )\\
        \le{}& C \int_{s_0}^\infty \frac{ s^{4k} + a^2 s^{4k+2}}{(s/a)^{2n+2}} d \mu_a\\
        \le{}& C a^{2n+2} \int_{s_0}^\infty s^{4k+2} d \mu_a \\
        \le{}& C a^{2n+2} && (\text{by \eqref{eqn L^2_a est for polynomials}}).
    \end{aligned} \end{gather}
    The estimates \eqref{proof L^2_a est for global eigenfunctions + weight, eqn 4} and \eqref{proof L^2_a est for global eigenfunctions + weight, eqn 6} and the fact that $\phi_{k,a}$ is an odd function thereby complete the proof of the lemma.
\end{proof}

\begin{lem} \label{lem L^2_a est for partial_a phi}
    Let $n\ge 3$ and $K \in \mathbb N$.
    For all $0 < s_0 \le s_0^* (n,K) \ll 1$,
    $0 < a \le a^* (n,K, s_0 ) \ll 1 $,
    and $k \in \mathbb N $ with $0 \le k \le K$,
    the following holds:

    If $\phi_{k,a} = \phi_{k,a,s_0}(s)$ denotes the $H_a$-eigenfunction constructed in Theorem \ref{thm global eigenfunctions},
    then there exists $C = C(n,k,s_0) >0$ such that
    \begin{equation}
        \| \partial_a \phi_{k,a} \|_{L^2_a} \le C .
    \end{equation}
\end{lem}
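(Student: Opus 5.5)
The plan is to use the pointwise bound \eqref{lem partial_a phi_a ests, eqn 2} from Lemma \ref{lem partial_a phi_a ests} and integrate it against $d\mu_a$, splitting into the regions $|s|\le s_0$ and $|s|\ge s_0$. Since $\phi_{k,a}$ is odd, $\partial_a\phi_{k,a}$ is odd as well, so it suffices to integrate over $s\ge 0$. Fix $\delta=1$ in Lemma \ref{lem partial_a phi_a ests}. This gives
\begin{equation*}
|\partial_a\phi_{k,a}(s)|^2\lesssim_{n,k,s_0} \frac{s^2a^{-4}}{(1+s/a)^{2n-2}} + 1 + (1+s)^{4k+2}.
\end{equation*}

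The polynomial terms $1+(1+s)^{4k+2}$ contribute $O(1;n,k,s_0)$. This follows from the Gaussian-type bounds on $e^{-F_a}$ and $dV_a$ in Propositions \ref{prop exp -F_a bounds} and \ref{prop vol element bounds}, exactly as in \eqref{eqn L^2_a est for polynomials} used in Lemma \ref{lem L^2_a est for global eigenfunctions + weight}. Concretely, $e^{-F_a}\lesssim e^{-s^2/4}$ and $dV_a\lesssim (a+s)^{n-1}ds$, so polynomial weights are integrable uniformly in $a$.

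The main point is the singular term $s^2a^{-4}(1+s/a)^{2-2n}$, which is concentrated at scale $s\sim a$. For $s\ge s_0$ it is bounded by $Cs^2a^{2n-6}s^{2-2n}\le C$ (using $n\ge3$ and $a\le s_0$), so it is harmless there. On $[0,s_0]$ I would bound $e^{-F_a}\le 1$ and substitute $s=a\sigma$. Using $dV_a = a^{n-1}\,dV_{a=1}(\sigma)\,a \lesssim a^{n}\langle\sigma\rangle^{n-1}d\sigma$ (Lemma \ref{lem: profile function of Lawlor neck}), the contribution becomes
\begin{equation*}
\int_0^{s_0/a} \sigma^2 a^{-2}\langle\sigma\rangle^{2-2n}\, a^{n}\langle\sigma\rangle^{n-1}\,d\sigma = a^{n-2}\int_0^{s_0/a}\sigma^2\langle\sigma\rangle^{1-n}\,d\sigma .
\end{equation*}
For $n\ge 5$ the integral converges, and the contribution is $O(a^{n-2})\le C$. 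For $n=3,4$ the integral grows like $(s_0/a)^{4-n}$ (with a $\log$ when $n=4$), giving a contribution of order $a^{n-2}(s_0/a)^{4-n}\lesssim s_0^{4-n}a^{2n-6}$, up to a $\log(1/a)$ factor when $n=4$. This is bounded for $n=3$ and tends to $0$ for $n=4$.

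I expect this scaling computation to be the only delicate step, specifically checking that the $a^{-2}$ blow-up of $\partial_a\phi_{0,a}$ near the neck is exactly compensated by the small volume $a^n$ of the neck region. The borderline case is $n=3$, where the contribution is $O(1)$ rather than small. If the crude bound from \eqref{lem partial_a phi_a ests, eqn 2} turned out to be insufficient, I would instead use the exact form of that term, $-sa^{-2}\beta_1'(s/a)$ from \eqref{proof partial_a phi_a ests, eqn 22}. Its decay is $\beta_1'=O(\sigma^{1-n})$ from \eqref{beta derivatives asymptotic expansion eqn}, which gives the same estimate. Combining the regions yields $\|\partial_a\phi_{k,a}\|_{L^2_a}\le C(n,k,s_0)$.
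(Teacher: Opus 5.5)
Your proposal is correct and is essentially the paper's proof. Both arguments start from the pointwise bound \eqref{lem partial_a phi_a ests, eqn 2} with $\delta=1$, handle the polynomial part via Proposition \ref{prop vol element bounds}, and control the neck term $s a^{-2}(1+s/a)^{1-n}$ by noting that its $a^{-2}$ size is offset by the $O(a^n)$ volume of the neck region. The only difference is cosmetic: the paper splits this term at $s=a$, bounding it by $Ca^{-1}$ on $[0,a]$ where $\mu_a([0,a])\lesssim a^n$, and by $a^{n-3}s^{2-n}$ against a Gaussian on $[a,\infty)$, whereas you split at $s=s_0$ and rescale $\sigma=s/a$ on the inner region.
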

\begin{proof}
    Throughout, $C = C(n,k,s_0 ) > 0$ denotes a positive constant which depends only on $n,k,s_0$ and which may change from line to line.

    By Lemma \ref{lem partial_a phi_a ests} with $\delta = 1$, there holds the pointwise estimate
    \begin{equation} \label{proof L^2_a est for partial_a phi, eqn 1}
        | \partial_a \phi_{k,a} (s) | 
        \le \frac{C |s| a^{-2} }{1 + (|s|/a)^{n-1}} + C (1 + s^{2k+1} ) 
        \qquad \forall s > 0.
    \end{equation}
    By Proposition \ref{prop vol element bounds},
    \begin{equation} \label{proof L^2_a est for partial_a phi, eqn 2}
        \| 1 + |s|^{2k+1} \|_{L^2_a} \le C        ,
    \end{equation}
    so it suffices to estimate the $L^2_a$-norm of $\frac{C |s| a^{-2} }{1 + (s/a)^{n-1}}$.

    We have the estimate
    \begin{gather} \label{proof L^2_a est for partial_a phi, eqn 3} \begin{aligned}
        &\int_a^\infty \left( \frac {Cs a^{-2} }{1 +(s/a)^{n-1}} \right)^2 d \mu_a \\
        \le{}& C \int_a^\infty \left( \frac{ s a^{-2} }{(s/a)^{n-1}} \right)^2 e^{-\frac{s^2}4} s^{n-1} ds
        && ( \text{by Propositions \ref{prop exp -F_a bounds}, \ref{prop vol element bounds}} )\\
        ={}& C a^{ 2n -6} \int_a^\infty s^{-n+3  } e^{-\frac{s^2}4} ds \\
        \le{}& C a^{n-3} \int_a^\infty e^{-\frac{s^2}4 } ds && ( n \ge 3)\\
        \le{}& C a^{n-3} \\
        \le{}& C && ( n \ge 3).
    \end{aligned} \end{gather}
    
    Next, for $s > 0$,
    \begin{equation} \label{proof L^2_a est for partial_a phi, eqn 4}
        \frac{C sa^{-2} }{1 + (s/a)^{n-1} } 
        \le Ca^{-1}\frac{ (s/a) }{1 + (s/a)^{n-1} } \le C a^{-1} .
    \end{equation}
    It follows that 
    \begin{gather} \label{proof L^2_a est for partial_a phi, eqn 5} \begin{aligned}
        \int_0^a \left( \frac{Cs a^{-2} }{1 + (s/a)^{n-1}} \right)^2 d \mu_a 
        \le{}& C a^{-2} \int_0^a d \mu_a 
        && ( \text{by \eqref{proof L^2_a est for partial_a phi, eqn 4}} ) \\
        \le{}& C a^{-2} \int_0^a \rho_a(s)^{n-1} d s 
        && ( \text{Propositions \ref{prop exp -F_a bounds}, \ref{prop vol element bounds}} )\\
        \le{}& C a^{-2+n} \\
        \le{}& C && ( n \ge 3).
    \end{aligned} \end{gather}

    Combining \eqref{proof L^2_a est for partial_a phi, eqn 1}, \eqref{proof L^2_a est for partial_a phi, eqn 2}, \eqref{proof L^2_a est for partial_a phi, eqn 3}, and \eqref{proof L^2_a est for partial_a phi, eqn 5} completes the proof.
\end{proof} 

\begin{lem} \label{lem L^2_a est for phi with beta}
    Let $n\ge 3$ and $K \in \mathbb N$.
    For all $0 < s_0 \le s_0^* (n,K) \ll 1$,
    $0 < a \le a^* (n,K, s_0 ) \ll 1 $,
    and $k \in \mathbb N $ with $0 \le k \le K$,
    the following holds:

    If $\phi_{k,a} = \phi_{k,a,s_0}(s)$ denotes the $H_a$-eigenfunction constructed in Theorem \ref{thm global eigenfunctions}
    and $\beta_a (s) $ is as in Lemma \ref{lem beta properties},
    then
    \begin{equation} \label{eqn phi_0 and beta L^2_a close}
         \| \phi_{0,a} - a^{-2} \beta_a \|_{L^2_a } \le C a \qquad
         \text{for some } C = C(n,s_0) > 0.
    \end{equation}

    In particular,
    \begin{align}
        \label{lem L^2_a est for phi with beta, eqn 1}
        \langle \phi_{0,a} , a^{-2} \beta_a \rangle_{L^2_a} &= \| \phi_{0,a} \|_{L^2_a}^2 + O(a; n, s_0)  \text{ and}\\
        \label{lem L^2_a est for phi with beta, eqn 2}
        \langle \phi_{k,a} , a^{-2} \beta_a \rangle_{L^2_a} &= O(a; n,k,  s_0)  \text{ for } k \ne 0.
    \end{align}
\end{lem}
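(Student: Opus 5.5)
The plan is to deduce \eqref{eqn phi_0 and beta L^2_a close} directly from the pointwise estimate \eqref{lem tilde phi_k,a pointwise ests, eqn 2} of Lemma \ref{lem tilde phi_k,a pointwise ests}, and then to obtain the two inner product statements from Cauchy--Schwarz together with the $L^2_a$ orthogonality of eigenfunctions with distinct eigenvalues.

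\textbf{Step 1: the $L^2_a$ bound.} Apply Lemma \ref{lem tilde phi_k,a pointwise ests} with $k=0$ and, say, $\delta = 1/2$. This gives
\[
|a^{-2}\beta_a(s) - \phi_{0,a}(s)| \le C(n,s_0)\, a\,(1+|s|)^{1/2} \qquad \text{for all } s \in \R.
\]
Squaring and integrating against $d\mu_a$ yields
\[
\|\phi_{0,a} - a^{-2}\beta_a\|_{L^2_a} \le C a\, \|(1+|s|)^{1/2}\|_{L^2_a}.
\]
The last norm is bounded uniformly in $a$ by the Gaussian-type weight bounds of Propositions \ref{prop exp -F_a bounds} and \ref{prop vol element bounds} (those cited for $\|1+|s|^{2k+1}\|_{L^2_a}\le C$). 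This proves \eqref{eqn phi_0 and beta L^2_a close}.

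\textbf{Step 2: the inner products.} For the first identity, write $a^{-2}\beta_a = \phi_{0,a} + (a^{-2}\beta_a - \phi_{0,a})$. Then
\[
\langle \phi_{0,a}, a^{-2}\beta_a\rangle_{L^2_a} - \|\phi_{0,a}\|_{L^2_a}^2
\]
is bounded by $\|\phi_{0,a}\|_{L^2_a}\cdot Ca$, and $\|\phi_{0,a}\|_{L^2_a} \le C(n,0)$ by Lemma \ref{lem L^2_a est for global eigenfunctions}.

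For $k\neq 0$, I would first record that $\langle \phi_{k,a},\phi_{0,a}\rangle_{L^2_a}=0$. This holds because $H_a$ is symmetric in $L^2_a$ and the eigenvalues are distinct: $1-k+\tilde\lambda_k$ versus $1+\tilde\lambda_0$, with $|\tilde\lambda_j|\lesssim a$ small. Then
\[
\langle\phi_{k,a},a^{-2}\beta_a\rangle_{L^2_a}=\langle\phi_{k,a},a^{-2}\beta_a-\phi_{0,a}\rangle_{L^2_a},
\]
which is $O(a)$ by Cauchy--Schwarz, Step 1, and Lemma \ref{lem L^2_a est for global eigenfunctions}.

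\textbf{Main obstacle.} The delicate point is justifying the integration by parts behind the symmetry of $H_a$. The boundary terms at $s=\pm\infty$ must vanish, which needs polynomial growth of $\phi_{j,a}$ and $\partial_s\phi_{j,a}$ against the Gaussian decay of $e^{-F_a}$ times the polynomial growth of the volume density. This follows from Lemma \ref{lem tilde phi_k,a pointwise ests} and Proposition \ref{prop generalized kernel}: $\phi_{j,a}$ and its derivative grow at most polynomially, while $e^{-F_a}\lesssim e^{-cs^2}$. There is no boundary at $s=0$, since the $\phi_{j,a}$ are smooth on all of $\R$.
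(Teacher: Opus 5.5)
Your proposal is correct and follows the paper's proof essentially step for step: you integrate the pointwise bound from Lemma \ref{lem tilde phi_k,a pointwise ests} against $d\mu_a$ using Proposition \ref{prop vol element bounds}, then apply Cauchy--Schwarz with Lemma \ref{lem L^2_a est for global eigenfunctions}, and for $k\neq 0$ you use the orthogonality $\langle\phi_{k,a},\phi_{0,a}\rangle_{L^2_a}=0$ coming from symmetry of $H_a$. Three details are small improvements on the paper's write-up: taking $\delta=1/2$ stays inside the cited lemma's range $\delta\in(0,1)$ (the paper uses $\delta=1$); you compare against the correct eigenvalue $1+\tilde\lambda_0$; and you justify the vanishing of boundary terms at $\pm\infty$, which matters because Proposition \ref{prop H_a symmetric} is only stated for $C^\infty_c$ functions.
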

\begin{proof}
    By Lemma \ref{lem tilde phi_k,a pointwise ests} with $\delta = 1$, there holds the pointwise estimate
    \begin{equation} 
        \phi_{0,a} (s) =  a^{-2} \beta_a(s)  + O (a; n,s_0) ( 1 + |s| )  \qquad \forall s \in \R.
    \end{equation}
    Thus,
    \begin{align*}
        \| \phi_{0,a} - a^{-2} \beta_a \|_{L^2_a} 
        = O(a; n,s_0) \| 1 + |s| \|_{L^2_a} \le C(n,s_0) a  = O(a;n,s_0) .
    \end{align*}
    by Proposition \ref{prop vol element bounds}.
    This proves \eqref{eqn phi_0 and beta L^2_a close}.

    The claimed estimate \eqref{lem L^2_a est for phi with beta, eqn 1} now follows.
    Indeed,
    \begin{multline}
        \left| \langle \phi_{0,a} , a^{-2} \beta_a \rangle_{L^2_a} - \| \phi_{0,a} \|^2_{L^2_a} \right| 
        = \left| \langle \phi_{0,a} ,  a^{-2} \beta_a - \phi_{0,a} \rangle_{L^2_a} \right| \\
        \le \| \phi_{0,a} \|_{L^2_a} \| a^{-2} \beta_a - \phi_{0,a} \| 
        \le C(n,s_0) a
    \end{multline}
    by Lemma \ref{lem L^2_a est for global eigenfunctions} and \eqref{eqn phi_0 and beta L^2_a close}.

    Next, note that for $1 \le k \le K$,
    \begin{gather*}
        ( 1 -k + \tilde \lambda _k )  \langle \phi_{k,a} , \phi_{0,a} \rangle_{L^2_a}
        = \langle H_a \phi_{k,a} , \phi_{0,a}  \rangle_{L^2_a} 
        = \langle \phi_{k,a} , H_a \phi_{0,a} \rangle_{L^2_a} 
        = (1 + \tilde \lambda_k ) \langle \phi_{k,a} , \phi_{0,a} \rangle_{L^2_a} \\
        \implies \langle \phi_{k,a} , \phi_{0,a} \rangle_{L^2_a} = 0.
    \end{gather*}
    where the first line uses Proposition \ref{prop H_a symmetric}.
    Thus, for $1 \le k \le K$,
    \begin{multline}
        | \langle \phi_{k,a}, a^{-2} \beta_a \rangle_{L^2_a} | 
        \le \left| \cancel{\langle \phi_{k,a} , \phi_{0,a} \rangle_{L^2_a}} \right| + \left| \langle \phi_{k,a} , a^{-2} \beta_a - \phi_{0,a} \rangle_{L^2_a} \right| \\
        \le \| \phi_{k,a} \|_{L^2_a} \| a^{-2} \beta_a - \phi_{0,a} \|_{L^2_a} 
        \le C(n,k,s_0) a
    \end{multline}
    where the last inequality uses Lemma \ref{lem L^2_a est for global eigenfunctions} and \eqref{eqn phi_0 and beta L^2_a close}.
    This proves \eqref{lem L^2_a est for phi with beta, eqn 2} and completes the proof.
\end{proof}

\subsection{Spectral Gap} \label{Subsection Spectral Gap}

Theorem \ref{thm global eigenfunctions} above shows there exist eigenfunctions $\phi_{k,a}$ of $H_{s,a}$ that can be constructed by gluing interior and exterior eigenfunctions.
However, it does not show that this gluing construction obtains \emph{all} eigenfunctions of $H_{s,a}$.

The aim of this subsection is to prove Theorem \ref{thm spectral gap}, which says that, when $s_0$ and $a$ are suitably small, $\{ \phi_k \}_{k =0}^K$ are in fact the first $K$ eigenfunctions and there is a spectral gap for the operator $H_{s,a}$ acting on odd functions on $\R$.
Together with Theorem \ref{thm global eigenfunctions}, Theorem \ref{thm spectral gap} will complete the proof of the paper's main result Theorem \ref{main thm intro} stated in the introduction.
Broadly, the proof of this spectral gap result proceeds by using the last subsection's estimates to prove $\phi_k$ has exactly $k$ zeros on $(0,\infty)$, and then applying the Sturmian Comparison Theorem.

\begin{lem} \label{lem phi_int positive}
    Let $n \ge 3$, $K \in \mathbb N$, and $C_0 > 0$.
    For all $0 < s_0 \le s_0^* (n,K) \ll 1$,
    $0 < a \le a^* (n,K, C_0, s_0) \ll 1$, and
    $\tilde \lambda \in \R$ with $|\tilde \lambda| \le C_0 a$,
    the interior eigenfunction $\phi_{int; K} = \phi_{int; \tilde \lambda , a, s_0, K}$ given by Lemma \ref{lem interior eigenfunctions} satisfies
    \begin{equation}
        \phi_{int; K}(\sigma) > 0 \qquad \forall \sigma \in \left( 0 , \frac{s_0}a \right)
    \end{equation}
\end{lem}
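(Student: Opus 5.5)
The plan is to compare every term of the ansatz \eqref{eqn interior eigenfunctions ansatz} with the leading term $\hat C_{K,0} u_0 = \beta_1$. The key inputs are a lower bound for $\beta_1$ and upper bounds on all corrections by $\sigma$ times a small factor. All terms are odd and vanish at $\sigma=0$, so it suffices to work on $0<\sigma<s_0/a$.

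\emph{Step 1: lower bound for $\beta_1$.} By \eqref{eqn defn Theta[h], beta[h]}, $\beta_1'(0)=\tfrac12(f_1(0)-0)=\tfrac12>0$. By Lemma~\ref{lem beta properties}(v),(vi), $\beta_1$ is positive and increasing on $(0,\infty)$ with limit $\overline\beta>0$. Hence there is $c=c(n)>0$ with
\begin{equation*}
\beta_1(\sigma)\ge c\,\frac{\sigma}{\langle\sigma\rangle}\qquad\forall\sigma>0.
\end{equation*}

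\emph{Step 2: upper bounds of the form $|g(\sigma)|\le C\sigma\langle\sigma\rangle^{m}$.} Each correction $g$ is odd, so $g(0)=0$, and it suffices to bound $|g'|\le C\langle\sigma\rangle^{m}$ and then integrate from $0$. The ingredients are:
\begin{itemize}
\item For $u_i$, $i\ge1$, Proposition~\ref{prop generalized kernel} (smoothness and the derivative asymptotics \eqref{generalized kernel derivatives asymptotic expansion eqn}) gives $|u_i(\sigma)|\le C\sigma\langle\sigma\rangle^{2i-1}$.
\item For $w_K$, the $X^1_{s_0/a}$ bound in \eqref{eqn interior eigenfunctions, correction term ests} gives $|\langle\sigma\rangle w_K'|\le C\langle\sigma\rangle$. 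Thus $|w_K'|\le C$ and $|w_K|\le C\sigma$.
\item For $v_i$, the $X^{2i+2}$ bound gives $|v_i'|\le Cs_0^2\langle\sigma\rangle^{2i+1}$. Thus $|v_i|\le C\sigma\langle\sigma\rangle^{2i+1}$.
\end{itemize}

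\emph{Step 3: sum and compare.} Divide the remaining terms of $\phi_{int;K}$ by $c\,\sigma/\langle\sigma\rangle$. With $C=C(n,K)$ and $\langle\sigma\rangle\le 1+s_0/a$, the relative sizes are as follows.
\begin{itemize}
\item For $1\le i\le K$: $a^{2i}|u_i|$ contributes at most $Ca^{2i}\langle\sigma\rangle^{2i}\le C(a+s_0)^{2i}\le C(2s_0)^{2i}$.
\item The term $a^2w_K$ contributes at most $Ca^2\langle\sigma\rangle\le C(a^2+as_0)$.
\item The $\tilde\lambda$ terms contribute at most $C_0a\sum_i a^{2i+2}\langle\sigma\rangle^{2i+2}\le CC_0a(2s_0)^{2}$.
\end{itemize}
Choose $s_0^*(n,K)$ so small that the total of the first item is at most $1/4$. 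Then choose $a^*(n,K,C_0,s_0)$ so that the last two items total at most $1/4$. With these choices,
\begin{equation*}
\phi_{int;K}(\sigma)\ge \tfrac12\beta_1(\sigma)>0\qquad\text{on }(0,s_0/a).
\end{equation*}

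The main obstacle is the behaviour near $\sigma=0$. There the leading term is only of size $\sigma$, so the crude weighted sup bounds, such as $|w_K|\le C\langle\sigma\rangle$, are not enough on their own. One must use oddness together with the derivative control built into the $X^\kappa$ norms to extract the factor $\sigma$, which is exactly what Step 2 does. The sign of the constants $\hat C_{K,i}$ is irrelevant, since only absolute values enter the estimates.
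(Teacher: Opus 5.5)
Your argument is correct, but it is organized differently from the paper's.

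The paper splits $(0,s_0/a)$ at a fixed $\Gamma=\Gamma(n)$ and treats the two pieces separately.
\begin{itemize}
\item On $[\Gamma,s_0/a)$ it uses $\beta_1\ge\frac12\overline\beta$ and bounds every correction pointwise by powers of $s_0$, using $a\sigma\le s_0$.
\item On $[0,\Gamma]$ it compares derivatives instead of values. It uses $\partial_\sigma\beta_1\ge c(n)>0$ on this compact interval and $O(a^2)$ bounds on the derivatives of the corrections there. It then concludes from $\phi_{int;K}(0)=0$.
\end{itemize}

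You replace this two-region argument with a single comparison against the weight $\sigma/\langle\sigma\rangle$. You combine the lower bound $\beta_1\ge c\,\sigma/\langle\sigma\rangle$ from Step 1 with upper bounds of the form $C\sigma\langle\sigma\rangle^{m}$. You get the latter by integrating, from $\sigma=0$, the derivative control contained in the $X^\kappa$ norms, which is where oddness enters.

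Both proofs use the same inputs:
\begin{itemize}
\item the asymptotics of the $u_i$;
\item the $X^\kappa$ bounds on $v_i$ and $w_K$;
\item the inequality $a\langle\sigma\rangle\le 2s_0$;
\item oddness at the origin.
\end{itemize}
So the difference is mainly one of packaging.

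Your version gives the slightly stronger quantitative conclusion $\phi_{int;K}\ge\frac12\beta_1$ on the whole interval, and it avoids choosing $\Gamma$. The paper's version avoids building a global lower bound for $\beta_1$ and needs only crude bounds on a compact set near the origin.

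The choice of $a^*$ also has to guarantee $a\le s_0\le \overline s_0(n,K)$ and $|\tilde\lambda|\le C_0a\le 1$, so that Lemma \ref{lem interior eigenfunctions} applies. You use these conditions implicitly; it would be better to state them.
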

\begin{proof}
    Throughout the proof, we write $\phi = \phi_{int; K} = \phi_{int; \tilde \lambda, a, s_0, K}$ for simplicity.
    Throughout the proof, $C = C(n,K)> 0$ denotes a positive constant depending only on $n,K$ that may change from line to line.

    There exists $\Gamma = \Gamma(n) > 1$ such that 
    \begin{equation} \label{proof phi_int positive, eqn 1}
        u_0 (\sigma) =\beta_1(\sigma) \ge \frac12 \overline \beta > 0 \qquad \forall \sigma \ge \Gamma
    \end{equation}
    where $\overline \beta = \overline \beta(n) >0$ is defined as in Lemma \ref{lem beta properties}.
    By the asymptotics in Proposition \ref{prop generalized kernel}, the generalized kernel elements $u_i$ satisfy
    \begin{equation} \label{proof phi_int positive, eqn 2}
        |u_i | + |\sigma \partial_\sigma u_i | \le C \sigma^{2i}  \qquad \forall \sigma \ge \Gamma, \, \forall 0 \le i \le K+1.
    \end{equation}

    Using \eqref{proof phi_int positive, eqn 1}, \eqref{proof phi_int positive, eqn 2}, $|\tilde \lambda| \le C_0 a$, and Lemma \ref{lem interior eigenfunctions}, it follows that for all $ 0 < a \le a^* (n, K, C_0, s_0 ) \ll 1$ and $\Gamma \le \sigma < \frac{s_0}a $, 
    \begin{gather}  \label{proof phi_int positive, eqn 3}
    \begin{aligned}
        &\phi(\sigma) \\
        ={}& \sum_{i=0}^K \hat C_{K, i} a^{2i} u_i 
		+ \tilde \lambda \sum_{i=0}^K a^{2(i+1)} \left( \hat C_{K, i} u_{i+1} + v_i \right) + a^2 w_K \\
        \ge{}& u_0 (\sigma) -  \sum_{i=1}^K C a^{2i} |u_i (\sigma) | 
        - C_0 a  \sum_{i=1}^{K+1} Ca^{2i} |u_{i}(\sigma)| 
        - C_0 a \sum_{i=0}^K a^{2i+2} |v_i(\sigma) | - a^2 |w_K(\sigma) | \\
        \ge{}& \frac12 \overline \beta  -  \sum_{i=1}^K C s^{2i}_0
        - C_0 a  \sum_{i=1}^{K+1} Cs^{2i}_0
        - C_0 a \sum_{i=0}^K C s_0^{2i+2} - C a s_0 \\
        \ge{}& \frac12 \overline \beta - C \sum_{i=1}^{2K+2} s_0^i
    \end{aligned} \end{gather}
    where the last line uses the estimate $C_0 a < 1$ for all $0 < a \le a^* (n, K, C_0, s_0) \ll 1$.
    It follows that if $0 < s_0 \le s_0^* (n,K) \ll 1$ is sufficiently small, then 
    \begin{equation} \label{proof phi_int positive, eqn 4}
        \phi(\sigma) > \frac14 \overline \beta > 0 \qquad \forall 0 < a \le a^* (n, K, C_0, s_0 ) \ll1 , \, \forall \sigma \in \left[ \Gamma,  \frac{s_0}a  \right).
    \end{equation}

    By smoothness of the $u_i$, Lemma \ref{lem interior eigenfunctions}, and the fact that $\Gamma = \Gamma(n)$ depends only on $n$, it follows that 
    \begin{equation} \label{proof phi_int positive, eqn 5}
        \sum_{i=0}^{K+1} |\partial_\sigma u_i (\sigma) | + \sum_{i=0}^K|\partial_\sigma v_i |  + |\partial_\sigma w_K | \le C  \qquad \forall 0 \le \sigma \le \Gamma  .
    \end{equation}
    By Lemma \ref{lem beta properties} and the fact that $\Gamma = \Gamma(n)$ depednds only on $n$, there exists $c = c(n) > 0$ such that 
    \begin{equation} \label{proof phi_int positive, eqn 6}
        \partial_\sigma u_0 (\sigma) = \partial_\sigma \beta_1 (\sigma) \ge c > 0 \qquad \forall \sigma \in [0, \Gamma].
    \end{equation}
    Using \eqref{proof phi_int positive, eqn 5}, \eqref{proof phi_int positive, eqn 6}, and $C_0 a < 1$ for all $0 < a \le a^* (n, K, C_0, s_0) \ll 1$,
    it follows that for all $0 < a \le a^* (n, K, C_0, s_0) \ll 1$ and all $\sigma \in [0, \Gamma]$,
    \begin{gather} \label{proof phi_int positive, eqn 7} \begin{aligned}
        &\partial_\sigma \phi_{int} \\
        ={}& \sum_{i=0}^K \hat C_{K,i} a^{2i} \partial_\sigma u_i
        + \tilde \lambda \sum_{i=0}^K a^{2i+2} \left( \hat C_{K,i} \partial_\sigma u_{i+1} + \partial_\sigma v_i \right) 
        + a^2 \partial_\sigma w_K \\
        \ge{}& \partial_\sigma u_0 - \sum_{i=1}^K C a^{2i} |\partial_\sigma u_i| 
        - C_0 a\sum_{i=0}^K C a^{2i+2} |\partial_\sigma u_{i+1} | 
        - C_0 a \sum_{i=0}^K a^{2i+2} |\partial_\sigma v_i | 
        - a^2 |\partial_\sigma w_K | \\
        \ge{}& c - C a^2  \\
        \ge{}& \frac12 c > 0.
    \end{aligned} \end{gather}
    Since $\phi_{int}$ is an odd function, $\phi_{int}(0) = 0$ and the derivative estimate \eqref{proof phi_int positive, eqn 7} implies
    \begin{equation} \label{proof phi_int positive, eqn 8}
        \phi_{int} (\sigma) > 0 \qquad \forall 0 < a \le a^* (n, K, C_0, s_0) \ll 1, \, \forall \sigma \in (0, \Gamma].
    \end{equation}
    Combining \eqref{proof phi_int positive, eqn 4} and \eqref{proof phi_int positive, eqn 8} thereby completes the proof.
\end{proof}

\begin{lem} \label{lem eigfunc zeros}
    Let $n \ge 3$ and $K \in \mathbb N$.
    For all $0 < s_0 \le s_0^* (n,K) \ll 1$, $1 \ll S_0^*(n,K)\le S_0 < \infty$, $0 < a \le a^* (n,K, s_0, S_0) \ll 1$, and $k \in \mathbb N $ with $0 \le k \le K$,
    the eigenfunction $\phi_k= \phi_{k, a, s_0}$ defined as in Theorem \ref{thm global eigenfunctions} has exactly $k$ zeros on the region $s \in (s_0, S_0)$.
\end{lem}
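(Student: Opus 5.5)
On $[s_0,S_0]$ with $S_0$ fixed, $\phi_k$ should be $C^1$-close to a nonzero multiple of the Laguerre polynomial $P_k$. So $\phi_k$ inherits the zeros of $P_k$ there, and $P_k$ has exactly $k$ simple zeros in $(s_0,S_0)$ once $s_0$ is small and $S_0$ is large.

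\textbf{Step 1 (zeros of $P_k$).} Recall that $P_k(s)=L_k^{((n-2)/2)}\bigl(\tfrac{(1+\overline c_0^2)s^2}{4}\bigr)$. The generalized Laguerre polynomial $L_k^{(\alpha)}$ with $\alpha>-1$ has exactly $k$ simple positive zeros. Hence $P_k$ has exactly $k$ simple zeros $0<z_1<\dots<z_k$ in $(0,\infty)$.

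Choose $s_0^*(n,K)$ smaller than every $z_1$ for $k\le K$, so that $P_k(s_0)\neq 0$; this is already assumed in the proof of Theorem~\ref{thm global eigenfunctions}. Choose $S_0^*(n,K)$ larger than every $z_k$.

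Set $m=\min_{[s_0,S_0]}\bigl(|P_k|+|P_k'|\bigr)>0$; this is positive because all zeros are simple. Then $P_k$ has no zeros on $[s_0,z_1-\eta]$ or on $[z_k+\eta,S_0]$. Near each $z_j$, $|P_k'|\ge m/2$ on a small interval $I_j$, and $P_k$ changes sign across $I_j$.

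\textbf{Step 2 (closeness of $\phi_k$ to $C P_k$).} For $s\ge s_0$, Theorem~\ref{thm global eigenfunctions} gives
\[
\phi_k=\frac{\phi_{int}(s_0/a)}{\phi_{ext}(s_0)}\,\phi_{ext}(s).
\]
Take $\tilde\lambda=\tilde\lambda_k$ with $|\tilde\lambda_k|\lesssim a$. Lemma~\ref{lem matching prelude} and Lemma~\ref{lem exterior eigenfunctions} then give
\[
\phi_{ext}=P_k+O(a),\qquad \phi_{ext}'=P_k'+O(a)
\]
uniformly on $[s_0,S_0]$. The $Y$-norm estimates control the error there with constants depending on $S_0$, because the weight $s^{b'}$ is bounded on $[1,S_0]$.

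The prefactor equals $\bigl(CP_k(s_0)+O(a)\bigr)/\bigl(P_k(s_0)+O(a)\bigr)=C+O(a)$, with $C=C(n,k)\neq 0$. Hence
\[
\|\phi_k-CP_k\|_{C^1([s_0,S_0])}\le C(n,k,s_0,S_0)\,a.
\]
An alternative route is Lemma~\ref{lem tilde phi_k,a pointwise ests} combined with Lemma~\ref{lem rewriting Laguerre poly}, which gives the same conclusion directly on $[s_0,S_0]$.

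\textbf{Step 3 (counting).} Take $a^*$ so small that the $C^1$ error is below $|C|m/4$.

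\emph{Away from the zeros.} Off $\bigcup_j I_j$ we have $|CP_k|\ge |C|m/2$, so $\phi_k\neq 0$ there.

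\emph{On each $I_j$.} Here $\phi_k'$ has a fixed sign, so $\phi_k$ is strictly monotone and has at most one zero. Also $\phi_k$ takes opposite signs at the endpoints of $I_j$, so it has exactly one zero.

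This gives exactly $k$ zeros in $(s_0,S_0)$. The case $k=0$ is included: then $P_0$ is a nonzero constant and $\phi_0\neq 0$ on the whole interval.

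\textbf{Main obstacle.} The main technical point is making the $O(\cdot)$ bounds of Lemma~\ref{lem matching prelude} uniform on all of $[s_0,S_0]$, not just $[s_0,s_1]$. I would handle this with the exterior expansion $\phi_{ext}=u_{\lambda_k}+\tilde\lambda(\tilde u_{\lambda_k}+\tilde v)+\tilde w$ on the compact interval. There $\tilde u_{\lambda_k}$ and $\tilde v$ are bounded by constants depending on $S_0$, while $\tilde w=O(a^n)$. This is why $a^*$ is allowed to depend on $S_0$.
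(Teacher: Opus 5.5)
Your argument is correct and is essentially the paper's proof. The paper also obtains $\phi_k = CP_k + O(a;n,k,s_0,S_0)$ together with the analogous derivative estimate on $(s_0,S_0)$, via Lemma~\ref{lem tilde phi_k,a pointwise ests} with $\delta=1$ and Lemma~\ref{lem rewriting Laguerre poly}, which is your ``alternative route''. It then counts zeros by a perturbation argument around the $k$ simple positive roots of $P_k$; your Step~3 simply makes that argument explicit.

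One small fix is needed in Step~3. The bound $|P_k|\ge m/2$ off $\bigcup_j I_j$ does not follow from the definition of $m$. Instead use $\min_{[s_0,S_0]\setminus\bigcup_j I_j}|P_k|>0$, which is positive by compactness, when choosing $a^*$.
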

\begin{proof}
    For any $k \in \mathbb N$, let $P_k(s) = L_k^{\left( \frac{n-2}2 \right) } \left( \frac{(1+\overline c_0^2) s^2}4 \right)$ denote the generalized Laguerre polynomial as in Proposition \ref{prop plane eigenfunction}.
    By the choice of $s_0^*$ and $S_0^*$, we can assume without loss of generality that $P_k(s)$ has its $k$ distinct positive roots on the region $(s_0, S_0)$ \cite{DLMF}*{\href{https://dlmf.nist.gov/18.16.iv}{18.16(iv)}}. 
    By Lemma \ref{lem tilde phi_k,a pointwise ests} with $\delta = 1$ and Lemma \ref{lem rewriting Laguerre poly}, it follows that
    \begin{align}
        \phi_k(s) &= C P_k(s) + O ( a; n, k, s_0, S_0) && \forall s \in (s_0, S_0) \text{ and} \\
        \partial_s \phi_k(s) &= C \partial_s P_k(s) + O(a; n, k,s_0, S_0) && \forall s \in (s_0, S_0) 
    \end{align}
    for some constant $C= C(n,k) \ne 0$  depending only on $n$ and $k$.
    An implicit function theorem argument then applies to deduce that, for all $0 < a \le a^*(n,K, s_0, S_0) \ll1 $, $\phi_k$ has exactly $k$ zeros in $(s_0, S_0)$, all of which arise as perturbations of roots of $P_k$.
\end{proof}

\begin{lem} \label{lem eigfunc positive far away}
    Let $n \ge 3$ and $K \in \mathbb N$.
    For all $0 < s_0 \le s_0^*(n,K) \ll 1$,
    $1 \ll S_0^*(n,K) \le S_0 <\infty$,
    $0 < a \le a^* (n,K, s_0,S_0) \ll 1$,
    and $k \in \mathbb N$ with $0 \le k \le K$,
    the eigenfucntion $\phi_k = \phi_{k, a, s_0}(s)$ defined as in Theorem \ref{thm global eigenfunctions} is non-vanishing on $[S_0, \infty)$, i.e.
        $$|\phi_{k}(s) | > 0 \qquad \forall s \ge S_0.$$
\end{lem}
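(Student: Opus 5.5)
For $s \ge S_0 \ge s_0$, Theorem \ref{thm global eigenfunctions} gives
$$\phi_k(s) = c_{k,a}\,\phi_{ext}(s), \qquad c_{k,a} := \frac{\phi_{int}(s_0/a)}{\phi_{ext}(s_0)}.$$
By Lemma \ref{lem matching prelude}, $c_{k,a} \to C(n,k) \neq 0$ as $a \to 0$, so $c_{k,a}$ is nonzero for small $a$. It therefore suffices to show that $\phi_{ext}$ does not vanish on $[S_0,\infty)$.

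Write $\phi_{ext} = u_{\lambda_k} + \tilde\lambda_k(\tilde u_{\lambda_k} + \tilde v) + \tilde w$, as in Lemma \ref{lem exterior eigenfunctions}, with $u_{\lambda_k} = P_k$. The plan is to compare with the leading term of $P_k$ uniformly on the unbounded interval, using the weighted norms rather than pointwise estimates on compact sets.

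Fix $b' \in (2k, 2k+1]$, say $b' = 2k + \tfrac12$. Then for $s \ge 1$,
$$|\tilde v(s)| \le C|\tilde\lambda_k|\, s^{b'}, \qquad |\tilde w(s)| \le C a^n s^{b'},$$
and by Proposition \ref{prop plane eigenfunction},
$$|\tilde u_{\lambda_k}(s)| \le C s^{2k}\ln s \le C s^{b'}.$$
Since $|\tilde\lambda_k| \le Ca$ by Theorem \ref{thm global eigenfunctions}, this gives
$$|\phi_{ext}(s) - P_k(s)| \le C(n,k,s_0)\, a\, s^{2k+\frac12} \qquad \text{for all } s \ge 1.$$

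Meanwhile, the asymptotics \eqref{eqn plane eigenfunction asymptotics} give
$$|P_k(s)| \ge \tfrac12 c_k s^{2k}, \qquad c_k = \frac{(1+\overline c_0^2)^k}{k!\,4^k},$$
for $s \ge S_0^*(n,k)$. Choose $S_0^*$ larger than the largest root of $P_k$ and large enough that this lower bound holds. Then
$$|\phi_{ext}(s)| \ge s^{2k}\left(\tfrac12 c_k - C a s^{1/2}\right).$$
This is positive only for $s \ll a^{-2}$, so the weighted estimate alone fails for $s \gtrsim a^{-2}$.

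Handling that large-$s$ regime is the main obstacle. I would treat it with an ODE/Sturm-type argument. Take a large parameter $R$ (e.g. $R = S_0$, or some fixed large value), and on $[R,\infty)$ consider
$$\psi = s^{-2k}\phi_{ext}.$$
The eigenvalue equation for $H_a$, with the drift $-\tfrac s2\partial_s$ dominating, implies that an eigenfunction with eigenvalue near $1-k$ either grows like $s^{2k}$ or like $e^{(1+\overline c_0^2)s^2/4}$. Neither growth mode admits a sign change far out.

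Concretely, I would use the Wronskian / variation-of-parameters representation from Claim \ref{proof exterior eigenfunctions, claim 1}. Writing $\phi_{ext}$ as a combination of $u_{\lambda_k}$ and $v_{\lambda_k}$ plus a small forcing term, the coefficient of the dominant solution has a fixed sign determined at $s = S_0$. Alternatively, and more simply, I would show that $\phi_{ext}$ and $\partial_s\phi_{ext}$ have the same sign at $s = S_0$ with
$$s\,\partial_s\phi_{ext} \ge k\,\phi_{ext} > 0$$
there (up to an overall sign). I would then argue by a maximum-principle/ODE comparison that this quantity stays positive. At a first point where $s\partial_s\phi = k\phi$, the equation
$$\frac{\phi''}{1+f_s'^2} = \left(\frac s2 - \frac{n-1}{\cdot}\right)\phi' - (k - \tilde\lambda)\phi$$
together with $|H_a - H_0| = O(a^n s^{-n})$ forces the ratio to increase. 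Hence $\phi_{ext}$ is monotone and nonvanishing on $[S_0,\infty)$.
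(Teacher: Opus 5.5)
Your reduction to $\phi_{ext}$ and the weighted comparison with $P_k$ are correct. As you note, though, they only give $\phi_{ext}\neq 0$ for $s\ll a^{-2}$, and the argument you propose for the remaining range fails. Write $q:=s\,\partial_s\phi_{ext}/\phi_{ext}$. The equation $H_a\phi_{ext}=(1-k+\tilde\lambda_k)\phi_{ext}$ gives
\begin{equation*}
    q'=\frac{q-q^2}{s}+s\bigl(1+(\partial_s f_a)^2\bigr)\Bigl[\Bigl(\frac12-\frac{n-1}{s^2+f_a^2}\Bigr)q-(k-\tilde\lambda_k)\Bigr].
\end{equation*}
So at a first point where $q=k$ (with $k\ge 1$, $s\ge S_0$ large, $|\tilde\lambda_k|\ll1$) you get $q'\approx-\tfrac k2(1+\overline c_0^2)s<0$. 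The ratio is driven \emph{down} through your threshold, not back up.

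Changing the threshold at $S_0$ cannot fix this. The leading term $(1+\overline c_0^2)s\bigl(\tfrac q2-(k-\tilde\lambda_k)\bigr)$ makes $q=2(k-\tilde\lambda_k)$ a \emph{repelling} equilibrium for the forward flow. Up to scaling, the polynomially growing eigenfunction is the unique solution that tracks it. Concretely, suppose $\phi_{ext}(S_0)>0$ and let $V$ be the exponentially growing solution of the same ODE, positive near infinity. For small $\eta>0$, $\phi_{ext}-\eta V$ satisfies any open condition on the Cauchy data at $S_0$ that $\phi_{ext}$ satisfies, yet it has a zero in $(S_0,\infty)$. Hence no argument using only $(\phi_{ext},\partial_s\phi_{ext})$ at $S_0$ can prove the lemma.

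Your option (a) has the same defect. The variation-of-parameters formula from the proof of Lemma \ref{lem exterior eigenfunctions} is built on $H_0-(1-k)$, so its forcing contains $\tilde\lambda_k\phi_{ext}$. As a result the coefficient of $u_{\lambda_k}$ drifts like $1+O(\tilde\lambda_k\ln s)$ rather than having a sign fixed at $S_0$. Meanwhile the coefficient of $v_{\lambda_k}$ is pinned by the behavior at infinity, not at $S_0$.

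The missing ingredient is exactly that information at infinity: $\phi_{ext}\in Y^{2;2-n,b'}_{s_0}$ grows at most polynomially. The paper uses it through barriers for $\mathcal L_{k,a}=H_a-(1-k+\tilde\lambda_{k,a})$. For $k\ge1$, $\mathcal L_{k,a}1=k-\tilde\lambda_{k,a}\ge0$ and $\mathcal L_{k,a}e^{s^2/8}<0$ on $[S_0,\infty)$. Also $|\phi_k(S_0)|\ge\epsilon_0(n,k)$ and $|\phi_k|\le\delta e^{s^2/8}$ near infinity. Comparison then gives $\phi_k\ge\epsilon_0-\delta e^{s^2/8}$ when $\phi_k(S_0)>0$, and one lets $\delta\searrow0$. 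For $k=0$ with $\tilde\lambda_{0,a}>0$, the constant barrier is replaced by $s^{-4\tilde\lambda_{0,a}}$. Because the zeroth-order coefficient $k-\tilde\lambda_{k,a}$ is positive, the comparison is best run on the quotient by the positive strict supersolution $e^{s^2/8}$.

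Alternatively, you can repair your Riccati picture by reversing its logic. For $s\ge S_0\gg1$ (uniformly in $a\le1$), the region $\{q>2k+1\}$ is forward-invariant and forces $\phi_{ext}$ to grow like $e^{cs^2}$. Moreover, $q\to+\infty$ immediately to the right of any zero of $\phi_{ext}$. So a zero in $[S_0,\infty)$ would contradict the polynomial growth of $\phi_{ext}$.
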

\begin{proof}
    First, consider the case where $1 \le k \le K$.
    Consider the linear operator 
    \begin{equation} \label{proof eigfunc pos far away, eqn 1}
        \mathcal L_{k,a} := H_a - ( 1-k+\tilde \lambda_{k,a}) = L_a - \frac s2 \partial_s + k - \tilde \lambda_{k,a}
    \end{equation}
    where $\tilde \lambda_{k,a} = \tilde \lambda_k(a)$ is as in Theorem \ref{thm global eigenfunctions}.
    Clearly, $\mathcal L_{k,a} \phi_k = 0$.
    We construct barriers to show $|\phi_k| > 0$ for all $s \ge S_0$.

    Since $|\tilde \lambda_{k,a} | \lesssim_{n,k, s_0} a$ (Theorem \ref{thm global eigenfunctions}), the constant function $u_0(s) = 1$ satisfies 
    \begin{equation} \label{proof eigfunc pos far away, eqn 1.5}
      \mathcal L_{k,a} u_0 = ( k - \tilde \lambda_{k,a} ) \ge 0  
    \end{equation}
    for all $0 < a \le a^*(n, K, s_0) \ll 1$.

    Consider also the function $u_1(s) = e^{\frac{s^2}8} > 0$.
    Then
    \begin{equation} \label{proof eigfunc pos far away, eqn 2}
        \partial_s u_1 = \frac s4 u_1  \qquad \text{and} \qquad
        \partial_{ss} u_1 = \frac14 u_1 + \frac {s^2}{16} u_1.
    \end{equation}
    Hence,
    \begin{gather} \label{proof eigfunc pos far away, eqn 3} \begin{aligned}
        &\mathcal L_{k,a} u_1 \\
        ={}& \frac1{1 + (\partial_s f_a)^2} \partial_{ss} u_1
        + \frac{\partial_s \left( \sqrt{ \frac{(s^2 +f_a^2)^{n-1}}{1 + (\partial_s f_a)^2}}\right)}{\sqrt{(1 + (\partial_s f_a)^2)(s^2 +f_a^2)^{n-1}}} \partial_s u_1 
        - \frac s2 \partial_s u_1 
        + (k- \tilde \lambda_{k,a}) u_1
        \\
        ={}& u_1 \left\{
        \frac{ \frac {s^2}{16} }{1 + (\partial_s f_a)^2}
        - \frac{ s^2}8  
        + k - \tilde \lambda_{k,a}
        + \frac{ \frac14  }{1 + (\partial_s f_a)^2}
        + \frac{\partial_s \left( \sqrt{ \frac{(s^2 +f_a^2)^{n-1}}{1 + (\partial_s f_a)^2}}\right)}{\sqrt{(1 + (\partial_s f_a)^2)(s^2 +f_a^2)^{n-1}}} \frac s4
        \right\} . 
    \end{aligned} \end{gather}
    The estimates for $\tilde \lambda_{k,a}$ (Theorem \ref{thm global eigenfunctions}) and the asymptotics of $f_a$ (Lemma \ref{lem: profile function of Lawlor neck}) imply there exists $C = C(n,k) > 0$ such that
    \begin{equation} \label{proof eigfunc pos far away, eqn 4}
        \left| k - \tilde \lambda_{k,a}
        + \frac{ \frac14  }{1 + (\partial_s f_a)^2}
        + \frac{\partial_s \left( \sqrt{ \frac{(s^2 +f_a^2)^{n-1}}{1 + (\partial_s f_a)^2}}\right)}{\sqrt{(1 + (\partial_s f_a)^2)(s^2 +f_a^2)^{n-1}}} \frac s4 \right| \le C < \infty 
    \end{equation}
    for all $s \ge 1$ and all $0 < a \le a^* (n, K, s_0, S_0) \ll 1$.
    Inserting estimate \eqref{proof eigfunc pos far away, eqn 4} into \eqref{proof eigfunc pos far away, eqn 3} gives, for all $s \ge S_0 \ge S_0^*(n,k)  \gg 1$ and all $0 < a \le a^* (n, K, s_0, S_0) \ll 1$,
    \begin{equation}  \label{proof eigfunc pos far away, eqn 5}
        \mathcal L_{k,a} u_1 
        \le u_1 \left\{ \frac {s^2}{16} - \frac {s^2}8 - C(n,k) \right\}  
        \le u_1 \left\{ - \frac{S_0^2}{16} - C(n,k) \right\} < 0.
    \end{equation}

    Equations \eqref{proof eigfunc pos far away, eqn 1.5} and \eqref{proof eigfunc pos far away, eqn 5} show, for any $\epsilon, \delta > 0$, 
    $u = \epsilon u_0 - \delta u_1$ is a subsolution of $\mathcal L_{k,a} u = 0$ on the region $(S_0, \infty)$.
    Since $\phi_k(S_0)$ is an $O(a; n, k, s_0, S_0)$ perturbation of a generalized Laguerre polynomial (Lemmas \ref{lem rewriting Laguerre poly} and \ref{lem tilde phi_k,a pointwise ests}), there exists $\epsilon_0 = \epsilon_0(n,k) >0$ such that
    \begin{equation} \label{proof eigfunc pos far away, eqn 6}
        |\phi_k(S_0) | \ge \epsilon_0(n,k) >0   
    \end{equation}
    for all $0 < a \le a^* (n,k, s_0,S_0) \ll 1$.
    Note also that the bounds from Lemma \ref{lem exterior eigenfunctions} imply, for any $\delta > 0$, $|\phi_k | \le \delta u_1$ near infinity.

    By \eqref{proof eigfunc pos far away, eqn 6}, $\phi_k(S_0) > 0$ or $\phi_k(S_0) < 0$.
    Consider the case that $\phi_k(S_0) > 0$.
    Then, for any $\delta > 0$, $\epsilon_0 - \delta u_1$ is a subsolution below $\phi_k$ at $S_0$ and near infinity.
    The maximum principle for $\mathcal L_{k,a}$ then implies that 
    \begin{equation} \label{proof eigfunc pos far away, eqn 7}
        \phi_k(s) \ge \epsilon_0 - \delta u_1(s) \qquad \forall s \in [S_0, \infty).
    \end{equation}
    Taking $\delta \searrow 0$ proves $\phi_k(s) >0$ for all $s \in[S_0, \infty)$.
    In the case that $\phi_k(S_0) < 0$, a similar argument applies using the supersolution $-\epsilon_0 + \delta u_1$.
    This completes the proof of the Lemma when $1 \le k \le K$.

    When $k =0$ and $\tilde \lambda_{0,a} \le 0$, $u_0 = 1$ is still a subsolution and same argument above applies to prove $\phi_{0,a}$ is non-vanishing on $[S_0, \infty)$.
    If, however, $\tilde \lambda_{0,a} > 0$, then $u_0 = 1$ is no longer a subsolution.
    In this case, estimates akin to \eqref{proof eigfunc pos far away, eqn 3}--\eqref{proof eigfunc pos far away, eqn 5} apply to show
    $u_0 = s^{-4 \tilde \lambda_{0,a}} >0$
    is a subsolution $\mathcal L_{k,a} u_0 \ge 0$ on the region $s \in (S_0, \infty)$. 
    The remainder of the argument then proceeds as above.
    The details are left to the reader.
\end{proof}

Combining Lemmas \ref{lem phi_int positive}--\ref{lem eigfunc positive far away} with a Sturmian Theorem, we can now prove a spectral gap theorem for $H_a$.

\begin{theorem}[Spectral Gap] \label{thm spectral gap}
    Let $n \ge 3$, $K \in \mathbb N$, and $\epsilon > 0$.
    For all $0 < s_0 \le s_0^*(n, K) \ll 1$ and $0 < a \le a^*(n,K, s_0, \epsilon) \ll 1$, the following holds:
    
    If $v : \R \to \R$, $v=v(s)$ is an odd $C^2$ function with
    \begin{equation}
        \| v \|_{H^1_a} + \| H_a v \|_{L^2_a} < \infty \qquad \text{and} \qquad
        \langle v , \phi_k \rangle_{L^2_a} = 0 \quad \forall 0 \le k \le K,
    \end{equation}
    where $\phi_{k} = \phi_{k,a,s_0}(s)$ denotes the $H_a$-eigenfunction as in Theorem \ref{thm global eigenfunctions},
    then 
    \begin{equation}
        \langle v, H_a v \rangle_{L^2_a} \le \left( -K + \epsilon \right) \| v \|_{L^2_a}^2 .
    \end{equation}
\end{theorem}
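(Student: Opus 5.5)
We will realize $H_a$, restricted to odd functions, as a self-adjoint Sturm--Liouville operator on the half-line $(0,\infty)$ with Dirichlet condition at $s=0$. The spectral gap will then follow from discreteness of the spectrum, Sturm oscillation, and the zero counts of Lemmas \ref{lem phi_int positive}--\ref{lem eigfunc positive far away}.

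\textbf{Step 1: $H_a$ as a Sturm--Liouville operator.} For odd $u$ we have $u(0)=0$. Since $d\mu_a = e^{-F_a}\,dV_a$, the operator takes the divergence form
\[
H_a u = \frac{1}{\rho_a}\,\partial_s\bigl(p_a\,\partial_s u\bigr) + u ,
\qquad
\rho_a := e^{-F_a}\sqrt{(1+(\partial_s f_a)^2)(s^2+f_a^2)^{n-1}},
\qquad
p_a := e^{-F_a}\sqrt{\tfrac{(s^2+f_a^2)^{n-1}}{1+(\partial_s f_a)^2}} .
\]
This is consistent with the choice of $F_a$, whose derivative $\frac s2\bigl(1+(\partial_s f_a)^2\bigr)$ produces the drift $-\frac s2\partial_s$ after division by $1+(\partial_s f_a)^2$. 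Hence $\langle v, H_a v\rangle_{L^2_a} = -\int p_a v_s^2\,ds + \|v\|^2_{L^2_a}$, with the boundary terms vanishing thanks to the assumed $H^1_a$ bounds (Proposition \ref{prop H_a symmetric}).

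\textbf{Step 2: discrete spectrum.} The Gaussian weight gives compactness of $H^1_a \hookrightarrow L^2_a$, via a Rellich-type argument using Propositions \ref{prop exp -F_a bounds} and \ref{prop vol element bounds}. Consequently the Friedrichs extension of $H_a$ on odd functions has purely discrete spectrum $\mu_0 > \mu_1 > \cdots \to -\infty$. Each eigenvalue is simple, since this is a second-order ODE with a Dirichlet condition at $0$ and an $L^2_a$ condition at $\infty$.

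\textbf{Step 3: identifying the top $K+1$ eigenvalues.} Sturm oscillation theory, in a form valid for the limit-point endpoint at $\infty$, says that the $j$-th eigenfunction has exactly $j$ zeros in $(0,\infty)$. We know the following about $\phi_k$:
\begin{itemize}
\item it is in $L^2_a$ (Lemma \ref{lem L^2_a est for global eigenfunctions});
\item it is odd;
\item it has no zeros in $(0,s_0]$ (Lemma \ref{lem phi_int positive}, using $|\tilde\lambda_k|\lesssim a$);
\item it has exactly $k$ zeros in $(s_0,S_0)$ (Lemma \ref{lem eigfunc zeros});
\item it has no zeros in $[S_0,\infty)$ (Lemma \ref{lem eigfunc positive far away}).
\end{itemize}
At the points $s_0$ and $S_0$ themselves, $\phi_k$ is close to $C P_k$, which is nonzero there by the choice of $s_0$ and $S_0$. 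So $\phi_k$ has exactly $k$ zeros in $(0,\infty)$, and therefore $\phi_k$ is the $k$-th eigenfunction and $\mu_k = 1-k+\tilde\lambda_k$. It follows that $\mu_{K+1}<\mu_K = 1-K+\tilde\lambda_K$.

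\textbf{Step 4: bounding $\mu_{K+1}$.} This is the main obstacle: Sturm theory alone only gives $\mu_{K+1}<1-K+O(a)$, not $\mu_{K+1}\le -K+\epsilon$. My first attempt is Sturm comparison combined with the matching construction. Suppose $\mu_{K+1}\in(-K+\epsilon,\,1-K-\epsilon')$ for some $\epsilon'>0$. Then the corresponding eigenfunction is a solution of $H_a\psi=\mu\psi$ whose eigenvalue lies away from the limiting spectrum $\{1-j\}$. The interior and exterior analyses, extended to eigenvalue parameters $\mu$ in a compact set away from $\{1-j\}$, force the exterior solution (decaying in $L^2_a$) to be near a multiple of the non-polynomial Kummer-type solution of $H_0 u=\mu u$. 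That solution is singular like $s^{2-n}$ at $0$, so it cannot match the bounded interior solution $\approx\beta_1$: the matching function $\Phi$ stays bounded away from zero. The remaining window $\mu_{K+1}\in[1-K-\epsilon',\,1-K+Ca)$ is excluded by counting zeros, since near eigenvalue $1-K$ the $L^2_a$ solution matches $P_K$ and has $K$ zeros, contradicting the requirement of $K+1$ zeros.

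If extending the matching to a range of $\mu$ proves too costly, the fallback is a min-max comparison with $H_0$ on $(0,\infty)$. The odd spectrum of $H_0$ with a Dirichlet-type condition at $0$ is $\{1-j\}$, and one shows $H_a$ converges to $H_0$ in strong resolvent sense away from a shrinking neighbourhood of $0$, using Lemma \ref{lem H_a - H_0 pointwise est} and a cutoff at scale $s_0$; the inner region contributes at most $O(s_0)$ to quadratic forms by the volume bounds.

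With $\mu_{K+1}\le -K+\epsilon$ in hand, expanding $v\perp\phi_0,\dots,\phi_K$ in the eigenbasis gives $\langle v,H_a v\rangle_{L^2_a}\le \mu_{K+1}\|v\|_{L^2_a}^2\le(-K+\epsilon)\|v\|^2_{L^2_a}$.
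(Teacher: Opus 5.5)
\textbf{Gap in Step 4.} Your Steps 1--3 match the paper's argument: the self-adjoint realization on $(0,\infty)$ with $u(0)=0$, compact resolvent, simple eigenvalues, and Sturm oscillation combined with Lemmas \ref{lem phi_int positive}--\ref{lem eigfunc positive far away} to get $\mu_k=1-k+\tilde\lambda_k$ for $k\le K$. The problem is Step 4. There you need $\mu_{K+1}\le -K+\epsilon$, which is the actual content of the theorem beyond Step 3, and you do not prove it.

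\emph{First route.} This needs an exterior construction for eigenvalue parameters $\mu$ ranging over a compact set away from $\{1-j\}$. That construction would have to be built on the decaying Kummer-type solution, with bounds uniform in $\mu$, and it would also need a quantitative lower bound on the matching function. Nothing like this is available. Lemma \ref{lem exterior eigenfunctions} only treats $|\tilde\lambda|<\tilde\lambda^*$ around the single value $1-K$, and its inverse $(H_0-(1-K))^{-1}$ is built from $u_{\lambda_K},v_{\lambda_K}$ at exactly that eigenvalue. Your heuristic is right, but as written it is an assertion.

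\emph{Fallback.} This fails as stated. Strong resolvent convergence $H_a\to H_0$ only gives that each point of $\sigma(H_0)$ is approximated by spectrum of $H_a$. It does not prevent $H_a$ from having extra eigenvalues in $(-K+\epsilon,1-K)$, and excluding those is exactly what you need. Doing so would require additional control, for example ruling out eigenfunctions that concentrate near $s=0$. The volume bounds cannot supply this, since they do not control $\int_0^{s_0}p_a v_s^2$.

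\textbf{The missing idea avoids all of this.} Theorem \ref{thm global eigenfunctions} and Lemmas \ref{lem phi_int positive}--\ref{lem eigfunc positive far away} hold for every $K$. Apply them with $K+1$ in place of $K$, shrinking $s_0^*(n,K)$ and $a^*$ as needed; the quantifiers in the statement allow this.

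\begin{itemize}
\item This produces an odd eigenfunction $\phi_{K+1}\in D(H_a)$ with eigenvalue $-K+\tilde\lambda_{K+1}$, where $|\tilde\lambda_{K+1}|\le C(n,K,s_0)\,a$.
\item By the same zero-count lemmas, $\phi_{K+1}$ has exactly $K+1$ zeros in $(0,\infty)$.
\item The Sturm argument from your Step 3 then gives $\mu_{K+1}=-K+\tilde\lambda_{K+1}\le -K+\epsilon$ once $a\le a^*(n,K,s_0,\epsilon)$.
\item The min-max conclusion follows, using $\langle v,H_av\rangle_{L^2_a(\R)}=2\langle v,H_av\rangle_{L^2_a((0,\infty))}$ for odd $v$.
\end{itemize}

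This is exactly how the paper's proof works: it runs the eigenfunction identification for $0\le k\le K+1$.
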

\begin{proof}
    Define $L^2_a( (0, \infty))$ to be the space of (equivalence classes of Lebesgue) measurable functions $u : (0, \infty) \to \R$ with $\int_0^\infty u^2 d\mu_a < \infty$.
    Define $H^1_a( (0, \infty))$ to be the completion of $C^\infty_c ([0, \infty) )$ with respect to the $H^1_a$ norm.
    Clearly, $L^2_a((0, \infty))$ and $ H^1_a((0, \infty))$ are Hilbert spaces.
    By Sobolev embedding, $H^1_a((0,\infty)) \subset C^{0, \alpha}_{loc} ( (0, \infty))$ for some $\alpha \in (0,1)$ (depending on $n, a$). 
    In fact, Proposition \ref{prop vol element bounds} implies the $H^1_a$-norm is comparable to the standard $H^1$-norm on $\R$ near $s=0$, and so $H^1_a((0, \infty)) \subset C^{0, \alpha}_{loc}([0, \infty))$.
    Thus, we can define $$H^1_{a; 0} ( (0, \infty)) = \{ u \in H^1_a((0, \infty)) \colon u(0) = 0 \}$$ and deduce $H^1_{a;0} ( (0, \infty)) \subset H^1_a ((0, \infty))$ is a closed subspace.

    Define an unbounded linear operator 
    \begin{gather} 
        \label{proof spectral gap, eqn 1}
        H_a : D(H_a) \to L^2_a((0, \infty)) \\
        \label{proof spectral gap, eqn 2}
        \text{on domain } D(H_a) = \{ u \in H^1_a((0, \infty)) \colon u(0) = 0 \text{ and } H_a u \in L^2_a((0, \infty)) \}  
    \end{gather}
    where $H_a$ is defined as in \eqref{eqn H_a defn} but with derivatives taken in the sense of distributions.
    
    \begin{claim} \label{proof spectral gap, claim 1}
        $H_a : D(H_a) \to L^2_a ((0, \infty))$
        is a self-adjoint unbounded linear operator.
    \end{claim}
    \begin{claimproof}
        Note $C^\infty_c((0, \infty)) \subset D(H_a) \subset H^1_{a;0}((0, \infty))$. The first inclusion is dense since $C^\infty_c((0, \infty)) \subset H^1_{a;0}((0, \infty))$ is dense.
        It then follows from Proposition \ref{prop H_a symmetric} that 
        \begin{equation} \label{proof spectral gap, eqn 3}
            \left( u , H_a v \right)_{L^2_a} = -\int_0^\infty \frac {(\partial_s u )(\partial_s v)}{1 + \partial_sf_a^2 } d \mu_a + \int_0^\infty u v d \mu_a \qquad \forall u , v \in D(H_a).
        \end{equation}
        Hence, $H_a : D(H_a) \to L^2_a((0, \infty))$ is a symmetric operator and bounded above by 1, that is,
        \begin{equation} \label{proof spectral gap, eqn 4}
            \left( u, H_a v \right)_{L^2_a} = (H_a u, v )_{L^2_a} \quad \text{and} \quad 
            (u, H_a u)_{L^2_a} \le \| u \|_{L^2_a}^2
            \qquad \forall u, v \in D(H_a).
        \end{equation}
        
        Since $0 \le \partial_s f_a \le \overline c_0 = \overline c_0(n)$ (Lemma \ref{lem: profile function of Lawlor neck}), we have that
        \begin{equation} \label{proof spectral gap, eqn 5}
            \frac1{1+\overline c_0^2} ( u, v)_{H^1_a} \le \int_0^\infty \frac{ (\partial_s u) (\partial_s v) }{1 + \partial_s f_a^2 } d \mu_a + \int_0^\infty u v  d\mu_a \le ( u, v)_{H^1_a}
            \qquad \forall u, v \in H^1_a((0, \infty)).
        \end{equation}
        The Lax-Milgram Theorem can then be applied to show for all $v \in L^2_a((0, \infty))$ there exists a weak solution $ u \in H^1_{a;0}((0, \infty))$ of $(-H_a + 2) u = f$.
        Hence, $-H_a +2 : D(H_a) \to L^2_a((0, \infty))$ is surjective.
        To summarize, $-H_a +1 : D(H_a) \to L^2_a((0, \infty))$ is a densely defined, symmetric operator which is bounded below by 0 and has $-H_a+1 +1$ surjective.
        \cite{ReedSimon1}*{Chapter VIII, Problem 10, p. 313} then gives that $-H_a +1$ and $H_a : D(H_a) \to L^2_a((0, \infty))$ are self-adjoint operators.
    \end{claimproof}

    \begin{claim} \label{proof spectral gap, claim 2}
        $H_a : D(H_a) \to L^2_a((0, \infty))$ has compact resolvent.
    \end{claim}
    \begin{claimproof}
        Since $H_a$ is bounded above by 1 (see the proof of Claim \ref{proof spectral gap, claim 1}), $2$ is in the resolvent and it suffices to show $(H_a - 2)^{-1} : L^2_a((0, \infty)) \to L^2_a((0, \infty))$ is compact.
        Let $f_i$ be a bounded sequence in $L^2_a((0, \infty))$ and set $u_i = (H_a- 2)^{-1} f_i \in D(H_a) \subset L^2_a((0, \infty))$.
        $u_i$ is a bounded sequence in $L^2_a((0, \infty))$.
        Integrating both sides of $(H_a - 2) u_i = f_i$ against $-u_i$ and using \eqref{proof spectral gap, eqn 3} and \eqref{proof spectral gap, eqn 5}, it follows that
        \begin{equation}
            \frac1{1 + \overline c_0^2} \| u_i \|_{H^1_a((0,\infty))}^2 \le \| f_i \|_{L^2_a((0, \infty))} \| u_i \|_{L^2_a((0, \infty))}.
        \end{equation}
        Thus, $u_i$ is a bounded sequence in $H^1_a((0, \infty))$.
        Using the embedding $H^1_a((0, \infty)) \subset C^{0, \alpha}_{loc}([0, \infty))$, it follows that some subsequence $u_{i_j}$ converges to $u_\infty$ in $C^{0}_{loc}([0, \infty))$.
        
        Set $C_0 := \sup_{i} \| u_i \|_{H^1_a((0, \infty))} < \infty$.
        Then, for any $R > 0$ and any $ j,k \in \mathbb N$,
        \begin{align*}
            &\| u_{i_j} - u_{i_k} \|_{L^2_a((0, \infty))}^2 \\
            ={}& \int_0^R ( u_{i_j} - u_{i_k})^2 d \mu_a + \int_R^\infty ( u_{i_j} - u_{i_k})^2 d \mu_a \\
            \le{}& \int_0^R ( u_{i_j} -u_\infty + u_\infty -  u_{i_k} )^2 d \mu_a + \frac1{R^2} \int_R^\infty s^2 ( u_{i_j} - u_{i_k})^2 d \mu_a \\
            \le{}& 2 \int_0^R ( u_{i_j} - u_\infty )^2 d \mu_a + 2 \int_0^R ( u_{i_k} - u_\infty)^2 d \mu_a 
            + \frac 1{R^2} \| u_{i_j} - u_{i_k} \|_{H^1_a ((0, \infty))}^2 && ( \text{Proposition \ref{prop Ecker's Sobolev ineq}} ) \\
            \le{}& 2R  \| u_{i_j} - u_\infty \|_{C^0 ([0, R])}^2 + 2 R \| u_{i_k} - u_\infty \|_{C^0([0, R])}^2   + \frac {4C_0^2}{R^2} .
        \end{align*}
        (Note that Proposition \ref{prop Ecker's Sobolev ineq} is justified by the density of $C^\infty_c ( (0, \infty)) \subset D(H_a)$.)
        It follows from this inequality that $u_{i_j}$ is a Cauchy sequence in $L^2_a((0, \infty))$ and therefore converges to $u_\infty$ in $L^2_a((0, \infty))$.
        This proves $(H_a - 2)^{-1}: L^2_a((0, \infty)) \to L^2_a((0, \infty))$ is compact.
    \end{claimproof}

    By \cite{ReedSimon4}*{Theorem XIII.64}, there exists a complete orthonormal basis $\{ u_i \}_{i \in \mathbb N}$ of $L^2_a((0, \infty))$ such that, for all $i$, $u_i \in D(H_a)$ is an eigenfunction of $H_a$ with eigenvalue say $\mu_i \in \R$.
    Moreover, $$1 \ge \mu_0 \ge \mu_1 \ge \mu_2 \ge \dots \to - \infty$$
    and the eigenvalues $\mu_i$ are given by the min-max principle.
    Elliptic regularity theory also implies the eigenfunctions $u_i$ are in fact smooth functions $u_i : [0, \infty) \to \R$ with $u_i(0) = 0$.
    Uniqueness of ODE solutions further implies the eigenvalues are all simple, that is,
        $$1 \ge \mu_0 > \mu_1 > \mu_2 > \dots \to -\infty.$$

    For $0 \le k \le K+1$, let $\phi_k = \phi_{k,a , s_0}$ denote the $H_a$-eigenfunctions as in Theorem \ref{thm global eigenfunctions} with eigenvalue say $\lambda_k = 1 - k + \tilde \lambda_k$.
    By the asymptotics of $\phi_k$ at infinity (Theorem \ref{thm global eigenfunctions} and Lemma \ref{lem exterior eigenfunctions}) and the fact that each $\phi_k$ is an odd function of $s$, $\phi_k \in D(H_a)$ after restricting $\phi_k$ to $(0, \infty)$.
    Therefore, $\{ \lambda_k \}_{0 \le k \le K+1} \subset \{ \mu_k \}_{k \in \mathbb N}$.
    In other words, for every $0 \le k \le K+1$, there exists $i = i(k)$ such that $\lambda_k = \mu_{i(k)}$ and $\phi_k$ is a constant multiple of $u_{i(k)}$.

    The Sturm Comparison Theorem implies that, for any $k \in \mathbb N$, $u_k$ has exactly $k$ zeros in $(0, \infty)$ (see e.g. \cite{DunfordSchwartz}*{Theorem XIII.7.50, page 1478}).
    By Lemmas \ref{lem phi_int positive}--\ref{lem eigfunc positive far away}, for any $0 \le k \le K+1$, $\phi_k$ also has exactly $k$ zeros in $(0, \infty)$.
    Hence, it must be the case that, for all $0 \le k \le K+1$, $\lambda_k = \mu_k$ and $\phi_k$ is a constant multiple of $u_k$.

    Let $v : \R \to \R$ be an odd $C^2$ function such that
    \begin{equation*}
        \| v \|_{H^1_a(\R)} + \| H_a v \|_{L^2_a(\R)} < \infty \qquad \text{and} \qquad \langle v, \phi_k \rangle_{L^2_a(\R)} = 0 \quad \forall 0 \le k \le K.
    \end{equation*}
    Then $v(0) = 0$, and the restriction of $v$ to $(0, \infty)$ is $L^2_a((0, \infty))$-orthogonal to $u_k = \phi_k$ for all $0 \le k \le K$.
    Because the eigenvalues are given by the min-max principle, it follows that
    \begin{multline}
        \langle v , H_a v \rangle_{L^2_a( \R) } = 2 \langle v, H_a v \rangle_{L^2_a((0, \infty))} 
        \le 2 \mu_{K+1} \| v \|_{L^2_a((0, \infty))}^2 = \lambda_{K+1} \| v \|_{L^2_a(\R)}^2  \\
        = ( 1 - K-1 + \tilde \lambda_{K+1} ) \| v \|_{L^2_a(\R)}^2 
        \le \left( - K + \epsilon \right) \| v \|_{L^2_a(\R)}^2
    \end{multline}
    where the last inequality uses $|\tilde \lambda_{K+1} | \le C(n, K, s_0) a \le \epsilon$ for $0 < a \le a^*$ (Theorem \ref{thm global eigenfunctions}).
    This completes the proof.
\end{proof}
    

Combining Theorems \ref{thm global eigenfunctions} and \ref{thm spectral gap} proves the paper's main result, Theorem \ref{main thm intro}, stated in the introduction.
Proposition \ref{prop scaling mode intro} from the introduction then follows from Lemma \ref{lem L^2_a est for phi with beta}.

\section{Approximate Lagrangian Mean Curvature Flow Solutions} \label{Section Approx Flow Solutions}

In this section, we indicate how the eigenmodes of $H_{s,a}$ may be used to construct solutions $L^n(t) \subseteq \C^n$ ($n \ge 3$) of Lagrangian mean curvature flow that form finite-time singularities.
\textbf{We emphasize that, unlike other sections, the discussions throughout this section do \emph{not} constitute rigorous proofs.} 
Instead, this section is meant to provide informal evidence for the existence of Lagrangian mean curvature flows with specific singularity-formation dynamics.
The informal descriptions contained in this section will be made rigorous in our companion paper \cite{SS26II}.

\begin{remark}
    Throughout this section, fix $n, K \in \mathbb N$ with $n \ge 3$, and fix $0 < s_0 = s_0(n,K) \ll 1$ to be a small positive constant depending only on $n$ and $K$.
    Throughout this section, all constants implicit in the $\lesssim, \sim$ notation (see Definition \ref{defn notation}) will depend only on $n$ and $K$.
\end{remark}

For $u = u(s, \tau)$, $a = a(\tau)$, we consider the PDE
    \begin{equation} \tag{\ref{eq: main equation}}
        \partial_\tau u = \left( 1 - 2 \frac{\partial_\tau a}{a} \right) \beta_a + H_{s, a} u + Q_{s,a}(u).
    \end{equation}
By Corollary \ref{cor: PDE for the potential}, solutions $(u,a)$ of \eqref{eq: main equation} yield solutions $h = f_{a(\tau)}(s) + u_s(s,\tau)$ of the parabolically rescaled Lagrangian mean curvature flow.
Thus, it suffices to obtain solutions of \eqref{eq: main equation}.

Assume throughout this section that $n \ge 3$.
We adopt the ansatz that $0 < a \ll 1 $ and $u \approx \sum_{i=0}^K b_i(\tau) \phi_{i,a}(s)$ can be approximated by a time-dependent linear combination of finitely many eigenmodes $\phi_{i,a}$ as in Theorem \ref{thm global eigenfunctions} where also the time-dependent coefficients satisfy $\sum_{i=0}^K |b_i| \ll 1$.
Inserting this ansatz into \eqref{eq: main equation} yields
    \begin{equation} \label{approx eqn 1}
        \sum_{i=0}^K b_i' \phi_{i,a} + \sum_{i=0}^K b_i (\partial_a \phi_{i,a}) a'
        \approx \left( 1 - 2 \frac{a'}a \right)\beta_a + \sum_{i=0}^K b_i ( 1 - i + \tilde \lambda_{i,a}) \phi_{i,a}  + Q_{s,a}(u)
    \end{equation}
where $' = \frac d{d \tau}$ indicates a derivative with respect to $\tau$.
Lemma \ref{lem tilde phi_k,a pointwise ests} indicates $\phi_{0,a} \approx a^{-2} \beta_a$,
Theorem \ref{thm global eigenfunctions} shows $|\tilde \lambda_{i,a}| \lesssim a \ll 1$,
and the smallness of the coefficients $a, b_i$ suggests the nonlinear term $Q_{s,a}(u)$ will be small relative to the linear terms in the PDE.
Assuming additionally that
\begin{equation} \label{approx eqn, assumption 1}
    | a'| \ll 1,
\end{equation}
\eqref{approx eqn 1} can then be approximated by
    \begin{equation} \label{approx eqn 2}
        \sum_{i=0}^K b_i' \phi_{i,a} 
        \approx \left( a^2 - 2aa' \right)\phi_{0,a} + \sum_{i=0}^K b_i ( 1 - i ) \phi_{i,a} .
    \end{equation}
This approximate equation \eqref{approx eqn 2} can be solved by finding a solution of the ODE system 
\begin{gather} \label{approx ODE system}
    \left\{ \begin{aligned}
        b_0' &= a^2 - 2 a a' + b_0, \\
        b_1' &= (1-1) b_1, \\
        \vdots& \\
        b_i'&= ( 1- i ) b_i, \\
        \vdots& \\
        b_K' &= (1-K) b_K.
    \end{aligned} \right.
\end{gather}
This ODE system \eqref{approx ODE system} has a solution given by
\begin{equation} \label{eqn approx ODE soln}
    b_K(\tau) = - b_0(\tau) = a^2(\tau) = C_0 e^{(1-K) \tau} , \qquad b_1 = b_2 = \dots = b_{K-1} = 0
\end{equation}
where $C_0 > 0$ is an arbitrary positive constant.
We henceforth require that $K \ge 2$ so that this ODE solution satisfies $a , |b_i| , |a'| \ll 1$ for $\tau \gg 1$, which is consistent with the approximations and smallness assumptions made above.
This solution \eqref{eqn approx ODE soln} to the ODE system then corresponds to the parabolically rescaled Lagrangian mean curvature flow solution $\widetilde L_K(\tau)$ given by the graph of
\begin{equation} \label{approx eqn 3}
    h_K(s, \tau) \approx  f_{a(\tau)} + C_0e^{(1-K)\tau} \partial_s \left(  \phi_{K,a(\tau)} - \phi_{0,a(\tau)}\right), \qquad a(\tau) = \sqrt{C_0} e^{\frac{1-K}2 \tau}
\end{equation}
defined for all $\tau \gg 1$.

Undoing the parabolic rescaling via
    $$\tau = - \ln ( T-t) , \qquad x =  s e^{-\tau/2}= s\sqrt{T-t} , \qquad e^{\tau/2} \gamma_{T - e^{-\tau}} = \tilde \gamma_{\tau} =  \{ (s, h(s, \tau) ) \, |\, s \in \R \}$$
the corresponding Lagrangian mean curvature flow solution $L_K^n (t) \subseteq \C^n$ is given by the profile curve
\begin{multline} \label{eqn approx unrescaled LMCF soln curve}
    \gamma_{K,t} \approx \left\{ \left. \left(x , f_{a \sqrt{T-t}  } ( x ) + a^2 \sqrt{T-t} \left( \partial_s \phi_{K,a} - \partial_s \phi_{0,a} \right) \left( \frac x{\sqrt{T-t}} \right) \right) \, \right| \, x \in \R \right\}, \\ a = \sqrt{C_0} e^{\frac{1-K}2 \tau} = \sqrt{C_0} (T-t)^{\frac{K-1}2}.
\end{multline}

Observe that, in \eqref{approx eqn 3}, the term $C_0 e^{(1-K)\tau} \partial_s ( \phi_{K,a} - \phi_{0,a} ) = a^2 \partial_s ( \phi_{K,a} - \phi_{0,a} )$ is order $a^2$, which is smaller $a^2 \ll a$ than the scale $a$ of the Lawlor neck profile $f_{a(\tau)}$ in \eqref{approx eqn 3}.
Hence, for any $K \ge 2$, the blow-up rate of the second fundamental form $A$ of $L_K(t)$ should be governed by the scale of the Lawlor neck, that is,
    $$\sup_{\mathbf x} | A_{L_K(t)} | \sim \frac1{a \sqrt{T-t}} \sim \frac1{(T-t)^{K/2}} \gg \frac1{\sqrt{T-t}}.$$
Since $K \ge2$, these expected curvature blow-up rates are always Type II, and we obtain curvature blow-up rates with arbitrarily large polynomial powers.

\subsection{Bounded Mean Curvature}

The mean curvature $\mathbf H_{\widetilde L_K(\tau)}$ of the parabolically rescaled solution $\widetilde L_K(\tau)$ satisfies 
\begin{multline*}
    {\sqrt{1+(\partial_s f_a + \partial_{ss} u)^2}}  \cdot \mathbf H_{\widetilde L_K(\tau)} = \mathcal H [f_a + u_s ] = \cancelto{0}{\mathcal H[f_a]} + ( L_{s, a} u )_s + \mathcal Q_{s,a}(u) \\
    \approx (L_{s,a} u )_s = \frac{d}{ds} \left( \frac{u_{ss}}{1 + (\partial_s f_a)^2} + (n-1) \frac{s u_s}{s^2 + f_a^2} \right) 
\end{multline*}
by the computations in Section \ref{section Prelims}.
Using this expression with the approximate solution 
    $$u \approx C_0e^{(1-K)\tau} \left(  \phi_{K,a(\tau)} - \phi_{0,a(\tau)}\right), \qquad a(\tau) = \sqrt{C_0} e^{\frac{1-K}2 \tau}$$
from \eqref{approx eqn 3}, 
we obtain
\begin{align*}
    &|\mathcal{H}[f_a + u_s] |\\
    \approx{}& | (L_{s,a}u)_s |\\
    ={}& \left| \left(H_{s,a} u + \frac s2 \partial_s u - u \right)_s \right|
    && ( \text{since } H_{s,a} = L_{s,a} - \frac s2 \partial_s + 1 ) \\
    \approx{}& C_0e^{(1-K)\tau} \left| \left( (1-K + \tilde \lambda_{K,a}) \phi_{K,a} - (1 + \tilde \lambda_{0,a}) \phi_{0,a} \right. \right. \\
    & \qquad \left. \left. + \frac s2 \partial_s (\phi_{K,a} - \phi_{0,a}) - (\phi_{K,a} - \phi_{0,a}) \right)_s \right| \\
    \lesssim{}& a^2 \left(|\partial_s \phi_{K,a}| + |\partial_s \phi_{0,a}| + |s| | \partial_{ss}  \phi_{K,a}|  + |s| |\partial_{ss} \phi_{0,a} |  \right) 
    && ( \text{by Theorem \ref{thm global eigenfunctions}}).
\end{align*}

The pointwise estimates of Lemma \ref{lem tilde phi_k,a pointwise ests} suggest 
\begin{equation*}
    |\partial_s \phi_{K,a}| + |\partial_s \phi_{0,a}| + |s| | \partial_{ss}  \phi_{K,a}|  + |s| |\partial_{ss} \phi_{0,a} |  \lesssim \frac1{\sqrt{a^2 + s^2}} \qquad \text{when } |s| \lesssim 1.
\end{equation*}
Inserting these bounds into the estimate for $\mathcal H$ above yields
\begin{equation*}
    |\mathcal H[ f_a + u_s]| \lesssim a^2 \frac1{\sqrt{a^2 + s^2}} \lesssim a \sim (T- t)^{\frac{K-1}2}  \qquad \text{for } |s| \lesssim 1.
\end{equation*}
Additionally, the pointwise estimates of Lemma \ref{lem phi_k,a - phi_0,a pointwise ests} suggest
\begin{align*}
    |\partial_{ss}(\phi_{K,a} - \phi_{0,a}) | &\lesssim 1 \qquad \text{when } |s| \lesssim 1,\\
    \text{and so } \, \frac1{\sqrt{1 + (\partial_s f_a + \partial_{ss} u)^2}} &\lesssim 1 \qquad \text{when } |s| \lesssim 1.
\end{align*}
Undoing the parabolic rescaling and using $K \ge 2$, it follows that the corresponding Lagrangian mean curvature flow solution $L_K(t)$ as in \eqref{eqn approx unrescaled LMCF soln curve} has uniformly bounded mean curvature 
\begin{equation} \label{eqn H bound at parab scales}
    |\mathbf H_{L_K(t)} |(\mathbf x) \lesssim \frac1{\sqrt{T-t}} |\mathcal H[ f_a + u_s]| \lesssim ( T- t)^{ \frac{K-1}2 - \frac12} \lesssim 1 \qquad
    \text{when } |\mathbf x| \lesssim \sqrt{T-t}.
\end{equation}
Barrier arguments as in \cite{Stolarski23}*{Section 7} or \cite{Liu24}*{Proposition 8.1} can then be adapted to bound the mean curvature $\mathbf H_{L_K(t)}(\mathbf x)$ outside the region where $|\mathbf x| \lesssim \sqrt{T-t}$.
Combined with \eqref{eqn H bound at parab scales}, this suggests that, for any $n \ge 3$ and $K \ge 2$, the Lagrangian mean curvature flow solution $L_K^n(t)$ as in \eqref{eqn approx unrescaled LMCF soln curve} has uniformly bounded mean curvature
    $$\sup_{t \in [0, T)} \sup_{\mathbf x \in L_K(t)} |\mathbf H_{L_K(t)} |(\mathbf x) < \infty.$$

\appendix

\section{Assorted Formulas and Estimates}

\subsection{Estimates for $H_a$}

\begin{lem} \label{lem H_a - H_0 pointwise est}
    Let $H_a = H_{s,a}$ and $H_0 = H_{s,0}$ be the operators given by \eqref{eqn H_a defn} and \eqref{eqn H_0 defn} respectively.
    For any $n \ge 3$ and $s_0 > 0$, there exists $C = C(n, s_0) > 0$ such that 
    \begin{equation} \label{eqn H_a - H_0 pointwise est}
        |(H_a - H_0) u | \le C \left( a^n s^{-n} | \partial_{ss} u | + a^n s^{-n-1} |\partial_s u| \right) \qquad \forall s \in (s_0, \infty) , \, a \in (0,1).
    \end{equation}
\end{lem}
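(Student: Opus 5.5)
Since $H_a$ and $H_0$ both contain the identical terms $-\frac s2\partial_s + 1$, these cancel in the difference, leaving
\begin{equation*}
    (H_a - H_0)u = \left( \frac{1}{1+(\partial_s f_a)^2} - \frac{1}{1+\overline c_0^2}\right)\partial_{ss} u + (n-1)\left( \frac{s}{s^2+f_a^2} - \frac{1}{(1+\overline c_0^2)s}\right)\partial_s u
\end{equation*}
for $s > s_0 > 0$. So the plan is to bound the two coefficient differences pointwise by $C a^n s^{-n}$ and $C a^n s^{-n-1}$ respectively, uniformly for $a\in(0,1)$ and $s\in(s_0,\infty)$.

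For this I use the scaling $f_a(s) = a f_1(s/a)$ together with the asymptotics \eqref{eq: asymptotic of SL profile function} of Lemma \ref{lem: profile function of Lawlor neck}. Writing $f_1(\sigma) = \overline c_0 \sigma + g(\sigma)$ with $|g(\sigma)| \lesssim \sigma^{1-n}$ and $|g'(\sigma)| \lesssim \sigma^{-n}$ for $\sigma \ge R_0$, we get $f_a(s) - \overline c_0 s = a g(s/a)$, hence $|f_a - \overline c_0 s| \lesssim a^n s^{1-n}$ and $|\partial_s f_a - \overline c_0| \lesssim a^n s^{-n}$ whenever $s/a \ge R_0$. Since $s > s_0$ and $a<1$, this holds once $a \le s_0/R_0$. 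For the remaining range $a\in(s_0/R_0,1)$ and $s\in(s_0, R_0)$, both sides are controlled by continuity and compactness: $a^n s^{-n}$ is bounded below by a positive constant depending on $n,s_0$, while $|\partial_s f_a|\le \overline c_0$ and $f_a \ge 0$ keep the coefficient differences bounded. (Actually even simpler: on all of $s/a\le R_0$ we have $a^n s^{-n}\ge R_0^{-n}$, and the coefficient differences are bounded by $C(n)$ and $C(n)/s_0$ respectively.)

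Then the first coefficient difference is $\frac{\overline c_0^2 - (\partial_s f_a)^2}{(1+(\partial_sf_a)^2)(1+\overline c_0^2)}$, bounded by $2\overline c_0|\partial_s f_a - \overline c_0| \lesssim a^n s^{-n}$. The second is
\begin{equation*}
    \frac{(1+\overline c_0^2)s^2 - s^2 - f_a^2}{s(s^2+f_a^2)(1+\overline c_0^2)} = \frac{(\overline c_0 s - f_a)(\overline c_0 s + f_a)}{s(s^2+f_a^2)(1+\overline c_0^2)},
\end{equation*}
whose absolute value is $\lesssim a^n s^{1-n}\cdot s / s^3 = a^n s^{-n-1}$, using $f_a \ge 0$ and $s^2+f_a^2\ge s^2$. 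Combining these gives \eqref{eqn H_a - H_0 pointwise est}. I don't see a real obstacle here; the only care needed is the uniformity in $a$ via the $s/a\ge R_0$ versus $s/a< R_0$ split.
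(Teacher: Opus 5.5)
Your proof is correct and follows the same route as the paper: cancel the common $-\frac s2\partial_s + 1$ terms, then bound the two coefficient differences using the asymptotics of $f_a$ from Lemma~\ref{lem: profile function of Lawlor neck}. The differences are minor refinements. You use the simplified first-order coefficient $(n-1)s/(s^2+f_a^2)$, whereas the paper expands the divergence form, which carries an extra $\partial_s f_a\,\partial_{ss} f_a$ term and so also needs asymptotics of $\partial_{ss} f_a$. You also make the uniformity in $a$ explicit via $f_a(s) = a f_1(s/a)$ and the split $s/a \ge R_0$ versus $s/a < R_0$, which the paper leaves implicit.
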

\begin{proof}
    A straightforward but tedious computation shows that
    \begin{multline} \label{proof H_a - H_0 pointwise est, eqn 1}
        (H_a -H_0) u  
        = (L_a - L_0) u 
        = \left[ \frac1{1 + (\partial_s f_a)^2} - \frac1{1 + \overline c_0^2} \right] \partial_{ss} u\\
        + \left[ 
        \frac{(n-1) (s^2 + f_a^2)^{-1} (s +  f_a\partial_s f_a)(1 + \partial_s f_a^2) - \partial_s f_a \partial_{ss} f_a }{  ( 1 + \partial_s f_a^2)^2 } 
        - \frac{n-1}{(1+ \overline c_0^2)s }\right]  \partial_s u .
    \end{multline}
    By the asymptotics of $f_a(s)$ \eqref{eq: asymptotic of SL profile function}, it follows that 
    \begin{align*}
        &(H_a - H_0)u\\
        ={}& \left[ \frac1{1 + (\partial_s f_a)^2} - \frac1{1 + \overline c_0^2} \right] \partial_{ss} u\\
        &+ \left[ 
        \frac{(n-1) (s^2 + f_a^2)^{-1} (s +  f_a\partial_s f_a)(1 + \partial_s f_a^2) - \partial_s f_a \partial_{ss} f_a }{  ( 1 + \partial_s f_a^2)^2 } 
        - \frac{n-1}{(1+ \overline c_0^2)s }\right]  \partial_s u \\
        ={}& \left[ \frac1{1 + [\overline c_0 + O(a^n s^{-n})]^2  } - \frac1{1 + \overline c_0^2} \right] \partial_{ss} u\\
        &+  
        (n-1) (s^2 + [\overline c_0 s + O ( a^n s^{1-n}) ]^2)^{-1}   \\
        &\qquad \qquad \cdot \frac{(s +  [\overline c_0 s + O ( a^n s^{1-n}) ][\overline c_0 + O(a^n s^{-n}) ])(1 + [\overline c_0 + O(a^n s^{-n})]^2)}{  ( 1 + [ \overline c_0 + O ( a^n s^{-n} )]^2 )^2 }
        \partial_s u 
        \\
        &- \frac{[\overline c_0 + O(a^n s^{-n} )] O(a^n s^{-n-1}) }{  ( 1 + [ \overline c_0 + O ( a^n s^{-n} )]^2 )^2 } \partial_s u
        - \frac{n-1}{(1+ \overline c_0^2)s }  \partial_s u \\
        ={}& O(a^n s^{-n} ) \partial_{ss} u
        + O(a^n s^{-n-1}) \partial_s u
    \end{align*}
    for $s \in (s_0, \infty)$ and $a \in (0, 1)$.
    This proves \eqref{eqn H_a - H_0 pointwise est}.
\end{proof}

\begin{lem} \label{lem partial_a H_a pointwise est}
    Let $H_a = H_{s,a}$ be the operator given by \eqref{eqn H_a defn}.
    For any $n \ge 3$ and $s_0 > 0$, there exists $C = C(n, s_0) > 0$ such that for all smooth $u : (s_0, \infty) \to \R $
    \begin{equation} \label{eqn partial_a H_a pointwise est}
        | \partial_a H_a  u | \le C \left( a^{n-1} s^{-n} | \partial_{ss} u | + a^{n-1} s^{-n-1} |\partial_s u| \right) \qquad \forall s \in (s_0, \infty) , \, a \in (0,1).
    \end{equation}
\end{lem}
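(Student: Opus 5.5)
The plan is to differentiate the explicit formula for $(H_a - H_0)u$ from the proof of Lemma \ref{lem H_a - H_0 pointwise est} with respect to $a$. Since $H_0$ does not depend on $a$, we have $\partial_a H_a u = \partial_a (L_a u)$. Differentiating the expression for $L_a u$ gives
\begin{equation*}
    \partial_a H_a u = \partial_a\!\left[\frac{1}{1+(\partial_s f_a)^2}\right]\partial_{ss} u + \partial_a\!\left[\frac{(n-1)(s+f_a\partial_s f_a)}{(s^2+f_a^2)(1+\partial_s f_a^2)} - \frac{\partial_s f_a\,\partial_{ss} f_a}{(1+\partial_s f_a^2)^2}\right]\partial_s u .
\end{equation*}
So it suffices to show that the $a$-derivative of the first coefficient is $O(a^{n-1}s^{-n})$, and that of the second is $O(a^{n-1}s^{-n-1})$, uniformly for $s>s_0$ and $a\in(0,1)$.

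The key input is an estimate on $\partial_a f_a$ and its $s$-derivatives. Since $f_a(s)=af_1(s/a)$, we have $\partial_a f_a = a^{-1}(f_a - s\partial_s f_a)$. This equals $a^{-1}\mathcal X[f_a]$, which is also $2a^{-1}\partial_s\beta_a$ by Lemma \ref{lem beta properties}(ii). For $s\ge s_0\ge a$, i.e. $s/a\ge 1$, Lemma \ref{lem: profile function of Lawlor neck} lets us write $f_a = \overline c_0 s + g_a$ with $|\partial_s^k g_a| \lesssim a^n s^{1-n-k}$. The linear part cancels in $f_a - s\partial_s f_a$, leaving
\begin{equation*}
    \partial_a f_a = a^{-1}\left(g_a - s\partial_s g_a\right) = O(a^{n-1}s^{1-n}),
\end{equation*}
and similarly $\partial_s^k\partial_a f_a = O(a^{n-1}s^{1-n-k})$ for $k=1,2$. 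There is a subtlety for $s_0 < s < a$: there $s/a$ is not large. But since $a<a^*$ can be taken below $s_0$ (or we simply use boundedness of $f_1$ on compacts, with constants depending on $s_0$), the regime $s\ge s_0$ always has $s/a\ge s_0/a \ge 1$ once $a\le s_0$. For $a\in(s_0,1)$, all quantities are smooth and bounded on $[s_0,\infty)$ with the same decay rates, with constants depending on $s_0$, so the estimate holds after enlarging $C$.

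With these in hand, each coefficient is a smooth function $\Psi(s,f_a,\partial_s f_a,\partial_{ss}f_a)$. Its $a$-derivative is the sum of the partial derivatives of $\Psi$ multiplied by $\partial_a f_a$, $\partial_a\partial_s f_a$ and $\partial_a\partial_{ss} f_a$ respectively. The partial derivatives of $\Psi$ are evaluated near the cone values $(\overline c_0 s,\overline c_0,0)$, so they are bounded by homogeneity:
\begin{itemize}
    \item for the first coefficient, the derivative is $O(1)$ times $\partial_a\partial_s f_a = O(a^{n-1}s^{-n})$;
    \item for the second coefficient, the relevant partial derivatives are $O(s^{-2})\cdot|\partial_a f_a|$ and $O(s^{-1})\cdot|\partial_a\partial_s f_a|$, each giving $O(a^{n-1}s^{-n-1})$;
    \item the term involving $\partial_{ss}f_a$ contributes $O(1)\cdot\partial_a\partial_{ss}f_a = O(a^{n-1}s^{-n-1})$, plus terms containing $\partial_{ss} f_a = O(a^ns^{-n-1})$, which are even smaller.
\end{itemize}

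The main obstacle is not the derivative computation itself but bookkeeping the homogeneity weights uniformly in $a$. The cleanest route is to use scaling: write each coefficient as $s^{-j}\Psi_j(s/a)$ with $\Psi_j(\sigma)-\Psi_j(\infty)=O(\sigma^{-n})$ together with its derivatives. Then $\partial_a$ acts as $-a^{-1}\sigma\Psi_j'(\sigma)$, which is immediately $O(a^{-1}(s/a)^{-n}) = O(a^{n-1}s^{-n})$ times $s^{-j}$.
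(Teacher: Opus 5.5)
Your proposal is correct and follows the paper's route. Like the paper, you differentiate the explicit coefficient formula for $H_a u$ in $a$ and insert the asymptotics of $f_a$, via $\partial_a f_a = a^{-1}(f_a - s\partial_s f_a)$ and $\partial_a\partial_s f_a = -\frac{s}{a}\partial_{ss}f_a$, and your term-by-term bookkeeping correctly produces the $a^{n-1}s^{-n}$ and $a^{n-1}s^{-n-1}$ rates. Your closing scaling observation — each coefficient equals $s^{-j}\Psi_j(s/a)$ with $\sigma\Psi_j'(\sigma) = O(\sigma^{-n})$ for $\sigma \ge s_0$ — makes the paper's ``insert and simplify'' step explicit and gives uniformity over all $a \in (0,1)$ without your case split at $a = s_0$.
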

\begin{proof}
    By a straightforward computation (cf. \eqref{proof H_a - H_0 pointwise est, eqn 1}),
    \begin{equation} \label{proof partial_a H_a pointwise est, eqn 1}
        H_a u 
        = \frac 1{1 + \partial_s f_a^2} \partial_{ss} u
        + (n-1) \frac{ s + f_a \partial_s f_a }{( s^2 + f_a^2 )( 1 + \partial_s f_a^2)} \partial_s u - \frac{\partial_s f_a \cdot \partial_{ss} f_a}{(1 + \partial_s f_a^2)^2} \partial_s u .
    \end{equation}
    Differentiating in $a$ gives
    \begin{gather} \label{proof partial_a H_a pointwise est, eqn 2}
    \begin{aligned}
        \partial_a H_a u 
        ={}& - 2 ( 1 + \partial_s f_a^2 )^{-2} (\partial_s f_a) (\partial_a \partial_s f_a) \partial_{ss} u \\
        &+ (n-1) ( \partial_a f_a \cdot \partial_s f_a + f \cdot \partial_a \partial_s f_a ) (s^2 + f_a^2)^{-1}  ( 1 + \partial_s f_a^2)^{-1} \partial_s u \\
        &-2(n-1)( s + f_a \partial_s f_a) (s^2 + f_a^2)^{-2} ( f_a \cdot \partial_a f_a) (1 + \partial_s f_a^2)^{-1} \partial_s u\\
        &- 2(n-1) ( s + f_a \partial_s f_a) (s^2 + f_a^2)^{-1} ( 1 + \partial_s f_a^2)^{-2} ( \partial_s f_a \cdot \partial_a \partial_s f_a ) \partial_s u \\
        &- \partial_a \partial_s f_a \cdot \partial_{ss} f_a (1 + \partial_s f_a^2)^{-2} \partial_s u \\
        &- \partial_s f_a \cdot \partial_a \partial_{ss} f_a ( 1 + \partial_s f_a^2)^{-2} \partial_s u\\
        &+ 4 \partial_s f_a \cdot \partial_{ss} f_a ( 1 + \partial_s f_a^2)^{-3} ( \partial_s f_a \cdot \partial_a \partial_s f_a ) \partial_s u.
    \end{aligned}
    \end{gather}
    Inserting the asymptotics of $f_a$ and its derivatives (see Lemma \ref{lem: profile function of Lawlor neck}) into \eqref{proof partial_a H_a pointwise est, eqn 2} and simplifying the resulting expression then proves \eqref{eqn partial_a H_a pointwise est}. 
\end{proof}

\subsection{Estimates for Integral Weights}
\begin{proposition} \label{prop exp -F_a bounds}
    For $n \ge 3$, there exists a dimensional constant $C_n > 0$ such that
    \begin{multline} \label{eqn Gaussian weight est}
        \max\left\{ e^{-C_n s^2}, e^{-\frac{(1+\ol{c}_0^2)s^2}4 - C_n a^2} \right\} \le e^{-F_a(s)} =  e^{- \frac{s^2}4 - \frac12 \int_0^s \tilde s ( \partial_s f_a)^2 d \tilde s} \\ \le \min \left\{ e^{-\frac{s^2}4 }, e^{-\frac{(1+\ol{c}_0^2)s^2}4 + C_n a^2} \right\} \qquad \forall s \in \R        
    \end{multline}
    where $\overline c_0 = \overline c_0(n) = \tan \left( \frac \pi 2 - \frac \pi {2n} \right)$.
\end{proposition}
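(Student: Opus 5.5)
Since $\overline c_0'$ depends only on $n$, and $F_a$ is even in $s$ (because $f_a$ is even, so $(\partial_s f_a)^2$ is even and $\tilde s(\partial_s f_a)^2$ is odd), I will work with $s\ge 0$ throughout. The proof reduces to pointwise bounds on the integrand $g_a(\tilde s):=\tilde s(\partial_s f_a(\tilde s))^2$ in the definition of $F_a$.

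\textbf{Step 1 (monotonicity of the slope).} By Lemma \ref{lem: profile function of Lawlor neck}, $f_a''>0$ and $f_a'(0)=0$. Moreover $\partial_s f_a\to\overline c_0$ as $s\to\infty$ by \eqref{eq: asymptotic of SL profile function}. Hence $0\le\partial_s f_a(\tilde s)\le\overline c_0$ for all $\tilde s\ge0$, so
\[
0\le\tfrac12\int_0^s g_a\,d\tilde s\le\tfrac{\overline c_0^2 s^2}{4}.
\]
This gives the first upper bound $e^{-F_a}\le e^{-s^2/4}$. It also gives the first lower bound $e^{-F_a}\ge e^{-(1+\overline c_0^2)s^2/4}\ge e^{-C_n s^2}$ once $C_n\ge(1+\overline c_0^2)/4$.

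\textbf{Step 2 (sharp comparison with the cone weight).} It remains to show
\[
\Bigl|\tfrac12\int_0^s \tilde s\bigl(\overline c_0^2-(\partial_s f_a)^2\bigr)\,d\tilde s\Bigr|\le C_n a^2 .
\]
The integrand is nonnegative by Step 1, so it suffices to bound $\int_0^\infty \tilde s(\overline c_0^2-(\partial_s f_a)^2)\,d\tilde s$. Using the scaling $\partial_s f_a(\tilde s)=f_1'(\tilde s/a)$ and substituting $\tilde s=a\sigma$, this integral equals
\[
a^2\int_0^\infty\sigma\bigl(\overline c_0^2-f_1'(\sigma)^2\bigr)\,d\sigma .
\]
The last integral is a dimensional constant, provided it is finite.

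\textbf{Step 3 (convergence).} To see that the $\sigma$-integral converges, note the integrand is bounded on compact sets. For $\sigma\ge1$, \eqref{eq: asymptotic of SL profile function} with $k=1$ gives $|f_1'(\sigma)-\overline c_0|\lesssim\sigma^{-n}$, so $\overline c_0^2-f_1'^2\lesssim\sigma^{-n}$, and the integrand is $\lesssim\sigma^{1-n}$, which is integrable since $n\ge3$. This is the only step that needs any care, and it is exactly where the hypothesis $n\ge3$ enters: for $n=2$ the integral would diverge logarithmically.

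Combining Steps 1–3, $F_a(s)$ lies between $\frac{(1+\overline c_0^2)s^2}{4}-C_na^2$ and $\frac{(1+\overline c_0^2)s^2}{4}$. Exponentiating gives the remaining two bounds, so both the max and the min in the statement follow.
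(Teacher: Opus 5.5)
Your proof is correct and follows essentially the same route as the paper: rescale $\tilde s = a\sigma$, so that the deviation of $\frac12\int_0^s \tilde s(\partial_s f_a)^2\,d\tilde s$ from $\overline c_0^2 s^2/4$ becomes $a^2$ times an integral controlled by $\int^\infty \sigma^{1-n}\,d\sigma$, which is finite precisely because $n\ge 3$. The only difference is that you use convexity ($f_a''>0$, $f_a'(0)=0$) to get $0\le \partial_s f_a\le \overline c_0$ on $s\ge 0$; this makes one side exact, giving even $e^{-F_a}\ge e^{-(1+\overline c_0^2)s^2/4}$ with no $C_n a^2$ loss. The paper instead uses the two-sided bound $|f_1'-\overline c_0|\le C_n(1+s^n)^{-1}$ on both sides, splits the integral at $\sigma=1$, and obtains $e^{-C_n s^2}$ separately from $|\partial_s f_a|\le C_n$.
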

\begin{proof}
    Throughout this argument, $C_n>0$ denotes a positive dimensional constant that may change from line to line.
    It suffices to prove the estimate for $s>0$ since $e^{- \frac{s^2}4 - \frac12 \int_0^s \tilde s(\partial_s f_a)^2 d \tilde s}$ is an even function of $s$.
    Note that the asympotics of $\partial_s f_1$ from Lemma \ref{lem: profile function of Lawlor neck} imply
    \begin{equation}\label{eqn f_1' est}
        |\partial_s f_1 - \ol{c}_0 | \le \frac{C_n}{1 + s^n }\qquad \forall s > 0.
    \end{equation}
    In particular, $| \partial_s f_a(s) | = |\partial_s f_1(s/a) | \le C_n$ for all $s$, from which we deduce $$e^{-C_n s^2} \le e^{- \frac{s^2}4 - \frac12 \int_0^s \tilde s (\partial_s f_a)^2 d \tilde s}.$$
    Moreover, it follows that for $s>0$,
    \begin{align*}
        0 \le \frac12 \int_0^s \tilde s ( \partial_s f_a)^2 d \tilde s 
        &= \frac12 \int_0^s \tilde s( \partial_s f_1(\tilde s/a))^2 d \tilde s \\
        &= \frac{a^2}2 \int_0^{s/a} \sigma ( \partial_s f_1(\sigma))^2 d \sigma && (\sigma = \tilde s /a ) \\
        &= \frac{a^2}2 \int_0^1 \sigma ( \partial_s f_1)^2 d \sigma + \frac{a^2}2 \int_1^{s/a} \sigma ( \partial_s f_1)^2 d \sigma \\
        &\le \frac{a^2}2 C_n + \frac{a^2}2 \int_1^{s/a} \sigma ( \ol{c}_0 + C_n \sigma^{-n} )^2 d \sigma 
        && (\text{by } \eqref{eqn f_1' est} )\\
        &\le C_n a^2 + \frac{a^2}2 \int_1^{s/a} \ol{c}_0^2 \sigma + C_n \sigma^{1-n} d \sigma \\
        &\le C_n a^2 + \frac{a^2}2 \int_0^{s/a} \ol{c}_0^2 \sigma d \sigma + \frac{a^2}2 \int_1^\infty C_n \sigma^{1-n} d \sigma \\
        &= C_n a^2 + \frac{\ol{c}_0^2 s^2}4 && ( n \ge 3).
    \end{align*}
    By similar logic, there exists the lower bound
        $$\frac12 \int_0^s \tilde s (\partial_s f_a )^2 d \tilde s \ge \frac{\ol{c}_0^2 s^2}4 - C_n a^2 \qquad \forall s > 0.$$
    The Gaussian estimate \eqref{eqn Gaussian weight est} now follows.
\end{proof}

\begin{proposition} \label{prop vol element bounds}
    For $n\ge 3$, there exists a dimensional constant $C_n > 1$ such that 
    \begin{equation} \label{eqn volume form est}
        C_n^{-1} \rho_a(s)^{n-1} \le \sqrt{ ( 1 + \partial_s f_a^2 ) ( s^2 + f_a^2 )^{n-1} } \le C_n \rho_a(s)^{n-1}
        \qquad \forall s \in \R; \, \forall 0 < a \le 1.
    \end{equation}

    Consequently, for any $\kappa > 0$, there exists $C = C(n, \kappa) > 0$ such that
    \begin{equation} \label{eqn L^2_a est for polynomials}
        \| |s|^\kappa \|_{L^2_a} \le C(n, \kappa) \qquad \forall 0  < a \le 1.
    \end{equation}
\end{proposition}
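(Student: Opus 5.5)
We prove the pointwise bound \eqref{eqn volume form est} by splitting into an inner region $|s|\lesssim a$ and an outer region $|s|\gtrsim a$, using the scaling $f_a(s)=af_1(s/a)$. Here $\rho_a(s)$ denotes the natural radial comparison quantity $\sqrt{a^2+s^2}$ (equivalently $a+|s|$). Since $\partial_s f_a(s)=(\partial_s f_1)(s/a)$, the first factor $1+\partial_s f_a^2$ is bounded between $1$ and $1+\sup|\partial_s f_1|^2$. By Lemma \ref{lem: profile function of Lawlor neck}, $f_1$ is convex and even with $f_1'(0)=0$, so $0\le \partial_s f_1\le \overline c_0$ on $s\ge0$. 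Hence $1\le 1+\partial_s f_a^2\le 1+\overline c_0^2$, a dimensional bound.

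The main step is the comparison $s^2+f_a^2\sim a^2+s^2$. Write $s^2+f_a^2=a^2\bigl(\sigma^2+f_1(\sigma)^2\bigr)$ with $\sigma=s/a$; it then suffices to show $\sigma^2+f_1(\sigma)^2\sim 1+\sigma^2$ uniformly in $\sigma\in\R$.

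\textbf{Lower bound.} Convexity together with $f_1(0)=1$ and $f_1'(0)=0$ gives $f_1\ge 1$, so $\sigma^2+f_1^2\ge \sigma^2+1$.

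\textbf{Upper bound.} Since $f_1'\le\overline c_0$, we have $f_1(\sigma)\le 1+\overline c_0|\sigma|$, so $\sigma^2+f_1^2\le \sigma^2+2+2\overline c_0^2\sigma^2\le C_n(1+\sigma^2)$.

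Raising this comparison to the power $(n-1)/2$ and multiplying by the first factor yields \eqref{eqn volume form est}. Because the comparison is uniform in $\sigma$, no separate large-$|s|$ asymptotics are needed.

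For \eqref{eqn L^2_a est for polynomials}, combine \eqref{eqn volume form est} with the upper Gaussian bound $e^{-F_a}\le e^{-s^2/4}$ from Proposition \ref{prop exp -F_a bounds}. This gives
\[
\| |s|^\kappa\|_{L^2_a}^2\le C_n\int_\R |s|^{2\kappa}(a^2+s^2)^{\frac{n-1}{2}}e^{-s^2/4}\,ds .
\]
For $0<a\le1$, bound $(a^2+s^2)^{(n-1)/2}\le (1+s^2)^{(n-1)/2}$. The remaining integral $\int |s|^{2\kappa}(1+s^2)^{(n-1)/2}e^{-s^2/4}\,ds$ is finite and depends only on $n$ and $\kappa$.

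I expect no real obstacle. The only point requiring care is that the constants are uniform in $a$, and the scaling reduction to $f_1$ handles this automatically.
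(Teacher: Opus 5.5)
Your proof is correct and follows essentially the same route as the paper: rescale to $f_1$, get the lower bound from $f_a \ge f_a(0) = a$ (convexity with $f_a'(0)=0$) and the upper bound from linear growth of $f_1$ and boundedness of $\partial_s f_1$, then combine with $e^{-F_a}\le e^{-s^2/4}$ to get the polynomial $L^2_a$ bound. The only difference is cosmetic. You derive $0 \le \partial_s f_1 \le \overline c_0$ and $f_1(\sigma) \le 1 + \overline c_0|\sigma|$ explicitly from convexity and the asymptotic slope. The paper instead cites unspecified linear bounds $f_1 \le C_n(1+|\sigma|)$ and $\partial_s f_1 \le C_n$ from the profile lemma.
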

\begin{proof}
    Throughout the proof, $C_n > 0$ denotes a dimensional constant that may change from line to line.
    By Lemma \ref{lem: profile function of Lawlor neck}, it follows that 
        $$f_1(\sigma) \le C_n( 1 + |\sigma|) \text{ and }  f_1'(\sigma) \le C_n \qquad (\forall \sigma \in \R).$$
    Therefore,
    \begin{align*}
        &\sqrt{ ( 1 + \partial_s f_a^2 ) ( s^2 + f_a^2 )^{n-1} } \\
        &= a^{n-1} \sqrt{ ( 1 + \partial_s f_1(s/a)^2 ) ( (s/a)^2 + f_1(s/a)^2 )^{n-1} }
        && ( f_a(s) = a f_1(s/a) ) \\
        &\le C_n a^{n-1} ( 1 +|s/a|^{n-1} ) \\
        &\le C_n \rho_a(s)^{n-1}.
    \end{align*}

    By Lemma \ref{lem: profile function of Lawlor neck}, $f_a(\sigma) \ge f_a(0) = a$ for all $\sigma \in \R$.
    Thus,
    \begin{equation}
        \sqrt{ ( 1 + \partial_s f_a^2 ) ( s^2 + f_a^2)^{n-1} } 
        \ge \sqrt{ (s^2 + a^2)^{n-1} } \ge C_n^{-1} \rho_a(s)^{n-1}.
    \end{equation}

    Let $\kappa > 0$.
    Then
    \begin{align*}
        &\| |s|^\kappa \|_{L^2_a}^2 \\ 
        &= \int_\R |s|^{2\kappa} e^{- \frac{s^2}4 - \frac12 \int_0^s \tilde s ( \partial_s f_a)^2 d \tilde s} \sqrt{ ( 1 + \partial_s f_a^2)(s^2 + f_a^2 )^{n-1} } ds \\
        &\le C_n \int_\R |s|^{2 \kappa} e^{-s^2/4} ( a^{n-1} + s^{n-1}) ds 
        && (\text{by \eqref{eqn Gaussian weight est}, \eqref{eqn volume form est}}) \\
        &\le C(n, \kappa) && ( 0 < a \le 1).
    \end{align*}
\end{proof}

\begin{proposition}[Computation of $\partial_a d\mu_a$] \label{prop computation of partial_a d mu_a}
    For $n \ge 3$,
    \begin{multline} \label{eqn partial_a d mu_a}
        \partial_a \ln \left( e^{- \frac{s^2}4 - \frac12 \int_0^s \tilde s ( \partial_s f_a)^2 d \tilde s} \sqrt{ ( 1 + (\partial_s f_a)^2 )( s^2 + f_a^2 )^{n-1} } \right) \\
        = \frac1a \int_0^s \tilde s^2 (\partial_s f_a) (\partial_{ss}f_a) d \tilde s + \frac{n-1}a \frac{ (s \partial_s f_a - f_a )^2 }{s^2 + f_a^2} .
    \end{multline}

    Moreover, there exists a dimensional constant $C_n > 0$ such that for all $s \in \R$
    \begin{align}
        \label{prop computation of partial_a d mu_a est 1}
        \left| \frac1a \int_0^s \tilde s^2 (\partial_s f_a) (\partial_{ss}f_a) d \tilde s \right| &\le C_n a,\\
        \label{prop computation of partial_a d mu_a est 2}
        \text{and } \left| \frac{n-1}a \frac{ (s \partial_s f_a - f_a )^2 }{s^2 + f_a^2} \right| &\le \frac{C_n}a \frac1{1 + ( s/a)^{n+1}}.
    \end{align}
\end{proposition}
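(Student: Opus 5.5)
The formula \eqref{eqn partial_a d mu_a} is a direct differentiation, using the scaling $f_a(s)=af_1(s/a)$. For this family we have
\begin{equation*}
\partial_a f_a=a^{-1}(f_a-s\partial_s f_a),\qquad \partial_a\partial_s f_a=-a^{-1}s\,\partial_{ss}f_a .
\end{equation*}
For the Gaussian factor I differentiate under the integral:
\begin{equation*}
-\tfrac12\int_0^s\tilde s\,\partial_a(\partial_s f_a)^2\,d\tilde s=\tfrac1a\int_0^s\tilde s^2(\partial_sf_a)(\partial_{ss}f_a)\,d\tilde s .
\end{equation*}
This is already the first term. For the volume factor I expand
\begin{equation*}
\tfrac12\partial_a\ln(1+(\partial_sf_a)^2)+\tfrac{n-1}2\partial_a\ln(s^2+f_a^2)=-\frac{s\,\partial_sf_a\partial_{ss}f_a}{a(1+(\partial_sf_a)^2)}+\frac{(n-1)f_a(f_a-s\partial_sf_a)}{a(s^2+f_a^2)}.
\end{equation*}
I then use the minimality equation \eqref{eq: profile function minimality} to replace $\partial_{ss}f_a/(1+(\partial_sf_a)^2)$ by $(n-1)(f_a-s\partial_sf_a)/(s^2+f_a^2)$. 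The two fractions then combine to
\begin{equation*}
\frac{n-1}{a}\cdot\frac{(f_a-s\partial_sf_a)(f_a-s\partial_sf_a)}{s^2+f_a^2},
\end{equation*}
which is the second term.

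For the bound \eqref{prop computation of partial_a d mu_a est 1}, I substitute $\sigma=\tilde s/a$. Then
\begin{equation*}
\frac1a\int_0^s\tilde s^2\,\partial_sf_a\,\partial_{ss}f_a\,d\tilde s=a\int_0^{s/a}\sigma^2 f_1'(\sigma)f_1''(\sigma)\,d\sigma .
\end{equation*}
By Lemma~\ref{lem: profile function of Lawlor neck}, $|f_1'|\le C$ and $|f_1''(\sigma)|\lesssim\langle\sigma\rangle^{-1-n}$. The integrand is therefore $O(\langle\sigma\rangle^{1-n})$, which is integrable on $\R$ for $n\ge3$. This gives the bound $C_na$ uniformly in $s$.

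For \eqref{prop computation of partial_a d mu_a est 2}, scaling gives
\begin{equation*}
\frac{(s\partial_sf_a-f_a)^2}{s^2+f_a^2}=\frac{(\sigma f_1'-f_1)^2}{\sigma^2+f_1^2}\Big|_{\sigma=s/a}.
\end{equation*}
Near $\sigma=0$ this is bounded, since $f_1\ge1$. For large $\sigma$, \eqref{eq: asymptotic of SL profile function} with $k=0,1$ gives $\sigma f_1'-f_1=O(\sigma^{1-n})$, so the quotient is $O(\sigma^{2-2n}/\sigma^2)=O(\sigma^{-2n})$. Since $2n\ge n+1$, this is $\lesssim(1+\sigma^{n+1})^{-1}$, which gives the stated estimate.

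The main obstacle is the algebraic simplification in the first step. The cancellation depends on correctly applying the ODE \eqref{eq: profile function minimality} to eliminate $\partial_{ss}f_a$, and the remaining estimates are routine.
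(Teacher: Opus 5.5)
Your proof is correct and follows the paper's argument essentially line by line. You use the same scaling identities for $\partial_a f_a$ and $\partial_a\partial_s f_a$, the same substitution of the minimality equation to combine the two volume-form terms into a perfect square, and the same rescaling $\sigma=\tilde s/a$ together with the decay of $f_1''$ and of $\sigma f_1'-f_1$ for the two bounds. Bounding the first integral by $\int_{\mathbb{R}}\langle\sigma\rangle^{1-n}\,d\sigma$ is slightly cleaner than the paper's passage to $\int_0^\infty$, which tacitly relies on the sign of the integrand. As in the paper, the factor $(s/a)^{n+1}$ should be read as $(|s|/a)^{n+1}$.
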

\begin{proof}
    Throughout the proof, $C_n > 0$ denotes a dimensional constant that may change from line to line.
    Using $f_a (s) = a f_1 (s/a)$, it follows that
    \begin{gather}
        \partial_s f_a(s) = \partial_s f_1 (s/a), \qquad \partial_{ss} f_a(s) = \frac1a \partial_{ss}f_1(s/a) \\
        \label{eqn partial_a f_a}
        \partial_a f_a = \frac1a f_a - \frac sa \partial_s f_a, \qquad
        \partial_a \partial_s f_a = - \frac sa \partial_{ss} f_a.
    \end{gather}

    It follows that 
    \begin{align*}
        &\partial_a \ln \sqrt{ ( 1 + (\partial_s f_a)^2 )( s^2 + f_a^2 )^{n-1} } \\
        ={}& \frac12 \frac{2 \partial_s f_a \partial_a \partial_s f_a }{1 + \partial_s f_a^2 } 
        + \frac{n-1}2 \frac{2 f_a \partial_a f_a}{s^2 + f_a^2 } \\
        ={}& - \frac{s \partial_s f_a }a \frac{ \partial_{ss} f_a }{1 + \partial_s f_a^2 } 
        - \frac{f_a}a (n-1) \frac{  s \partial_s f_a - f_a }{s^2 + f_a^2 } && (\text{by \eqref{eqn partial_a f_a}})\\
        ={}& \left[ \frac{s \partial_s f_a}a - \frac{f_a}a \right] (n-1) \frac{s \partial_s f_a - f_a}{s^2 + f_a^2} &&(\text{by Lemma \ref{lem: profile function of Lawlor neck}})\\
        ={}& \frac{n-1}a \frac{ (s \partial_s f_a - f_a)^2}{s^2 + f_a^2} ,
    \end{align*}
    and
    \begin{equation*}
        \partial_a \ln e^{- \frac {s^2}4 - \frac12 \int_0^s \tilde s (\partial_s f_a)^2 d \tilde s} 
        = - \int_0^s \tilde s \partial_s f_a \partial_a \partial_s f_a d \tilde s 
        = + \frac1a \int_0^s \tilde s^2 \partial_s f_a \partial_{ss} f_a d \tilde s.
    \end{equation*}
    This proves \eqref{eqn partial_a d mu_a}.

    Additionally,
    \begin{align*}
        \left| \frac1a \int_0^s \tilde s^2 \partial_s f_a \partial_{ss} f_a d \tilde s \right|
        &= \left| \int_0^s \frac{\tilde s^2}{a^2} [ \partial_s f_1(\tilde s/a) ] [\partial_{ss} f_1 (\tilde s/a) ] d \tilde s \right| \\
        &= a \left|  \int_0^{s/a} \sigma^2 \partial_s f_1 (\sigma) \partial_{ss}f_1(\sigma) d \sigma \right| && (\sigma = \tilde s/a) \\
        &\le a \left|  \int_0^{\infty} \sigma^2 \partial_s f_1 (\sigma) \partial_{ss}f_1(\sigma) d \sigma \right| \\
        &\le C_n a,
    \end{align*}
    where the last line follows uses the fact that, for $n \ge 3$, the asymptotics of $f_1$ and its derivatives from Lemma \ref{lem: profile function of Lawlor neck} imply 
        $$\left| s^2 \partial_s f_1 (\partial_{ss} f_1) \right| \le C_n ( 1 + |s|)^{1-n} \le C_n ( 1 + |s|)^{-2} \qquad \forall s \in \R$$
    which is integrable.

    For the other estimate,
    \begin{equation*}
        \left| \frac{n-1}a \frac{[s \partial_s f_a(s) - f_a (s) ]^2}{s^2 + f_a(s)^2}  \right| 
        \le \left| \frac{n-1}a \frac{[ \frac sa \partial_s f_1(s/a) - f_1 (s/a) ]^2}{(s/a)^2 + f_1(s/a)^2}  \right| \\
        \le \frac1a \frac{C_n}{1 + ( s/a)^{n+1}}
    \end{equation*}
    where the last line follows from the asymptotics of $f_1$ in Lemma \ref{lem: profile function of Lawlor neck}.
\end{proof}

\begin{proposition} \label{prop H_a symmetric}
    For $u, w \in C^\infty_c(\R)$,
    \begin{equation}
        \int_\R w H_a u \, d\mu_a = - \int_\R \frac{(\partial_s w) (\partial_s u)}{1 + (\partial_s f_a)^2} d \mu_a + \int_\R w u \, d \mu_a.
    \end{equation}
\end{proposition}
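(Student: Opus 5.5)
The plan is to put $H_a$ in divergence form using the explicit structure of $d\mu_a$, and then integrate by parts once. Write $d\mu_a = e^{-F_a}\sqrt{(1+(\partial_s f_a)^2)(s^2+f_a^2)^{n-1}}\,ds =: m_a(s)\,ds$, and set
\[
p_a(s) := \frac{m_a(s)}{1+(\partial_s f_a)^2} = e^{-F_a}\sqrt{\frac{(s^2+f_a^2)^{n-1}}{1+(\partial_s f_a)^2}} .
\]
The target identity is
\[
m_a H_a u = \partial_s\bigl(p_a\,\partial_s u\bigr) + m_a u .
\]
Once this holds, multiplying by $w$ and integrating over $\R$ gives the claim. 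Indeed, $\int w\,\partial_s(p_a\partial_s u)\,ds = -\int p_a\,\partial_s w\,\partial_s u\,ds$ because $w,u\in C^\infty_c(\R)$, so there are no boundary terms, and $p_a\,ds = \frac{d\mu_a}{1+(\partial_s f_a)^2}$.

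To verify the divergence identity, I expand $\partial_s(p_a\partial_s u)$. By \eqref{eqn L_a defn},
\[
\partial_s\Bigl(\sqrt{\tfrac{(s^2+f_a^2)^{n-1}}{1+(\partial_s f_a)^2}}\,\partial_s u\Bigr) = \sqrt{(1+(\partial_s f_a)^2)(s^2+f_a^2)^{n-1}}\; L_a u .
\]
Multiplying by $e^{-F_a}$ gives $m_a L_a u$. The remaining term is the one where the derivative falls on $e^{-F_a}$:
\[
-F_a'\,p_a\,\partial_s u, \qquad F_a' = \frac s2 + \frac s2(\partial_s f_a)^2 = \frac s2\bigl(1+(\partial_s f_a)^2\bigr) .
\]
Hence $-F_a' p_a \partial_s u = -\frac s2\, m_a\,\partial_s u$. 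Adding the zeroth-order term $m_a u$ yields exactly $m_a\bigl(L_a u - \frac s2\partial_s u + u\bigr) = m_a H_a u$.

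The only substantive point is the cancellation $F_a'\,(1+(\partial_s f_a)^2)^{-1} = s/2$. This is precisely why the weight $F_a$ includes the correction $\frac12\int_0^s \tilde s(\partial_s f_a)^2\,d\tilde s$. I expect no further obstacle, since smoothness of $f_a$ (Lemma \ref{lem: profile function of Lawlor neck}) makes every step legitimate.
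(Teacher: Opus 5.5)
Your proposal is correct and takes the same approach as the paper, which states only that the identity ``follows from a straightforward computation using integration by parts.'' Your divergence-form identity $m_a H_a u = \partial_s(p_a \partial_s u) + m_a u$ is exactly that computation written out, and the key cancellation you identify, $F_a' = \frac{s}{2}\bigl(1+(\partial_s f_a)^2\bigr)$, is the correct and essential step.
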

\begin{proof}
    This follows from a straightforward computation using integration by parts.
\end{proof}

\begin{proposition} \label{prop Ecker's Sobolev ineq}
    There exists $C = C(n, a) > 0$ such that
    \begin{equation} \label{eqn Ecker's Sobolev ineq}
        \| s u \|_{L^2_a(\R)} \le C \| u \|_{H^1_a(\R) } \qquad \forall u \in C^\infty_c(\R) .
    \end{equation}
\end{proposition}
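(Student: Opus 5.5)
The plan is to prove the Gaussian Hardy-type inequality of Ecker by integrating by parts against the drift in the weight. Write $d\mu_a = e^{-F_a}\rho\, ds$ with $\rho := \sqrt{(1+(\partial_s f_a)^2)(s^2+f_a^2)^{n-1}}$. The key identity comes from
\[
\partial_s\bigl(e^{-F_a}\bigr) = -\Bigl(\tfrac{s}{2} + \tfrac{s}{2}(\partial_s f_a)^2\Bigr)e^{-F_a} = -\tfrac{s}{2}\bigl(1+(\partial_s f_a)^2\bigr)e^{-F_a},
\]
so $s\,e^{-F_a}$ is, up to the bounded factor $\tfrac12(1+(\partial_s f_a)^2)\in[\tfrac12,\tfrac12(1+\overline c_0^2)]$, minus the derivative of the weight. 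For $u\in C^\infty_c(\R)$ I will compute
\[
\int_\R s^2 u^2 \, d\mu_a \le 2\int_\R s\,u^2\,\frac{s(1+(\partial_s f_a)^2)}{2}\, e^{-F_a}\rho\,ds = -2\int_\R s u^2 \rho\, \partial_s(e^{-F_a})\, ds,
\]
and then integrate by parts, with no boundary terms by compact support. This gives
\[
2\int_\R \partial_s(s u^2\rho)\, e^{-F_a}\, ds = 2\int_\R \Bigl(u^2 + 2 s u\,\partial_s u + s u^2 \tfrac{\partial_s\rho}{\rho}\Bigr) d\mu_a .
\]

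Three terms then need to be bounded. The first is at most $2\|u\|_{L^2_a}^2$. The second, by Cauchy--Schwarz and Young, is at most $4\|su\|_{L^2_a}\|\partial_s u\|_{L^2_a}\le \tfrac12\|su\|^2_{L^2_a}+8\|\partial_s u\|^2_{L^2_a}$, which can be absorbed into the left-hand side. For the third I need the sign or a bound on $s\,\partial_s\rho/\rho = s\,\partial_s\ln\rho$. Computing,
\[
s\,\partial_s\ln\rho = \frac{s\,\partial_s f_a\,\partial_{ss}f_a}{1+(\partial_s f_a)^2} + (n-1)\frac{s(s+f_a\partial_s f_a)}{s^2+f_a^2}.
\]
The second summand is bounded by $C_n$, since $|s|,|f_a\partial_s f_a|\lesssim \sqrt{s^2+f_a^2}$ (using $|\partial_s f_a|\le C_n$). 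For the first summand, scaling gives $s\,\partial_{ss}f_a(s) = \sigma\,\partial_{ss}f_1(\sigma)$ with $\sigma=s/a$, and the asymptotics of Lemma \ref{lem: profile function of Lawlor neck} show this is $O(\langle\sigma\rangle^{-n})$, hence bounded by $C_n$. So $|s\,\partial_s\ln\rho|\le C_n$ uniformly in $a$, and the third term is at most $2C_n\|u\|^2_{L^2_a}$.

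Absorbing the $\tfrac12\|su\|^2$ term then yields $\|su\|_{L^2_a}^2\le C(n)\bigl(\|u\|_{L^2_a}^2+\|\partial_s u\|_{L^2_a}^2\bigr)$. This gives the statement, in fact with a constant independent of $a$. The only delicate point is the uniform bound on $s\,\partial_s\ln\rho$ near the neck scale $|s|\sim a$, which I handle by the rescaling $\sigma=s/a$ as above. Everything else is routine integration by parts.
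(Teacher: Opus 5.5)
Your proof is correct and is essentially the paper's argument. The paper expands $0 \le \int_\R (3\,\partial_s u - s u)^2 \, d\mu_a$ and integrates the cross term by parts. That is the same integration by parts against the drift $-\tfrac{s}{2}\bigl(1+(\partial_s f_a)^2\bigr)$ of the Gaussian weight that you perform, with the perfect square doing the job of your Young-inequality absorption.

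Your explicit bound $|s\,\partial_s \ln\rho| \le C_n$ on your volume density $\rho$, obtained via the rescaling $\sigma = s/a$, also shows the constant can be taken independent of $a$. This slightly sharpens the paper's $O(1;n,a)$ error term and its constant $C(n,a)$.
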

\begin{proof}
    Let $u \in C^\infty_c(\R)$.
    For a constant $C_0 > 0$ to be determined, we compute that
    \begin{gather} \label{proof Ecker's Sobolev ineq, eqn 1} \begin{aligned}
        0 \le{}& \int_\R ( C_0 \partial_s u - s u)^2 d \mu_a \\
        ={}& \int C_0^2 (\partial_s u)^2  - 2 C_0 s u \partial_s u + s^2 u^2 d\mu_a \\
        ={}& \int C_0^2 (\partial_s u)^2 + s^2 u^2 d\mu_a - C_0 \int s \partial_s (u^2)  d\mu_a \\
        ={}& \int C_0^2 (\partial_s u)^2 + s^2 u^2 d \mu_a\\
        &+ C_0 \int u^2 \left[ 1 + s \partial_s \left( \ln \left( e^{-F_a(s) }\sqrt{ ( 1 + \partial_s f_a^2 ) ( s^2 + f_a^2)^{n-1}}  \right) \right) \right] d\mu_a
    \end{aligned} \end{gather}
    where the last line follows from integration by parts.
    By Lemma \ref{lem: profile function of Lawlor neck}, it follows that
    \begin{equation} \label{proof Ecker's Sobolev ineq, eqn 2}
        1 + s \partial_s  \left( \ln \left( e^{-F_a(s) }\sqrt{ ( 1 + \partial_s f_a^2 ) ( s^2 + f_a^2)^{n-1}}  \right) \right)
        = - \frac {s^2}2 - \frac{s^2}2 (\partial_s f_a )^2+ O(1; n,a). 
    \end{equation}
    Inserting \eqref{proof Ecker's Sobolev ineq, eqn 2} into \eqref{proof Ecker's Sobolev ineq, eqn 1} gives
    \begin{multline*}
        0 \le \int C_0^2 (\partial_s u)^2 + s^2 u^2 d\mu_a + C_0 \int u^2 \left( - \frac{s^2}2 - \frac{s^2}2 (\partial_s f_a)^2 + O(1; n,a) \right) d\mu_a \\
        \le \int C_0^2 ( \partial_s u)^2 + C_0 C(n,a) u^2 
        + \left( 1 - \frac{C_0}2 \right)  s^2 u^2 \, d\mu_a. 
    \end{multline*}
    Taking $C_0 = 3$ and rearranging then gives
    \begin{equation}
        \int s^2 u^2 d\mu_a \le C(n,a) \int (\partial_s u)^2 + u^2 \, d\mu_a 
    \end{equation}
    for some (different) constant $C(n,a)$ depending only on $n,a$ as claimed.
\end{proof}

\bibliography{BibFile}

\end{document}